\documentclass[12pt,reqno]{amsart}

\usepackage{subfiles}
\usepackage{euscript}
\usepackage{tabu}
\usepackage[margin=0.75in]{geometry}
\usepackage[dvipsnames]{xcolor} 		             		
\usepackage{graphicx}			
\usepackage{amssymb}
\usepackage{mathrsfs}
\usepackage{amsthm}
\usepackage{amsmath}
\usepackage{stmaryrd}

\usepackage{tikz}
\usepackage{tikz-cd}
\usetikzlibrary{arrows,arrows.meta}
\usetikzlibrary{arrows.meta,        
                decorations.pathmorphing,
                matrix}%
\tikzset{
  commutative diagrams/.cd, 
  arrow style=tikz, 
  diagrams={>=cm to}
}

\usepackage{accents}
\usepackage{upgreek}
\usepackage{enumerate}
\usepackage{bm}
\usepackage{mathtools}
\usepackage[all]{xy}
\usepackage{caption}

\usepackage{url}
\usepackage{float}
\usepackage{todonotes}
\usepackage{bbm}
\usepackage{colonequals}
\usepackage{longtable}

\usepackage[full]{textcomp}
\usepackage[cal=cm]{mathalfa}
\usepackage{xparse}
\usepackage{comment}
\usepackage{cite}

\usepackage[normalem]{ulem}

\theoremstyle{definition}
\newtheorem{innercustomthm}{Theorem}
\newenvironment{customthm}[1]
  {\renewcommand\theinnercustomthm{#1}\innercustomthm}
  {\endinnercustomthm}
  
\theoremstyle{definition}
\newtheorem{innercustomcor}{Corollary}
\newenvironment{customcor}[1]
  {\renewcommand\theinnercustomcor{#1}\innercustomcor}
  {\endinnercustomcor}
  
\theoremstyle{definition}

\makeatletter
\def\@tocline#1#2#3#4#5#6#7{\relax
  \ifnum #1>\c@tocdepth 
  \else
    \par \addpenalty\@secpenalty\addvspace{#2}%
    \begingroup \hyphenpenalty\@M
    \@ifempty{#4}{%
      \@tempdima\csname r@tocindent\number#1\endcsname\relax
    }{%
      \@tempdima#4\relax
    }%
    \parindent\z@ \leftskip#3\relax \advance\leftskip\@tempdima\relax
    \rightskip\@pnumwidth plus4em \parfillskip-\@pnumwidth
    #5\leavevmode\hskip-\@tempdima
      \ifcase #1
       \or\or \hskip 1em \or \hskip 2em \else \hskip 3em \fi%
      #6\nobreak\relax
    \dotfill\hbox to\@pnumwidth{\@tocpagenum{#7}}\par
    \nobreak
    \endgroup
  \fi}
\makeatother

\makeatletter
\DeclareRobustCommand{\cev}[1]{%
  \mathpalette\do@cev{#1}%
}
\newcommand{\do@cev}[2]{%
  \fix@cev{#1}{+}%
  \reflectbox{$\m@th#1\vec{\reflectbox{$\fix@cev{#1}{-}\m@th#1#2\fix@cev{#1}{+}$}}$}%
  \fix@cev{#1}{-}%
}
\newcommand{\fix@cev}[2]{%
  \ifx#1\displaystyle
    \mkern#23mu
  \else
    \ifx#1\textstyle
      \mkern#23mu
    \else
      \ifx#1\scriptstyle
        \mkern#22mu
      \else
        \mkern#22mu
      \fi
    \fi
  \fi
}

\makeatother

\usetikzlibrary{calc}
\usetikzlibrary{fadings}
\usetikzlibrary{decorations.pathmorphing}
\usetikzlibrary{decorations.pathreplacing}

\newcounter{marginnote}
\DeclareMathAlphabet{\mathpzc}{OT1}{pzc}{m}{it}

\usepackage[backref=page]{hyperref}
\hypersetup{
  colorlinks   = true,          
  urlcolor     = blue,          
  linkcolor    = violet,          
  citecolor   = blue             
}
\theoremstyle{definition}
\newtheorem{theorem}{Theorem}[section]

\newtheorem{corollary}[theorem]{Corollary}
\newtheorem{lemma}[theorem]{Lemma}
\newtheorem{proposition}[theorem]{Proposition}
\newtheorem{remark}[theorem]{Remark}

\newtheorem*{runningexample*}{Running example}

\newtheorem*{aside*}{Aside}

\newtheorem{definition}[theorem]{Definition}
\newtheorem{example}[theorem]{Example}

\newtheorem{proposition-definition}[theorem]{Proposition-Definition}

\DeclareMathOperator{\Pic}{Pic}

\newcommand{\ol}[1]{\overline{#1}}

\newcommand{\bcd}{\begin{center}\begin{tikzcd}}
\newcommand{\ecd}{\end{tikzcd}\end{center}}

\newcommand{\PP}{\mathbb{P}}

\newcommand{\Mcal}{\mathcal{M}}

\newcommand{\Mbar}{\ol{\Mcal}}

\newcommand{\M}{\mathcal{M}}

\newcommand{\Mtil}{\widetilde{\mathcal{M}}}
\newcommand{\gr}{\mathrm{gr}}

\NewDocumentCommand{\compatibilitydatum}{m m m m m m O{} O{} O{}}{
\begin{equation*} \begin{tikzcd}[ampersand replacement=\&]
  \: \arrow{r} \& {#1} \arrow{r} \arrow{d}{#7} \& {#2} \arrow{r} \arrow{d}{#8} \& {#3} \arrow{r}{[1]} \arrow{d}{#9} \& \: \\
  \: \arrow{r} \& {#4} \arrow{r} \& {#5} \arrow{r} \& {#6} \arrow{r} \& \:
\end{tikzcd} \end{equation*}}

\NewDocumentCommand{\commutingsquare}{m m m m o O{} O{} O{} O{}}{
\begin{equation}\begin{tikzcd}[ampersand replacement=\&] \label{#5}
  #1 \arrow{r}{#6} \arrow{d}{#7} \& #2 \arrow{d}{#8} \\
  #3 \arrow{r}{#9} \& #4
\end{tikzcd}\IfValueTF{#5}{\label{#5}}{} \end{equation}}

\NewDocumentCommand{\Cartesiansquare}{m m m m O{} O{} O{} O{}}{
\begin{equation*}\begin{tikzcd}[ampersand replacement=\&]
  #1 \arrow{r}{#5} \arrow{d}{#6} \arrow[dr, phantom, "\square"] \& #2 \arrow{d}{#7} \\
  #3 \arrow{r}{#8} \& #4
\end{tikzcd} \end{equation*}}

\NewDocumentCommand{\Cartesiansquarelabel}{m m m m m O{} O{} O{} O{}}{
\begin{tikzcd}[ampersand replacement=\&]
  #1 \arrow{r}{#6} \arrow{d}{#7} \arrow[dr, phantom, "\square"] \& #2 \arrow{d}{#8} \\
  #3 \arrow{r}{#9} \& #4
\end{tikzcd}\IfValueTF{#5}{\label{#5}}{}
}

\NewDocumentCommand{\triangleofspaces}{m m m O{} O{} O{}}{
\begin{tikzcd} [ampersand replacement=\&]
#1 \arrow{r}{#4} \arrow[bend right]{rr}{#5} \& #2 \arrow{r}{#6} \& #3
\end{tikzcd}}

\title{Topology of the Vakil--Zinger moduli space}

\author[T. Song]{Terry Dekun Song}\address{Department of Pure Mathematics and Mathematical Statistics, University of Cambridge, Cambridge, CB3 0WA}\email{\url{ds2016@cam.ac.uk}}

\begin{document}
\begin{abstract}
  We derive a set of generators for the rational homology of the desingularised genus one mapping space $\Mtil_{1,n}(\mathbb{P}^r,d)$ constructed by Vakil--Zinger and qualitatively describe the relations among the generators. The results build on a detailed study of the stratifications of the moduli spaces coming from tropical geometry and the constraints coming from the weight filtration on the Borel--Moore homology groups of strata, extending the techniques from the previous study on $\Mbar_{g,n}.$ Our results imply that the even homology of $\Mtil_{1,n}(\mathbb{P}^r,d)$ is tautological and controlled by genus-zero and reduced genus-one Gromov--Witten theory. We verify the Hodge and Tate conjectures for $\Mtil_{1,n}(\mathbb{P}^r,d),$ completely describe its rational Picard group, and recover known results on the vanishing of odd cohomology. Our techniques also apply to the pure weight homology groups of genus one stable maps $\Mbar_{1,n}(\PP^r,d).$
\end{abstract}
\maketitle
\setcounter{tocdepth}{1}
\tableofcontents
\section{Introduction}
Let $\mathcal{M}_{1,n}(\mathbb{P}^r,d)$ be the moduli space of degree $d$ maps from $n$-marked smooth genus one curves to projective space. The stable maps compactification $$\mathcal{M}_{1,n}(\mathbb{P}^r,d)\subset \Mbar_{1,n}(\mathbb{P}^r,d)$$ is stratified by degree-decorated dual graphs with smooth locally closed strata. However, the strata where the minimal genus one subcurve (\emph{the core}) is contracted have excess dimensions, so $\Mbar_{1,n}(\mathbb{P}^r,d)$ is reducible. The \emph{main component} $\Mbar^{\mathrm{main}}_{1,n}(\mathbb{P}^r,d)\subset \Mbar_{1,n}(\PP^r,d)$ is defined as the closure of $\mathcal{M}_{1,n}(\mathbb{P}^r,d)$ in $ \Mbar_{1,n}(\mathbb{P}^r,d).$ It is singular, and smoothability of stable maps is subtle to characterise and work with.

In \cite{vakilzinger}, Vakil and Zinger constructed a beautiful desingularsation $$\Mtil_{1,n}(\PP^r,d)\to \Mbar_{1,n}^{\mathrm{main}}(\PP^r,d).$$ The stack $\Mtil_{1,n}(\PP^r,d)$ is given a modular interpretation by Ranganathan--Santos-Parker--Wise \cite{rspw} as a moduli space of maps from \emph{centrally aligned} genus one curves: they constructed a weighted strata blow-up $\Mbar_{1,n}^{\mathrm{cen}}\to \Mbar_{1,n}$ parametrising genus one stable curves together with contractions to elliptic singularities, such that the natural map $\Mtil_{1,n}(\PP^r,d)\to \Mbar_{1,n}$ factors through.

This work describes the rational homology of $\Mtil_{1,n}(\PP^r,d)$ by a set of generators and their relations (Theorems \ref{thm:maingen}, \ref{thm:rels}) by the stratification spectral sequence, topology of linear systems on low genus curves, and how their basepoints control the relations among the homology classes.

\subsection{Central alignment stratification} The generators in the presentation of $H_\star(\Mtil_{1,n}(\PP^r,d))$ are extracted from the stratification of $\Mtil_{1,n}(\PP^r,d)$ by centrally aligned $(1,n,d)$-graphs: this was set out in \cite{rspw} and reviewed in \cite{vzdualcomplex}. 

When a genus one subgraph has positive degree, a centrally aligned graph agrees with the usual stable map dual graph, which we term as $(1,n,d)$-graphs, and so are the corresponding strata in $\Mtil_{1,n}(\PP^r,d)$. When the core has total degree zero, the $(1,n,d)$-graph--typically denoted as $\mathbf{G}$--is decorated with the data of a central alignment $\rho$ (Definition \ref{defn:centrgrph}) that blocks a subset of vertices into levels. 

In this work, we coarsen the stratification to equivalence classes of centrally aligned $(1,n,d)$-graphs under contractions and sproutings of rational tails - genus zero subgraphs that do not receive a central alignment (Definition \ref{defn:coarcl}). Analogous to the ordinary stable maps \cite[§3]{bm}, locally closed strata $\Mtil_{[\mathbf{G},\rho]}$ in $\Mtil_{1,n}(\PP^r,d)$ are the fibre products of (Lemma \ref{lem:kunneth}):
\begin{enumerate}
  \item a dual graph stratum in $\Mbar_{1,n}^{\mathrm{cen}},$ which is a torus bundle over a dual graph stratum in $\Mbar_{1,n},$
  \item a collection of pointed genus zero maps together with prescribed tangent vectors on the attaching points, such that the maps collapse the sum of the tangent vectors,
  \item genus zero stable maps.
\end{enumerate}
 

\begin{example}
  A stratum $[\mathbf{G},\rho]$ in $\Mtil_{1,3}(\PP^r,4).$ The three fibre product factors are shaded in grey, yellow, and green respectively. The dotted circle indicates genus zero stable maps, and the numbers indicate degrees.
\end{example}

\[\begin{tikzpicture}[x=0.75pt,y=0.75pt,yscale=-1,xscale=1]

\draw [line width=1.5]    (59.11,49.27) -- (59.11,85.56) ;
\draw  [fill={rgb, 255:red, 0; green, 0; blue, 0 }  ,fill opacity=1 ] (54.41,86.7) .. controls (54.41,83.86) and (56.72,81.55) .. (59.56,81.55) .. controls (62.41,81.55) and (64.72,83.86) .. (64.72,86.7) .. controls (64.72,89.55) and (62.41,91.86) .. (59.56,91.86) .. controls (56.72,91.86) and (54.41,89.55) .. (54.41,86.7) -- cycle ;
\draw    (48.85,101.5) -- (59.56,86.7) ;
\draw    (59.56,86.7) -- (69.85,101.5) ;
\draw [line width=1.5]    (58.97,45.96) -- (156.77,82.77) ;
\draw  [fill={rgb, 255:red, 0; green, 0; blue, 0 }  ,fill opacity=1 ] (151.61,82.77) .. controls (151.61,79.92) and (153.92,77.62) .. (156.77,77.62) .. controls (159.62,77.62) and (161.93,79.92) .. (161.93,82.77) .. controls (161.93,85.62) and (159.62,87.93) .. (156.77,87.93) .. controls (153.92,87.93) and (151.61,85.62) .. (151.61,82.77) -- cycle ;
\draw [line width=1.5]    (93.38,39.56) -- (183.25,39.44) ;
\draw  [fill={rgb, 255:red, 0; green, 0; blue, 0 }  ,fill opacity=1 ] (63.38,13.86) .. controls (63.38,11.01) and (65.69,8.7) .. (68.54,8.7) .. controls (71.39,8.7) and (73.7,11.01) .. (73.7,13.86) .. controls (73.7,16.71) and (71.39,19.02) .. (68.54,19.02) .. controls (65.69,19.02) and (63.38,16.71) .. (63.38,13.86) -- cycle ;
\draw  [fill={rgb, 255:red, 0; green, 0; blue, 0 }  ,fill opacity=1 ] (53.81,45.96) .. controls (53.81,43.11) and (56.12,40.8) .. (58.97,40.8) .. controls (61.82,40.8) and (64.13,43.11) .. (64.13,45.96) .. controls (64.13,48.8) and (61.82,51.11) .. (58.97,51.11) .. controls (56.12,51.11) and (53.81,48.8) .. (53.81,45.96) -- cycle ;
\draw [line width=1.5]    (68.54,13.86) -- (58.97,45.96) ;
\draw [line width=1.5]    (68.54,13.86) -- (93.38,39.72) ;
\draw  [fill={rgb, 255:red, 0; green, 0; blue, 0 }  ,fill opacity=1 ] (88.22,39.72) .. controls (88.22,36.87) and (90.53,34.56) .. (93.38,34.56) .. controls (96.23,34.56) and (98.54,36.87) .. (98.54,39.72) .. controls (98.54,42.57) and (96.23,44.88) .. (93.38,44.88) .. controls (90.53,44.88) and (88.22,42.57) .. (88.22,39.72) -- cycle ;
\draw [line width=1.5]    (93.38,39.72) -- (58.97,45.96) ;
\draw  [dash pattern={on 5.63pt off 4.5pt}][line width=1.5]  (33.25,4.62) -- (128.15,4.62) -- (128.15,116.19) -- (33.25,116.19) -- cycle ;
\draw  [dash pattern={on 5.63pt off 4.5pt}][line width=1.5]  (33.25,4.62) -- (166.25,4.62) -- (166.25,116.19) -- (33.25,116.19) -- cycle ;
\draw  [dash pattern={on 5.63pt off 4.5pt}][line width=1.5]  (183.77,39.77) .. controls (183.77,32.98) and (189.28,27.47) .. (196.08,27.47) .. controls (202.88,27.47) and (208.39,32.98) .. (208.39,39.77) .. controls (208.39,46.57) and (202.88,52.08) .. (196.08,52.08) .. controls (189.28,52.08) and (183.77,46.57) .. (183.77,39.77) -- cycle ;
\draw    (203.56,50.7) -- (213.85,65.5) ;
\draw  [draw opacity=0][fill={rgb, 255:red, 46; green, 98; blue, 110 }  ,fill opacity=0.35 ] (33.25,4.62) -- (128.15,4.62) -- (128.15,116.19) -- (33.25,116.19) -- cycle ;
\draw  [draw opacity=0][fill={rgb, 255:red, 248; green, 231; blue, 28 }  ,fill opacity=0.5 ] (128.15,4.62) -- (166.25,4.62) -- (166.25,116.19) -- (128.15,116.19) -- cycle ;
\draw  [draw opacity=0][fill={rgb, 255:red, 126; green, 211; blue, 33 }  ,fill opacity=0.5 ] (180.44,8.52) -- (227.38,8.52) -- (227.38,80.52) -- (180.44,80.52) -- cycle ;

\draw (61.11,52.67) node [anchor=north west][inner sep=0.75pt]  [font=\small]  {$0$};
\draw (41.9,78.48) node [anchor=north west][inner sep=0.75pt]  [font=\small]  {$0$};
\draw (34,99.77) node [anchor=north west][inner sep=0.75pt]  [font=\small]  {$m_{1}$};
\draw (66,101.4) node [anchor=north west][inner sep=0.75pt]  [font=\small]  {$m_{2}$};
\draw (199.61,11.17) node [anchor=north west][inner sep=0.75pt]  [font=\small]  {$2$};
\draw (151.9,91.48) node [anchor=north west][inner sep=0.75pt]  [font=\small]  {$2$};
\draw (49.9,8.48) node [anchor=north west][inner sep=0.75pt]  [font=\small]  {$0$};
\draw (99.01,22.38) node [anchor=north west][inner sep=0.75pt]  [font=\small]  {$0$};
\draw (209,66.4) node [anchor=north west][inner sep=0.75pt]  [font=\small]  {$m_{3}$};
\draw (5,60.4) node [anchor=north west][inner sep=0.75pt]    {$\mathbf{G}$};
\draw (60,120.4) node [anchor=north west][inner sep=0.75pt]  [font=\small]  {$\rho =0$};
\draw (130.15,119.59) node [anchor=north west][inner sep=0.75pt]  [font=\small]  {$\rho =1$};
\end{tikzpicture}\]

\subsection{Weight filtrations on strata} In the following, we use \emph{Borel--Moore homology} as it receives a cycle class map from Chow homology and is functorial with respect to stratification. For smooth spaces, Borel--Moore homology groups are Poincaré dual to singular cohomology. The central alignment stratification of $\Mtil_{1,n}(\PP^r,d)$ induces a spectral sequence in Borel--Moore homology $$E^1_{p,q} = \bigoplus_{\substack{[\mathbf{G},\rho]\\ \dim \Mtil_{[\mathbf{G},\rho]} = p}} H^{\mathrm{BM}}_{p+q}(\mathcal{M}_{[\mathbf{G},\rho]})\Rightarrow H_{p+q}(\Mtil_{1,n}(\PP^r,d)).$$ The first page differentials are induced by boundary maps, generalising the familiar excision long exact sequence, and are compatible with mixed Hodge structures \cite[Lemma 3.8]{ara}. Because $\Mtil_{1,n}(\PP^r,d)$ is a smooth, proper Deligne--Mumford stack, $H_{p+q}(\Mtil_{1,n}(\PP^r,d))$ is pure of weight $-(p+q).$ Truncating the differentials of the spectral sequence $E^{k}_{p,q}\to E^{k}_{p-k,q+k-1}$ at weight $-(p+q),$ we observe that only the pure weight quotient $\mathrm{gr}^W_{-(p+q)}E^1_{p,q}$ survives to $E^{\infty}_{p,q},$ and that there are successive surjections $$\mathrm{gr}^{W}_{-(p+q)}E^1_{p,q}\twoheadrightarrow \cdots \twoheadrightarrow \mathrm{gr}^{W}_{-(p+q)} E^{k}_{p,q} \twoheadrightarrow \mathrm{gr}^{W}_{-(p+q)} E^{k+1}_{p,q} \twoheadrightarrow \cdots \twoheadrightarrow E^{\infty}_{p,q}$$ with $$\mathrm{gr}^W_{-(p+q)} E^{k+1}_{p,q} = \mathrm{gr}^W_{-(p+q)} E^{k}_{p,q}/\left(\mathrm{im}: \mathrm{gr}^W_{-(p+q)} E^{k}_{p+1,q}\to \mathrm{gr}^W_{-(p+q)} E^{k}_{p,q}\right).$$

\begin{remark}
  Dually, we may also truncate the stratification spectral sequence for the compactly supported cohomology
$$E_1^{p,q} = \bigoplus_{\substack{[\mathbf{G},\rho]\\ \dim \mathcal{M}_{[\mathbf{G},\rho]} = p}}H_c^{p+q}(\mathcal{M}_{[\mathbf{G},\rho]})\Rightarrow H^{p+q}(\Mtil_{1,n}(\PP^r,d))$$ 
and have successive injections $$E_{\infty}^{p,q}\subset \cdots\subset \mathrm{gr}^{W}_{p+q} E_{k+1}^{p,q}\subset \mathrm{gr}^{W}_{p+q} E_{k}^{p,q}\subset\cdots \subset\mathrm{gr}^{W}_{p+q}E_1^{p,q}$$ given by $$\mathrm{gr}^{W}_{p+q} E_{k+1}^{p,q} = \ker\left( \mathrm{gr}^{W}_{p+q} E_{k}^{p,q}\to \mathrm{gr}^{W}_{p+q} E_{k}^{p+k, q-k+1}\right)\subset \mathrm{gr}^{W}_{p+q} E_{k}^{p,q}.$$ The surjectivity (resp. injectivity) follows from the weight bounds on $H_k^{\mathrm{BM}}$ and $H_c^k.$
\end{remark}

Thus, the pure weight Borel--Moore homology groups $$\mathrm{gr}^W_{-\star }H_{\star}^{\mathrm{BM}}(\mathcal{M}_{[\mathbf{G},\rho]}) = \bigoplus_{k} \mathrm{gr}^W_{-k }H_{k}^{\mathrm{BM}}(\mathcal{M}_{[\mathbf{G},\rho]})$$ generate $H_\star(\Mtil_{1,n}(\PP^r,d)),$ and the relations among the generators are subquotients of the `off-by-one' Borel--Moore homology groups $\mathrm{gr}^W_{-\star+1} H_\star^{\mathrm{BM}}(\mathcal{M}_{[\mathbf{G},\rho]}).$ The fibre product descriptions of the strata $\Mtil_{[\mathbf{G},\rho]}$ lead to Künneth formulas for their Borel--More homology groups (Lemma \ref{lem:kunneth}), so the pure and off-by-one weight graded pieces are determined by the pure weight classes on each fibre product factor, which we now discuss.



\subsection{Generators and relations}\label{sec:introgenrel}

We approach the weight pieces of $\mathcal{M}_{1,n}(\PP^r,d)$ via its partial compactification by linear systems. Let $$\mathrm{Pic}^d_{1,n}\to \mathcal{M}_{1,n}$$ be the relative degree-$d$ (rigidified) Picard stack over $\mathcal{M}_{1,n}.$ It has relative polarisation $$\Theta\in A^1(\mathrm{Pic}^d_{1,n})$$ and universal Poincaré line bundle $\mathcal{P}_d$ over $\mathrm{Pic}^d_{1,n}\times_{\mathcal{M}_{1,n}} \mathcal{C}_{1,n}.$ Let $\pi: \mathrm{Pic}^d_{1,n}\times_{\mathcal{M}_{1,n}} \mathcal{C}_{1,n}\to \mathrm{Pic}^d_{1,n}$ be the projection map, we define $$\hat{\mathcal{Q}}_{1,n}(\PP^r,d):=\mathbb{P}_{\mathrm{Pic}^d_{1,n}}(R\pi_* \mathcal{P}_d^{\oplus r+1}).$$ It is a projective bundle over $\mathrm{Pic}^d_{1,n},$ such that the fibre over $(E,L)$ is $$\mathbb{P}(H^0(E, L)^{\oplus r+1}).$$ It has hyperplane class $$H_{\hat{\mathcal{Q}}}\in A^1(\hat{\mathcal{Q}}_{1,n}(\PP^r,d)).$$ There is an open immersion $$\mathcal{M}_{1,n}(\PP^r,d)\subset \hat{\mathcal{Q}}_{1,n}(\PP^r,d)$$ given by tuples of sections with no common basepoints.

The classes $\Theta, H_{\hat{\mathcal{Q}}}$ produce Borel--Moore homology classes under the cycle class map. Together with flat pullback from $W_{-\star} H_\star^{\mathrm{BM}}(\mathcal{M}_{1,n}),$ they generate $W_{-\star} H_\star^{\mathrm{BM}}(\mathcal{M}_{1,n}(\PP^r,d)),$ and an analogous description holds for its off-by-one weight cohomology (Lemmas \ref{lem:m1nrd}, \ref{lem:m1nrdoff}). The same discussion applies to maps from nodal elliptic curves (Lemma \ref{cor:Mcycle}).

We turn to second fibre product factor, which are genus zero maps together with tangent vectors. Let $\mathsf{M}_{0,1}$ and $\mathsf{M}_{0,2}$ be the Artin stacks of smooth rational curves with one and two marked points, respectively. They are identified with classifying stacks of automorphisms of $\mathbb{P}^1$ that preserves the set of marked points. In particular, there are isomorphisms $$H^\star(\mathsf{M}_{0,i})\cong \mathbb{Q}[\psi]$$ for $i=1,2,$ where $$\psi\in H^2(\mathsf{M}_{0,i})$$ is the $\psi$-class of any marked point and is well-defined up to a sign. These $\psi$-classes pull back to classes supported on the genus zero vertices with factorisation property that are univalent or bivalent, and these are their only contribution to pure weight Borel--Moore homology (Lemma \ref{lem:MF}). To see this, we first determine the cohomology of parametrised genus zero maps satisfying the factorisation property: \begin{enumerate}
  \item Given a tangent vector on the distinguished point $p\in \mathbb{P}^r,$ say $v\in T_p \mathbb{P}^r$, we define the space of pointed maps with derivative $v$ and determine its cohomology in Section \ref{sec:prestan}. Analogous to the results in \cite{farbwolf} concering the moduli space $\mathrm{Map}^*_d(\mathbb{P}^1, \mathbb{P}^r)$ of pointed, parametrized maps from $\PP^1$ to $\PP^r$, they are largely independent of the degree of the map, in which case their cohomology is isomorphic to that of $\mathbb{A}^r\setminus \mathrm{pt}.$
  \item We describe the space of admissible tangent vectors: they are tuples of tangent vectors in $T_p \mathbb{P}^r$ satisfying suitable linear dependencies.
\end{enumerate}

Combining information on the base and fibre directions, Corollary \ref{cor:genDF} states that the pure weight cohomology of the space of \emph{parametrised} maps with factorisation property is given only by $H^0\cong \mathbb{Q}.$ The only source of pure weight classes from the moduli space is hence the $\psi$-classes from taking quotients by the automorphism groups.

Finally, genus zero stable maps and their pointed analogues $$\Mbar_{0,n}^*(\PP^r,d):=\mathrm{ev}_{n+1}^{-1}(p)\subset \Mbar_{0,n+1}(\PP^r,d)$$ are smooth and proper. Their homology groups are known to be generated by tautological cycles \cite{opretorus}, a complete set of relations have been derived via wall-crossing \cite{mmchow,mminter,mmtaut}, and their Betti numbers have been determined \cite{getzpand}.

\begin{customthm}{A}[Corollary \ref{cor:maingen}]\label{thm:maingen}
  Let $[\mathbf{G},\rho]$ label a boundary stratum closure $\Mbar_{[\mathbf{G},\rho]}\xhookrightarrow{\iota_{[\mathbf{G},\rho]}} \Mtil_{1,n}(\mathbb{P}^r,d)$ with interior $\Mtil_{[\mathbf{G},\rho]},$ let $F\in H_{\star}^{\mathrm{BM}}(\Mtil_{[\mathbf{G},\rho]})$ be a polynomial of the following classes:
  \begin{enumerate}
    \item pullback from \(W_{-\star} H_{\star}^{\mathrm{BM}}(\mathcal{M}_{1,m})\) for \(m\leq n+d\),
    \item $\Theta\in A^1(\mathrm{Pic}^{d'}_{1,m})$ and $H_{\hat{\mathcal{Q}}}\in A^1(\mathcal{M}_{1,n'}(\mathbb{P}^r,d'))$ defined above,
    \item pullback from $H^\star(\mathsf{M}_{0,1})$ and $H^\star(\mathsf{M}_{0,2})$, each generated by their $\psi$-classes,
    \item pullback from rational tails, which are isomorphic to $\Mbar_{0,n'}(\mathbb{P}^r,d'),$
  \end{enumerate}
  and let $\tilde{F}\in H_{\star}^{\mathrm{BM}}(\Mbar_{[\mathbf{G},\rho]})$ be a lift\footnote{Given open embedding $U\subset X$ with $U, X$ both smooth, the restriction map on pure weight Borel--Moore homology $\mathrm{gr}^W_{-\star} H^{\mathrm{BM}}_\star(X)\to \mathrm{gr}^W_{-\star} H^{\mathrm{BM}}_\star(U)$ is surjective. We discuss choices of lifts in §\ref{subsec:strcl}.} of the class $F.$ The homology group $H_\star(\Mtil_{1,n}(\mathbb{P}^r,d))$ is additively generated by classes of the form $(\iota_{[\mathbf{G},\rho]})_* \tilde{F}.$ 
\end{customthm}

\begin{remark}
  The same technique shows that   can be applied to the pure weight homology of Kontsevich stable maps $\mathrm{gr}^W_{-\star} H_\star(\Mbar_{1,n}(\PP^r,d)),$ which is generated by items (1), (2), (3) listed above.
\end{remark}

The precise description of the generators is more refined than the statement above: see Corollary \ref{cor:maingen} for more details. It is helpful to present the generators as centrally aligned dual graphs with vertex or level decorations by Borel--Moore homology classes.

The relations among the generators are controlled by the off-by-one weight graded pieces of the strata. A qualitative summary of the relations is as follows. 

\begin{customthm}{B}\label{thm:rels}
  The relations among the generators are multiplicatively generated by:
  \begin{enumerate}
    \item relations pulled back from $\overline{\mathcal{M}}_{1,n}^{\mathrm{cen}},$ which are generated by WDVV and Getzler's relations as well as $\psi$-class relations coming from the central alignment\footnote{See §\ref{subsubsec:quotaut} for the definition of the relevant $\psi$-classes and the relation.},
    \item basepoint relations, which we define and compute in §\ref{subsec:basptrelcomp} and \ref{subsec:baseptrel}.
  \end{enumerate}
\end{customthm}

See Theorem \ref{thm:relations} and definitions in that section for a more precise statement. The term `generate' is intended as taking pulled back relations from each stratum closure and pushing forward to $\Mtil_{1,n}(\PP^r,d)$. The linear span of all such relations hence depends on the complexity of the dual graph stratification of $\Mtil_{1,n}(\PP^r,d).$

The basepoint relations concern pure weight classes on positive degree cores of the strata, which correspond to items (1) and (2) of the generators in Theorem \ref{thm:maingen}. The relations are derived by considering maps of pairs\footnote{Here, $[\mathbb{G},\rho]$ is a (centrally aligned) $(1,n,d)$-graph consisting of a genus one vertex and a rational tail or a level.} $$(\mathcal{M}_{1,n}(\PP^r,d)\cup \Mtil_{[\mathbb{G},\rho]}, \Mtil_{[\mathbb{G},\rho]})\to (\hat{\mathcal{Q}}_{1,n}(\PP^r,d), \hat{\mathcal{Q}}_{1,n}(\PP^r,d)\setminus \mathcal{M}_{1,n}(\PP^r,d)),$$ that `collapses' rational tails (genus zero stable maps) to basepoints. This allows us to relate the $E^1$-page differentials on the left hand side to the boundary maps on the right hand side. This is analogous to maps of pairs $(\mathrm{Bl}_Z X, E)\to (X,Z),$ where $E\subset \mathrm{Bl}_Z X$ is the exceptional divisor: the boundary map $\gr^W_{-k}H^{\mathrm{BM}}_{k-1}(X\setminus Z)\to \gr^W_{-k} H^{\mathrm{BM}}_{k}(E)$ is completely determined by the map $\gr^W_{-k}H^{\mathrm{BM}}_{k-1}(X\setminus Z)\to \gr^W_{-k} H^{\mathrm{BM}}_{k}(Z).$

Because the basepoint quasimaps have codimension $r,$ the basepoint relations are in homological codimension no less than $2r.$ This implies that in low codimension, all the relations are `tautological' in the following sense.

\begin{customcor}{C}
  The relations among the generators in $H_{k}^{\mathrm{BM}}(\Mtil_{1,n}(\PP^r,d))$ for $k<2r$ are multiplicatively generated by relations in $H_{\star}(\Mbar_{1,n}^{\mathrm{cen}}).$
\end{customcor}

\begin{remark}
  Let $\mathfrak{M}_{1,n}^{\mathrm{cen}}$ be the moduli stack of prestable centrally aligned curves, and let $\mathfrak{Pic}_{{\mathfrak{M}}_{1,n}^{\mathrm{cen}}}^d\to \mathfrak{M}_{1,n}^{\mathrm{cen}}$ be the relative degree-$d$ Picard stackover it. One could consider the natural map $$\Mtil_{1,n}(\PP^r,d)\to \mathfrak{Pic}_{{\mathfrak{M}}_{1,n}^{\mathrm{cen}}}^d,$$ stratify the image of the morphism $\mathfrak{Pic}_{{\mathfrak{M}}_{1,n}^{\mathrm{cen}}}^{d, \mathrm{VZ}}\subset \mathfrak{Pic}_{{\mathfrak{M}}_{1,n}^{\mathrm{cen}}}^d$ to determine the weight graded pieces of $H_\star^{\mathrm{BM}}(\mathfrak{Pic}_{{\mathfrak{M}}_{1,n}^{\mathrm{cen}}}^{d, \mathrm{VZ}})$ and then present $\Mtil_{1,n}(\PP^r,d)$ as the basepoint-free locus in the quasimaps moduli space, which is a projective bundle over $\mathfrak{Pic}_{{\mathfrak{M}}_{1,n}^{\mathrm{cen}}}^{d, \mathrm{VZ}}.$ This gives an alternative way to derive the above generators and relations of $H_\star(\Mtil_{1,n}(\PP^r,d)).$
\end{remark}

\subsection{Applications}
The even homology of $\Mbar_{1,n}$ is spanned by boundary classes \cite{pet}. Combining this with the description of generators given above, the even homology of $\Mtil_{1,n}(\mathbb{P}^r,d)$ is generated by tautological classes and boundary strata. This allows us to deduce

\begin{customcor}{D}[Corollary \ref{cor:HT}]
  The Hodge and Tate conjectures hold for $\Mtil_{1,n}(\PP^r,d).$
\end{customcor}

From another perspective, Theorem \ref{thm:maingen} implies that the even homology groups of $\Mtil_{1,n}(\PP^r,d)$ are entirely controlled by intersection numbers of tautological cycles, namely the reduced genus one Gromov--Witten theory of $\PP^r.$ 

Via Poincaré duality, Theorem \ref{thm:maingen} also describes the Hodge structures in $H^\star(\Mtil_{1,n}(\mathbb{P}^r,d)).$ Following \cite{clp}, we denote $$\mathsf{L}:=H^2(\mathbb{P}^1),\mathsf{S}_{k+1}:=W_k H^k(\mathcal{M}_{1,k}).$$ The Eichler--Shimura isomorphism states that $\mathsf{S}_{k+1}$ corresponds to $\mathrm{SL}_2(\mathbb{Z})$-cusp forms of weight $k+1.$ They are possibly non-zero for odd $k\geq 11.$ The Hodge structures present in $H^\star(\Mtil_{1,n}(\mathbb{P}^r,d))$ are products of $\mathsf{L}$ and $\mathsf{S}_{k+1}$ for $k\geq n+d.$ In particular, the odd weight Hodge structures in $H^\star(\Mtil_{1,n}(\mathbb{P}^r,d))$ have weight at least $11.$ Since $\Mtil_{1,n}(\mathbb{P}^r,d)$ has pure weight cohomology, we recover the following result of Fontanari \cite{Fontanari}.

\begin{customcor}{E}[Corollary \ref{cor:oddvan}]
  For odd $k< 11$ and all $n,$ $H^k(\Mtil_{1,n}(\mathbb{P}^r,d))$ vanishes.
\end{customcor}

The presence of Hodge structures $\mathsf{L}^j \prod_{k}\mathsf{S}_{k+1}^{\ell_k}$ is tied to the graphical stratification of $\Mtil_{1,n}(\mathbb{P}^r,d)$. This allows more precise descriptions of what each Hodge structure contributes.

\begin{example}[§\ref{sec:degodd}]
  For $r\geq 11,$ the lowest $d$ such that $H^\star(\Mtil_{1,0}(\mathbb{P}^r,d))$ has non-vanishing odd cohomology is $d=11.$ In this case, $H^{123}(\Mtil_{1,0}(\mathbb{P}^r,11))\neq 0$ is the first non-vanishing odd cohomological degree. The lowest odd cohomological degree $k$ with  $H^k(\Mtil_{1,0}(\mathbb{P}^r,d))$ non-vanishing is realised by $k = 13$ when $d \geq 66.$
\end{example}

Using the fact that $\mathrm{gr}^W_2 H^1(\mathcal{M}_{1,n}(\PP^r,d))$ vanishes, we determine the rational Picard group of $\Mtil_{1,n}(\PP^r,d)$ as follows.

\begin{customcor}{F}[Proposition \ref{prop:Pic}]\label{cor:F}
 The second cohomology group $H^2(\Mtil_{1,n}(\mathbb{P}^r,d))$ has a basis given by $\Theta, H_{\hat{\mathcal{Q}}},$ and boundary divisors. The cycle class map $$A^1(\Mtil_{1,n}(\mathbb{P}^r,d))_{\mathbb{Q}}\to H^2(\Mtil_{1,n}(\mathbb{P}^r,d))$$ is an isomorphism.
\end{customcor}

It is natural to expect the basepoint relations in even homological degrees to admit lifts in the Chow group of $\Mtil_{1,n}(\PP^r,d).$ In that case, the cycle class map $A_{i}(\Mtil_{1,n}(\mathbb{P}^r,d))_{\mathbb{Q}}\to H^{2i}(\Mtil_{1,n}(\mathbb{P}^r,d))$ is an ismorphism for all $i>0.$

\subsection{Related work}
The strategy of understanding weight filtrations within a stratified space is far from being new. The ideas can be traced back to the earlier work of Arbarello--Cornalba \cite{ac} on $\Mbar_{g,n}$ and have been utilised to great effect by Petersen \cite{pet} on $\Mbar_{1,n}$ with its dual graph stratification. They also underly the recent breakthroughs on cohomology of $\M_{g,n}$ and $\Mbar_{g,n}$ by Bergström--Faber--Payne \cite{bfp}, Canning--Larson \cite{clchow}, Canning--Larson--Payne \cite{clp, clp11}, and Canning--Larson--Payne--Willwacher \cite{clpw, clpw2, clpw25}.

Unlike the moduli of stable curves, the strata in the mapping spaces are in general non-trivial fibre products. This has been investigated by Oprea in the setting of genus zero stable maps to flag varieties \cite{opreaflag}.

The work combines the above perspectives and techniques with the recent study of combinatorics of genus one mapping spaces: this includes the plethystic structure involving stable maps \cite{genusonechar} and the explicit genus zero geometry of rational functions satisfying factorisation property \cite{vzdualcomplex} that is inspired by previous work of Farb--Wolfson \cite{farbwolf} on parametrised pointed genus zero maps.

Another important class of compactified mapping spaces is the moduli space of genus one stable quotients \cite{mop} $\overline{\mathcal{Q}}_{1,n}(\PP^r,d).$ In particular, as $\overline{\mathcal{Q}}_{1,0}(\PP^r,d)$ is smooth and proper, we expect that the technique and calculations in the present work are applicable. They will complement previous work of Y. Cooper \cite{Cooper2014} on its geometric and topological properties.

\subsection{Future directions}
Ongoing work of the author \cite{relations} returns to the generators and relations of $H^*(\Mbar_{0,n}(\mathbb{P}^r,d))$ using the dual graph stratification. The results obtained from this will complement the ones given by Mustata--Mustata \cite{mmchow,mminter,mmtaut} using wall-crossing techniques; comparing the two seems non-obvious and is worth pursuing.

Regarding the interior $\mathcal{M}_{1,n}(\mathbb{P}^r,d)$, the Betti numbers as well as the full weight filtration remain unknown: this work concerns the pure and off-by-one pieces, and the top weight cohomology is known to vanish from \cite{vzdualcomplex} using boundary complex techniques. On the other hand, results about the cohomology of $\Mtil_{1,n}(\PP^r,d)$ and their strata closures in this work could shed light on the graph complex that calculates the other weight graded pieces $\mathrm{gr}^W_k H_c^\star(\mathcal{M}_{1,n}(\PP^r,d)).$ The weight graded Euler characteristics of $\mathcal{M}_{1,n}(\PP^r,d )$ has been determined by the recent Serre characteristics calculations in \cite{virtualhodge}, which gives evidence towards degeree stabilisation in a range of cohomological degrees.

While the present results do not immediately yield closed formulas for the Poincaré polynomials of $\Mtil_{1,n}(\PP^r,d)$, ongoing work with S. Kannan aims to calculate their $S_n$-equivariant Grothendieck ring classes that specialises to Poincaré polynomials. 

Battistella--Carocci \cite{battistellacarocci} constructed a normal crossings compactification of $\mathcal{M}_{2,n}(\mathbb{P}^r,d)$ by combining the perspective of \cite{rspw} with the theory of genus two Gorenstein singularities and admissible covers. The techniques of this work, combined with Petersen's previous work on the cohomology of $\mathcal{M}_{2,n}$ \cite{PetA2, PetM2ct}, should lead to a description of the cohomology of their construction.

\subsection*{Acknowledgement} The author is grateful to his supervisor Dhruv Ranganathan for enlightening discussions and thanks Samir Canning and Siddarth Kannan for valuable comments on previous drafts. Thanks are also due to Alexis Aumonier, Sam Payne, Michele Pernice, Oscar Randal-Williams, and Ravi Vakil for helpful conversations and encouragement.

This work is supported by a Cambridge Trust international scholarship.

\subsection*{Convention} All (co)homology groups are taken with $\mathbb{Q}$-local systems and carry mixed Hodge structures unless stated otherwise. `Smooth' means smooth as a Deligne--Mumford stack, of which the coarse moduli space may acquire singularities. For $M$ a smooth DM stack of dimension $m,$ we use $[M]\in H_{2m}^{\mathrm{BM}}(M)$ to denote its fundamental class.

\section{Stratification of mapping spaces}\label{sec:graph}
\subsection{Dual graphs}
We recall the decorated dual graph stratification on \(\Mbar_{g,n}(\mathbb{P}^r,d)\).

\begin{definition}\cite[Definition 1.1]{vzdualcomplex}
  Let $g, n, d \geq 0$. 
  
  A $(g, n, d)$-graph is a tuple $\mathbf{G} = (G, w, \delta, m)$ where:
\begin{enumerate}
\item $G$ is a connected graph;
\item $w : V(\mathbf{G}) \to \mathbb{Z}_{\geq 0}$ is called the \textit{genus function};
\item $\delta: V(\mathbf{G}) \to \mathbb{Z}_{\geq 0}$ is called the \textit{degree function};
\item $m: \{1, \ldots, n\} \to V(\mathbf{G})$ is called the \textit{marking function}. 
\end{enumerate}
They are required to satisfy:
\begin{enumerate}[(1)]
\item $\dim_{\mathbb{Q}} H_1(G, \mathbb{Q}) + \sum_{v \in V(\mathbf{G})} w(v) = g$;
\item $\sum_{v \in V(\mathbf{G})} \delta(v) = d$.
\end{enumerate}
A $(g, n, d)$-graph is called \textit{stable} if for all vertices $v \in V(\mathbf{G})$ with $\delta(v) = 0$, we have
\[ 2 w(v) - 2 + \mathrm{val}(v) + |m^{-1}(v)| > 0, \]
where $\mathrm{val}(v)$ means the graph valence of $v$.

The automorphism group of a $(g,n,d)$-graph is a graph automorphism that commutes with the genus, degree, and marking decoration functions.

Given a $(g,n,d)$-graph $\mathbf{G}$ and an edge $e\in E(\mathbf{G}),$ we define $\mathbf{G}/e$ as a $(g,n,d)$-graph whose underlying graph is the quotient graph $\mathbf{G}/e,$ and whose decorations are defined as: \begin{enumerate}
    \item If $e$ is not a loop, the two vertices $v_1$ and $v_2$ incident to $e$ are combined into a new vertex $v'$. Then $$\delta(v') = \delta(v_1) + \delta(v_2), w(v') = w(v_1) + w(v_2),$$ and $m^{-1}(v') = m^{-1}(v_1) \cup m^{-1}(v_2)$.
    \item If $e$ is a loop, we increase $w$ on the vertex supporting $e$ by $1.$ The degree and marking functions are retained.
\end{enumerate}
The resulting $(g,n,d)$-graph $\mathbf{G}/e$ is denoted as the edge contraction of $\mathbf{G}$ by the edge $e.$

The set of $(g,n,d)$-graphs forms a category with morphisms generated by isomorphisms and edge contractions.
\end{definition}

\begin{definition}
  Let $\mathbf{G}$ be a $(g,n,d)$-graph. Given a (naive) subgraph $\mathbf{G}'$ of $\mathbf{G}$, its enhancement $\hat{\mathbf{G}}'$ is the dual graph with genus and marking decorations restricted from that of $\mathbf{G},$ together with additional legs in bijection to edges connecting vertices in $\mathbf{G}'$ to vertices outside $\mathbf{G}'.$ We denote the set of additional legs as $L'(\mathbf{G}').$
\end{definition}

In the following, `subgraphs' will always mean their enhancements.

\begin{definition}
  Let $\mathbf{G}$ be a $(1,n,d)$-graph. The \emph{core} of $\mathbf{G},$ denoted as  $\mathbf{G}^{c},$ is the minimal subgraph of $\mathbf{G}$ that has genus one.

  There is a poset structure on $V(\mathbf{G})\setminus V(\mathbf{G}^c)$ where $f_1\leq f_2$ if the unique path\footnote{Notice that contracting the core of $\mathbf{G}$ gives a rooted tree, so the path is indeed unique.} from $f_2$ to $\mathbf{G}^{c}$ passes through $f_1$. We extend the poset structure to $V(\mathbf{G})$ by declaring that all elements in $V(\mathbf{G}^c)$ are tied as the unique minimum.
\end{definition}

Centrally aligned dual graphs are $(1,n,d)$-graphs with a partially defined ordering on their vertices. The locally closed strata of the normal crossings boundary $\Mtil_{1,n}(\mathbb{P}^r,d)\setminus \mathcal{M}_{1,n}(\mathbb{P}^r,d)$ are labeled by centrally aligned $(1,n,d)$-graphs. The definitions below follow \cite{rspw} and are adapted from the exposition in \cite{vzdualcomplex}, which primarily focuses on the closely related and finer stratification of \emph{radially} aligned $(1,n,d)$-graphs.

\begin{definition}\label{defn:centrgrph}
  A centrally aligned $(1,n,d)$-graph is a $(1,n,d)$-graph that has a core with positive total degree, or a $(1,n,d)$-graph $\mathbf{G}$ where the core has total degree zero, and the data of a connected subgraph $\mathbf{G}_\rho$ of $\mathbf{G}$ with a level map $\rho: V(\mathbf{G}_\rho)\to \{0,\dots, |\rho|\}$. The pair $(\mathbf{G}_\rho, \rho)$ are required to satisfy the following conditions:
\begin{enumerate}
  \item the subgraph $\mathbf{G}_\rho$ contains the core,
  \item the pre-image $\rho^{-1}(0)$ is equal to the core,
  \item in $V(\mathbf{G}_\rho),$ only $\rho^{-1}(|\rho|)$ has positive degree vertices, and their degrees sum to greater than one,
  \item the map $\rho$ is a map of posets, where the poset structure on $V(\mathbf{G}_\rho)$ is restricted from that of $V(\mathbf{G}).$
\end{enumerate}
  
  An isomorphism of centrally aligned $(1,n,d)$-graphs is an isomorphism of the underlying $(1,n,d)$-graphs that commutes with the alignment functions.
\end{definition}

\begin{remark}
  The central alignment defined above is the same as \cite[§4.6]{rspw} with $\delta$ equal to the contraction radius of the $(1,n,d)$-dual graph.
\end{remark}

\begin{example}
  The figure below shows a centrally aligned $(1,n,d)$-graph with $n=5$ and $d=4.$ The core has total degree zero, and the only the contraction radius vertices are assigned the alignment function $\rho$.
\end{example}

\begin{figure}[H]
  \centering
  \tikzset{every picture/.style={line width=0.75pt}} 

\begin{tikzpicture}[x=0.75pt,y=0.75pt,yscale=-1,xscale=1]

\draw    (125.11,82.27) -- (125.11,118.56) ;
\draw  [fill={rgb, 255:red, 0; green, 0; blue, 0 }  ,fill opacity=1 ] (120.41,119.7) .. controls (120.41,116.86) and (122.72,114.55) .. (125.56,114.55) .. controls (128.41,114.55) and (130.72,116.86) .. (130.72,119.7) .. controls (130.72,122.55) and (128.41,124.86) .. (125.56,124.86) .. controls (122.72,124.86) and (120.41,122.55) .. (120.41,119.7) -- cycle ;
\draw    (96.54,60.7) -- (81.52,67.16) ;
\draw    (96.54,59.86) -- (82.1,55.22) ;
\draw    (114.85,134.5) -- (125.56,119.7) ;
\draw    (125.56,119.7) -- (135.85,134.5) ;
\draw    (125.11,82.27) -- (200.77,102.77) ;
\draw  [fill={rgb, 255:red, 0; green, 0; blue, 0 }  ,fill opacity=1 ] (195.61,102.77) .. controls (195.61,99.92) and (197.92,97.62) .. (200.77,97.62) .. controls (203.62,97.62) and (205.93,99.92) .. (205.93,102.77) .. controls (205.93,105.62) and (203.62,107.93) .. (200.77,107.93) .. controls (197.92,107.93) and (195.61,105.62) .. (195.61,102.77) -- cycle ;
\draw    (133.38,47.56) -- (247.77,46.77) ;
\draw  [fill={rgb, 255:red, 0; green, 0; blue, 0 }  ,fill opacity=1 ] (242.61,46.77) .. controls (242.61,43.92) and (244.92,41.62) .. (247.77,41.62) .. controls (250.62,41.62) and (252.93,43.92) .. (252.93,46.77) .. controls (252.93,49.62) and (250.62,51.93) .. (247.77,51.93) .. controls (244.92,51.93) and (242.61,49.62) .. (242.61,46.77) -- cycle ;
\draw    (248.68,46.8) -- (287.59,46.8) ;
\draw  [fill={rgb, 255:red, 0; green, 0; blue, 0 }  ,fill opacity=1 ] (282.43,46.8) .. controls (282.43,43.95) and (284.74,41.64) .. (287.59,41.64) .. controls (290.44,41.64) and (292.75,43.95) .. (292.75,46.8) .. controls (292.75,49.65) and (290.44,51.96) .. (287.59,51.96) .. controls (284.74,51.96) and (282.43,49.65) .. (282.43,46.8) -- cycle ;
\draw    (237.49,31.98) -- (247.77,46.77) ;
\draw  [fill={rgb, 255:red, 0; green, 0; blue, 0 }  ,fill opacity=1 ] (91.38,59.86) .. controls (91.38,57.01) and (93.69,54.7) .. (96.54,54.7) .. controls (99.39,54.7) and (101.7,57.01) .. (101.7,59.86) .. controls (101.7,62.71) and (99.39,65.02) .. (96.54,65.02) .. controls (93.69,65.02) and (91.38,62.71) .. (91.38,59.86) -- cycle ;
\draw  [fill={rgb, 255:red, 0; green, 0; blue, 0 }  ,fill opacity=1 ] (119.95,82.27) .. controls (119.95,79.42) and (122.26,77.11) .. (125.11,77.11) .. controls (127.96,77.11) and (130.27,79.42) .. (130.27,82.27) .. controls (130.27,85.12) and (127.96,87.43) .. (125.11,87.43) .. controls (122.26,87.43) and (119.95,85.12) .. (119.95,82.27) -- cycle ;
\draw    (96.54,59.86) -- (125.11,82.27) ;
\draw    (96.54,59.86) -- (133.38,47.56) ;
\draw  [fill={rgb, 255:red, 0; green, 0; blue, 0 }  ,fill opacity=1 ] (128.22,47.56) .. controls (128.22,44.71) and (130.53,42.4) .. (133.38,42.4) .. controls (136.23,42.4) and (138.54,44.71) .. (138.54,47.56) .. controls (138.54,50.41) and (136.23,52.72) .. (133.38,52.72) .. controls (130.53,52.72) and (128.22,50.41) .. (128.22,47.56) -- cycle ;
\draw    (133.38,47.56) -- (125.11,82.27) ;
\draw  [dash pattern={on 4.5pt off 4.5pt}] (59.24,24.92) -- (148.87,24.92) -- (148.87,93.93) -- (59.24,93.93) -- cycle ;
\draw  [dash pattern={on 4.5pt off 4.5pt}] (59.24,24.92) -- (178.73,24.92) -- (178.73,152.07) -- (59.24,152.07) -- cycle ;
\draw  [dash pattern={on 4.5pt off 4.5pt}] (59.24,24.92) -- (218.73,24.92) -- (218.73,152.07) -- (59.24,152.07) -- cycle ;

\draw (61,45.26) node [anchor=north west][inner sep=0.75pt]  [font=\small]  {$m_{1}$};
\draw (61.56,65.5) node [anchor=north west][inner sep=0.75pt]  [font=\small]  {$m_{2}$};
\draw (107.01,77.38) node [anchor=north west][inner sep=0.75pt]  [font=\small]  {$0$};
\draw (107.9,111.48) node [anchor=north west][inner sep=0.75pt]  [font=\small]  {$0$};
\draw (100,134.77) node [anchor=north west][inner sep=0.75pt]  [font=\small]  {$m_{3}$};
\draw (131,135.4) node [anchor=north west][inner sep=0.75pt]  [font=\small]  {$m_{4}$};
\draw (242.61,55.17) node [anchor=north west][inner sep=0.75pt]  [font=\small]  {$1$};
\draw (195.9,111.48) node [anchor=north west][inner sep=0.75pt]  [font=\small]  {$2$};
\draw (283.9,56.48) node [anchor=north west][inner sep=0.75pt]  [font=\small]  {$1$};
\draw (224,20.4) node [anchor=north west][inner sep=0.75pt]  [font=\small]  {$m_{5}$};
\draw (164.68,162.21) node [anchor=north west][inner sep=0.75pt]    {$({\mathbf{G}} ,\ \rho )$};
\draw (87.9,39.48) node [anchor=north west][inner sep=0.75pt]  [font=\small]  {$0$};
\draw (128.01,26.38) node [anchor=north west][inner sep=0.75pt]  [font=\small]  {$0$};
\draw (14,52.4) node [anchor=north west][inner sep=0.75pt]  [font=\small]  {$\rho =0$};
\draw (14,110.4) node [anchor=north west][inner sep=0.75pt]  [font=\small]  {$\rho =1$};
\draw (179,6.4) node [anchor=north west][inner sep=0.75pt]  [font=\small]  {$\rho =2$};
\end{tikzpicture}
\end{figure}

We require edge contractions among centrally aligned $(1,n,d)$-graphs to be compatible with their levels.

\begin{definition}
 Let $(\mathbf{G},\rho)$ be a centrally aligned $(1,n,d)$-graph. Given an integer $i \in \{1, \ldots, |\rho|\}$, we define the \textit{radial merge} of $(\mathbf{G}, \rho)$ along $i,$ denoted as $(\mathbf{G}_{\setminus i}, \rho_{\setminus i})$ as follows:
\begin{enumerate}
  \item post-compose $\rho$ with the surjection $\{0, \ldots, k\} \to \{0, \ldots, k - 1\}$ which decreases all $j \geq i$ by $1$;
  \item whenever $v, w \in V(\mathbf{G})$ with $v \in f^{-1}(i - 1)$ and $w \in f^{-1}(i)$, such that there is an edge $e$ between $v$ and $w$, perform the edge contraction of $e$ as for $(1,n,d)$-graphs.
\end{enumerate}

The set of centrally aligned $(1,n,d)$-graphs forms a category in which the morphisms are generated by isomorphisms, radial merges, and contractions of edges in $(\mathbf{G},\rho)$ outside of $\mathbf{G}_{\rho}$ as well as edges connecting vertices in $\rho^{-1}(0).$
\end{definition}

\begin{definition}\label{defn:coarep}
  The rational tail subgraphs of a centrally aligned $(1,n,d)$-graph $(\mathbf{G}, \rho)$ are the maximal connected subgraphs of $\mathbf{G}$ that do not contain any vertex in $\mathbf{G}_\rho.$ The coarse representative of $(\mathbf{G}, \rho)$ is the centrally aligned $(1,n,d)$-graph obtained by contracting each rational tail subgraph to a single vertex.
\end{definition}

\begin{definition}\label{defn:coarcl}
  Two centrally aligned $(1,n,d)$-graphs are \emph{coarsely equivalent} if they have isomorphic coarse representatives and extend this to an equivalence relation on the set. We use $[\mathbf{G}, \rho]$ to denote the equivalence class of $(\mathbf{G}, \rho).$ The morphisms among centrally aligned $(1,n,d)$-graphs induce morphisms on their coarse equivalence classes. In particular, an automorphism of a coarse class agrees with that of its coarse representative as a centrally aligned graph.
\end{definition}

\subsection{Graph strata of $\Mtil_{1,n}(\PP^r,d)$}
We now describe the \emph{coarse} strata $\Mtil_{(\mathbf{G}, \rho)}$ associated to coarse equivalence classes of centrally aligned dual graphs. Each stratum is a fibre product of: \begin{enumerate}
  \item a stratum in the moduli space of centrally aligned genus one curves $\Mbar_{1,n}^{\mathrm{cen}}$ \cite[§3]{rspw},
  \item a collection of genus zero maps satisfying the factorisation property \cite[Definition 4.1]{rspw},
  \item pointed genus zero stable maps.
\end{enumerate}

The factorisation property is defined in loc. cit. in terms of elliptic singularities and presented in later work \cite[§2.4]{bnr}, \cite[Lemma 3.10]{vzdualcomplex} as a linear dependency condition on certain derivatives of the genus zero maps which we will explain.

\begin{definition}
  Let $$\Mbar_{0,n}^*(\PP^r,d)\subset \Mbar_{0,n+1}(\PP^r,d),\mathcal{M}_{0,n}(\PP^r,d)\subset \M_{0,n+1}(\PP^r,d)$$ be the fibres over $p\in \mathbb{P}^r$ of the evaluation maps $$\mathrm{ev}_{n+1}: \Mbar_{0,n+1}(\PP^r,d)\to \PP^r, \M_{0,n+1}(\PP^r,d)\to \PP^r$$
  which are Zariski locally trivial fibrations.
\end{definition}

\begin{definition}
  Let $\boldsymbol{\delta}\in \mathbb{Z}_{\geq 0}^\ell,\boldsymbol{m}\in \mathbb{Z}_{\geq 0}^\ell$ be a tuple of degrees and (numbers of) marked points. Define $\mathbb{P}^{\circ}\mathcal{V}_{(\boldsymbol{\delta}, \boldsymbol{m})}$ be the $\mathbb{G}_m^{\ell-1}$-torsor over $\prod_{i=1}^\ell \M_{0,m_i}^*(\PP^r,\delta_i)$ recording a collection of tangent vectors $[v_1,\dots, v_\ell]$ at the frozen marked point (namely $p_{n+1}$) up to rescaling.
  
  We define the factorisation mapping space as \[\mathcal{M}^{\mathbf{F}}_{(\boldsymbol{\delta}, \boldsymbol{m})}:=\{((f_1,\dots,f_\ell), [v_1,\dots, v_\ell])\in \mathbb{P}^{\circ}\mathcal{V}_{(\boldsymbol{\delta}, \boldsymbol{m})}\mid \sum_{i=1}^\ell \partial f_i(v_i) = 0\in T_p\PP^r\}.\] This is a closed subscheme of $\mathcal{M}^{\mathbf{F}}_{(\boldsymbol{\delta}, \boldsymbol{m})}.$
\end{definition}

\begin{definition}
  Let $(\mathbf{G}, \rho)$ be a centrally aligned $(1,n,d)$-graph where the core has total degree zero. Its \emph{contraction core}, denoted as $\mathbf{G}^{cc},$ is the maximal subgraph with vertices in $V_\rho(\mathbf{G})\setminus \rho^{-1}(|\rho|)$: they all have degree zero. The restriction of $\rho$ to $\mathbf{G}^{cc}$ gives the pair $(\mathbf{G}^{cc}, \rho|_{\mathbf{G}^{cc}})$ the structure of a centrally aligned genus one tropical curve \cite[§3.3]{rspw}. Let $$\Mtil_{(\mathbf{G}^{cc}, \rho|_{\mathbf{G}^{cc}})}\subset \Mbar_{1,|L'(\mathbf{G}^{cc})|}^{\mathrm{cen}}$$ be the locally closed stratum associated to the pair. For brevity, we may use notation $\Mtil_{\mathbf{G}^{cc}}$ when there is no risk of confusion.
\end{definition}

\begin{remark}
  The $\Mbar_{1,n}^{\mathrm{cen}}\to \Mbar_{1,n}$ is a sequence of weighted blow-ups along boundary strata closures of $\Mbar_{1,n}$ \cite[Prop 3.3.4]{rspw}. The stratum $\Mtil_{\mathbf{G}^{cc}}$ is the total space of a torus fibre bundle over the stable curves stratum $\M_{\mathbf{G}^{cc}}\subset \Mbar_{1,n}.$
\end{remark}

  Let $[\mathbf{G}, \rho]$ be a coarse equivalence class of centrally aligned $(1,n,d)$-graph such that its core has total degree zero, and denote its associated stratum as $\Mtil_{[\mathbf{G}, \rho]}.$
  
  \begin{definition}Let $$\Mtil^{\mathrm{ord}}_{[\mathbf{G}, \rho]}\to \Mtil_{[\mathbf{G}, \rho]}$$ be the finite cover that labels the vertices on $\rho^{-1}(|\rho|)$ and the rational tail subgraphs of $[\mathbf{G}, \rho].$ We have $\Mtil^{\mathrm{ord}}_{[\mathbf{G}, \rho]}/\mathrm{Aut}([\mathbf{G}, \rho])\cong \Mtil_{[\mathbf{G}, \rho]}.$ Let $(\boldsymbol{\delta}^{(r)}_{[\mathbf{G}, \rho]},\boldsymbol{m}^{(r)}_{[\mathbf{G}, \rho]})$ be the tuples of degrees and markings of the vertices on the contraction radius $\rho^{-1}(|\rho|).$ Let $(\boldsymbol{\delta}_{[\mathbf{G}, \rho]}, \boldsymbol{m}_{[\mathbf{G}, \rho]})$ be the tuples of degrees and markings of the rational tails subgraphs.
  \end{definition}

  \begin{lemma}\label{lem:kunneth}
    The Borel--Moore homology of $\Mtil^{\mathrm{ord}}_{[\mathbf{G}, \rho]}$ has the Künneth formula $$H_\star^{\mathrm{BM}}(\Mtil^{\mathrm{ord}}_{[\mathbf{G}, \rho]})\cong H_\star(\mathbb{P}^r)\otimes H_\star^{\mathrm{BM}}(\Mtil_{\mathbf{G}^{cc}})\otimes H_\star^{\mathrm{BM}}(\mathcal{M}^{\mathbf{F}}_{(\boldsymbol{\delta}^{(r)}, \boldsymbol{m}^{(r)})})\otimes H_\star\left(\prod_{j=1}^{N_{[\mathbf{G}, \rho]}}\Mbar_{0,m_i}^* (\mathbb{P}^r,\delta_i)\right).$$ Taking finite group quotient, $$H_\star^{\mathrm{BM}}(\Mtil^{\mathrm{ord}}_{[\mathbf{G}, \rho]})_{\mathrm{Aut}[\mathbf{G},\rho]} = H_\star^{\mathrm{BM}}(\Mtil_{[\mathbf{G}, \rho]}).$$
  \end{lemma}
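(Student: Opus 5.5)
The plan is to realise $\Mtil^{\mathrm{ord}}_{[\mathbf{G},\rho]}$ as an honest product, or as a fibre bundle whose monodromy acts trivially on homology, of the four factors in the formula, and then apply the Künneth formula for Borel--Moore homology.

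\textbf{Step 1: the map to the central alignment stratum.} Once the vertices on $\rho^{-1}(|\rho|)$ and the rational tails are labelled, forgetting the map and stabilising the contraction core gives a morphism
\[
\Mtil^{\mathrm{ord}}_{[\mathbf{G},\rho]}\to \Mtil_{\mathbf{G}^{cc}}\times \PP^r .
\]
The second component records the common image point $p$ of the contracted core. This uses that the contraction core and the whole core map to a single point. By the modular description of \cite{rspw} (maps from centrally aligned curves satisfying the factorisation condition), the fibre over a point $(C^{cc},\rho,p)$ consists of two pieces of data.

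The first piece is the tuple of genus zero maps on the level-$|\rho|$ vertices, each pointed at $p$. These come with tangent vectors at the attaching points, up to a common rescaling, and satisfy $\sum_i \partial f_i(v_i)=0$. The common rescaling is exactly the $\mathbb{G}_m$-ambiguity of the outer level's torus coordinate in $\Mbar^{\mathrm{cen}}_{1,n}$. Since $(C^{cc},\rho)$ already records the torus coordinates up to the last level, I expect the data in this piece to be precisely a point of $\mathcal{M}^{\mathbf{F}}_{(\boldsymbol{\delta}^{(r)},\boldsymbol{m}^{(r)})}$.

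The second piece is the rational tails. Because of the coarse equivalence (Definition \ref{defn:coarcl}), each tail is an arbitrary pointed genus zero stable map attached at a fixed point of the image. This gives a factor $\Mbar^*_{0,m_j}(\PP^r,\delta_j)$, with the frozen point lying over the evaluation of the attaching point. One subtlety must be checked here: the attachment point of a tail lies on a level-$|\rho|$ vertex or on the core, so its image varies. To handle this, I would use that $\mathrm{ev}:\Mbar_{0,m+1}(\PP^r,\delta)\to\PP^r$ is $\mathrm{PGL}_{r+1}$-equivariant, hence a Zariski locally trivial fibration with fibre $\Mbar^*_{0,m}$. This gives a fibre bundle rather than a product.

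\textbf{Step 2: trivialising the bundles.} I would use the $\mathrm{PGL}_{r+1}$-action on $\PP^r$ to trivialise everything over $\PP^r$, writing the factorisation space and the evaluation fibres relative to the fixed base point $p$. Equivalently, $\Mtil^{\mathrm{ord}}_{[\mathbf{G},\rho]}\cong \mathrm{PGL}_{r+1}\times^{P} (\text{fibre over }p)$, where $P$ is the stabiliser of $p$. Since $P$ is connected, its action on the rational (Borel--Moore) homology of the fibre is trivial. The Leray--Hirsch spectral sequence over $\PP^r$ therefore has trivial local systems. It degenerates because $\PP^r$ has only even, pure cohomology and the fibre homologies carry compatible mixed Hodge structures. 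Alternatively, one can use the cellular decomposition of $\PP^r$ and induct.

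The tail bundles over the positive-dimensional parameter spaces of attaching points are handled the same way. The tail fibres $\Mbar^*_{0,m}(\PP^r,\delta)$ are smooth and proper with pure Hodge--Tate cohomology, so the relevant Leray spectral sequences degenerate by weights. The monodromy is again trivial because the structure group is connected.

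\textbf{Step 3: the quotient.} The final statement follows because $\Mtil^{\mathrm{ord}}\to\Mtil$ is the quotient by the finite group $\mathrm{Aut}[\mathbf{G},\rho]$. With $\mathbb{Q}$-coefficients, Borel--Moore homology of a quotient stack by a finite group equals the coinvariants (equivalently the invariants) of the group action. This follows from the transfer argument.

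\textbf{Main obstacle.} I expect the main difficulty to be Step 1: identifying the fibre exactly with $\mathcal{M}^{\mathbf{F}}$, including the correct $\mathbb{G}_m$ bookkeeping between the torus bundle $\Mtil_{\mathbf{G}^{cc}}\to\M_{\mathbf{G}^{cc}}$ and the projectivised tangent vectors. This is where the precise form of the factorisation property from \cite[Lemma 3.10]{vzdualcomplex} must be matched with \cite[§4]{rspw}.
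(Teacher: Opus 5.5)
Your proposal follows the same route as the paper. The paper's proof only asserts that restricting to the contraction core, the contraction radius and the rational tails exhibits $\Mtil^{\mathrm{ord}}_{[\mathbf{G},\rho]}$ as an iterated Zariski locally trivial fibration, obtained by base change from evaluation maps, and then reads off the Künneth formula. Your Step 3 is fine. The rational-tail part of Step 2 is also fine, provided you replace ``degenerates by weights'' by Deligne's degeneration theorem for the smooth projective base changes of $\mathrm{ev}\colon\Mbar_{0,m+1}(\PP^r,\delta)\to\PP^r$ (or by Leray--Hirsch with classes pulled back from $\Mbar_{0,m+1}(\PP^r,\delta)$).

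The genuine gap is your claim that the Leray spectral sequence of $\mathrm{PGL}_{r+1}\times^{P}F\to\PP^r$ degenerates because the monodromy is trivial and $H^\star(\PP^r)$ is pure and even. When $F$ has mixed cohomology this is false. For instance, $\mathrm{PGL}_2=\mathrm{PGL}_2\times^{B}B\to\PP^1$ is Zariski locally trivial with connected structure group and fibre $B\simeq\mathbb{C}^\star$. Yet $d_2\colon H^1(B)\to H^2(\PP^1)$ is an isomorphism of weight-two structures $\mathbb{Q}(-1)$, and $H^\star(\mathrm{PGL}_2)\cong H^\star(S^3)$. Your cellular alternative fails for the same reason: the boundary map between the two Bruhat cells is non-zero. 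Zariski local triviality, which is all the paper's proof invokes, does not close this gap either.

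This is not just a missing justification: the transgression is non-zero in the moduli problem itself. Take $n=2$, $d=2$, and the stratum where a degree-zero genus one vertex is joined to a single level-one vertex of degree $2$ carrying both markings.
\begin{itemize}
\item Here $\Mtil_{\mathbf{G}^{cc}}=\M_{1,1}$, there are no automorphisms, and $\mathcal{M}^{\mathbf{F}}\cong\mathrm{Map}^{*,0}_2(\PP^1,\PP^r)$ has the cohomology of $S^{2r-1}$ by Lemma \ref{lem:mapwdelta}.
\item Letting $p$ vary, the stratum is $\M_{1,1}\times X$ with $X=\{f\in\mathrm{Map}_2(\PP^1,\PP^r)\mid df_\infty=0\}$.
\item Every such $f$ is a double cover of a line ramified at $\infty$. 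Hence $f=\phi\circ q_a$ uniquely, with $q_a(t)=(t-a)^2$ and $\phi$ a linear embedding, so $X\cong\mathbb{A}^1\times\mathrm{Map}_1(\PP^1,\PP^r)$.
\item The map $\phi\mapsto d\phi_\infty$ makes $\mathrm{Map}_1(\PP^1,\PP^r)$ an $\mathbb{A}^1$-bundle over $T\PP^r\setminus 0$.
\item The Euler class $c_r(T\PP^r)=(r+1)h^r\neq 0$ is, up to a non-zero scalar, $d_{2r}(\beta)$. Therefore $H^{2r-1}(X)=H^{2r}(X)=0$.
\end{itemize}
The claimed formula, read Poincaré dually, predicts $\mathbb{Q}(-r)$ in both of these degrees, spanned by $\beta$ and $h^r$. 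So the formula fails for this stratum, even on its pure and off-by-one weight pieces.

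Step 2 therefore cannot be repaired as it stands. You would need to compute the transgressions of the basepoint classes $\beta_v$ over $\PP^r$, that is, decide whether they extend $P$-equivariantly. In the example above the obstruction is the $P$-equivariant class of the basepoint locus, a non-zero multiple of $c_r^P(T_p\PP^r)$. The statement itself must be corrected wherever these transgressions are non-zero. The $\mathbb{G}_m$ bookkeeping in Step 1, which you flagged as the main obstacle, is not where the argument breaks.
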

  \begin{proof}
    Restricting maps parametrised by $\Mtil^{\mathrm{ord}}_{[\mathbf{G}, \rho]}$ to the contraction core, vertices on the contraction radius $|\rho|$ and the rational tail subgraphs exhibits $\Mtil^{\mathrm{ord}}_{[\mathbf{G}, \rho]}$ as a total space of a iterated Zariski locally trivial fibration with fibres as tensor product summands shown above. The fibrations are Zariski locally trivial because they are base changes of $\mathrm{ev}_{n+1}: \Mbar_{0,n+1}(\PP^r,d)\to \PP^r$ and $\M_{0,n+1}(\PP^r,d)\to \PP^r.$
  \end{proof}

\begin{remark}
  When the core has positive total degree, the stratum $\Mtil_{[\mathbf{G},\rho]}$ agrees with the associated stratum in $\Mbar_{1,n}(\PP^r,d).$ Let $\mathcal{M}_{\mathbf{G}^{c}}\subset \Mbar_{1,n'_{\mathbf{G}^{c}}}(\mathbb{P}^r,\delta_{\mathbf{G}^{c}})$ be the genus one stable map stratum specified by the subgraph $\mathbf{G}^c,$ and let $(\boldsymbol{\delta}_{[\mathbf{G}, \rho]}, \boldsymbol{m}_{[\mathbf{G}, \rho]})$ be the tuples of degrees and markings of the rational tails subgraphs.
  
  In this case, we have the similar formula $$H_\star^{\mathrm{BM}}(\Mtil_{[\mathbf{G},\rho]})\cong \left(H^\star(\mathcal{M}_{\mathbf{G}^{c}})\otimes H^\star(\prod_{j=1}^{N_{[\mathbf{G},\rho]}}\Mbar_{0,m_i}^* (\mathbb{P}^r,\delta_i))\right)_{\mathrm{Aut}[\mathbf{G},\rho]}.$$
\end{remark}

The Künneth formulas are compatible with weight filtrations. This reduces the pure and off-by-one weight pieces of $H_\star^{\mathrm{BM}}(\mathcal{M}_{[\mathbf{G},\rho]})$ to those of its tensor product summands.

\section{Building blocks}\label{sec:build}
In this section, we determine the pure and off-by-one weight graded pieces of the Borel--Moore homology groups of the mapping spaces from each vertex or level of centrally aligned $(1,n,d)$-graphs. Because all the mapping spaces we encounter are smooth, it is often convenient to determine their singular cohomology and apply Poincaré duality.

\subsection{Genus zero maps with factorisation}\label{sec:gen0fac}
The weight graded pieces of $H_\star^{\mathrm{BM}}(\mathcal{M}^{\mathbf{F}}_{(\boldsymbol{\delta}, \boldsymbol{m})})$ are Poincaré dual to those of $H^\star(\mathcal{M}^{\mathbf{F}}_{(\boldsymbol{\delta}, \boldsymbol{m})}),$ which we describe via the concrete geometry of \emph{parametrised} mapping spaces.

\begin{definition}
  Let $\mathrm{Map}_{d}^{*}(\mathbb{P}^1, \mathbb{P}^r)=\{f: \mathbb{P}^1\to \mathbb{P}^r\mid f(\infty) = [1:1:\cdots:1]=:p\in \mathbb{P}^r\}$ be the space of parametrised pointed degree-$d$ maps from $\mathbb{P}^1$ to $\mathbb{P}^r.$ It is identified with the space of basepoint free polynomials as an open subset of $\mathbb{A}^{(r+1)d}$: $$\{(f_0(t),\cdots, f_r(t))\in \mathbb{A}^{(r+1)d}\mid f_i(t)\in \mathbb{C}[t]\text{ monic of degree }d, \bigcap_{i=0}^r f_i^{-1}(0) = \varnothing\}.$$ 
\end{definition}

\begin{definition}\label{defn:MapF}
Let $\boldsymbol{\delta}\in \mathbb{Z}_{\geq 0}^k$ be a degree vector. Define $$\mathrm{Map}_{\boldsymbol{\delta}}^{*, \mathsf{F}}(\mathbb{P}^1, \mathbb{P}^r)\subset \prod_{i=1}^k \mathrm{Map}_{\boldsymbol{\delta}_i}^{*}(\mathbb{P}^1, \mathbb{P}^r)$$ as the subspace consisting of tuples of parametrised pointed maps such that the images of tangent lines \(d_\infty^{(i)}f_i: T_{\infty} \mathbb{P}^1\to T_p \mathbb{P}^r\) satisfies some non-vanishing linear dependency: for any choice of basis vectors \(v^{(i)}\in T_\infty \mathbb{P}^1\), there exists \((\alpha_1,\dots, \alpha_k)\in (\mathbb{C}^\star)^{k}\) such that \(\sum_{i=1}^k \alpha_i d_\infty^{(i)}f_i(v^{(i)})=0.\)

Let \[\widetilde{\mathrm{Map}}_{\boldsymbol{\delta}}^{*, \mathsf{F}}(\mathbb{P}^1, \mathbb{P}^r)\to \mathrm{Map}_{\boldsymbol{\delta}}^{*, \mathsf{F}}(\mathbb{P}^1, \mathbb{P}^r)\] parametrise in addition the data of the non-vanishing linear dependency itself up to common rescaling. Over a tuple of maps $(f_0,\dots, f_r)\in \mathrm{Map}_{\boldsymbol{\delta}}^{*, \mathsf{F}}(\mathbb{P}^1, \mathbb{P}^r),$ the fibre is $$\{[\alpha_1:\dots:\alpha_k]\in (\mathbb{C}^\star)^{k}/(\mathbb{C}^\star\cdot \mathrm{Id})\mid \sum_{i=1}^k \alpha_i d_\infty^{(i)}f_i(v^{(i)})=0\}.$$
\end{definition}

\begin{remark}\label{rem:parunpar}
  Up to permutation of the entries, the mapping space $\mathcal{M}^{\mathbf{F}}_{(\boldsymbol{\delta}^{(r)}, \boldsymbol{m}^{(r)})}$ is related to their corresponding parametrised mapping spaces in the following standard way:
  \begin{enumerate}
    \item for $\boldsymbol{m}_i^{(r)} = 0,1,$ $\mathcal{M}^{\mathbf{F}}_{(\boldsymbol{\delta}^{(r)}, \boldsymbol{m}^{(r)})}$ is the quotient of the parametrised mapping space by $\mathrm{Aut}(\mathbb{P}^1, *)$ (a Borel subgroup of $\mathrm{PGL}_2(\mathbb{C})$) or $\mathbb{C}^\star$ respectively, which acts on parametrised maps from the corresponding univalent or bivalent component;
    \item for $\boldsymbol{m}^{(r)}_i\geq 2$,  $\mathcal{M}^{\mathbf{F}}_{(\boldsymbol{\delta}^{(r)}, \boldsymbol{m}^{(r)})}$ is non-canonically isomorphic to the product of the parametrised mapping space with an ordered configuration space\footnote{The numerics are one off from the standard isomorphism $\mathrm{Map}_d (\mathbb{P}^1,\mathbb{P}^r)\cong \mathcal{M}_{0,3}(\mathbb{P}^r,d)$ because of the presence of a frozen marked point.} $\mathrm{Conf}^{\boldsymbol{m}^{(r)}-2}(\mathbb{P}^1\setminus \{0,1,\infty\}).$
  \end{enumerate}
\end{remark}

\subsubsection{Pointed maps with prescribed tangent lines}\label{sec:prestan}
We start by studying pointed mapping spaces with a prescribed tangent line along the marked point. 

Via the open embedding $$\mathrm{Map}_{\boldsymbol{\delta}_i}^{*}(\mathbb{P}^1, \mathbb{P}^r)\subset \mathbb{A}^{\boldsymbol{\delta}_i(r+1)},$$ prescribing a tangent line is equivalent to taking an affine linear subspace in $\mathbb{A}^{\boldsymbol{\delta}_i(r+1)},$ where the coordinates are the polynomial coefficients. The affine coordinates specify a basis vector \(v\in T_\infty \mathbb{P}^1\), and taking the derivative at \(v\) gives the map $$d_\infty: \prod_{i=1}^k  \mathrm{Map}_{\boldsymbol{\delta}_i}^{*}(\mathbb{P}^1, \mathbb{P}^r)\to \prod_{i=1}^k T_p \mathbb{P}^r,$$ which is the product of maps $$d_\infty^{(i)}: \mathrm{Map}_{\boldsymbol{\delta}_i}^{*}(\mathbb{P}^1, \mathbb{P}^r)\to T_p \mathbb{P}^r.$$

An elementary calculation gives the following:

\begin{lemma}\cite[Lemma 4.1]{vzdualcomplex}\label{lem-derivative}
  The map \(d_\infty^{(i)}\) factors through \[\mathrm{Map}_{\boldsymbol{\delta}_i}^{*}(\mathbb{P}^1, \mathbb{P}^r)\to \mathbb{A}^{\boldsymbol{\delta}_i(r+1)}\xrightarrow{p_{\boldsymbol{\delta}_i-1}} \mathbb{A}^{r+1}\to T_p \mathbb{P}^r,\] where the projection \(p_{\boldsymbol{\delta}_i-1}\) records the \(z^{\boldsymbol{\delta}_i-1}\)-coefficient of the \((r+1)\)-tuple of polynomials, and the second map is the linear map \(\mathbb{C}^{r+1}\to \mathbb{C}^{r+1}/\mathbb{C}\cdot (1,1,\dots, 1)\cong T_p \mathbb{P}^r.\)
\end{lemma}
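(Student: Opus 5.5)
The plan is a direct computation in the affine chart at $\infty$. Write a point of $\mathrm{Map}_{\boldsymbol{\delta}_i}^{*}(\mathbb{P}^1,\mathbb{P}^r)$ as a tuple $(f_0,\dots,f_r)$ of monic degree $d=\boldsymbol{\delta}_i$ polynomials in $t$ with no common zero, so the map is $t\mapsto [f_0(t):\cdots:f_r(t)]$. Near $\infty$ use the coordinate $z=1/t$, in which $T_\infty\mathbb{P}^1$ has the basis vector $v=\partial_z$ fixed by the affine coordinates. Homogenising, the map becomes $z\mapsto [z^d f_0(1/z):\cdots:z^d f_r(1/z)]$. Writing $f_j(t)=t^d+a_{j,d-1}t^{d-1}+\cdots+a_{j,0}$, each entry is $1+a_{j,d-1}z+O(z^2)$. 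At $z=0$ this gives $p=[1:\cdots:1]$, which confirms the pointing condition.

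Next, compute the derivative. The curve $z\mapsto (1+a_{0,d-1}z+\cdots,\dots,1+a_{r,d-1}z+\cdots)$ in $\mathbb{A}^{r+1}\setminus 0$ lifts the map through the cone over $\mathbb{P}^r$. Its velocity at $z=0$ is $(a_{0,d-1},\dots,a_{r,d-1})$. The Euler sequence identifies $T_p\mathbb{P}^r\cong \mathbb{C}^{r+1}/\mathbb{C}\cdot(1,\dots,1)$, since the lift passes through $(1,\dots,1)$. Under this identification $d_\infty^{(i)}(v)$ is the image of $(a_{0,d-1},\dots,a_{r,d-1})$. That vector is exactly $p_{\boldsymbol{\delta}_i-1}$ applied to the coefficient vector, so we obtain the claimed factorisation, with the last map being the quotient projection.

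Finally, check independence of the choice of lift. Rescaling the homogeneous lift by a function $\lambda(z)$ with $\lambda(0)=1$ changes the velocity by $\lambda'(0)(1,\dots,1)$, which dies in the quotient. This also explains why the monic normalisation, rather than any particular scaling, suffices. The degenerate case $d=0$, where the map is constant, gives zero derivative and can be treated separately. No step is a real obstacle; the only care needed is matching sign and scaling conventions for $v$, which only rescale the linear map and do not affect the factorisation.
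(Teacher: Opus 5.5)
Your computation is correct and is exactly the ``elementary calculation'' the paper invokes without writing it out (it only cites Kannan--Song): expanding $z^d f_j(1/z) = 1 + a_{j,d-1}z + O(z^2)$ in the chart $z = 1/t$ at $\infty$, and reading off the velocity of the lift modulo $(1,\dots,1)$, gives the factorisation. One notational point: the statement uses $z$ for the polynomial variable, so its ``$z^{\boldsymbol{\delta}_i-1}$-coefficient'' is your $t^{d-1}$-coefficient $a_{j,d-1}$, not a coefficient in your chart coordinate $z=1/t$.
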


\begin{definition}
  Let \(w\in T_p \mathbb{P}^r,\) define \[\mathrm{Map}_{{\delta}}^{*, w}(\mathbb{P}^1, \mathbb{P}^r):=\{f\in \mathrm{Map}_{{\delta}}^{*}(\mathbb{P}^1, \mathbb{P}^r)\mid d_\infty(v) = w \}.\] When there is no risk of confusion, we may use $\mathrm{Map}_{{\delta}}^{*, w}$ to denote $\mathrm{Map}_{{\delta}}^{*, w}(\mathbb{P}^1, \mathbb{P}^r)$ for sake of brevity.
\end{definition}

\begin{remark}By Lemma \ref{lem-derivative}, \(\mathrm{Map}_{{\delta}}^{*, w}(\mathbb{P}^1, \mathbb{P}^r)\) is the intersection of the open subset $\mathrm{Map}_{{\delta}}^{*}(\mathbb{P}^1, \mathbb{P}^r)\subset \mathbb{A}^{\delta(r+1)}$ and the affine subspace $$L_{\delta}^{w}: = \{(f_0,\cdots, f_r)\mid \left[\left(p_{\delta-1}(f_i)\right)_{i=0}^r\right] = [w]\in \mathbb{C}^{r+1}/\langle(1,1,\cdots,1)\rangle\cong T_p \mathbb{P}^r\},$$ so it is in particular an open in an affine space isomorphic to $\mathbb{A}^{\delta(r+1)-r}.$\end{remark}

We calculate the cohomology of \(\mathrm{Map}_{{\delta}}^{*, w}(\mathbb{P}^1, \mathbb{P}^r)\) following the techniques in \cite{farbwolf} which calculated the cohomology of $\mathrm{Map}_{\delta}^* (\mathbb{P}^1, \mathbb{P}^r)$.

\begin{lemma}\label{lem:mapwdelta}
  When $\delta\geq 2$, for all $w$, $$
    H^i(\mathrm{Map}_{{\delta}}^{*, w})=\begin{cases*}
    \mathbb{Q}(-r), \hspace{5pt} i = 2r-1\\
    \mathbb{Q}(0), \hspace{13pt} i=0 \\
    0, \hspace{30pt} \text{otherwise}
  \end{cases*}$$

  When $\delta = 1$ and $w\neq 0$, $\mathrm{Map}_\delta^{*, w} \cong \mathbb{A}^1$, whereas the space is empty when $\delta = 1$ and $w=0.$ Because $\mathrm{Map}_{{\delta}}^{*, w}$ is smooth, there is Poincaré duality $$H_i^{\mathrm{BM}}(\mathrm{Map}_{{\delta}}^{*, w})\cong H^{2((\delta-1)(r+1)+1)-i}(\mathrm{Map}_{{\delta}}^{*, w})^\vee$$ compatible with the weight filtration.
\end{lemma}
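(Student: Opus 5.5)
The plan is to follow the Farb--Wolfson strategy for $\mathrm{Map}^*_\delta(\mathbb{P}^1,\mathbb{P}^r)$: view $\mathrm{Map}_\delta^{*,w}$ as the complement of a discriminant-type locus in an affine space, and compute that locus with a Vassiliev simplicial resolution and Alexander duality.

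\emph{Setup and the case $\delta=1$.} By the preceding remark, $\mathrm{Map}_\delta^{*,w}=L_\delta^w\setminus Z$. Here $L_\delta^w\cong\mathbb{A}^{N}$ with $N=\delta(r+1)-r$. It consists of tuples of monic degree $\delta$ polynomials $(f_0,\dots,f_r)$ whose $t^{\delta-1}$-coefficients are $a_i=w_i+c$ for a free common parameter $c$. The closed subset $Z$ is the locus where the $f_i$ have a common root. For $\delta=1$ we have $f_i=t+a_i$, and a common root means all $a_i$ are equal, i.e.\ $w=0$. So for $w\neq 0$ the space is $L_1^w\cong\mathbb{A}^1$, while for $w=0$ it is empty, as claimed.

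\emph{Resolving $Z$.} From now on take $\delta\ge 2$. I would resolve $Z$ by the standard simplicial resolution over configurations of common roots. For $k\ge1$, let $Z_k$ be the space of pairs (an unordered $k$-point configuration $S\subset\mathbb{C}$, a tuple in $L^w_\delta$ vanishing on $S$), with the usual open-simplex fibres. Vanishing at $S$ imposes the linear conditions $\prod_{s\in S}(t-s)\mid f_i$ for each $i$. Writing $f_i=g\,h_i$ with $g=\prod_{s\in S}(t-s)$ and $h_i$ monic of degree $\delta-k$, the subleading condition becomes $e_1(S)$-shift $+\,(\text{subleading coeff of }h_i)=w_i+c$.
\begin{itemize}
\item For $k\le\delta-1$, the $h_i$ have free subleading coefficients. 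So the fibre over $S$ is an affine space of dimension $(r+1)(\delta-k)-r$, and $Z_k\to\mathrm{Conf}_k(\mathbb{C})$ is an affine bundle.
\item For $k=\delta$, all $h_i=1$, so the condition forces $w=0$ and $S$ arbitrary. Hence for $w\neq0$ this stratum is empty, and for $w=0$ it is a single point per $S$.
\end{itemize}
The Vassiliev spectral sequence then computes $H^{\mathrm{BM}}_\star(Z)$ from the terms $H^{\mathrm{BM}}_\star(\mathrm{Conf}_k(\mathbb{C});\pm\mathbb{Q})$, suitably shifted and Tate twisted. As in Farb--Wolfson, $\mathrm{Conf}_k(\mathbb{C})$ with the sign local system has vanishing BM homology for $k\ge2$. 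So only $k=1$ survives. This gives $H^{\mathrm{BM}}_\star(Z)$ equal to the BM homology of an affine bundle over $\mathbb{C}$ of total dimension $1+(r+1)(\delta-1)-r=N-r$. Concretely, $H^{\mathrm{BM}}_\star(Z)$ is $\mathbb{Q}(N-r)$ in degree $2(N-r)$.

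\emph{Alexander duality.} Applying $H^i_c(L\setminus Z)\to H^i_c(L)\to H^i_c(Z)$ together with Poincaré duality on the smooth open $L\setminus Z$, the class of $Z$ produces exactly one class in $H^{2r-1}(\mathrm{Map}^{*,w}_\delta)$, of weight type $\mathbb{Q}(-r)$. Together with $H^0=\mathbb{Q}$, this gives the stated answer. The Poincaré duality statement in the lemma is then formal, since the space is smooth of dimension $N=(\delta-1)(r+1)+1$.

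\emph{Expected obstacles.} The main obstacle is the boundary cases of the resolution. These are the top stratum $k=\delta$ when $w=0$, and checking that the linear conditions stay independent and nonempty, so that the $Z_k$ really are affine bundles of the stated rank. For $w=0$ the $k=\delta$ stratum is $\mathrm{Conf}_\delta(\mathbb{C})$ with sign twist, which still has vanishing BM homology since $\delta\ge2$; so it should drop out as well, but I would verify this carefully. A second concern is checking that the Farb--Wolfson vanishing argument for $\mathrm{Conf}_k$ with sign coefficients applies in this affine-subspace setting. As a sanity check, I would note the alternative route: translation $t\mapsto t+s$ shifts $c$ freely, so $\mathrm{Map}^{*,w}_\delta\cong\mathbb{A}^1\times Y_w$ with $Y_w$ the slice $c=0$. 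One could compare with Farb--Wolfson's computation for $\mathrm{Map}^*_\delta$ via the fibration $\mathrm{Map}^*_\delta\to\mathbb{A}^{r+1}$ of subleading coefficients, and point counts over $\mathbb{F}_q$ should give $q^N-q^{N-r}$ consistently.
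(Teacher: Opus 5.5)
Your argument is correct, but it is organized differently from the paper's proof.

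\textbf{The paper's route.} The paper argues by induction on $\delta$. It stratifies $L^w_\delta$ by the exact number $k$ of basepoints, counted with multiplicity. It then uses the Farb--Wolfson multiplication homeomorphism to identify each stratum with $\mathbb{A}^k\times\mathrm{Map}^{*,w}_{\delta-k}$, and runs the stratification spectral sequence in Borel--Moore homology. By the inductive hypothesis, the $k$-basepoint stratum carries its fundamental class plus one odd class. That odd class sits exactly one degree above the fundamental class of the $(k+1)$-basepoint stratum. The $d^1$ differentials cancel these in pairs, leaving only the fundamental class of the one-basepoint stratum. Since $L^w_\delta$ is an affine space, that class must be hit by a unique odd class on $\mathrm{Map}^{*,w}_\delta$. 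The base case $\delta=2$ is done by hand, separately for $w=0$ and $w\neq 0$, and the $w=0$ degeneracy $L^w_{\delta,-(\delta-1)}=L^w_{\delta,-\delta}$ is handled in a footnote.

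\textbf{What your route buys.} You resolve the whole basepoint locus $Z$ at once by sets of distinct common roots, so everything reduces to the vanishing of $H^{\mathrm{BM}}_\star(\mathrm{UConf}_k(\mathbb{C});\pm\mathbb{Q})$ for $k\ge 2$. This needs no induction and no knowledge of lower-degree mapping spaces, and it treats $w=0$ and $w\neq0$ uniformly. It shows directly that $Z$ is an irreducible codimension-$r$ subvariety with the Borel--Moore homology of $\mathbb{A}^{N-r}$. The odd class is then exhibited as the boundary preimage of $[Z]$. This is precisely the extra clause the paper has to carry in its inductive hypothesis, and it is what is used later for the classes $\beta_v$.

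\textbf{Your two flagged obstacles.} Both disappear:
\begin{itemize}
\item Parametrise by the cofactors $h_i$. Their constraint $[(b_i)]=[w]$ does not depend on $S$, so $Z_k\cong\mathrm{UConf}_k(\mathbb{C})\times\mathbb{A}^{(r+1)(\delta-k)-r}$ is an honest product, and $Z_\delta\cong\mathrm{UConf}_\delta(\mathbb{C})$ when $w=0$. The sign-twisted vanishing therefore applies verbatim for all $k\ge 2$.
\item For the weights you do not need Hodge-theoretic compatibility of the Vassiliev spectral sequence. The connecting map $H^{\mathrm{BM}}_{2(N-r)+1}(L^w_\delta\setminus Z)\to H^{\mathrm{BM}}_{2(N-r)}(Z)$ is a morphism of mixed Hodge structures. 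Its target is the fundamental class of an irreducible variety of dimension $N-r$, hence $\mathbb{Q}(N-r)$, which dualises to $\mathbb{Q}(-r)$ in $H^{2r-1}$.
\end{itemize}

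\textbf{What the paper's route buys.} In exchange, the paper avoids the simplicial-resolution machinery. It also keeps stratum-by-stratum control via the same multiplication maps $(g,h)\mapsto gh$ that it later uses to describe the basepoint locus of $\hat{\mathcal{Q}}_{1,n}(\mathbb{P}^r,d)$.
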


\begin{remark}
  For $\delta\geq 2,$ the cohomology of $\mathrm{Map}_{\delta}^{*,w}(\mathbb{P}^1, \mathbb{P}^r)$ together with its mixed Hodge structure agrees with that of $\mathbb{A}^r\setminus \mathrm{pt}$ and are \emph{independent} of $\delta$ and $w.$ This is closely related to the result of Farb--Wolfson \cite[Theorem 1.2]{farbwolf} that when $\delta>0$, the cohomology of $\mathrm{Map}_{\delta}^{*}(\mathbb{P}^1, \mathbb{P}^r)$ agrees with that of $\mathbb{A}^r\setminus \mathrm{pt}$ and is independent of $\delta.$
\end{remark}

\begin{proof}
  When $w\neq 0$ and $\delta = 1,$ the space $\mathrm{Map}_{1}^{*,w}(\mathbb{P}^1, \mathbb{P}^r)$ is identified with the space of monic polynomials $\{(t + \alpha_0,\dots, t+ \alpha_r)\in \mathbb{A}^{r+1}\mid [(\alpha_0,\dots, \alpha_r)]=w\in \mathbb{C}^{r+1}/\langle (1,\dots, 1)\rangle\}$ and is hence isomorphic to $\mathbb{A}^1.$

  When $\delta\geq 2$, we use induction on $\delta.$ The inductive hypothesis is that $$H_i^{\mathrm{BM}}(\mathrm{Map}_{{\delta}}^{*, w})=\begin{cases*}
    \mathbb{Q}(\delta(r+1)-2r+1), \hspace{3pt} i = 2(\delta(r+1)-2r)+1\\
    \mathbb{Q}(\delta(r+1)-r), \hspace{28pt} i=2(\delta(r+1)-r) \\
    0, \hspace{101pt} \text{otherwise}
  \end{cases*}$$ and that $H^{\mathrm{BM}}_{2\delta(r+1)+1}(\mathrm{Map}_{{\delta}}^{*, w})$ is identified with the fundamental class of the basepoint locus under the boundary map. The base case is split into cases $w = 0$ and $w\neq 0.$

  When $w = 0,\delta = 2,$ $\mathrm{Map}_{2}^{*,0}(\mathbb{P}^1, \mathbb{P}^r) = L_0\setminus \mathbb{A}^2,$ where $\mathbb{A}^2$ is identified with the space of monic quadratic polynomials and $\mathbb{A}^2\hookrightarrow L_{2}^{0}$ is the diagonal map. The inductive hypothesis for this case follows from the excision long exact sequence.

  When $w \neq 0,\delta =2,$ $\mathrm{Map}_{2}^{*,0} = L_{2}^{w}\setminus (\mathbb{A}^1\times \mathrm{Map}_{1}^{*,w})$ where $\mathbb{A}^1\times \mathrm{Map}_{1}^{*,w}\to  L_{2}^{w}$ is given by multiplication $$(z, (t+\alpha_0,\dots, t+\alpha_r))\mapsto ((t+z)(t+\alpha_0),\dots, (t+z)(t+\alpha_r))$$ and is an isomorphism onto the complement $ L_{w}\setminus \mathrm{Map}_{2}^{*,0}$. As $\mathrm{Map}_{1}^{*,w}\cong \mathbb{A}^1$ from earlier, we have that $\mathrm{Map}_{2}^{*,0}\cong L_{2}^{w}\setminus \mathbb{A}^2$ and the excision long exact sequence implies the inductive hypothesis.
  
  For the inductive step, we stratify $L_{\delta}^{w}$ by the number of base points: let $L_{\delta, -k}^{w}\subset L_{\delta}^{w}$ be the closed subspace of tuples of polynomials with at least $k$ basepoints and let $L_{\delta, -k}^{w, \circ}\subset L_{\delta, -k}^{w}$ be the open subspace of polynomials with precisely $k$ base points. From \cite[§3]{farbwolf}, the multiplication map $\mathbb{A}^k\times \mathrm{Map}_{\delta-k}^{*, w}\to L_{\delta, -k}^{w, \circ}$ given by $(f, (g_0, \dots, g_k))\mapsto (fg_0,\dots, fg_k)$ is proper and a \textit{homeomorphism}, so the pullback is an isomorphism on compactly supported cohomology, compatible with mixed Hodge structures. Now we apply the stratification spectral sequence associated to the filtration\footnote{When $w = 0,L_{\delta, -(d-1)}^{w} = L_{\delta, -d}^w,$ in which case we formally delete the item $L_{\delta, -d}^w$ from the  filtration.} $$\dots \subsetneq L_{\delta, -(k+1)}^w\subsetneq L_{\delta, -k}^w\subsetneq L_{\delta, 0}^w = L_{\delta}^{w},$$ which is given by $E^1_{-k, q} = H^{\mathrm{BM}}_{-k+q}(L_{\delta, -k}^{w,\circ})\Rightarrow H^{\mathrm{BM}}_{-k+q}(L_{\delta, -d}^w).$ 
  
  Recall that the inductive hypothesis states that for all $\delta'<\delta$, the Borel--Moore homology group of $\mathrm{Map}_{\delta'}^{*, w}$ is the fundamental class and the preimage of fundamental class of $\mathrm{Map}_{\delta'-1}^{*, w}$ under the boundary map; for $k>0,$ taking the homeomorphisms between $L_{\delta, -k}^{w, \circ}$ and $\mathbb{A}^k\times \mathrm{Map}_{\delta-k}^{*, w},$ a similar description holds for $L_{\delta, -k}^{w, \circ}.$ Because the $d^1$ differentials among the $E^1$-pages $H^{\mathrm{BM}}_{-k+q}(L_{\delta, -k}^{w,\circ})\to H^{\mathrm{BM}}_{-(k-1)+q}(L_{\delta, -(k-1)}^{w,\circ})$ are given by boundary maps, they are isomorphisms for all $k>1.$ Therefore, $E^2_{-k, q}$ vanishes for all $k>1.$
  
  It remains to determine $d^1$ on the terms $E^1_{-1,q}$ and $E^1_{0,q}.$ Because the spectral sequence is in the fourth quadrant, $E^2_{-1,q} = E^{\infty}_{-1,q},$ which is a subquotient of $H^{\mathrm{BM}}_{-1+q}(L_{\delta}^{w})\cong H^{\mathrm{BM}}_{-1+q}(\mathbb{A}^{\delta(r+1)-r}),$ and similarly $E^2_{0,q} = E^{\infty}_{0,q}.$ Therefore, $E^2_{-1,q}=0$ for all $q,$ and $E^2_{0,q} = 0$ for $q\neq 2(\delta(r+1)-r),$ and $E^2_{-1,2(\delta(r+1)-r)}=\mathbb{Q}(\delta(r+1)-r).$ In other words, $d_1^{-1,q}$ are isomorphisms for all $q,$ and $d_1^{0,q}$ are isomorphisms for all $q\neq 2(\delta(r+1)-r),$ and $E_1^{0, 2(\delta(r+1)-r)} = E_2^{0, 2(\delta(r+1)-r)} =\mathbb{Q}(\delta(r+1)-r).$ We deduce that the Borel--Moore homology of $\mathrm{Map}_{\delta}^{*,w}$ is given by $$\begin{cases*}
    \mathbb{Q}(\delta(r+1)-2r), \hspace{23pt} i = 2(\delta(r+1)-2r)+1\\
    \mathbb{Q}(\delta(r+1)-r), \hspace{28pt} i=2(\delta(r+1)-r) \\
    0, \hspace{101pt} \text{otherwise}
  \end{cases*}$$ and $H_{2(\delta(r+1)-2r)+1}^{\mathrm{BM}}(\mathrm{Map}_{\delta}^{*,w})$ is identified with $H_{2((\delta-1)(r+1)-r+1)}^{\mathrm{BM}}(\mathbb{A}^1\times \mathrm{Map}_{\delta-1}^{*,w})$ under the boundary map. This finishes the inductive step.
\end{proof}

\subsubsection{Spaces of linearly dependent vectors}
We describe the spaces of tangent vectors of $\mathbb{P}^r$ that may arise from maps with linearly dependent derivatives and determine the weight pieces of their Borel--Moore homology groups.

\begin{definition}
  Let \(V\) be an \(r\)-dimensional vector space over \(\mathbb{C}\): for our purposes, \(V= T_p \mathbb{P}^r\). Denote $\mathsf{D}_k^*\subset V^k$ as the locus of vectors that admit some non-vanishing linear dependency. Let \(\pi_{\sim}:\widetilde{\mathsf{D}}_k^* \to \mathsf{D}_k^*\) parametrise in addition the non-vanishing linear dependency itself up to common rescaling. Abusing notation, we may replace the subscript $k$ by an indexing set of size $k$, such as $[k] = \{1,\dots,k\}.$
  
  Let $\boldsymbol{\delta}\in \mathbb{Z}_{\geq 0}^k$ be a length-$k$ degree vector. Define \[\mathsf{D}_{\boldsymbol{\delta}}^* = \{\mathbf{v}\in {\mathsf{D}}^*_{k}\mid v_i\neq 0\text{ if }\boldsymbol{\delta}_i = 1; v_i= 0 \text{ if }\boldsymbol{\delta}_i = 0\}\subset \mathsf{D}_k,\] \[{\widetilde{\mathsf{D}}}^*_{\boldsymbol{\delta}}:=\pi_{\sim}^{-1}(\mathsf{D}_{\boldsymbol{\delta}}^*)\subset \widetilde{\mathsf{D}}_k^*.\]
\end{definition}

\begin{remark}
  From the description in Lemma \ref{lem:mapwdelta}, the spaces $\mathsf{D}^*_{\boldsymbol{\delta}}$ and ${\widetilde{\mathsf{D}}}^*_{\boldsymbol{\delta}}$ describe the realisable tangent vectors in the parametrised mapping space $\mathrm{Map}_{\boldsymbol{\delta}}^{*, \mathsf{F}}(\mathbb{P}^1, \mathbb{P}^r)$ and $\widetilde{\mathrm{Map}}_{\boldsymbol{\delta}}^{*, \mathsf{F}}(\mathbb{P}^1, \mathbb{P}^r)$ from Definition \ref{defn:MapF}.
\end{remark}

\begin{remark}
  The space $\widetilde{\mathsf{D}}_k^*$ deformation retracts to $(\mathbb{C}^\star)^k/\mathbb{C}^\star,$ and we present the cohomology of \(\widetilde{\mathsf{D}}_k^*\) as $H^\star((\mathbb{C}^\star)^{k}/\mathbb{C}^\star\cdot \mathrm{Id})\cong \bigwedge \left(\mathbb{Q}\{\alpha_1,\dots, \alpha_k\}/\langle\sum_{i=1}^k
  \alpha_i\rangle\right),$ where each $\alpha_i\in H^1((\mathbb{C}^\star)^{k}/\mathbb{C}^\star)$ is the generator of the cohomology the corresponding factor in $(\mathbb{C}^\star)^{k}.$
\end{remark}

\begin{definition}
  For a multi-degree vector $\boldsymbol{\delta},$ define $I_{\boldsymbol{\delta}}^{(1)}:=\{i\in [k]\mid \boldsymbol{\delta}_i = 1\}$ and $I_{\boldsymbol{\delta}}^{(0)}:=\{i\in [k]\mid \boldsymbol{\delta}_i = 0\}.$ Denote $[k]_{\boldsymbol{\delta}_{\setminus 0}}:=[k]\setminus I_{\boldsymbol{\delta}}^{(0)}.$
\end{definition}

\begin{remark}\label{rem:minuszerodeg}
  The projection map $\widetilde{\mathsf{D}}^*_{\boldsymbol{\delta}}\to \widetilde{\mathsf{D}}^*_{\boldsymbol{\delta}_{\setminus 0}}$ gives an isomorphism $\widetilde{\mathsf{D}}^*_{\boldsymbol{\delta}}\cong \widetilde{\mathsf{D}}^*_{\boldsymbol{\delta}_{\setminus 0}}\times (\mathbb{C}^\star)^{I_{\boldsymbol{\delta}}^{(0)}}.$
\end{remark}

Using the open embedding $\widetilde{\mathsf{D}}^*_{\boldsymbol{\delta}}\subset \widetilde{\mathsf{D}}_k^*,$ we describe the pure and off-by-one weight graded pieces as follows.

\begin{lemma}\label{lem:Dtilstar}
  Use $N_{k}$ to denote $\dim \widetilde{\mathsf{D}}^*_{\boldsymbol{\delta}} = (r+1)(k-1).$ The pure weight Borel--Moore homology of $\widetilde{\mathsf{D}}^*_{\boldsymbol{\delta}}$ is spanned by its fundamental class $[\widetilde{\mathsf{D}}^*_{\boldsymbol{\delta}}]\in H^{\mathrm{BM}}_{2 N_{k}}(\widetilde{\mathsf{D}}^*_{\boldsymbol{\delta}}).$

  The off-by-one weight graded pieces $\gr^{W}_{-\star+1} H^{\mathrm{BM}}_\star(\widetilde{\mathsf{D}}^*_{\boldsymbol{\delta}})$ is isomorphic to $$\mathbb{Q}(N_{k}-r)^{\oplus |I_{\boldsymbol{\delta}}^{(1)}|}\oplus \mathbb{Q}(N_{k}-1)^{\oplus k-1},$$ where the two summands are respectively given by: \begin{enumerate}
    \item the boundary map images of the fundamental classes of the closed subspaces $\{(\boldsymbol{v}, \boldsymbol{\alpha})\in \widetilde{\mathsf{D}}^*_{k}\mid \boldsymbol{v}_j = 0\}\cong \mathbb{C}^\star\times \widetilde{\mathsf{D}}^*_{{\boldsymbol{\delta}}\setminus \{j\}}$ for all $j\in I_{\boldsymbol{\delta}}^{(1)},$
    \item the torus factors coming from $\widetilde{\mathsf{D}}_k^*$.
  \end{enumerate}
\end{lemma}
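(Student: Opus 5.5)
First I would understand $\widetilde{\mathsf{D}}^*_k$ itself. Projecting $(\mathbf{v},[\boldsymbol{\alpha}])\mapsto[\boldsymbol{\alpha}]\in(\mathbb{C}^\star)^k/\mathbb{C}^\star$ makes it a vector bundle. The fibre is the linear subspace $\{\mathbf{v}\in V^k\mid\sum\alpha_iv_i=0\}$, of dimension $r(k-1)$. This explains both the dimension $N_k=(r+1)(k-1)$ and the deformation retraction in the preceding remark. Hence $H^\star(\widetilde{\mathsf{D}}^*_k)\cong\bigwedge(\mathbb{Q}\{\alpha_i\}/\langle\sum\alpha_i\rangle)$, with $H^j$ pure of weight $2j$ (Tate type $\mathbb{Q}(-j)$). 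By Poincaré duality $H^{\mathrm{BM}}_{2N_k-j}\cong H^j(-N_k)^\vee$, which has weight $-2N_k+2j$, while the pure weight would be $-(2N_k-j)$. So only $j=0$ is pure: this is the fundamental class. The off-by-one weight needs $2j=j+1$, i.e.\ $j=1$, giving $\mathbb{Q}(N_k-1)^{\oplus k-1}$ from the torus classes $\alpha_i$. This is summand (2) in the case $\boldsymbol{\delta}$ has no entries equal to $0$ or $1$, where $\widetilde{\mathsf{D}}^*_{\boldsymbol{\delta}}=\widetilde{\mathsf{D}}^*_k$.

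Next, using Remark \ref{rem:minuszerodeg}, I would reduce to $I^{(0)}_{\boldsymbol{\delta}}=\varnothing$. The extra factors $(\mathbb{C}^\star)^{I^{(0)}}$ are pure in the same way. There is a dimension bookkeeping point here: the count should be read with $k$ the full length, since the $\mathbb{C}^\star$ factors restore both the dimension and the $k-1$ torus classes. I would check that this matches the stated formula rather than changing it.

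In general $\widetilde{\mathsf{D}}^*_{\boldsymbol{\delta}}$ is the open complement in $\widetilde{\mathsf{D}}^*_k$ of $Z=\bigcup_{j\in I^{(1)}}Z_j$, where $Z_j=\{v_j=0\}$. Each $Z_j$ is a subbundle of corank $r$ over the torus: if $v_j=0$, the remaining vectors satisfy one linear relation with the other $\alpha$'s, and $\alpha_j$ becomes a free $\mathbb{C}^\star$ coordinate. This gives $Z_j\cong\mathbb{C}^\star\times\widetilde{\mathsf{D}}^*_{k-1}$, so $Z_j$ is smooth of dimension $N_k-r$, with pure fundamental class of weight $-2(N_k-r)$. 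The intersections $Z_J=\bigcap_{j\in J}Z_j$ are again such bundles, of codimension $r|J|$.

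I would then run the excision sequence
\[
H^{\mathrm{BM}}_m(Z)\to H^{\mathrm{BM}}_m(\widetilde{\mathsf{D}}^*_k)\to H^{\mathrm{BM}}_m(\widetilde{\mathsf{D}}^*_{\boldsymbol{\delta}})\xrightarrow{\partial} H^{\mathrm{BM}}_{m-1}(Z)\to\cdots
\]
and apply the exact functors $\gr^W_{-m}$ and $\gr^W_{-m+1}$. Since $\gr^W_{-m}H^{\mathrm{BM}}_{m-1}(Z)=0$, the pure part of the open set is a quotient of that of $\widetilde{\mathsf{D}}^*_k$, so it is spanned by the fundamental class. For the off-by-one part, $\gr^W_{-m+1}H^{\mathrm{BM}}_m(\widetilde{\mathsf{D}}^*_{\boldsymbol{\delta}})$ is an extension of two pieces. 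One is the kernel of the pure part of $\partial$, landing in $\gr^W_{-(m-1)}H^{\mathrm{BM}}_{m-1}(Z)$. The other is the off-by-one part of $\widetilde{\mathsf{D}}^*_k$, modulo the image from $Z$.

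The main obstacle is the union $Z$. I would handle it with the Mayer--Vietoris stratification spectral sequence over the $Z_J$. Because every $Z_J$ has codimension at least $2r\ge 2$ when $|J|\ge2$, the pure part of $Z$ is $\bigoplus_j\mathbb{Q}[Z_j]$. The off-by-one classes of $Z$ in degree $m\ne 2N_k-1$ cannot hit the torus classes of the ambient space: the torus classes sit in degree $2N_k-1$, whereas $Z$ has dimension $N_k-r<N_k-1$ once $r\ge1$ (one should treat $r=1$ with care). Thus the pushforward from $Z$ is zero in the relevant weights. Finally, the fundamental classes $[Z_j]$ lie in $H^{\mathrm{BM}}_{2(N_k-r)}$ and map to zero in $H^{\mathrm{BM}}_{2(N_k-r)}(\widetilde{\mathsf{D}}^*_k)$, because that group vanishes since $2(N_k-r)<2N_k-(k-1)$ roughly. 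Hence they lift under $\partial$, and the $|I^{(1)}|$ lifts give classes of weight $-2(N_k-r)$ in degree $2(N_k-r)+1$, i.e.\ $\mathbb{Q}(N_k-r)$ each. Combined with the $k-1$ torus classes, this yields the stated decomposition.
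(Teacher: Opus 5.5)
This is the route the paper intends. The paper writes no proof; it only points to the open embedding $\widetilde{\mathsf{D}}^*_{\boldsymbol{\delta}}\subset\widetilde{\mathsf{D}}^*_k$, the retraction onto $(\mathbb{C}^\star)^k/\mathbb{C}^\star$, and Remark~\ref{rem:minuszerodeg}. The two summands in the statement are exactly your cokernel and kernel terms, and the skeleton is sound, but one step is wrong as written. The group $H^{\mathrm{BM}}_{2(N_k-r)}(\widetilde{\mathsf{D}}^*_k)\cong H^{2r}((\mathbb{C}^\star)^{k-1})(N_k)$ does \emph{not} vanish once $k-1\ge 2r$, so that is not why $[Z_j]$ maps to zero. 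The correct reason is weights: this group is pure of weight $4r-2N_k$, while $[Z_j]$ has weight $2r-2N_k$. Equivalently, $\mathrm{gr}^W_{-m}H^{\mathrm{BM}}_m(\widetilde{\mathsf{D}}^*_k)=0$ for $m<2N_k$, which you already proved in your first paragraph. Hence all of $\mathrm{gr}^W_{-\star}H^{\mathrm{BM}}_\star(Z)$ lies in the kernel and lifts under $\partial$. No extra care is needed for $r=1$ either, since $2\dim Z=2N_k-2r\le 2N_k-2$.

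Your claim that $Z_J$ has codimension $r|J|$ also fails. If $v_j=0$ for all $j\neq i$, then $\alpha_i v_i=0$ forces $v_i=0$, so $Z_{[k]}=Z_{[k]\setminus\{i\}}$ is the zero section, of codimension $r(k-1)$. For $k\ge 3$ this is still $\ge 2r$, and your Mayer--Vietoris argument does give $\bigoplus_j\mathbb{Q}[Z_j]$, since a relation among the $[Z_j]$ would need some $Z_J$ of the same dimension as $Z_j$. For $k=2$, however, $Z_1=Z_2$, and when $\boldsymbol{\delta}_{\setminus 0}=(1,1)$ the count in the statement is actually false. Writing $v_1=-\lambda v_2$ gives $\widetilde{\mathsf{D}}^*_{(1,1)}\cong (V\setminus 0)\times\mathbb{C}^\star$, whose off-by-one part is $\mathbb{Q}(N_2-r)\oplus\mathbb{Q}(N_2-1)$: two-dimensional, not three. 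Your argument should exclude this case rather than reproduce the overcount. Finally, in the reduction via Remark~\ref{rem:minuszerodeg}, the $\mathbb{C}^\star$ factors restore the $k-1$ torus classes but not the dimension. The actual dimension is $(r+1)(k-|I^{(0)}_{\boldsymbol{\delta}}|-1)+|I^{(0)}_{\boldsymbol{\delta}}|$, and the Tate twists are right only if $N_k$ is read as this number.
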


\begin{lemma}\label{lem:fibDF} The parametrised mapping space $\widetilde{\mathrm{Map}}_{\boldsymbol{\delta}}^{*, \mathsf{F}}(\mathbb{P}^1, \mathbb{P}^r)$ is the fibre product:
  \[\begin{tikzcd}
	{\widetilde{\mathrm{Map}}_{\boldsymbol{\delta}}^{*, \mathsf{F}}(\mathbb{P}^1, \mathbb{P}^r)} & {\prod_{i=1}^k \mathrm{Map}_{\boldsymbol{\delta}_i}^{*}(\mathbb{P}^1, \mathbb{P}^r)} \\
	{\widetilde{\mathsf{D}}_{\boldsymbol{\delta}}^*} & {\left(T_p \mathbb{P}^r\right)^{\oplus k}}
	\arrow[from=1-1, to=1-2]
	\arrow[from=1-1, to=2-1]
	\arrow[from=1-2, to=2-2]
	\arrow[from=2-1, to=2-2]
\end{tikzcd}\]
Thus, over a point \({((v_1,\dots, v_k), [\alpha_1,\dots, \alpha_k])\in \widetilde{\mathsf{D}}_{\boldsymbol{\delta}}^*}\), the fibre of the map $\widetilde{\mathrm{Map}}_{\boldsymbol{\delta}}^{*, \mathsf{F}}(\mathbb{P}^1, \mathbb{P}^r)\to {\widetilde{\mathsf{D}}}^*_{\boldsymbol{\delta}}$ is $\prod_{i=1}^k \mathrm{Map}_{{\delta}}^{*, v_i}(\mathbb{P}^1, \mathbb{P}^r).$
\end{lemma}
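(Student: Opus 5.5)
The plan is to unwind the definitions and check that the square is Cartesian at the level of functors of points, then read off the fibre.

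First, fix the maps in the square. The right vertical arrow is the product of derivatives
$$d_\infty=\prod_{i=1}^k d^{(i)}_\infty:\prod_{i=1}^k \mathrm{Map}_{\boldsymbol{\delta}_i}^{*}(\mathbb{P}^1,\mathbb{P}^r)\to (T_p\mathbb{P}^r)^{\oplus k},$$
using the basis vectors $v^{(i)}\in T_\infty\mathbb{P}^1$ fixed by the affine coordinates. By Lemma \ref{lem-derivative} this is the composite of a linear projection and a quotient, so it is a morphism of schemes. The bottom arrow is the composite $\widetilde{\mathsf{D}}^*_{\boldsymbol{\delta}}\xrightarrow{\pi_\sim}\mathsf{D}^*_{\boldsymbol{\delta}}\subset (T_p\mathbb{P}^r)^{\oplus k}$, forgetting the dependency $[\alpha_1:\dots:\alpha_k]$. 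The left vertical arrow sends $((f_i),[\boldsymbol\alpha])$ to $((d^{(i)}_\infty f_i(v^{(i)}))_i,[\boldsymbol\alpha])$. The top arrow is the composite of the forgetful map to $\mathrm{Map}^{*,\mathsf{F}}_{\boldsymbol\delta}$ with the inclusion into the product.

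Next, verify that the left arrow lands in $\widetilde{\mathsf{D}}^*_{\boldsymbol{\delta}}$. By definition $\sum\alpha_i d^{(i)}_\infty f_i(v^{(i)})=0$ with all $\alpha_i\neq0$, so the tuple of derivatives lies in $\mathsf{D}^*_k$. The degree conditions also hold:
\begin{enumerate}
\item If $\boldsymbol\delta_i=0$, then $f_i$ is constant, so its derivative vanishes.
\item If $\boldsymbol\delta_i=1$, Lemma \ref{lem:mapwdelta} shows that $\mathrm{Map}^{*,0}_1$ is empty, so the derivative is nonzero.
\end{enumerate}
Hence the square commutes. The induced map to the fibre product $P$ is the identity on underlying data: a $T$-point of $P$ is a tuple $(f_i)$ together with $(\mathbf v,[\boldsymbol\alpha])\in\widetilde{\mathsf D}^*_{\boldsymbol\delta}(T)$ such that $d_\infty f=\mathbf v$. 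Such data is exactly a tuple of maps with a chosen nonvanishing linear dependency up to scaling among their derivatives, i.e.\ a $T$-point of $\widetilde{\mathrm{Map}}^{*,\mathsf F}_{\boldsymbol\delta}$. Both spaces are defined as the same locally closed subscheme of $\prod_i\mathrm{Map}^*_{\boldsymbol\delta_i}\times(\mathbb{C}^\star)^k/\mathbb{C}^\star$, cut out by $\sum\alpha_i d^{(i)}_\infty f_i(v^{(i)})=0$, so the identification holds scheme-theoretically and not merely on points.

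Finally, the fibre statement follows by base change. Over a point $(\mathbf v,[\boldsymbol\alpha])$, the fibre is $d_\infty^{-1}(\mathbf v)=\prod_i (d^{(i)}_\infty)^{-1}(v_i)=\prod_i\mathrm{Map}^{*,v_i}_{\boldsymbol\delta_i}$, directly from the definition of $\mathrm{Map}^{*,w}_\delta$.

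The main obstacle is bookkeeping rather than substance. One point is the scheme structure on $\widetilde{\mathrm{Map}}^{*,\mathsf F}_{\boldsymbol\delta}$, which should be taken as the incidence subscheme described above. The other is consistency of the degree $0$ and degree $1$ constraints defining $\mathsf D^*_{\boldsymbol\delta}$; these are handled by the constancy of degree-zero maps and the emptiness of $\mathrm{Map}^{*,0}_1$.
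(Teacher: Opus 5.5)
Your proposal is correct and is essentially the paper's own argument. The paper gives no separate proof of this lemma; it treats it as an immediate unwinding of Definition \ref{defn:MapF}, the definition of $\widetilde{\mathsf{D}}^*_{\boldsymbol{\delta}}$, and Lemma \ref{lem-derivative}, which is exactly what you do. Your checks that degree-zero factors have zero derivative and that $\mathrm{Map}^{*,0}_1=\varnothing$ show that the degree constraints in $\mathsf{D}^*_{\boldsymbol{\delta}}$ impose nothing extra, even scheme-theoretically, which are the only points the paper leaves tacit.
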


\begin{corollary}\label{cor:genDF}
   Denote $\dim_{M_{\boldsymbol{\delta}}}$ as the dimension of $\prod_{i=1}^k \mathrm{Map}_{\boldsymbol{\delta}_i}^{*,w_i}$ for any choice of $(w_i)_{i=1}^k$ such that the product is non-empty. The pure weight Borel--Moore homology groups are given by $$\gr^W_{-\star} H^{\mathrm{BM}}_\star(\widetilde{\mathrm{Map}}_{\boldsymbol{\delta}}^{*, \mathsf{F}}(\mathbb{P}^1, \mathbb{P}^r)) = \mathbb{Q}\cdot [\widetilde{\mathrm{Map}}_{\boldsymbol{\delta}}^{*, \mathsf{F}}(\mathbb{P}^1, \mathbb{P}^r)],$$
  
  The off-by-one weight graded pieces are given by \begin{align*}\gr^W_{-\star+1} H^{\mathrm{BM}}_\star(\widetilde{\mathrm{Map}}_{\boldsymbol{\delta}}^{*, \mathsf{F}}(\mathbb{P}^1, \mathbb{P}^r)) & = \left(\underbrace{\mathrm{gr}^W_{-\star+1}H^{\mathrm{BM}}_\star(\widetilde{\mathsf{D}}_{\boldsymbol{\delta}}^*)}_{\text{Lemma \ref{lem:Dtilstar}}}\otimes \mathbb{Q}\cdot [\prod_{i=1}^k\mathrm{Map}_{\boldsymbol{\delta}_i}^{*,w}] \right)\\ & \oplus \left(\mathbb{Q}\cdot [\widetilde{\mathsf{D}}_{\boldsymbol{\delta}}^*]\otimes \underbrace{\mathbb{Q}(\dim_{M_{\boldsymbol{\delta}}}-r)^{\oplus k-|I_{\boldsymbol{\delta}}^{(0)}|-|I_{\boldsymbol{\delta}}^{(1)}|}}_{\gr^W_{-\star+1}H^{\mathrm{BM}}_\star(\prod_{i=1}^k\mathrm{Map}_{\boldsymbol{\delta}_i}^{*,w}), \text{ Lemma \ref{lem:mapwdelta}}}\right)\end{align*}
\end{corollary}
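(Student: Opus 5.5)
The plan is to use the fibre product description of Lemma \ref{lem:fibDF}. The map
\[\widetilde{\mathrm{Map}}_{\boldsymbol{\delta}}^{*, \mathsf{F}}(\mathbb{P}^1, \mathbb{P}^r)\to \widetilde{\mathsf{D}}^*_{\boldsymbol{\delta}}\]
is the restriction of the product of the maps $d^{(i)}_\infty$. By Lemma \ref{lem-derivative}, each $d^{(i)}_\infty$ is the restriction of a linear projection $\mathbb{A}^{\boldsymbol{\delta}_i(r+1)}\to T_p\mathbb{P}^r$ to the open basepoint-free locus.

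The first step is to show that the Borel--Moore homology of the total space is computed by a Künneth-type formula. For this I would use the Leray spectral sequence of the map to $\widetilde{\mathsf{D}}^*_{\boldsymbol{\delta}}$. The fibres are $\prod_i \mathrm{Map}^{*,v_i}_{\boldsymbol{\delta}_i}$, and by Lemma \ref{lem:mapwdelta} their cohomology is independent of $v_i$ on the allowed locus:
\begin{enumerate}
\item for $\boldsymbol{\delta}_i\ge 2$ it is that of $\mathbb{A}^r\setminus\mathrm{pt}$;
\item for $\boldsymbol{\delta}_i=1$ (where $v_i\neq 0$) the fibre is $\mathbb{A}^1$;
\item for $\boldsymbol{\delta}_i=0$ the fibre is a point.
\end{enumerate}
So the relevant local systems have rank one in each degree. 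I need them to be constant. The generators in Lemma \ref{lem:mapwdelta} are algebraic: the fundamental class, and the class mapping under the boundary map to the fundamental class of the basepoint locus $\mathbb{A}^1\times\mathrm{Map}^{*,w}_{\delta-1}$. These are canonically defined in families over $T_p\mathbb{P}^r$ (minus the origin for degree one factors), so monodromy is trivial.

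The second step is degeneration. The fibre cohomology is concentrated in degrees $0$ and $2r-1$, with pure Tate weights $0$ and $2r$. I would argue that the Leray spectral sequence degenerates at $E_2$ by a weight argument combined with the family-wise construction. Concretely, the degree $2r-1$ class extends globally as the restriction of the class Poincaré dual to the codimension-$r$ basepoint locus in the ambient family of affine spaces over $\widetilde{\mathsf{D}}^*_{\boldsymbol{\delta}}$, via the excision sequence. This global extension forces degeneration (Leray--Hirsch). The result is
\[H^\star(\widetilde{\mathrm{Map}}^{*,\mathsf F}_{\boldsymbol{\delta}})\cong H^\star(\widetilde{\mathsf{D}}^*_{\boldsymbol{\delta}})\otimes \bigotimes_{\boldsymbol{\delta}_i\ge 2} H^\star(\mathbb{A}^r\setminus\mathrm{pt}),\]
compatibly with mixed Hodge structures. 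I expect this step to be the main obstacle: the fibration is not Zariski locally trivial, and the degree-one and degree-zero entries change the base stratification, which is already encoded in $\widetilde{\mathsf{D}}^*_{\boldsymbol{\delta}}$. I would handle it by working over the ambient affine-bundle family and using Lemma \ref{lem:mapwdelta}'s inductive identification uniformly in $w$.

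The last step is to read off weights after Poincaré duality. The total space is smooth of dimension $N_k+\dim_{M_{\boldsymbol{\delta}}}$, so pure-weight Borel--Moore classes correspond to pure cohomology classes $\gr^W_j H^j$. The classes $H^{2r-1}(\mathbb{A}^r\setminus\mathrm{pt})=\mathbb{Q}(-r)$ have weight one more than their degree. Their contribution to $\gr^W_{j+1}H^j$ requires the other tensor factor to be pure. By Lemma \ref{lem:Dtilstar}, the pure part of $H^\star(\widetilde{\mathsf{D}}^*_{\boldsymbol{\delta}})$ is only $H^0$. Hence the pure part of the tensor product is $\mathbb{Q}$ in degree $0$, which is dual to the fundamental class. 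For the off-by-one part, exactly one tensor factor is off by one and the rest are pure, which gives two cases:
\begin{enumerate}
\item the off-by-one part of $\widetilde{\mathsf{D}}^*_{\boldsymbol{\delta}}$ from Lemma \ref{lem:Dtilstar}, tensored with the fibre fundamental class;
\item the base fundamental class tensored with one of the $k-|I^{(0)}_{\boldsymbol{\delta}}|-|I^{(1)}_{\boldsymbol{\delta}}|$ classes $\mathbb{Q}(\dim_{M_{\boldsymbol{\delta}}}-r)$, one from each factor with $\boldsymbol{\delta}_i\ge2$.
\end{enumerate}
These two cases are the two summands in the statement.
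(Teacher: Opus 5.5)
Your proposal is correct and follows the paper's argument: take the Leray spectral sequence of $d_\infty\colon\widetilde{\mathrm{Map}}_{\boldsymbol{\delta}}^{*,\mathsf{F}}(\mathbb{P}^1,\mathbb{P}^r)\to\widetilde{\mathsf{D}}^*_{\boldsymbol{\delta}}$, get trivial monodromy from the linear-projection structure of Lemma \ref{lem-derivative}, use a Künneth $E_2$-page, and read off the pure and off-by-one pieces from Lemmas \ref{lem:mapwdelta} and \ref{lem:Dtilstar}. The one difference is that you prove full $E_2$-degeneration via Leray--Hirsch, whereas the paper only needs weight and degree inspection: the base has no pure classes beyond $H^0$ and its off-by-one classes sit in degrees $1$ and $2r-1$, so no differential can touch the relevant pieces. (A small slip: the fibre local systems have rank $\binom{m}{j}$ in degree $(2r-1)j$, not rank one, when $m>1$ entries have $\boldsymbol{\delta}_i\ge 2$.)
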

\begin{proof}
We compute the compactly supported cohomology of $\widetilde{\mathrm{Map}}_{\boldsymbol{\delta}}^{*, \mathsf{F}}(\mathbb{P}^1, \mathbb{P}^r)$ by the Leray spectral sequence of the map $$d_\infty: \widetilde{\mathrm{Map}}_{\boldsymbol{\delta}}^{*, \mathsf{F}}(\mathbb{P}^1, \mathbb{P}^r)\to {\widetilde{\mathsf{D}}}^*_{\boldsymbol{\delta}}.$$ As the map is the pullback of the linear projection map $$\prod_{i=1}^k \mathbb{A}^{\boldsymbol{\delta}_i(r+1)}\to \prod_{i=1}^k \mathbb{C}^{r+1}\to (T_p \mathbb{P}^r)^{\oplus k},$$ the locally constant sheaf $(d_{\infty})_! \mathbb{Q}$ has trivial monodromy, so the $E_2$-page satisfies the Künneth formula. The claim follows from weight truncating the spectral sequences, applying Lemma \ref{lem:mapwdelta}, and inspecting the weights and cohomological degrees.
\end{proof}

We set notation for the following classes corresponding to basepoints on the moduli spaces of parametrised pointed maps.

\begin{definition}\label{defn:betav}
  Let $\beta_v\in H^{2r-1}(\widetilde{\mathrm{Map}}_{\boldsymbol{\delta}}^{*, \mathsf{F}}(\mathbb{P}^1, \mathbb{P}^r))$ be:\begin{enumerate}
    \item when $\boldsymbol{\delta}_v\geq 2,$ the pullback from the factor associated to the vertex $v$ of $\beta\in H^{2r-1}(\mathrm{Map}_{\boldsymbol{\delta}_v}^{*,w})$ (Lemma \ref{lem:mapwdelta}),
    \item when $\boldsymbol{\delta}_v = 1,$ the pullback from $\mathbb{Q}(N_k-r)\subset \mathrm{gr}^W_{\star+1}H^\star(\widetilde{\mathsf{D}}_{\boldsymbol{\delta}}^*)$ corresponding to the entry $v$ (item (1) in Lemma \ref{lem:Dtilstar}). 
  \end{enumerate}
   
\end{definition}

\subsubsection{Quotienting by automorphisms}\label{subsubsec:quotaut}
From Remark \ref{rem:parunpar}, the fibre of the forgetful map
$$\mathcal{M}^{\mathbf{F}}_{(\boldsymbol{\delta}^{(r)}, \boldsymbol{m}^{(r)})}\to \prod_{i=1}^k \mathcal{M}_{0, \boldsymbol{m}_i}$$ is given by $\widetilde{\mathrm{Map}}_{\boldsymbol{\delta}}^{*, \mathsf{F}}(\mathbb{P}^1, \mathbb{P}^r).$ We calculate the weight graded pieces of $H^\star(\mathcal{M}^{\mathbf{F}}_{(\boldsymbol{\delta}^{(r)}, \boldsymbol{m}^{(r)})})$ using the Leray spectral sequence associated to the above morphism. 

We set notation for $\psi$-classes on $\mathsf{M}_{0,1}, \mathsf{M}_{0,2}.$ They pull back to pure weight classes on strata of genus zero stable maps, the strata $\mathcal{M}^{\mathbf{F}}_{(\boldsymbol{\delta}^{(r)}, \boldsymbol{m}^{(r)})},$ and \(\mathcal{M}_{\mathsf{C}_{k, (\mathbf{d}, \mathbf{m})}}.\)

\begin{definition}
  Let $\psi\in H^2(\mathsf{M}_{0,1})$ or $\psi\in H^2(\mathsf{M}_{0,2})$ be the standard generator under the isomorphism of stacks $\mathsf{M}_{0,1}\cong B \mathrm{Aut}(\mathbb{P}^1, *)$ and $\mathsf{M}_{0,2}\cong {B}\mathbb{G}_m.$ They agree with the psi classes of the marked points up to a sign, which suffices for our purposes.
  
  Let $v$ be a univalent or bivalent vertex in a $(1,n,d)$-graph, possibly with central alignment. The $\psi$-class $\psi_v$ of vertex $v$ is the cohomology class in the corresponding mapping space stratum pulled back from $\psi\in H^2(\mathsf{M}_{0,1})$ or $\psi\in H^2(\mathsf{M}_{0,2})$ along the unique inward pointing half-edge, which is denoted as $\mu_v$ in \cite{vzdualcomplex}. 
\end{definition}

\begin{lemma}\label{lem:MF}
  The pure weight singular cohomology of $\mathcal{M}^{\mathbf{F}}_{(\boldsymbol{\delta}^{(r)}, \boldsymbol{m}^{(r)})}$ is: \begin{enumerate}
    \item spanned by $\{1, \psi,\dots, \psi^{r-1}\}$ when all vertices of $\rho^{-1}(|\rho|)$ are univalent or bivalent; the class $\psi\in H^2(\mathcal{M}^{\mathbf{F}}_{(\boldsymbol{\delta}^{(r)}, \boldsymbol{m}^{(r)})})$ is the pulled back $\psi$-class of any univalent or bivalent vertex,
    \item $H^0\cong \mathbb{Q}$ when there is any vertex that is not univalent or bivalent.
  \end{enumerate}

  As a $W_\star H^\star(\mathcal{M}^{\mathbf{F}}_{(\boldsymbol{\delta}^{(r)}, \boldsymbol{m}^{(r)})})$-module, the off-by-one weight singular cohomology of $\mathcal{M}^{\mathbf{F}}_{(\boldsymbol{\delta}^{(r)}, \boldsymbol{m}^{(r)})}$ is generated by:
  \begin{enumerate}
    \item when all vertices of $\rho^{-1}(|\rho|)$ are univalent or bivalent, the pullback from the codimension one linear subspace $\langle \beta_v-\beta_w\rangle_{v,w} \subset H^{2r-1}(\widetilde{\mathrm{Map}}_{\boldsymbol{\delta}}^{*, \mathsf{F}}(\mathbb{P}^1, \mathbb{P}^r))$ (Definition \ref{defn:betav}), 
    \item when not all vertices are univalent or bivalent, \begin{enumerate}
      \item pullback of $H^1(\mathcal{M}_{0, \boldsymbol{m}_i})$ for all $\boldsymbol{m}_i\geq 4,$
      \item pullback of $[\alpha_v-\alpha_w]\in H^1((\mathbb{C}^\star)^k/\mathbb{C}^\star)$ for all vertices $v,w$ that are not univalent or bivalent,
      \item $\beta_v$ for all vertices $v.$
    \end{enumerate}
  \end{enumerate}
\end{lemma}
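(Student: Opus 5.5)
The plan is to run the Leray spectral sequence of the forgetful map
$$\pi:\mathcal{M}^{\mathbf{F}}_{(\boldsymbol{\delta}^{(r)}, \boldsymbol{m}^{(r)})}\to \prod_{i=1}^k \mathcal{M}_{0,\boldsymbol{m}_i},$$
where $\mathcal{M}_{0,1}=\mathsf{M}_{0,1}$ and $\mathcal{M}_{0,2}=\mathsf{M}_{0,2}$ are the classifying stacks for the univalent and bivalent vertices. By Remark \ref{rem:parunpar}, the fibre of $\pi$ is $\widetilde{\mathrm{Map}}_{\boldsymbol{\delta}}^{*, \mathsf{F}}(\mathbb{P}^1, \mathbb{P}^r)$.

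For vertices with $\boldsymbol{m}_i\ge 3$, the fibration is trivial over the configuration space factor. The base is then a product of $B\mathrm{Aut}(\mathbb{P}^1,*)$, $B\mathbb{G}_m$ (both with cohomology $\mathbb{Q}[\psi]$ and pure of Tate type), and $\mathcal{M}_{0,\boldsymbol{m}_i}$. For the last factor, $\mathrm{gr}^W_\star H^\star=H^0$ and $\mathrm{gr}^W_{\star+1}H^\star$ is the $H^1$ generated by the $d\log$ of cross-ratios, which is nonzero only for $\boldsymbol{m}_i\ge 4$.

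\textbf{Step 1: the fibre.} By Corollary \ref{cor:genDF} and Poincaré duality, the fibre has pure weight cohomology $H^0=\mathbb{Q}$. Its off-by-one part is spanned by two kinds of classes:
\begin{itemize}
\item the torus classes $\alpha_v-\alpha_w$ in $H^1$, of weight $2$;
\item the basepoint classes $\beta_v$ in $H^{2r-1}$, of weight $2r$.
\end{itemize}

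\textbf{Step 2: the monodromy.} The groups $\mathrm{Aut}(\mathbb{P}^1,*)$ and $\mathbb{G}_m$ act on the fibre, and I would compute this action on the fibre cohomology. The action rescales the tangent vector $v^{(i)}$ at $\infty$ of the $i$-th component. On $\widetilde{\mathsf{D}}^*$ this acts through the weight-one character, rescaling $\alpha_i$. The action is connected, so the monodromy on the local system is trivial. The content therefore sits in the $d_2$ transgressions, which for these $B\mathbb{G}_m$-bundles become Gysin/Euler-class differentials.

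Concretely, when the $i$-th vertex is univalent or bivalent, the $\mathbb{G}_m$ reparametrising it rotates the $i$-th torus coordinate. This gives $d_2(\alpha_i)=\psi_i$. Consequently:
\begin{itemize}
\item Each $\alpha_v-\alpha_w$ for $v,w$ both univalent or bivalent transgresses to $\psi_v-\psi_w$. This kills the independent $\psi$'s and forces all $\psi_v$ to agree, so a single $\psi$ survives.
\item For vertices with $\boldsymbol{m}_i\ge 3$, the corresponding $\alpha$'s are not hit and survive. These give item 2(b).
\item The scaling also acts on $\beta_v$. Since $\beta_v$ arises by transgression from the basepoint locus of $\mathrm{Map}^{*,w}$, which is a copy of $\mathbb{A}^r\setminus\mathrm{pt}$ in the tangent direction, the $\mathbb{G}_m$-equivariant Euler class of $\mathbb{A}^r$ (namely $\psi^r$ up to scalar) gives $d_{2r}(\beta_v)=c\,\psi^r$.
\end{itemize}

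\textbf{Step 3: reading off weights.} In the all-univalent/bivalent case the pure classes are $\mathbb{Q}[\psi]/\psi^r$, which is item (1). The off-by-one classes are the combinations $\sum c_v\beta_v$ whose transgression $\sum c_v\psi^r$ vanishes, that is, the span of the differences $\beta_v-\beta_w$.

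If some vertex has $\boldsymbol{m}_i\ge 3$, that vertex contributes no $\psi$. The relation $\alpha_v-\alpha_w\mapsto \psi_v$, with $w$ such a vertex, kills every $\psi$. Hence only $H^0$ is pure, giving item (2). The off-by-one part consists of the $H^1(\mathcal{M}_{0,\boldsymbol{m}_i})$, the surviving $\alpha_v-\alpha_w$, and all the $\beta_v$, which are no longer obstructed. The pure-weight truncation guarantees that only these low weight pieces interact.

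\textbf{Main obstacle.} The hardest step is identifying the higher differential $d_{2r}(\beta_v)$ with a nonzero multiple of $\psi^r$. I would first compute it equivariantly on $\mathrm{Map}^{*,w}_\delta\subset L^w_\delta$: the fundamental class of the basepoint locus is the equivariant Thom class, and $\mathbb{G}_m$ acts on the normal directions with weights coming from the $z^{\delta-1}$ coefficients. The $\delta=1$ case needs separate treatment via Lemma \ref{lem:Dtilstar}(1).
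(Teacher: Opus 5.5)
Your proposal is correct and follows the paper's proof: the same Leray spectral sequence for $\mathcal{M}^{\mathbf{F}}_{(\boldsymbol{\delta}^{(r)}, \boldsymbol{m}^{(r)})}\to\prod_i\mathcal{M}_{0,\boldsymbol{m}_i}$, with $d_2$ sending $\alpha_v-\alpha_w\mapsto\psi_v-\psi_w$ and $d_{2r}$ sending $\beta_v\mapsto\psi^r$. For the step you flag as the main obstacle, the paper's primary argument is simpler: $d_{2r}$ is the only differential that can hit $\psi^r$, and if it vanished then $W_\star H^\star$ of this finite-type Deligne--Mumford stack would be infinite-dimensional; the explicit $\mathbb{C}^\star$-equivariant computation you propose is mentioned in the paper only as an alternative check.
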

\begin{proof}
  By inspecting the weights and cohomological degrees of the Leray spectral sequence associated to $\mathcal{M}^{\mathbf{F}}_{(\boldsymbol{\delta}^{(r)}, \boldsymbol{m}^{(r)})}\to \prod_{i=1}^k \mathcal{M}_{0, \boldsymbol{m}_i}$, the possibly non-vanishing differentials concerning pure and off-by-one weight graded pieces are multiplicatively generated by \begin{enumerate}
  \item $d_2^{0,1}: \mathrm{gr}^W_2 E_{2}^{0,1}\to \mathrm{gr}^W_2 E_2^{2,0},$ in which each $[\alpha_v-\alpha_w]\in H^1((\mathbb{C}^\star)^k/\mathbb{C}^\star)\subset \mathrm{gr}^W_2 E_{2}^{0,1}$ is mapped to $\psi_v-\psi_w.$ This is because the $(\mathbb{C}^\star)^k/\mathbb{C}^\star$-bundle parametrises collections of compatible isomorphisms between the tangent lines at the frozen marked points \cite[§2]{bnr}, see also \cite[Lemma 3.2]{vzdualcomplex}. This implies the relation $\psi_v=\psi_w$ when pulled back to $\mathcal{M}^{\mathbf{F}}_{(\boldsymbol{\delta}^{(r)}, \boldsymbol{m}^{(r)})},$ so that all $\psi_v$ pulls back to zero when there is any vertex not univalent or bivalent, and that they all pull back to the same class $\psi$ otherwise,
  \item $d_{2r}^{0,2r-1}: \mathrm{gr}^W_{2r} E_{2}^{0,2r-1}\to \mathrm{gr}^W_{2r} E_{2r}^{2r,0},$ in which $\beta_v\mapsto \psi^r.$ This is the only possibly non-zero differential after $E_2,$ and $W_{\star}H^\star(\mathcal{M}_{(\boldsymbol{\delta}^{(r)}, \boldsymbol{m}^{(r)})}^{\boldsymbol{F}})$ would be infinite-dimensional if it were zero. This can also be seen by an explicit calculation of the $\mathbb{C}^\star$-action on $\widetilde{\mathrm{Map}}_{\boldsymbol{\delta}}^{*,\mathsf{F}}(\PP^1,\PP^r)\subset (\mathbb{C}^{\star})^k/\mathbb{C}^{\star}\times \prod_{i=1}^{k}\mathrm{Map}_{\boldsymbol{\delta}_i}^i(\PP^1,\PP^r),$ where $\mathbb{C}^\star\subset \mathrm{Aut}(\PP^r,*)$ is a maximal torus, so that $B\mathbb{C}^\star\to B\mathrm{Aut}(\PP^r,*)$ induces a cohomology isomorphism.
  \end{enumerate}
\end{proof}

\subsection{Maps from smooth elliptic curves}\label{sec:mapellip}
We describe the weight graded pieces of $\mathcal{M}_{1,n}(\PP^r,d)$ via the partial compactification $\hat{\mathcal{Q}}_{1,n}(\PP^r,d)$ defined in §\ref{sec:introgenrel}. The space $\hat{\mathcal{Q}}_{1,n}(\PP^r,d)$ is the total space of a projective bundle over the universal Picard group $\mathrm{Pic}^d_{1,n},$ such that the fibre over $((E, p_1,\dots,p_n),L)\in \mathrm{Pic}^d_{1,n}$ is given by $\mathbb{P}(H^0(E, L)^{\oplus r+1}).$ It has relative hyperplane class $H_{\hat{\mathcal{Q}}}\in H^2(\hat{\mathcal{Q}}_{1,n}(\PP^r,d)).$

\begin{remark}
  While the notation is chosen to resemble that of the moduli space of quasimaps \cite{mop}, the basepoints in $\hat{\mathcal{Q}}_{1,n}(\mathbb{P}^r,d)$ are not required to be disjoint from the marked points. In this way, the space $\hat{\mathcal{Q}}_{1,n}(\mathbb{P}^r,d)$ can be considered as a quasimaps moduli space where each marked point has zero weight.
\end{remark}

\begin{lemma}\label{lem:propsurj}
  The complement $\mathcal{B}_{1,n}(\mathbb{P}^r,d):= \hat{\mathcal{Q}}_{1,n}(\PP^r,d)\setminus \mathcal{M}_{1,n}(\PP^r,d)$ receives a proper, surjective map from $\mathcal{C}_{1,n}\times_{\mathcal{M}_{1,n}}\hat{\mathcal{Q}}_{1,n}(\mathbb{P}^r,d-1)$.
\end{lemma}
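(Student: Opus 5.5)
The map I would construct is the ``basepoint multiplication'' morphism
\[
\mu\colon \mathcal{C}_{1,n}\times_{\mathcal{M}_{1,n}}\hat{\mathcal{Q}}_{1,n}(\PP^r,d-1)\longrightarrow \hat{\mathcal{Q}}_{1,n}(\PP^r,d).
\]
A point of the source is $((E,p_1,\dots,p_n),x,L',[s_0:\dots:s_r])$ with $x\in E$, $L'$ of degree $d-1$ and $s_i\in H^0(E,L')$. The map $\mu$ sends it to $((E,p_i),\,L'(x),\,[s_0\sigma_x:\dots:s_r\sigma_x])$, where $\sigma_x$ is the canonical section of $\mathcal{O}_E(x)$ vanishing at $x$.

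\textbf{Step 1: $\mu$ is a morphism of stacks.} In families I would argue as follows. The universal section gives a Cartier divisor $\Delta$ on $\mathcal{C}_{1,n}\times_{\mathcal{M}_{1,n}}\mathcal{C}_{1,n}$. Twisting the pulled-back Poincaré bundle $\mathcal{P}_{d-1}$ by $\mathcal{O}(\Delta)$ gives a family of degree $d$ line bundles, hence a map to $\mathrm{Pic}^d_{1,n}$. The rigidification issue is harmless, because rescalings are absorbed by the projectivisation. Multiplication by the canonical section $\sigma_\Delta$ gives an injection $\pi_*(\mathcal{P}_{d-1})^{\oplus r+1}\hookrightarrow \pi_*(\mathcal{P}_{d-1}(\Delta))^{\oplus r+1}$, fibrewise $H^0(L')\hookrightarrow H^0(L'(x))$. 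This fibrewise injectivity is what makes the induced map of projective bundles well defined.

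The case $d-1=0$ needs care. There $H^0(L')$ is nonzero only when $L'\cong\mathcal{O}$, so $\hat{\mathcal{Q}}_{1,n}(\PP^r,0)$ should be read as the projective cone supported on the trivial-bundle locus. I would treat it by restricting to that locus and using the same multiplication.

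\textbf{Step 2: image equals $\mathcal{B}_{1,n}(\PP^r,d)$.} Each $s_i\sigma_x$ vanishes at $x$, so the image lies in the basepoint locus. Conversely, suppose $(E,L,[s_0:\dots:s_r])$ has a common zero $x$. Then every $s_i$ is a section of $L(-x)\subset L$, so the point equals $\mu$ of $(x,L(-x),[s_i])$. This gives surjectivity on $\mathbb{C}$-points, and therefore surjectivity of the morphism, since finite type stacks are determined by field-valued points after base change.

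\textbf{Step 3: properness.} This is the main point. Relative to $\mathcal{M}_{1,n}$, both the source and the target are proper: each is a projective bundle over a relative Picard stack that is proper (an elliptic curve) over $\mathcal{M}_{1,n}$, and the source is additionally fibred by the proper curve $\mathcal{C}_{1,n}$. Since $\mu$ commutes with the projections to $\mathcal{M}_{1,n}$ and the target is separated over $\mathcal{M}_{1,n}$, $\mu$ is proper by the standard cancellation property.

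Alternatively, one can check the valuative criterion directly. For a family over a DVR with $\mu$-image given, the limit point $x$ exists in the proper curve $E$. The limit sections exist after rescaling, because the multiplication is injective on sections.

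I expect the only real obstacle to be verifying properness of $\mathrm{Pic}^d_{1,n}\to\mathcal{M}_{1,n}$ as a rigidified stack, including the degenerate $d-1=0$ cone. I would handle this by noting that the rigidified relative Picard stack of degree $d$ is identified with the universal curve via $L\mapsto$ the point $y$ with $L\cong\mathcal{O}((d-1)p_1+y)$ when $n\ge1$, and via the Jacobian of $E$ in general. It is therefore proper over $\mathcal{M}_{1,n}$.
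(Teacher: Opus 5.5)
Your proposal is correct and follows essentially the same route as the paper: the same multiplication-by-the-canonical-section map, surjectivity by dividing out a section vanishing at a basepoint, and properness. The paper phrases the map via $\mathcal{C}_{1,n}\cong\mathbb{P}_{\mathrm{Pic}^1_{1,n}}(\mathcal{P}_1)$ and the tensor map $\mathcal{P}_1\boxtimes\mathcal{P}_{d-1}^{\oplus r+1}\to\mathcal{P}_d^{\oplus r+1}$, and for properness it says ``projective, hence proper'' where you use the cancellation property over $\mathcal{M}_{1,n}$; your treatment of the degenerate $d-1=0$ case and of the properness of the rigidified Picard stack fills in details the paper leaves implicit.
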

\begin{proof}
  We identify $\mathcal{C}_{1,n}\cong \mathbb{P}_{\mathrm{Pic}_{1,n}^{1}}(\mathcal{P}_1).$ The map $\mathcal{P}_1\boxtimes \mathcal{P}_{d-1}^{\oplus r+1}\to \mathcal{P}_d^{\oplus r+1}$ over the multiplication map $\otimes: \mathrm{Pic}_{1,n}^1\times \mathrm{Pic}_{1,n}^{d-1}\to \mathrm{Pic}_{1,n}^{d}$ then projectivises to a map $\mathcal{C}_{1,n}\times_{\mathcal{M}_{1,n}}\hat{\mathcal{Q}}_{1,n}(\mathbb{P}^r,d-1)\to \hat{\mathcal{Q}}_{1,n}(\mathbb{P}^r,d).$ Concretely, the map is given by multiplying sections of line bundles. The above description shows that the map is projective, hence proper.

  By construction, the image of the map has basepoints, so it lands in $\mathcal{B}_{1,n}(\mathbb{P}^r,d).$ To see surjectivity, it suffices to work over the fibre over a marked point. Given a tuple of sections $[s_0:\dots:s_r]$ with basepoint divisor $B$, pick any degree one effective divisor $p\leq B$ together with a choice of setction $s_p$ (unique up to scalar multiplication), then $[s_0:\dots:s_r]$ is the image of $(p,[s_0/s_p:\dots: s_r/s_p]).$
\end{proof}

\begin{definition}
  Let $\mathrm{Pic}^{d_1,d_2}_{1,n}$ denote the fibre product $\mathrm{Pic}^{d_1}_{1,n}\times_{\mathcal{M}_{1,n}}\times \mathrm{Pic}^{d_2}_{1,n}$.
\end{definition}

The pure weight graded pieces of the Picard groups $\mathrm{Pic}^{d}_{1,n}$ and $\mathrm{Pic}^{d_1,d_2}_{1,n}$ can be related to those of $\mathcal{M}_{1,n}$ by adapting the methods of Canning--Larson--Payne \cite[§2.2]{clp}, which uses the partial compactification by the fibre power of the universal curve $\mathcal{E}\to \mathcal{M}_{1,1}.$

\begin{lemma}\label{lem:pics}
  \begin{enumerate}
    \item  The pure weight cohomology of $\mathrm{Pic}^d_{1,n}$ is given by \[W_\star H^\star(\mathrm{Pic}^d_{1,n})\cong W_\star H^\star(\mathcal{M}_{1,n+1})\oplus W_{\star-2} H^{\star-2}(\mathcal{M}_{1,n})(-1).\] The Tate twist is given by $\Theta\in H^2(\mathrm{Pic}^d_{1,n})$ as the image of $1\in H^0(\mathcal{M}_{1,n})(-1)\to H^2(\mathcal{M}_{1,n})$ under the Gysin pushforward; this is the class of a relative polarisation of $\mathrm{Pic}^d_{1,n}\to \mathcal{M}_{1,n}.$
    \item  There is an isomorphism \[W_\star H^\star(\mathrm{Pic}^{d_1,d_2}_{1,n})\cong  W_\star H^\star(\mathcal{M}_{1,n+2})\oplus  W_{\star-2} H^{\star-2}(\mathcal{M}_{1,n+1})(-1)^{\oplus 2}\oplus W_{\star-4} H^{\star-4}(\mathcal{M}_{1,n})(-2).\] Similar to (1), the Tate twists are given by relative polarisations $\Theta_1, \Theta_2\in H^2(\mathrm{Pic}^{d_1,d_2}_{1,n})$, and $\Theta_1\cup \Theta_2\in H^4(\mathrm{Pic}^{d_1,d_2}_{1,n}).$
  \end{enumerate}
\end{lemma}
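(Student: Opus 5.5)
Following the hint in the paragraph preceding the statement, I would adapt Canning--Larson--Payne and work with the relative Picard stack over $\mathcal{M}_{1,n}$. The key point is that $\mathrm{Pic}^d_{1,n}\to\mathcal{M}_{1,n}$ is a torsor under the universal elliptic curve. Since the Picard stack is rigidified and degree $d$ line bundles on $E$ correspond to points via $L\mapsto$ (the point $q$ with $L\cong\mathcal{O}((d-1)p_1+q)$), we get $\mathrm{Pic}^d_{1,n}\cong\mathcal{C}_{1,n}$, the universal curve. Writing the universal curve as $\mathcal{M}_{1,n+1}\sqcup\bigsqcup_{i=1}^n\sigma_i(\mathcal{M}_{1,n})$, I would check that this identification is compatible with the weight filtrations used later. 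It is an isomorphism of stacks over $\mathcal{M}_{1,n}$ after choosing $p_1$. For $n=0$ the rigidification matters, and I would instead work over $\mathcal{M}_{1,1}$ directly: $\mathrm{Pic}^d_{1,0}$ is the universal curve over $\mathcal{M}_{1,1}$ modulo the forgetful identification. For part (2), the same argument gives $\mathrm{Pic}^{d_1,d_2}_{1,n}\cong\mathcal{C}_{1,n}\times_{\mathcal{M}_{1,n}}\mathcal{C}_{1,n}$, the fibre square of the universal curve.

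The main step is computing $W_\star H^\star$ of $\mathcal{C}_{1,n}$ and of its fibre square. For $\mathcal{C}_{1,n}$, I would use the Leray spectral sequence of the smooth proper map $\pi:\mathcal{C}_{1,n}\to\mathcal{M}_{1,n}$. It degenerates at $E_2$ by Deligne, and gives
\[
H^\star(\mathcal{C}_{1,n})\cong H^\star(\mathcal{M}_{1,n})\oplus H^{\star-1}(\mathcal{M}_{1,n},\mathbb{V})\oplus H^{\star-2}(\mathcal{M}_{1,n})(-1),
\]
where $\mathbb{V}=R^1\pi_*\mathbb{Q}$. The last summand is generated by a relative polarisation class $\Theta$; a fibrewise point class such as the section $\sigma_1$ works. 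Separately, the excision sequence for $\mathcal{M}_{1,n+1}\subset\mathcal{C}_{1,n}$ with closed complement $n$ sections reads
\[
\bigoplus_i H^{\star-2}(\mathcal{M}_{1,n})(-1)\to H^\star(\mathcal{C}_{1,n})\to H^\star(\mathcal{M}_{1,n+1})\to\cdots
\]
Restriction to the open part is surjective on lowest weight pieces $W_\star H^\star$, since $\mathcal{C}_{1,n}$ is smooth. The kernel on pure weight is the span of the images of the sections. Modulo $\Theta$, these images differ by classes $\sigma_i-\sigma_1$ that come from the $\mathbb{V}$-part and the base. Comparing the two descriptions then yields
\[
W_\star H^\star(\mathrm{Pic}^d)\cong W_\star H^\star(\mathcal{M}_{1,n+1})\oplus W_{\star-2}H^{\star-2}(\mathcal{M}_{1,n})(-1),
\]
with the splitting given by $\Theta\cdot\pi^*$. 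Concretely, I would show $\pi^*\oplus\Theta\pi^*$ together with the $\mathbb{V}$-part maps isomorphically, and that restriction to $\mathcal{M}_{1,n+1}$ kills exactly the $\Theta$-summand modulo section classes.

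For part (2), I would iterate: regard $\mathcal{C}\times_{\mathcal{M}}\mathcal{C}$ as the universal curve pulled back to $\mathcal{C}_{1,n}$. I would then apply the part (1) argument fibrewise over $\mathcal{C}_{1,n}$, whose open part $\mathcal{M}_{1,n+1}$ is itself stratified by sections. Relative Künneth for the two polarisations $\Theta_1,\Theta_2$ gives the four summands, with $\Theta_1\cup\Theta_2$ producing the $(-2)$ twist. The off-diagonal open part $\mathcal{M}_{1,n+2}$ supplies the leading term.

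The main obstacle I expect is the bookkeeping of weights in the excision and Leray comparison. I need the $\mathbb{V}$-summand and the section classes to contribute nothing extra to pure weight beyond what is already accounted for by $\mathcal{M}_{1,n+1}$ (respectively $\mathcal{M}_{1,n+2}$). Here I would rely on the fact that a smooth variety's pure weight cohomology is the image of cohomology from any smooth compactification, and compare with the fibre powers over $\overline{\mathcal{M}}_{1,1}$ as in \cite[§2.2]{clp}.
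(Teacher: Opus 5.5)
\textbf{Where the argument breaks.} The paper gives no proof of this lemma. It only points to \cite[\S2.2]{clp}, uses the identification $\mathrm{Pic}^d_{1,n}\cong\mathcal{C}_{1,n}$, and, in the footnote to the proof of Proposition~\ref{prop:Pic}, claims that each section pushes forward to $\Theta$. For $n\ge 1$ your set-up (that identification, Leray over $\mathcal{M}_{1,n}$, and the Gysin sequence for $\mathcal{M}_{1,n+1}\subset\mathcal{C}_{1,n}$) is the intended route. Everything up to ``the kernel on pure weight is the span of the images of the sections'' is correct.

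The gap is the next step, ``comparing the two descriptions then yields\ldots''. The kernel is $\sum_{i=1}^n\sigma_{i*}W_{\star-2}H^{\star-2}(\mathcal{M}_{1,n})(-1)$. The claimed formula needs this to be a single copy of $W_{\star-2}H^{\star-2}(\mathcal{M}_{1,n})(-1)$, so it needs the differences $\sigma_{i*}\alpha-\sigma_{1*}\alpha$ to vanish. As you note yourself, these come from the $\mathbb{V}$-part of the Leray decomposition. They are pure and die on $\mathcal{M}_{1,n+1}$, so neither summand on the right accounts for them. You give no reason for them to vanish, and in general they do not. This is the same point the paper's footnote glosses over.

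\textbf{Counterexample to part (1), $n=2$.} Normalise $p_1=0$. In coordinates $(p_2,q)$,
\[
\mathcal{C}_{1,2}=(\mathcal{E}\times_{\mathcal{M}_{1,1}}\mathcal{E})\setminus(0\times\mathcal{E}),\qquad \sigma_1=\{q=0\},\quad \sigma_2=\{q=p_2\}.
\]
\begin{itemize}
\item Since $H^\bullet(\mathcal{M}_{1,1},\mathbb{V})=0$ and $H^0(\mathcal{M}_{1,1},\mathrm{Sym}^2\mathbb{V})=0$, Leray gives $H^1(\mathcal{E})=0$ and $H^2(\mathcal{E}\times_{\mathcal{M}_{1,1}}\mathcal{E})\cong\mathbb{Q}(-1)^3$.
\item A basis is $[0\times\mathcal{E}]$, $[\mathcal{E}\times 0]$, $[\Delta]$. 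The third class comes from $\wedge^2\mathbb{V}\subset\mathbb{V}\otimes\mathbb{V}$. Independence follows because the intersection matrix of these three classes on a fibre $E\times E$ is nondegenerate.
\item Removing $0\times\mathcal{E}$, the Gysin sequence gives $W_2H^2(\mathcal{C}_{1,2})=H^2(\mathcal{C}_{1,2})\cong\mathbb{Q}(-1)^2$, with basis $[\sigma_1],[\sigma_2]$.
\item Restriction to $\mathcal{M}_{1,3}$ is onto in lowest weight and kills both section classes, so $W_2H^2(\mathcal{M}_{1,3})=0$.
\end{itemize}
The right-hand side of (1) in degree $2$ is therefore $0\oplus W_0H^0(\mathcal{M}_{1,2})(-1)\cong\mathbb{Q}(-1)$, which is one-dimensional against two. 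The displayed isomorphism fails for $n=2$, and no weight bookkeeping can close your step.

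\textbf{Part (2) fails already for $n=1$.} We have $\mathrm{Pic}^{d_1,d_2}_{1,1}\cong\mathcal{E}\times_{\mathcal{M}_{1,1}}\mathcal{E}$, whose pure $H^2$ is $\mathbb{Q}(-1)^3$. The formula predicts $W_2H^2(\mathcal{M}_{1,3})\oplus W_0H^0(\mathcal{M}_{1,2})(-1)^{\oplus 2}\cong\mathbb{Q}(-1)^2$. The missing class is the diagonal, which is exactly the $\mathbb{V}\otimes\mathbb{V}$ term that your ``relative K\"unneth for $\Theta_1,\Theta_2$'' drops.

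\textbf{What your method does prove.}
\begin{itemize}
\item The Leray decomposition $W_\star H^\star(\mathrm{Pic}^d_{1,n})\cong W_\star H^\star(\mathcal{M}_{1,n})\oplus W_\star H^{\star-1}(\mathcal{M}_{1,n},\mathbb{V})\oplus W_{\star-2}H^{\star-2}(\mathcal{M}_{1,n})(-1)$.
\item A surjection $W_\star H^\star(\mathrm{Pic}^d_{1,n})\twoheadrightarrow W_\star H^\star(\mathcal{M}_{1,n+1})$ whose kernel is the span of all $n$ section pushforwards.
\end{itemize}
This agrees with statement (1) for $n=1$ but not for $n\ge 2$.
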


The same technique allows us to relate the off-by-one cohomology of the Picard groups $\mathrm{Pic}^d_{1,n}$ to Petersen's interpretation of Getzler's relation in $\mathrm{gr}^W_4 H^3(\mathcal{M}_{1,4})$ \cite{pet}.

\begin{lemma}\label{lem:M1noff}
  For $k$ odd, $\mathrm{gr}^W_{k+1}H^k(\mathrm{Pic}^d_{1,n})\neq 0$ only when $k = 3$ and $n\geq 3$ in which case it is isomorphic to $\mathrm{gr}^W_{4}H^3(\mathcal{M}_{1,n+1})$ and when $k=5$ and $n\geq 4$ in which case it is isomorphic to $\mathrm{gr}^W_{4}H^3(\mathcal{M}_{1,n})(-1)$ under the Gysin pushforward along any section $\mathcal{M}_{1,n}\to \mathrm{Pic}^d_{1,n}.$
\end{lemma}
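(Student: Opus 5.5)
We compute $\mathrm{gr}^W_{k+1}H^k(\mathrm{Pic}^d_{1,n})$ with the same partial compactification used for Lemma \ref{lem:pics}, following \cite[§2.2]{clp}. The projection $\mathrm{Pic}^d_{1,n}\to\mathcal{M}_{1,n}$ is a torsor under the universal elliptic curve; after choosing a section (for instance $L\mapsto L\otimes\mathcal{O}(-dp_1)$) it identifies with $\mathcal{C}_{1,n}=\mathcal{E}\times_{\mathcal{M}_{1,1}}\mathcal{M}_{1,n}\to\mathcal{M}_{1,n}$. This fibration contains the open locus $\mathcal{M}_{1,n+1}$, whose complement is the union of the $n$ sections $\sigma_i$, each isomorphic to $\mathcal{M}_{1,n}$ and pairwise disjoint. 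So we have a Gysin long exact sequence
\[\cdots\to \bigoplus_{i=1}^n H^{k-2}(\mathcal{M}_{1,n})(-1)\xrightarrow{\ \sigma_{i*}\ } H^k(\mathrm{Pic}^d_{1,n})\to H^k(\mathcal{M}_{1,n+1})\to \bigoplus_{i=1}^n H^{k-1}(\mathcal{M}_{1,n})(-1)\to\cdots\]
compatible with mixed Hodge structures. Alternatively, Leray for $\mathcal{C}_{1,n}\to\mathcal{M}_{1,n}$ gives $H^k(\mathrm{Pic})=H^k(\mathcal{M}_{1,n})\oplus H^{k-1}(\mathcal{M}_{1,n},\mathbb{V}_1)\oplus H^{k-2}(\mathcal{M}_{1,n})(-1)$, since the spectral sequence degenerates for a family of abelian varieties (Deligne). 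We use this second description together with the first.

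The key input is Petersen's computation of the low-weight pieces of $H^\star(\mathcal{M}_{1,n})$ and $H^\star(\mathcal{M}_{1,n},\mathbb{V}_1)$ \cite{pet}. For odd $k$, the off-by-one piece $\mathrm{gr}^W_{k+1}H^k(\mathcal{M}_{1,m})$ is nonzero only for $k=3$, $m\geq 4$; this is Getzler's relation. It is Eisenstein, since cusp-form classes $\mathsf{S}_{k+1}$ are pure. Local systems $\mathbb{V}_a$ with $a$ odd contribute only through cusp forms or Eisenstein classes of the wrong weight in the relevant range.

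We then split into terms. For $k$ odd, $\mathrm{gr}^W_{k+1}$ of the summand $H^{k-2}(\mathcal{M}_{1,n})(-1)$ is $\mathrm{gr}^W_{k-1}H^{k-2}(\mathcal{M}_{1,n})(-1)$, which is nonzero only for $k-2=3$, $n\ge4$. This gives the $k=5$ term, realised by Gysin pushforward along a section. For the $H^k(\mathcal{M}_{1,n})$ summand we get $k=3$, $n\ge 4$. For the middle summand, odd $k$ means $H^{k-1}(\mathcal{M}_{1,n},\mathbb{V}_1)$ has even degree $k-1$. We identify $\mathcal{M}_{1,n}$-cohomology with $\mathbb{V}_1$ coefficients, via the fibration over $\mathcal{M}_{1,1}$, with a sum of $H^\bullet(\mathcal{M}_{1,1},\mathbb{V}_a)$ twisted appropriately. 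The Eisenstein part of $H^1(\mathcal{M}_{1,1},\mathbb{V}_{2})$ appears in the combination with $\mathcal{M}_{1,n+1}$.

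Cleaner is to compare directly with the open inclusion $\mathcal{M}_{1,n+1}\subset\mathrm{Pic}^d_{1,n}$. Restriction is injective on $\mathrm{gr}^W_{k+1}H^k$ modulo images of $\mathrm{gr}^W_{k-1}H^{k-2}(\mathcal{M}_{1,n})(-1)$. Cokernels land in $\mathrm{gr}^W_{k+1}H^{k-1}(\mathcal{M}_{1,n})(-1)=\mathrm{gr}^W_{k-1}H^{k-1}(\mathcal{M}_{1,n})$, which is pure, even-degree, and spanned by boundary classes. The residue of Getzler's class is zero there, because Getzler's relation lives in $\mathrm{gr}^W_4H^3(\mathcal{M}_{1,4})$ pulled back along forgetful maps. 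This gives $\mathrm{gr}^W_4H^3(\mathrm{Pic})\cong \mathrm{gr}^W_4H^3(\mathcal{M}_{1,n+1})$, nonzero iff $n+1\ge4$, i.e. $n\ge3$.

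The main obstacle is checking exactness at the $k=5$ level. We must verify that $\sigma_*$ on $\mathrm{gr}^W_4H^3(\mathcal{M}_{1,n})(-1)$ is injective and independent of $i$ modulo the image, so that one copy survives. Here we would use that the sections differ by translations acting trivially on cohomology of the torsor.
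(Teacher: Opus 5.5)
Your route is the one the paper intends. The paper writes out no proof of this lemma; it only points to the partial-compactification method of \cite[\S2.2]{clp}. You identify $\mathrm{Pic}^d_{1,n}$ with the universal curve $\mathcal{C}_{1,n}$, take $\mathrm{gr}^W_{k+1}$ of the Gysin sequence for $\mathcal{M}_{1,n+1}\subset\mathcal{C}_{1,n}$, and feed in Petersen's vanishing. This correctly gives the vanishing for odd $k\neq 3,5$ and the isomorphism $\mathrm{gr}^W_4H^3(\mathrm{Pic}^d_{1,n})\cong \mathrm{gr}^W_4H^3(\mathcal{M}_{1,n+1})$.

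One correction in the $k=3$ step. The cokernel term vanishes simply because $W_2H^2(\mathcal{M}_{1,n})=0$: pure classes are restrictions from $\overline{\mathcal{M}}_{1,n}$, whose even cohomology is tautological and restricts to zero on the interior. The group is not ``spanned by boundary classes'', and no statement about residues of Getzler's class is needed.

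The genuine gap is the $k=5$ clause, which you rightly flag. Exactness only gives
\[
\mathrm{gr}^W_6H^5(\mathrm{Pic}^d_{1,n})\cong \mathrm{coker}\Bigl(\mathrm{gr}^W_6H^4(\mathcal{M}_{1,n+1})\xrightarrow{\ \mathrm{res}\ }\bigoplus_{i=1}^n \mathrm{gr}^W_4H^3(\mathcal{M}_{1,n})(-1)\Bigr).
\]
So you must show that the image of $\mathrm{res}$ is the whole sum-zero subspace, i.e.\ $\sigma_{i*}\alpha\equiv\sigma_{j*}\alpha$ modulo $W_5$. Injectivity of each $\sigma_{i*}$ is free, since $\pi_*\sigma_{i*}=\mathrm{id}$.

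The reason you give for the coincidence is false. Translation by the section $\mathcal{O}(p_i-p_j)$ acts trivially on the Leray graded pieces, not on cohomology. Let $\mathbb{V}_1=R^1\pi_*\mathbb{Q}$. The deviation of the translation from the identity, going from the top piece $H^{k-2}(\mathcal{M}_{1,n})(-1)$ to the middle piece $H^{k-1}(\mathcal{M}_{1,n},\mathbb{V}_1)$, is (up to sign) cup product with the normal-function class $\eta_{ij}\in H^1(\mathcal{M}_{1,n},\mathbb{V}_1)$ of $p_i-p_j$. This class is nonzero: it is the Leray component of $[\sigma_i]-[\sigma_j]$, and on $\mathcal{M}_{1,2}$ it spans $H^1(\mathcal{M}_{1,2},\mathbb{V}_1)\cong\mathbb{Q}(-1)$.

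Hence $\sigma_{i*}\alpha-\sigma_{j*}\alpha=([\sigma_i]-[\sigma_j])\cup\pi^*\alpha$ has Leray component $\eta_{ij}\cup\alpha\in\mathrm{gr}^W_6H^4(\mathcal{M}_{1,n},\mathbb{V}_1)$. The $k=5$ claim is equivalent to all these products vanishing, i.e.\ to $\mathrm{gr}^W_6H^4(\mathcal{M}_{1,n},\mathbb{V}_1)=0$. This is exactly the middle Leray summand you never computed, and it is not covered by Petersen's results for $\mathcal{M}_{1,m}$.

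It can be supplied, for instance as follows (sketch):
\begin{itemize}
\item \emph{Reduce to $n=5$.} Use forgetful pullback and additivity $\eta_{ij}=\eta_{is}+\eta_{sj}$. On $\mathcal{M}_{1,4}$ the group $\mathrm{gr}^W_6H^4(\mathcal{M}_{1,4},\mathbb{V}_1)$ vanishes, since $F(E\setminus 0,3)$ is affine of dimension $3$. It then suffices to show $P:=\eta_{51}\cup\pi_5^*G=0$ on $\mathcal{M}_{1,5}$.
\item \emph{Decompose $G$.} Your $k=3$ case, combined with the Leray decomposition of $\mathcal{C}_{1,3}$, gives $G=\langle\kappa,\eta_{41}\rangle$ with $\kappa$ pulled back from $\mathcal{M}_{1,3}$.
\item \emph{Antisymmetry.} Since $\langle\eta_{41},\eta_{51}\rangle\in W_2H^2(\mathcal{M}_{1,5})(-1)=0$, the transposition $(45)$ sends $P$ to $-P$.
\item \emph{Symmetry.} $S_{\{2,3,4\}}$ fixes $P$, by $S_4$-invariance of $G$.
\item \emph{Conclusion.} The line $\mathbb{Q}P$ is then stable under $S_{\{2,3,4,5\}}$, but conjugate transpositions would act on it by opposite signs. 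This forces $P=0$.
\end{itemize}
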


The class $H_{\hat{\mathcal{Q}}}\in H^2(\mathcal{M}_{1,n}(\mathbb{P}^r,d))$ comes from the projective bundle formula and records the hyperplane class in the linear system. We relate it to hyperplane classes pulled back from $\mathbb{P}^r$. 

\begin{lemma}
  When $n\geq 1$, $d\geq 2$, $H_{\hat{\mathcal{Q}}}\in H^2(\mathcal{M}_{1,n}(\mathbb{P}^r,d))$ agrees with $\mathrm{ev}_{i}^*H$ where $H\in H^2(\mathbb{P}^r)$ is a hyperplane class and $\mathrm{ev}_{i}: \mathcal{M}_{1,n}(\mathbb{P}^r,d)\to \mathbb{P}^r$ is any evaluation map.
\end{lemma}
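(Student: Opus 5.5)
The plan is to compare the two line bundles directly on $\mathcal{M}_{1,n}(\PP^r,d)$, rather than only their cohomology classes. On $\hat{\mathcal{Q}}_{1,n}(\PP^r,d)=\mathbb{P}_{\mathrm{Pic}^d_{1,n}}(R\pi_*\mathcal{P}_d^{\oplus r+1})$ the tautological line $\mathcal{O}(-1)\subset \pi_*\mathcal{P}_d^{\oplus r+1}$ is spanned by the tuple $(s_0,\dots,s_r)$. Here $d\geq 2$, so $H^1(E,L)=0$ and $\pi_*\mathcal{P}_d$ is a rank-$d$ vector bundle. Over the basepoint-free locus, the universal map $f$ is $[s_0:\dots:s_r]$, and evaluating at the $i$-th marked point gives
\[\mathcal{O}(-1)\to \bigl(\sigma_i^*\mathcal{P}_d\bigr)^{\oplus r+1}.\]
This morphism is injective on fibres exactly because $p_i$ is not a basepoint. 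So $f^*\mathcal{O}_{\PP^r}(-1)|_{p_i}$, the line in $\mathbb{C}^{r+1}$ spanned by $(s_0(p_i),\dots,s_r(p_i))$ after trivialising $L_{p_i}$, is identified with $\mathcal{O}(-1)\otimes(\sigma_i^*\mathcal{P}_d)^{-1}$. I would record this as the identity
\[\mathrm{ev}_i^*\mathcal{O}_{\PP^r}(1)\cong \mathcal{O}_{\hat{\mathcal{Q}}}(1)\otimes \sigma_i^*\mathcal{P}_d \quad\text{on } \mathcal{M}_{1,n}(\PP^r,d),\]
which gives $\mathrm{ev}_i^*H = H_{\hat{\mathcal{Q}}} + c_1(\sigma_i^*\mathcal{P}_d)$.

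The hypothesis $n\geq 1$ gives a section $\sigma_1$, which lets us rigidify. I would normalise the Poincaré bundle $\mathcal{P}_d$ on $\mathrm{Pic}^d_{1,n}\times_{\mathcal{M}_{1,n}}\mathcal{C}_{1,n}$ so that $\sigma_i^*\mathcal{P}_d$ is trivial along the chosen section. This normalisation also fixes which $\mathcal{O}(1)$, and hence which $H_{\hat{\mathcal{Q}}}$, is meant in the definition of $\hat{\mathcal{Q}}$. With it, the correction term vanishes for that $i$.

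It then remains to show that $c_1(\sigma_i^*\mathcal{P}_d)$ is independent of $i$, or at least that it vanishes for every $i$ once restricted to $\mathcal{M}_{1,n}(\PP^r,d)$. For two marked points,
\[c_1(\sigma_i^*\mathcal{P}_d)-c_1(\sigma_j^*\mathcal{P}_d)=c_1\bigl(\sigma_i^*\mathcal{P}_d\otimes\sigma_j^*\mathcal{P}_d^{-1}\bigr),\]
and this line bundle is pulled back from $\mathrm{Pic}^0_{1,n}$ via the Abel--Jacobi map $(E,L)\mapsto L(-dp_j)$. Rationally, the classes on $\mathrm{Pic}^d_{1,n}$ are described by Lemma \ref{lem:pics}, with $H^2$ generated by $\Theta$ and classes pulled back from $\mathcal{M}_{1,n+1}$. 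I expect the difference to be a combination of $\Theta$ and pullbacks of $H^2(\mathcal{M}_{1,n})$. Since $H^2(\mathcal{M}_{1,n};\mathbb{Q})$ is spanned by $\psi$ and boundary-type classes, which vanish on the open part, only a possible $\Theta$-component needs control.

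\textbf{Main obstacle.} The hard step is controlling that possible $\Theta$-component, or more generally the class of $\sigma_i^*\mathcal{P}_d\otimes\sigma_j^*\mathcal{P}_d^{-1}$. On the fibre over a fixed curve $E$, this bundle is the restriction to $\mathrm{Pic}^d(E)\cong E$ of the Poincaré bundle twisted by $p_i-p_j$. It is a degree-zero bundle on $E$, topologically trivial, so its rational $c_1$ is fibrewise zero. Its class therefore comes from the base, and I would argue that it vanishes rationally on $\mathcal{M}_{1,n}$ because it is given by a torsion or trivial family there. If this direct argument fails, I would fall back on computing degrees on test curves in $\mathcal{M}_{1,n}(\PP^r,d)$, such as moving a marked point or moving $L$ in a pencil. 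Because the open part has $\mathrm{gr}^W_2H^2$ detected by such families, this would separate $\Theta$ from $H_{\hat{\mathcal{Q}}}$.
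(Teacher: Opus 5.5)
Your identity $\mathrm{ev}_i^*\mathcal{O}_{\PP^r}(1)\cong\mathcal{O}_{\hat{\mathcal{Q}}}(1)\otimes\sigma_i^*\mathcal{P}_d$ is correct. It is also more careful than the paper's own argument. The paper concludes $\pi_{\hat{\mathcal{Q}}}^*H_{\hat{\mathcal{Q}}}=\mathrm{ev}^*H$ on the universal curve, which cannot hold because the two sides have fibre degrees $0$ and $d$. The correct identity there is $\mathrm{ev}^*\mathcal{O}(1)\cong\pi_{\hat{\mathcal{Q}}}^*\mathcal{O}(1)\otimes\pi_{\mathcal{C}}^*\mathcal{P}_d$, which becomes yours after pulling back along $s_i$; the paper thus silently drops the term $c_1(\sigma_i^*\mathcal{P}_d)$ that you isolate. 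Your normalisation paragraph then correctly gives $H_{\hat{\mathcal{Q}}}=\mathrm{ev}_i^*H$ for one chosen $i$, with $\mathcal{P}_d$ rigidified along $\sigma_i$. That is the whole content of the lemma when $n=1$.

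The gap is the final step, and it cannot be closed, because the class you want to vanish does not vanish.

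\textbf{Where the inference fails.} From ``fibrewise zero'' you infer ``comes from the base''. A fibrewise-zero class in $H^2(\mathrm{Pic}^d_{1,n})$ only lies in $F^1$ of the Leray filtration, whose graded pieces include $H^1(\mathcal{M}_{1,n},R^1)$. The class $c_1(\sigma_i^*\mathcal{P}_d\otimes\sigma_j^*\mathcal{P}_d^{-1})$ sits exactly there: it is the Abel--Jacobi class of the section $p_i-p_j$ of the universal Jacobian, and that section is not torsion.

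\textbf{A concrete family.} Fix $(E,p_1)$ and restrict to the family $(p_2,L)\in(E\setminus p_1)\times\mathrm{Pic}^d(E)$. Up to a common twist that cancels in the difference, $\sigma_2^*\mathcal{P}_d$ is the Poincar\'e bundle of $E$. Its $c_1$ has a nonzero component in $H^1(E\setminus p_1)\otimes H^1(\mathrm{Pic}^d(E))\cong H^1(E)\otimes H^1(\mathrm{Pic}^d(E))$. By contrast, $\sigma_1^*\mathcal{P}_d$ is pulled back from $\mathrm{Pic}^d(E)$ alone. So the difference is nonzero in $H^2(\mathrm{Pic}^d_{1,2};\mathbb{Q})$.

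For $r\geq 2$ the difference stays nonzero in $H^2(\mathcal{M}_{1,2}(\PP^r,d);\mathbb{Q})$, by the projective bundle formula and $\mathrm{codim}\,\mathcal{B}=r$. By your own formula, $\mathrm{ev}_1^*H\neq\mathrm{ev}_2^*H$ there. So no argument can give the statement for all $i$ simultaneously once $n\geq 2$. Your fallback test families would detect this class rather than kill it.

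Lemma~\ref{lem:pics} does not rescue the step either. Read literally, it gives $W_2H^2(\mathrm{Pic}^d_{1,n})=\mathbb{Q}\Theta$. But under $\mathrm{Pic}^d_{1,n}\cong\mathcal{C}_{1,n}$ the section classes $[\sigma_i]$ are not proportional for $n\geq 2$; for instance, $H^2(\mathcal{C}_{1,2};\mathbb{Q})$ is two-dimensional.

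What your argument does establish, and what is true, is the normalisation-dependent version: $H_{\hat{\mathcal{Q}}}=\mathrm{ev}_i^*H$ when $\mathcal{P}_d$ is rigidified along $\sigma_i$. Different choices of $i$ give classes that differ by these Abel--Jacobi classes.
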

\begin{proof}
  Let $\hat{\mathcal{C}}_{1,n}(\mathbb{P}^r,d)$ be the universal curve over $\hat{\mathcal{Q}}_{1,n}(\mathbb{P}^r,d).$ Namely, it is the fibre product \Cartesiansquare{\hat{\mathcal{C}}_{1,n}(\mathbb{P}^r,d)}{\hat{\mathcal{Q}}_{1,n}(\mathbb{P}^r,d)}{\mathcal{C}_{1,n}\times_{\mathcal{M}_{1,n}}\mathrm{Pic}^d_{1,n}}{\mathrm{Pic}^d_{1,n}}[\pi_{\hat{\mathcal{Q}}}][\pi_{{\mathcal{C}}}][][] Let $\mathcal{C}_{1,n}(\mathbb{P}^r,d)\to \mathcal{M}_{1,n}(\mathbb{P}^r,d)$ be the universal curve over $\mathcal{M}_{1,n}(\mathbb{P}^r,d),$ then $\mathcal{C}_{1,n}(\mathbb{P}^r,d)$ is open in $\hat{\mathcal{Q}}_{1,n}(\mathbb{P}^r,d)$ pulled back from the open immersion $\mathcal{M}_{1,n}(\mathbb{P}^r,d)\subset \hat{\mathcal{Q}}_{1,n}(\mathbb{P}^r,d).$ 
  
  Let $\mathrm{ev}: \mathcal{C}_{1,n}(\mathbb{P}^r,d)\to \mathbb{P}^r$ be the evaluation map, then $\mathrm{ev}^*\mathcal{O}_{\mathbb{P}^r}(1)\cong (\pi_{{\mathcal{C}}}^* \mathcal{P}_d)|_{\mathcal{C}_{1,n}(\mathbb{P}^r,d)}$ where recall that $\mathcal{P}_d$ is the Poincaré line bundle on $\mathcal{C}_{1,n}\times_{\mathcal{M}_{1,n}}\mathrm{Pic}^d_{1,n}.$ On the other hand, the line bundle $\mathcal{O}_{\pi_*\mathcal{P}_d^{\oplus r+1}}(1)$ on $\hat{\mathcal{Q}}_{1,n}(\mathbb{P}^r,d)$ satisfies that $\pi_{\hat{\mathcal{Q}}}\mathcal{O}_{\pi_*\mathcal{P}_d^{\oplus r+1}}(1)\cong (\pi_{{\mathcal{C}}}^* \mathcal{P}_d)|_{\mathcal{C}_{1,n}(\mathbb{P}^r,d)}.$ 
  Therefore, $$\pi_{\hat{\mathcal{Q}}}^* H_{\hat{\mathcal{Q}}}|_{\mathcal{C}_{1,n}(\mathbb{P}^r,d)} = \mathrm{ev}^* H$$ on $H^2(\mathcal{C}_{1,n}(\mathbb{P}^r,d)).$ We pull back both sides along the section $s_i: \mathcal{M}_{1,n}(\mathbb{P}^r,d)\to \mathcal{C}_{1,n}(\mathbb{P}^r,d)$ corresponding to the marked point $p_i$: they are $s_i^*(\pi_{\hat{\mathcal{Q}}}^* H_{\hat{\mathcal{Q}}}|_{\mathcal{C}_{1,n}(\mathbb{P}^r,d)}) = (\pi_{\hat{\mathcal{Q}}}\circ s_i)^* H_{\hat{\mathcal{Q}}} = H_{\hat{\mathcal{Q}}}$ and $s_i^* \mathrm{ev}^* H = \mathrm{ev}_i^*H.$ Hence $H_{\hat{\mathcal{Q}}} = \mathrm{ev}_i^*H.$
\end{proof}

Using the shorthand $\mathcal{B}$ for $\mathcal{B}_{1,n}(\mathbb{P}^r,d),$ we have the excision sequence in Borel--Moore homology: \[\cdots\to H^{\mathrm{BM}}_{k}(\mathcal{B}) \xrightarrow{\phi} H^{\mathrm{BM}}_{k} (\hat{\mathcal{Q}}_{1,n}(\mathbb{P}^r,d))\to  H^{\mathrm{BM}}_{k} (\mathcal{M}_{1,n}(\mathbb{P}^r,d))\to H^{\mathrm{BM}}_{k-1}(\mathcal{B})\xrightarrow{\varphi} H_{k-1}^{\mathrm{BM}}(\mathcal{Q}_{1,n}(\mathbb{P}^r,d))\to \cdots\]

We describe the pure and off-by-one weight graded pieces of $H_\star^{\mathrm{BM}}(\mathcal{M}_{1,n}(\PP^r,d))$ using weight truncations of the long exact sequence.

\begin{lemma}\label{lem:m1nrd} \label{lem:m1nrdoff}
  \begin{enumerate}
    \item The pure weight piece $\mathrm{gr}^W_\star H^\star(\mathcal{M}_{1,n}(\mathbb{P}^r,d))$ is given by $\mathrm{gr}^W_\star H^\star(\mathrm{Pic}^d_{1,n})\otimes H_{\hat{\mathcal{Q}}}^i$ for $ 0 \leq i\leq r.$
    \item The off-by-one weight graded piece fits in a short exact sequence \begin{equation} 0\to \frac{\mathrm{gr}^W_{-k+1}H_k^{\mathrm{BM}}(\hat{\mathcal{Q}}_{1,n}(\mathbb{P}^r,d))}{\mathrm{im}(\phi)}\to \mathrm{gr}^W_{-k+1} H_k^{\mathrm{BM}}(\mathcal{M}_{1,n}(\PP^r,d))\to \ker(\gr^W_{-k+1}\varphi)\to 0\end{equation} where \begin{enumerate}
    \item $\gr^{W}_{-\star+1}H_\star^{\mathrm{BM}}(\hat{\mathcal{Q}}_{1,n}(\PP^r,d))$ is generated by pullback from $\gr^{W}_{-\star+1}H_\star^{\mathrm{BM}}(\mathrm{Pic}^d_{1,n})$ and their cap products with $H_{\hat{\mathcal{Q}}},$
    \item $\ker(\gr^W_{-\star+1}\varphi)\subset \gr^W_{-\star+1} H_{{\star-1}}^{\mathrm{BM}}(\mathcal{B})$ is generated, under the surjection $$\gr^W_{-\star+1} H_{{\star-1}}^{\mathrm{BM}}(\mathcal{C}_{1,n}\times_{\mathcal{M}_{1,n}}\hat{\mathcal{Q}}_{1,n}(\PP^r,d-1))\to \gr^W_{-\star+1} H_{{\star-1}}^{\mathrm{BM}}(\mathcal{B})$$ by $\mathrm{gr}^W_{\star} H^\star(\hat{\mathcal{Q}}_{1,n+1}(\PP^r,d-1))\cap [\mathcal{C}_{1,n}\times_{\mathcal{M}_{1,n}}\hat{\mathcal{Q}}_{1,n}(\PP^r,d-1)].$
    \end{enumerate}
  \end{enumerate}
\end{lemma}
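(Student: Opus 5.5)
The plan is to weight-truncate the excision sequence displayed just above,
\[\cdots\to H^{\mathrm{BM}}_{k}(\mathcal{B}) \xrightarrow{\phi} H^{\mathrm{BM}}_{k} (\hat{\mathcal{Q}}_{1,n}(\mathbb{P}^r,d))\to H^{\mathrm{BM}}_{k} (\mathcal{M}_{1,n}(\mathbb{P}^r,d))\to H^{\mathrm{BM}}_{k-1}(\mathcal{B})\xrightarrow{\varphi} \cdots\]
We use that $\mathrm{gr}^W$ is exact on mixed Hodge structures, together with the weight bound: $H^{\mathrm{BM}}_k$ of any variety or DM stack has weights $\geq -k$.

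\emph{Part (1).} Take $\mathrm{gr}^W_{-k}$. The term $H^{\mathrm{BM}}_{k-1}(\mathcal{B})$ has weights $\geq -k+1$, so $\mathrm{gr}^W_{-k}H^{\mathrm{BM}}_{k-1}(\mathcal{B})=0$. Hence restriction
\[\mathrm{gr}^W_{-k}H^{\mathrm{BM}}_k(\hat{\mathcal{Q}})\to \mathrm{gr}^W_{-k}H^{\mathrm{BM}}_k(\mathcal{M}_{1,n}(\PP^r,d))\]
is surjective. Its kernel is the image of $\mathrm{gr}^W_{-k}H^{\mathrm{BM}}_k(\mathcal{B})$, i.e.\ classes supported on $\mathcal{B}$.

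By the projective bundle formula, $H^{\mathrm{BM}}_\star(\hat{\mathcal{Q}})$ is free over $H^{\mathrm{BM}}_\star(\mathrm{Pic}^d_{1,n})$ on the powers $H_{\hat{\mathcal{Q}}}^i$, $0\le i\le r(d+1)+r$ (the fibre is $\PP^{(r+1)d-1}$). This formula is compatible with weights, so the pure part of $\hat{\mathcal{Q}}$ is $\mathrm{gr}^W H(\mathrm{Pic}^d_{1,n})\otimes H_{\hat{\mathcal{Q}}}^i$.

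It remains to show that the kernel kills exactly the powers $H_{\hat{\mathcal{Q}}}^i$ with $i\ge r$, modulo lower ones, leaving $0\le i\le r-1$. I would identify the fibrewise picture: in each fibre $\mathbb{P}(H^0(E,L)^{\oplus r+1})$, the basepoint locus $\mathcal{B}$ has codimension $r$. Its fundamental class is a polynomial in $H_{\hat{\mathcal{Q}}}$ of degree $r$ with leading coefficient nonzero (the image under Lemma \ref{lem:propsurj} of $\mathcal{C}_{1,n}\times\hat{\mathcal{Q}}(d-1)$, whose class is $d\,H^r+\dots$ with coefficients pulled back from $\mathrm{Pic}$). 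Capping this class with pure classes of $\hat{\mathcal{Q}}$ produces relations expressing $H^{i}$ for $i\ge r$ in lower powers. The dimension count is the same as for $\mathrm{Map}^*$ à la Farb--Wolfson.

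Conversely, by Lemma \ref{lem:propsurj} the image of $\mathrm{gr}^W H(\mathcal{B})$ equals the pushforward of the pure part of $\mathcal{C}_{1,n}\times_{\mathcal{M}_{1,n}}\hat{\mathcal{Q}}_{1,n}(\PP^r,d-1)$. That pure part is generated via Lemma \ref{lem:pics}(2), using $\mathcal{C}_{1,n}\simeq \mathrm{Pic}^1$, by $\Theta_1,\Theta_2$, pullbacks from $\mathcal{M}_{1,n+1}$, and powers of $H$. I must check that its pushforwards lie in the ideal generated by $[\mathcal{B}]$. This comparison is \textbf{the main obstacle}, and I would handle it by computing pushforwards along the multiplication map $\mathrm{Pic}^1\times\mathrm{Pic}^{d-1}\to\mathrm{Pic}^d$ using $\Theta$-classes.

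\emph{Part (2).} Take $\mathrm{gr}^W_{-k+1}$. Exactness of $\mathrm{gr}^W$ gives directly
\[0\to \mathrm{coker}(\mathrm{gr}\,\phi)\to \mathrm{gr}^W_{-k+1}H^{\mathrm{BM}}_k(\mathcal{M}_{1,n}(\PP^r,d))\to \ker(\mathrm{gr}\,\varphi)\to 0.\]

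For (a), apply the projective bundle formula again, now to the off-by-one weight part of $\mathrm{Pic}^d_{1,n}$ (described by Lemma \ref{lem:M1noff}).

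For (b), note that $H^{\mathrm{BM}}_{k-1}(\mathcal{B})$ has weights $\ge -k+1$, so $\mathrm{gr}^W_{-k+1}$ is its lowest-weight part. For a proper surjection from a smooth space $\tilde{\mathcal{B}}=\mathcal{C}_{1,n}\times\hat{\mathcal{Q}}(d-1)$, pushforward on the lowest weight of BM homology is surjective; this is the standard fact that lowest-weight BM homology is the image from any resolution. Finally, the pure part of $\tilde{\mathcal{B}}$ is Poincaré dual to $\mathrm{gr}^W H^\star$, and $\mathcal{C}_{1,n}\times_{\mathcal{M}_{1,n}}\hat{\mathcal{Q}}_{1,n}(d-1)\cong\hat{\mathcal{Q}}_{1,n+1}(d-1)$. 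This gives the stated generators capped with the fundamental class.
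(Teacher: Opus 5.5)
Your outline is the paper's. You weight-truncate the excision sequence, use the projective bundle formula over $\mathrm{Pic}^d_{1,n}$, and surject the pure classes of $\widetilde{\mathcal{B}}:=\mathcal{C}_{1,n}\times_{\mathcal{M}_{1,n}}\hat{\mathcal{Q}}_{1,n}(\PP^r,d-1)$ onto $\gr^W H^{\mathrm{BM}}(\mathcal{B})$ via Lemma~\ref{lem:propsurj}. Items (1) and (2b) are then read off the cokernel and kernel of $m_*$. \textbf{The gap:} the step you label the main obstacle is the actual content of (1) and (2b), and you do not carry it out.

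Write $f_2:\hat{\mathcal{Q}}_{1,n}(\PP^r,d)\to\mathrm{Pic}^d_{1,n}$, $f_1:\widetilde{\mathcal{B}}\to\mathcal{C}_{1,n}\times_{\mathcal{M}_{1,n}}\mathrm{Pic}^{d-1}_{1,n}$, and $m'$ for the multiplication map between the bases. The spanning half of (1) is fine: capping with $[\mathcal{B}]$, whose $H_{\hat{\mathcal{Q}}}$-expansion has degree $r$ and nonzero leading coefficient, reduces every $H_{\hat{\mathcal{Q}}}^i$ with $i\ge r$. What is missing is independence. You must show that $m_*\bigl(\gr^W H^{\mathrm{BM}}(\widetilde{\mathcal{B}})\bigr)$ contains no nonzero combination of classes $H_{\hat{\mathcal{Q}}}^i\cap f_2^*\alpha$ with $i\le r-1$. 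Your criterion, that the image is the ideal generated by $[\mathcal{B}]$, comes with no argument; a priori $m_*(\Theta_2\cap[\widetilde{\mathcal{B}}])=H_{\hat{\mathcal{Q}}}^{r+1}+\cdots$ is an extra generator.

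The paper does this computation. From $m'^*\Theta=\Theta_1+\Theta_2$ and the projection formula it gets $m'_*m'^*\alpha=0$, $m'_*(\Theta_2\cap m'^*\alpha)=\alpha$ and $m'_*(\Theta_1\cap m'^*\alpha)=-\alpha$, and it transports this to $m$ by the projective bundle formula. For (2b) you only show that all of $\gr^W H^{\mathrm{BM}}(\mathcal{B})$ is hit. The statement, however, concerns $\ker(\gr^W\varphi)$, which is the image of $\ker(\gr^W m_*)$ under the surjection, and that kernel must be computed. Two slips: the powers in the projective bundle formula run over $0\le i\le (r+1)d-1$. Also, $\widetilde{\mathcal{B}}$ is not isomorphic to $\hat{\mathcal{Q}}_{1,n+1}(\PP^r,d-1)$, which is only the open locus where $q$ avoids the $n$ markings.

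When you do the computation, do not copy the paper's transport step. Your range $0\le i\le r-1$ contradicts the statement's $0\le i\le r$, and here you are right. The paper asserts that $m_*$ sends $H_{\hat{\mathcal{Q}}}^a\cap f_1^*(\Theta_2\cap m'^*\alpha)\mapsto H_{\hat{\mathcal{Q}}}^{a+r+1}\cap f_2^*\alpha$ and kills every class with no $\Theta$-factor, including $[\widetilde{\mathcal{B}}]$. That treats $m$ as a base change of $m'$, which it is not.

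In fact $m$ is the inclusion of $\widetilde{\mathcal{B}}$ as a codimension-$(r+1)$ linear subbundle of $m'^*\hat{\mathcal{Q}}_{1,n}(\PP^r,d)$, fibrewise $s_q\cdot H^0(L')^{\oplus r+1}\subset H^0(L'(q))^{\oplus r+1}$, followed by the base change of $m'$. The class of this subbundle is $c_{r+1}(Q\otimes\mathcal{O}(1))=\sum_j c_j(Q)\,H_{\hat{\mathcal{Q}}}^{r+1-j}$. The quotient $Q$ has fibre $(L'(q)|_q)^{\oplus r+1}$, so $c_1(Q)$ has degree $(r+1)d$ on the fibres of $m'$. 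Hence $m_*[\widetilde{\mathcal{B}}]=[\mathcal{B}]=(r+1)d\,H_{\hat{\mathcal{Q}}}^r+(\text{lower powers})\neq0$. Fibrewise this is just the class of the codimension-$r$ basepoint locus in $\mathbb{P}^{(r+1)d-1}$. Your coefficient $d$ should be $(r+1)d$, though only non-vanishing matters.

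So $H_{\hat{\mathcal{Q}}}^r$ is expressible through lower powers on $\mathcal{M}_{1,n}(\PP^r,d)$, just as only $1,\dots,\psi^{r-1}$ survive in Lemma~\ref{lem:MF}. Read as a direct sum, item (1) holds with $0\le i\le r-1$; with $i\le r$ it is true only as a spanning statement. State explicitly that this corrected version is what you prove. Keep the $c_j(Q)$ terms when you compute $\ker(\gr^W m_*)$ for (2b).
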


\begin{proof}
The multiplication map $\mathcal{C}_{1,n}\times_{\mathcal{M}_{1,n}} \hat{\mathcal{Q}}_{1,n}(\PP^r,d-1)\to \hat{\mathcal{Q}}_{1,n}(\PP^r,d)$ given (over $\mathcal{M}_{1,n}$) by $$(q, (L, [s_0,\dots,s_n]))\mapsto (L\otimes \mathcal{O}(q), [s_p s_0:\dots:s_p s_n])$$ factors through $$\mathcal{C}_{1,n}\times_{\mathcal{M}_{1,n}} \hat{\mathcal{Q}}_{1,n}(\PP^r,d-1)\to \mathcal{B}\to \hat{\mathcal{Q}}_{1,n}(\PP^r,d).$$ Lemma \ref{lem:propsurj} implies that the pushforward map $$\gr^W_{-\star+1} H_{{\star-1}}^{\mathrm{BM}}(\mathcal{C}_{1,n}\times_{\mathcal{M}_{1,n}}\hat{\mathcal{Q}}_{1,n}(\PP^r,d-1))\to \gr^W_{-\star+1} H_{{\star-1}}^{\mathrm{BM}}(\mathcal{B})$$ is surjective. We then have a surjection $\ker(\gr^W_{-k+1}\tilde m_*)\twoheadrightarrow\ker(\gr^W_{-k+1}\varphi)$ along the proper pushforward. To compute this, consider the following diagram, where $m': \mathcal{C}_{1,n}\times_{\mathcal{M}_{1,n}}\mathrm{Pic}^{d-1}_{1,n}\to \mathrm{Pic}^{d}_{1,n}.$

\[\begin{tikzcd}
	{\mathcal{C}_{1,n}\times_{\mathcal{M}_{1,n}} \hat{\mathcal{Q}}_{1,n}(\PP^r,d-1)} & {\hat{\mathcal{Q}}_{1,n}(\PP^r,d)} \\
	{\mathcal{C}_{1,n}\times_{\mathcal{M}_{1,n}}\mathrm{Pic}^{d-1}_{1,n}} & {\mathrm{Pic}^{d}_{1,n}}
	\arrow["m", from=1-1, to=1-2]
	\arrow["{f_1}"', from=1-1, to=2-1]
	\arrow["{f_2}", from=1-2, to=2-2]
	\arrow["{m'}"', from=2-1, to=2-2]
\end{tikzcd}\]

Let $\Theta\in H^2(\Pic^d_{1,n})$ be the relative polarisation over $\mathcal{M}_{1,n},$ and let $\Theta_1,\Theta_2\in H^2(\mathcal{C}_{1,n}\times_{\mathcal{M}_{1,n}}\Pic^{d-1}_{1,n})$ be the relative polarisations pulled back from $\Pic^{d-1}_{1,n}\to \mathcal{M}_{1,n}$ and $\mathcal{C}_{1,n}\to \mathcal{M}_{1,n},$ respectively. We have ${m'}^*(\Theta) = \Theta_1 + \Theta_2.$ Applying projection formula, we see that the pushforward map $\alpha\in \mathrm{gr}^W_{-\star} H_{\star}^{\mathrm{BM}}(\mathrm{Pic}^{d}_{1,n}),$ it maps $\Theta_2\cap (m')^*(\alpha)\mapsto \alpha,$ and $\Theta_1\cap (m')^*(\alpha)\mapsto -\alpha$ and vanishes otherwise. In particular, the kernel is isomorphic to $\mathrm{gr}^W_{\star} H^\star(\mathrm{Pic}^{d-1}_{1,n+1})\cap [\mathcal{C}_{1,n}\times_{\mathcal{M}_{1,n}}\mathrm{Pic}_{1,n}^{d-1}].$

Extending the map ${m'}^*$ by the projective bundle formula, the map $m_*$ maps $$H_{\hat{\mathcal{Q}}}^a\cap f_1^*(\Theta_2\cap (m')^*(\alpha))\mapsto H_{\hat{\mathcal{Q}}}^{a+(r+1)}\cap f_2^*(\alpha),$$ $$H_{\hat{\mathcal{Q}}}^a\cap f_1^*(\Theta_1\cap (m')^*(\alpha))\mapsto -H_{\hat{\mathcal{Q}}}^{a+(r+1)}\cap f_2^*(\alpha),$$
and vanishes otherwise. Items (1) and (2-b) follow from taking the kernel and cokernel of $m_*,$ and (2-a) follows from applying the projective bundle formula on $\hat{\mathcal{Q}}_{1,n}(\PP^r,d)\to \Pic^d_{1,n}.$
\end{proof}

As a special case of the above Lemma, we record the following vanishing result of off-by-one weight graded piece for later use.
\begin{lemma}\label{lem:H1wt2}
  $\gr^W_{2}H^1(\mathcal{M}_{1,n}(\mathbb{P}^r,d))=0.$
\end{lemma}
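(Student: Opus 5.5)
Since $\mathcal{M}_{1,n}(\mathbb{P}^r,d)$ is smooth of some dimension $D$, Poincaré duality identifies $\gr^W_2 H^1(\mathcal{M}_{1,n}(\mathbb{P}^r,d))$ with the dual of $\gr^W_{-2D+2}H^{\mathrm{BM}}_{2D-1}(\mathcal{M}_{1,n}(\mathbb{P}^r,d))$. So we specialise Lemma \ref{lem:m1nrdoff}(2) to $k=2D-1$ and show that both outer terms of the short exact sequence vanish.

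For the left-hand term it suffices to show that $\gr^W_{-2D+2}H^{\mathrm{BM}}_{2D-1}(\hat{\mathcal{Q}}_{1,n}(\mathbb{P}^r,d))=0$. By Lemma \ref{lem:m1nrdoff}(2-a), this group is generated by pullbacks from $\gr^W_{-\star+1}H^{\mathrm{BM}}_\star(\mathrm{Pic}^d_{1,n})$ and their cap products with powers of $H_{\hat{\mathcal{Q}}}$. Since $\hat{\mathcal{Q}}_{1,n}(\mathbb{P}^r,d)$ is a projective bundle of relative dimension $R$ over $\mathrm{Pic}^d_{1,n}$, the projective bundle formula reduces this, in the top-minus-one Borel--Moore degree, to the term with $H_{\hat{\mathcal{Q}}}^0$. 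Dually, we need $\gr^W_2H^1(\mathrm{Pic}^d_{1,n})=0$. By Lemma \ref{lem:M1noff}, the only nonzero $\gr^W_{k+1}H^k$ with $k$ odd occur for $k=3$ and $k=5$, so the case $k=1$ vanishes. One can also see this directly: $\mathrm{Pic}^d_{1,n}$ is a torsor under the universal elliptic curve over $\mathcal{M}_{1,n}$, so $H^1$ comes from $H^1(\mathcal{M}_{1,n+1})=0$.

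For the right-hand term we need $\ker(\gr^W_{-2D+2}\varphi)\subset \gr^W_{-2D+2}H^{\mathrm{BM}}_{2D-2}(\mathcal{B})$. The set $\mathcal{B}$ has codimension $r\geq 1$ in $\hat{\mathcal{Q}}_{1,n}(\mathbb{P}^r,d)$: basepoints impose $r$ conditions, as seen from the surjection in Lemma \ref{lem:propsurj} out of a space of dimension $D-r$. Hence $\dim_{\mathbb{C}}\mathcal{B}=D-r$, and $H^{\mathrm{BM}}_{2D-2}(\mathcal{B})=0$ whenever $r\geq 2$. When $r=1$, $\mathcal{B}$ is a divisor and $H^{\mathrm{BM}}_{2D-2}(\mathcal{B})$ is spanned by the fundamental classes of its irreducible components, which are pure of weight $-(2D-2)$. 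So the off-by-one piece $\gr^W_{-2D+2}$ of $H^{\mathrm{BM}}_{2D-2}(\mathcal{B})$ is zero by weight bounds, because a top-degree Borel--Moore group carries only its pure weight. Equivalently, in Lemma \ref{lem:m1nrdoff}(2-b) the generators $\gr^W_\star H^\star(\hat{\mathcal{Q}}_{1,n+1}(\mathbb{P}^r,d-1))\cap[\cdot]$ land in pure weight and contribute nothing to the off-by-one weight in this degree. The indexing convention in (2-b) is unclear (weights $-k+1$ paired with $H_{k-1}$), so I will argue directly from the excision sequence.

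The main obstacle is this last step: matching the indexing in Lemma \ref{lem:m1nrdoff}(2-b) correctly, and checking the codimension of $\mathcal{B}$ when $r=1$ (and small $d$, e.g.\ $d=1$ where $\mathcal{M}_{1,n}(\mathbb{P}^r,1)$ is empty). I would handle it by working directly with the excision long exact sequence: in degree $2D-1$, the boundary map lands in $H^{\mathrm{BM}}_{2D-2}(\mathcal{B})$, whose weights are all $\le -(2D-2)$ with equality only for fundamental classes. Strictness of morphisms of mixed Hodge structures then forces the off-by-one contribution to vanish.
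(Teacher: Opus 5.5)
Your treatment of the left-hand term, and of the right-hand term when $r\geq 2$, is exactly the paper's argument. You dualise to $\gr^W_{-(2D-2)}H^{\mathrm{BM}}_{2D-1}$, use $H^1(\hat{\mathcal{Q}}_{1,n}(\PP^r,d))\cong H^1(\mathrm{Pic}^d_{1,n})$ together with Lemma~\ref{lem:M1noff}, and note $H^{\mathrm{BM}}_{2D-2}(\mathcal{B})=0$ because $\dim\mathcal{B}=D-r$. The paper's proof only uses this dimension vanishing; its condition $k-1>2\dim\mathcal{B}$ holds precisely when $r\geq 2$.

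The gap is your $r=1$ paragraph, which misreads the weights. The connecting map $H^{\mathrm{BM}}_{2D-1}(\mathcal{M}_{1,n}(\PP^r,d))\to H^{\mathrm{BM}}_{2D-2}(\mathcal{B})$ is an untwisted morphism of mixed Hodge structures. So the weight $-(2D-2)$ piece you are computing goes to $\gr^W_{-(2D-2)}H^{\mathrm{BM}}_{2D-2}(\mathcal{B})$. Since the weights of $H^{\mathrm{BM}}_j$ lie in $[-j,0]$, this is the \emph{pure} (lowest) weight piece of $H^{\mathrm{BM}}_{2D-2}(\mathcal{B})$, not an off-by-one piece.

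This is precisely what Lemma~\ref{lem:m1nrdoff}(2-b) records, so its indexing is correct. Off-by-one classes on $\mathcal{M}_{1,n}(\PP^r,d)$ in degree $k$ come from pure basepoint classes on $\mathcal{B}$ in degree $k-1$. Your reading, that those generators ``contribute nothing to the off-by-one weight'', is therefore backwards.

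For $r=1$, $\mathcal{B}$ is a divisor. So $H^{\mathrm{BM}}_{2D-2}(\mathcal{B})$ is its top-degree group: pure of exactly the relevant weight, spanned by fundamental classes of components, and nonzero. No weight-bound or strictness argument removes it.

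What you actually need for $r=1$ is that $\varphi$ is injective on this group.
\begin{itemize}
\item $\mathcal{B}$ is the image of the irreducible space $\mathcal{C}_{1,n}\times_{\mathcal{M}_{1,n}}\hat{\mathcal{Q}}_{1,n}(\PP^1,d-1)$ (Lemma~\ref{lem:propsurj}), so it is irreducible and $H^{\mathrm{BM}}_{2D-2}(\mathcal{B})=\mathbb{Q}\cdot[\mathcal{B}]$.
\item Restrict to a fibre $\PP(H^0(E,L)^{\oplus 2})$ of the projective bundle over $\mathrm{Pic}^d_{1,n}$. There $\mathcal{B}$ is the resultant hypersurface of pairs of sections with a common zero, which has positive degree.
\item Hence the Poincar\'e dual of $[\mathcal{B}]$ has nonzero $H_{\hat{\mathcal{Q}}}$-coefficient in $H^2(\hat{\mathcal{Q}}_{1,n}(\PP^1,d))$, so its image in $H^{\mathrm{BM}}_{2D-2}(\hat{\mathcal{Q}}_{1,n}(\PP^1,d))$ is nonzero.
\end{itemize}
Therefore $\ker(\gr^W\varphi)=0$ also for $r=1$. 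The case $d=1$ is vacuous, since $\mathcal{M}_{1,n}(\PP^r,1)=\varnothing$. With this replacement your proof is complete.
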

\begin{proof}
  Recall that $\dim \mathcal{M}_{1,n}(\mathbb{P}^r,d) = n + d(r+1),$ which we denote as $\mathrm{d}_{n,r,d}.$ By Poincaré duality this is equivalent to $$\gr^W_{-2(\mathrm{d}_{n,r,d}-1)}H^{\mathrm{BM}}_{2\mathrm{d}_{n,r,d}-1}(\mathcal{M}_{1,n}(\mathbb{P}^r,d))=0.$$

  We inspect the short exact sequence in Lemma \ref{lem:m1nrdoff}. When $k-1>2\dim \mathcal{B}_{1,n}(\mathbb{P}^r,d),$ the term $\mathrm{gr}^W_{-k+1}H^{\mathrm{BM}}_{k-1}(\mathcal{B}_{1,n}(\mathbb{P}^r,d))$ vanishes for dimension reasons. From Lemma \ref{lem:M1noff}, $H^k(\mathrm{Pic}_{1,n}^d)$ has off-by-one weight cohomology only when $k=3.$ Applying Poincaré duality and projective bundle formula, we see that $$\gr^W_{-2(\mathrm{d}_{n,r,d}-1)}H^{\mathrm{BM}}_{2\mathrm{d}_{n,r,d}-1}(\hat{\mathcal{Q}}_{1,n}(\mathbb{P}^r,d)) = 0,$$ hence $\ker(\gr^W_{-k+1}\varphi)=0$ as well. The vanishing result follows.
\end{proof}

\subsection{Maps from nodal elliptic curves}\label{sec:mapnod}
Let $\mathsf{C}_{k, (\mathbf{d}, \mathbf{m})}$ be a degree decorated $k$-cycle with multi-degree $\mathbf{d}$ and markings $\mathbf{m},$ and let $\mathcal{M}_{\mathsf{C}_{k, (\mathbf{d}, \mathbf{m})}}$ be the mapping space specified by $\mathsf{C}_{k, (\mathbf{d}, \mathbf{m})}.$

\begin{definition}
  Define $\mathsf{C}_{k, \mathbf{m}}$ as the underlying marked dual graph. Let $\mathfrak{M}_{\mathsf{C}_{k, \mathbf{m}}}\subset \mathfrak{M}_{1, |\mathbf{m}|}$ be the moduli of nodal curves of type $\mathsf{C}_{k, \mathbf{m}}$.  Recall that $$\mathfrak{M}_{\mathsf{C}_{k, \mathbf{m}}}\cong \left(\prod_{v\in V(\mathsf{C}_{k, \mathbf{m}})}\mathcal{M}_{0, \mathbf{m}(v)+2}\right)/\mathrm{Aut}(\mathsf{C}_{k, \mathbf{m}}),$$ so it has torus stabilisers along vertices $v\in V(\mathsf{C}_{k, \mathbf{m}})$ with $|\mathbf{m}(v)| = 0.$
  
  Let $\mathrm{Pic}^{\mathbf{d}}_{\mathsf{C}_{k, \mathbf{m}}}\to \mathfrak{M}_{\mathsf{C}_{k, \mathbf{m}}}$ be the universal Picard group with multi-degree $\mathbf{d},$ which is a $\mathbb{G}_m$-torsor. Let $\mathfrak{C}_{\mathsf{C}_{k, \mathbf{m}}}\to \mathfrak{M}_{\mathsf{C}_{k, \mathbf{m}}}$ be the universal curve, with $\mathcal{P}_{\mathbf{d}}$ the Poincaré line bundle over $\mathrm{Pic}^{\mathbf{d}}_{\mathsf{C}_{k, \mathbf{m}}}\times_{\mathfrak{M}_{\mathsf{C}_{k, \mathbf{m}}}} \mathfrak{C}_{\mathsf{C}_{k, \mathbf{m}}}.$ The projection map to the universal curve is denoted as $$\pi^{(\mathbf{d})}: \mathrm{Pic}^{\mathbf{d}}_{\mathsf{C}_{k, \mathbf{m}}}\times_{\mathfrak{M}_{\mathsf{C}_{k, \mathbf{m}}}} \mathfrak{C}_{\mathsf{C}_{k, \mathbf{m}}}\to \mathrm{Pic}^{\mathbf{d}}_{\mathsf{C}_{k, \mathbf{m}}}.$$ 
  
  Let $\hat{\mathcal{Q}}_{\mathsf{C}_{k, (\mathbf{d}, \mathbf{m})}}\to \mathrm{Pic}^{\mathbf{d}}_{\mathsf{C}_{k, \mathbf{m}}}$ be the projective bundle $\mathbb{P}(\pi^{(\mathbf{d})}_* \mathcal{P}_{\mathbf{d}}^{\oplus r+1}).$ As the case of $\mathcal{M}_{1,n}(\PP^r,d)\subset \hat{\mathcal{Q}}_{1,n}(\PP^r,d)$, the mapping space $\mathcal{M}_{\mathsf{C}_{k, (\mathbf{d}, \mathbf{m})}}$ may be presented as the basepoint free locus in $\hat{\mathcal{Q}}_{\mathsf{C}_{k, (\mathbf{d}, \mathbf{m})}}.$
\end{definition}

\begin{lemma}\label{lem:Piccyc}
  When $\mathbf{m}$ satisfies that $|\mathbf{m}(v)|\geq 1$ for all vertices $v$ on the cycle, the pure weight cohomology of $\mathrm{Pic}^{\mathbf{d}}_{\mathsf{C}_{k, \mathbf{m}}}$ is pulled back from the base $W_\star H^\star(\mathfrak{M}_{\mathsf{C}_{k, \mathbf{m}}})$, which is isomorphic to $$\left[\bigotimes_{\substack{v\in V(\mathsf{C}_{k, \mathbf{m}})\\ \mathbf{m}(v)=0}}H^\star\left(\mathsf{M}_{0,2}\right)\right]^{\mathrm{Aut}(\mathsf{C}_{k, \mathbf{m}})}$$ consisting of $\mathrm{Aut}(\mathsf{C}_{k, \mathbf{m}})$-invariant polynomials in the $\psi_v$-classes for bivalent vertices $v\in V(\mathsf{C}_{k, \mathbf{m}}).$ In particular, when all $\mathbf{m}(v)\geq 1,$ the pure weight cohomology of $\mathrm{Pic}^{\mathbf{d}}_{\mathsf{C}_{k, \mathbf{m}}}$ is given by $H^0\cong \mathbb{Q}.$
  
  Its off-by-one weight graded piece is generated as a $W_\star H^\star(\mathrm{Pic}^{\mathbf{d}}_{\mathsf{C}_{k, \mathbf{m}}})$-module by pullbacks from the WDVV relation in $H^1(\mathcal{M}_{0,4})$ and $$\left[\bigotimes_{\substack{v\in V(\mathsf{C}_{k, \mathbf{m}})\\ \mathbf{m}(v)=0}}H^\star\left(\mathsf{M}_{0,2}\right)\otimes H^1(\mathbb{G}_m)\right]^{\mathrm{Aut}(\mathsf{C}_{k, \mathbf{m}})}$$ where $H^1(\mathbb{G}_m)$ receives the sign representation of $\mathrm{Aut}(\mathsf{C}_{k, \mathbf{m}})\subset D_k.$
\end{lemma}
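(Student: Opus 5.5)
The plan is to reduce from the Picard stack to the base $\mathfrak{M}_{\mathsf{C}_{k,\mathbf{m}}}$, then compute the base via the product description in the definition. Set $\tilde{\mathfrak{M}}:=\prod_{v}\mathcal{M}_{0,\mathbf{m}(v)+2}$. Pulling back to the finite cover $\tilde{\mathfrak{M}}\to\mathfrak{M}_{\mathsf{C}_{k,\mathbf{m}}}$, the Picard stack becomes a $\mathbb{G}_m$-torsor $\widetilde{\mathrm{Pic}}\to\tilde{\mathfrak{M}}$. Since we work with $\mathbb{Q}$-coefficients, every statement can be checked on the cover and then descended by taking $\mathrm{Aut}(\mathsf{C}_{k,\mathbf{m}})$-invariants.

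\textbf{Step 1: the torsor.} I would identify the $\mathbb{G}_m$ in $\widetilde{\mathrm{Pic}}\to\tilde{\mathfrak{M}}$ concretely. A line bundle of fixed multidegree on a cycle of rational curves is determined by its gluing data at the $k$ nodes, modulo rescaling on each component. What remains is the monodromy around the cycle, which lies in $\mathbb{G}_m=\mathrm{Pic}^0$ of the cycle.

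\textbf{Step 2: Leray spectral sequence.} I would run the Leray spectral sequence for $\widetilde{\mathrm{Pic}}\to\tilde{\mathfrak{M}}$. Its $E_2$-page is $H^\star(\tilde{\mathfrak{M}})\otimes H^\star(\mathbb{G}_m)$, and the only differential is $d_2$, which sends the generator $\eta\in H^1(\mathbb{G}_m)$ (pure of weight $2$) to the first Chern class of the torsor.
\begin{itemize}
\item If every $\mathbf{m}(v)\geq 1$, the base is a product of $\mathcal{M}_{0,\geq 3}$. Then $W_\star H^\star$ of the base is $H^0$, so the Chern class in $H^2$, which has weight at least $2$ on the pure part, vanishes on $\mathrm{gr}^W_2 H^2(\tilde{\mathfrak{M}})=0$.
\item If some $\mathbf{m}(v)=0$, the factor $\mathcal{M}_{0,2}=\mathsf{M}_{0,2}=B\mathbb{G}_m$ contributes $\mathbb{Q}[\psi_v]$. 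Here the Chern class is a combination $\sum\pm\psi_v$: rescaling a bivalent component acts on the monodromy with weight $\pm1$, with sign fixed by the orientation of the cycle.
\end{itemize}

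\textbf{Step 3: weights on the base.} $H^j(\mathcal{M}_{0,n})$ is pure of weight $2j$, so by Künneth $W_jH^j(\tilde{\mathfrak{M}})$ comes only from $H^0$ of the $\mathcal{M}_{0,\geq 3}$ factors tensored with the $\psi_v$-polynomials. The next weight piece $\mathrm{gr}^W_{j+1}$ is:
\begin{itemize}
\item zero in $H^j(\mathcal{M}_{0,n})$ except through the $H^1$ classes;
\item $\mathrm{gr}^W_2H^1(\mathcal{M}_{0,4})$, which is the WDVV class;
\item for larger $n$, generated by pullbacks from $\mathcal{M}_{0,4}$ forgetful maps modulo Arnold relations, which lie in higher weight, so only the forgetful pullbacks of WDVV matter.
\end{itemize}

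\textbf{Step 4: assembling.} The pure part of $E_\infty$ is the pure part of the base, possibly modulo $c_1$. The statement asserts the pure part is all $\mathrm{Aut}$-invariant $\psi$-polynomials, so I must argue that $c_1$ does not kill them. This is the main obstacle. I would first check whether the torsor is trivialised by the marked points on a marked vertex adjacent to the bivalent chain, which would make $c_1=0$. If instead $c_1=\sum\pm\psi_v\neq0$, I expect the statement to need the reading ``pulled back from the base'' with the relation imposed. In either case the classes $\eta\otimes\psi$-polynomials surviving $d_2$ (exactly $\ker d_2$ on $\psi$-polynomial coefficients) give the $H^\star(\mathsf{M}_{0,2})\otimes H^1(\mathbb{G}_m)$ summand of the off-by-one piece. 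Together with the WDVV pullbacks, these generate the off-by-one piece as a module over the pure part.

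Finally, I would take $\mathrm{Aut}(\mathsf{C}_{k,\mathbf{m}})\subset D_k$-invariants. Reflections reverse the orientation of the cycle and hence invert the monodromy, so they act on $\eta$ by $-1$, giving the sign representation claimed.
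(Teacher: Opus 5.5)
Your route is the paper's: Leray for the $\mathbb{G}_m$-torsor over the finite cover, then $\mathrm{Aut}(\mathsf{C}_{k,\mathbf m})$-invariants with the sign character on $H^1(\mathbb{G}_m)$. But there is a genuine gap exactly at the step you call the main obstacle.

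Under the literal hypothesis that every vertex is marked, your first bullet in Step~2 does finish the proof: $c_1\in W_2H^2(\tilde{\mathfrak M})=0$, so the Leray sequence degenerates. The substance of the lemma, however, is the case with unmarked bivalent vertices, and there you leave $d_2(\eta)=c_1$ undetermined.

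Worse, your formula for $c_1$ is wrong. Let $\phi_a$ rescale an unmarked component $C_v$ by $a$. It acts on $\mathrm{Pic}^{\mathbf d}(C)$ by translation by $a^{\pm d_v}$, not $a^{\pm 1}$. To see this, use a coordinate $t$ on $C_v$ with the nodes at $0,\infty$. Write $L=\mathcal{O}_C(D)$ with $D$ in the smooth locus and $D|_{C_v}=\sum_j x_j$. Then $\phi_a^*L\otimes L^{-1}=\mathcal{O}_C(\phi_a^{-1}D-D)$ is the degree-zero bundle given by $f=\prod_j (t-x_j/a)/(t-x_j)$ on $C_v$. Its monodromy around the cycle is $f(0)/f(\infty)=a^{-d_v}$, up to inversion.

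Hence over $\tilde{\mathfrak M}$ one has $c_1=\sum_{\mathbf m(v)=0}\pm d_v\psi_v$. On the $B\mathbb{G}_m$-factor of a single unmarked vertex, the torsor is the one attached to the character $a\mapsto a^{\pm d_v}$. So the trivialisation you hope to get from markings on a neighbouring vertex cannot exist once $d_v\neq 0$.

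Consequently your sentence ``in either case \dots'' fails as soon as some unmarked vertex has $d_v\neq 0$. Stability forces this for unmarked vertices of a positive-degree cycle core, which is where the paper applies the lemma. Multiplication by the nonzero linear form $c_1$ is injective on $H^\star(\tilde{\mathfrak M})$, so no class $\eta\otimes P(\psi)$ survives. The pure part is then $\bigl(\mathbb{Q}[\psi_v]/(c_1)\bigr)^{\mathrm{Aut}}$, not all invariant $\psi$-polynomials.

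The smallest test case is the unmarked nodal cubic $C$ in degree $1$. Here $\mathrm{Pic}^1(C)\cong C^{\mathrm{sm}}$ equivariantly, and $\mathbb{G}_m\subset\mathrm{Aut}(C)$ acts simply transitively. So $[\mathrm{Pic}^1(C)/\mathrm{Aut}(C)]\simeq B(\mathbb{Z}/2)$ has rational cohomology $\mathbb{Q}$, whereas the stated formula predicts an infinite-dimensional ring of $\psi$-polynomials.

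When every unmarked vertex has $d_v=0$, we get $c_1=0$ and your argument does go through. You must also track that a reflection exchanges the two special points of each $\mathsf{M}_{0,2}$, whose $\psi$-classes are opposite, so it acts on the $\psi_v$ by signs as well as on $\eta$.

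That degenerate case is precisely what the paper's proof computes. It asserts that the $\mathrm{Aut}(C)$-action on $\mathrm{Pic}^{\mathbf d}(C)$ factors through the sign character and reads the answer off the monodromy of $R\pi_*\mathbb{Q}$. This is correct for the induced action on the cohomology of the fibre, but not for the action on $\mathrm{Pic}^{\mathbf d}(C)$ itself, and it is the latter that produces $d_2$.

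So your suspicion is well founded. The missing ingredient is the correct computation of $c_1$, and it shows the statement as written needs $d_v=0$ on unmarked vertices. Otherwise one must impose $c_1=0$ on the pure part and drop the $H^1(\mathbb{G}_m)$-summand.
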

\begin{proof}
  Let $C$ be a cycle of rational curves. Its Picard group with multi-degree $\boldsymbol{d}$ is computed by the exact sequence $$0\to H^0(C, \mathcal{O}_C^*)\to H^0(C^\nu, \mathcal{O}_{C^\nu}^*)\to \bigoplus_{e\in E(C)}H^0(\mathcal{O}_{\nu_e}^*)\to \mathrm{Pic}^{\boldsymbol{d}}(C)\to 0,$$ which is isomorphic to $0\to \mathbb{C}^\star\xrightarrow{\Delta} (\mathbb{C}^\star)^{V(C)}\to (\mathbb{C}^\star)^{E(C)}\to \mathrm{Pic}^{\boldsymbol{d}}(C)\to 0.$

  On the other hand, $\mathrm{Aut}(C)$ fits in the exact sequence $1\to D\to \mathrm{Aut}(C)\to \prod_{v\in V(C)}\mathbb{C}^\star\to 1$ where $D$ is the dihedral group of the cycle. We may compute that the $\mathrm{Aut}(C)$-action on $\mathrm{Pic}^{\boldsymbol{d}}(C)$ factors through the sign representation of $\mathrm{Aut}(C)\to \mathbb{Z}/2 \mathbb{Z},$ and $\mathbb{Z}/2 \mathbb{Z}$ acts on $\mathrm{Pic}^{\boldsymbol{d}}(C)\cong \mathbb{C}^\star$ by inversion. A similar statement holds after assigning marked points to $C$.

  Denote $\pi: \mathrm{Pic}^{\mathbf{d}}_{\mathsf{C}_{k, \mathbf{m}}}\to \mathfrak{M}_{\mathsf{C}_{k, \mathbf{m}}}$ as the natural map. The action of $\mathrm{Aut}(C)$ described above determines the monodromy of $R\pi_* \mathbb{Q}$ as a local system on $\mathfrak{M}_{\mathsf{C}_{k, \mathbf{m}}}.$ The claimed result now follows from a sheaf cohomology computation.
  \end{proof}

We define the following auxiliary moduli space of pairs of line bundles to describe the basepoint locus $\hat{\mathcal{Q}}_{\mathsf{C}_{k, (\mathbf{d}, \mathbf{m})}}\setminus \mathcal{M}_{\mathsf{C}_{k, (\mathbf{d}, \mathbf{m})}}.$

\begin{definition}
  Let $\mathrm{Pic}^{\mathbf{d}_1, \mathbf{d}_2}_{\mathsf{C}_{k, \mathbf{m}}}\to \mathfrak{M}_{\mathsf{C}_{k, \mathbf{m}}}$ be the fibre product of the universal Picard group $\mathrm{Pic}^{\mathbf{d}_1}_{\mathsf{C}_{k, \mathbf{m}}}\to \mathfrak{M}_{\mathsf{C}_{k, \mathbf{m}}}$ and $\mathrm{Pic}^{\mathbf{d}_2}_{\mathsf{C}_{k, \mathbf{m}}}\to \mathfrak{M}_{\mathsf{C}_{k, \mathbf{m}}}$.
\end{definition}

Following the considerations analogous to the previous section, we have the following lemmas that lead to a description of the off-by-one weight graded piece of $H_c^\star(\mathcal{M}_{\mathsf{C}_{k, (\mathbf{d}, \mathbf{m})}}).$

\begin{lemma}
  The pure weight cohomology of $\mathrm{Pic}^{\mathbf{d}_1, \mathbf{d}_2}_{\mathsf{C}_{k, \mathbf{m}}}$ is pulled back from the base $W_\star H^\star(\mathfrak{M}_{\mathsf{C}_{k, \mathbf{m}}}).$\end{lemma}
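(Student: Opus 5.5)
The plan is to reduce to the single-factor case, Lemma \ref{lem:Piccyc}, by studying the map $\pi_{12}:\mathrm{Pic}^{\mathbf{d}_1,\mathbf{d}_2}_{\mathsf{C}_{k,\mathbf{m}}}\to \mathfrak{M}_{\mathsf{C}_{k,\mathbf{m}}}$ and its Leray spectral sequence. Fibrewise, over a curve $C$ of type $\mathsf{C}_{k,\mathbf{m}}$, the exact sequence in the proof of Lemma \ref{lem:Piccyc} identifies each of $\mathrm{Pic}^{\mathbf{d}_i}(C)$ with $\mathbb{C}^\star$. The fibre of $\pi_{12}$ is therefore $\mathbb{G}_m\times\mathbb{G}_m$, and $\pi_{12}$ is a $\mathbb{G}_m^2$-torsor, namely the fibre product of two $\mathbb{G}_m$-torsors. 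Its fibre cohomology is $\bigwedge^\star(\mathbb{Q}\{a_1,a_2\})$, where $a_1,a_2\in H^1$ are pure of weight $2$. By the monodromy computation in Lemma \ref{lem:Piccyc}, $\mathrm{Aut}(C)$ acts on both $a_1$ and $a_2$ through the same sign character of $\mathrm{Aut}(\mathsf{C}_{k,\mathbf{m}})\subset D_k$, so it acts on $a_1\wedge a_2$ trivially.

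The first step is to write the $E_2$ page as
\[E_2^{p,q}=H^p\bigl(\mathfrak{M}_{\mathsf{C}_{k,\mathbf{m}}}, R^q\pi_{12*}\mathbb{Q}\bigr),\]
with $R^0=\mathbb{Q}$, $R^1=\mathbb{Q}_{\mathrm{sgn}}(-1)^{\oplus 2}$ and $R^2=\mathbb{Q}(-2)$. The second step is a weight argument. Pass to the finite cover $\prod_v\mathcal{M}_{0,\mathbf{m}(v)+2}$ and use $H^\star(\mathsf{M}_{0,2})=\mathbb{Q}[\psi]$ for the bivalent vertices. On this cover $H^p$ of the base has weights $\geq p$, with the pure part $W_pH^p$ spanned by polynomials in the $\psi_v$, since $\mathcal{M}_{0,n}$ has pure weight cohomology only in degree $0$. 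Hence $E_2^{p,q}$ has weights $\geq p+2q$. A class in $E_2^{p,q}$ contributes to $W_{p+q}H^{p+q}$ only if $p+2q\leq p+q$, which forces $q=0$. So $W_\star H^\star(\mathrm{Pic}^{\mathbf{d}_1,\mathbf{d}_2})$ is contained in the image of the edge map $E_2^{\star,0}=H^\star(\mathfrak{M}_{\mathsf{C}_{k,\mathbf{m}}})\to H^\star(\mathrm{Pic}^{\mathbf{d}_1,\mathbf{d}_2})$, which is $\pi_{12}^*$. Strictness of morphisms of mixed Hodge structures shows this image equals $\pi_{12}^*W_\star H^\star(\mathfrak{M}_{\mathsf{C}_{k,\mathbf{m}}})$ modulo the images of differentials.

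The main obstacle is making the weight bound on $E_2^{p,q}$ rigorous for the stack $\mathfrak{M}_{\mathsf{C}_{k,\mathbf{m}}}$. It has $B\mathbb{G}_m$ factors with nonzero cohomology in all even degrees, and the local system $R^1$ carries a sign twist. I would handle this by working $\mathrm{Aut}(\mathsf{C}_{k,\mathbf{m}})$-equivariantly on the cover $\prod_v \mathcal{M}_{0,\mathbf{m}(v)+2}$, where each factor is either $\mathcal{M}_{0,n}$ (weights $\geq$ degree) or $B\mathbb{G}_m$ (pure Tate). Taking invariants is exact and weight-preserving, so the bound $E_2^{p,q}\geq p+2q$ survives. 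The sign twist affects only which classes are invariant, not their weights. This gives the lemma exactly as in Lemma \ref{lem:Piccyc}, with the added information that the differentials $d_2\colon a_i\mapsto c_1$ of the torsors kill some pulled-back $\psi$-polynomials but introduce no new pure classes.
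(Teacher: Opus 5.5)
Your proof is correct and follows the route the paper intends: the paper gives no separate argument and defers to the proof of Lemma \ref{lem:Piccyc} (fibre $\mathbb{C}^\star$, sign monodromy, Leray/sheaf cohomology). You carry that argument out for the fibre product, using the bound that $E_2^{p,q}$ has weights $\geq p+2q$ to force every pure class into the image of the edge map $\pi_{12}^*$. One correction in wording: because of the sign monodromy, $\pi_{12}$ is a torsor under a twisted form of $\mathbb{G}_m^2$ (the relative generalised Jacobian), not a literal $\mathbb{G}_m^2$-torsor, and this does not affect your weight argument.
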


\begin{lemma}\label{lem:mckdm}
  Let $D(\mathsf{C}_{k, (\mathbf{d}, \mathbf{m})})$ be the set of non-negative multi-degrees $\boldsymbol{\delta}$ on $\mathsf{C}_{k, \mathbf{m}}$ such that $\boldsymbol{\delta}\leq \mathbf{d}$ and $\sum_{v\in V(\mathsf{C}_{k, \mathbf{m}})}\boldsymbol{\delta}(v)=1.$ There is a proper, surjective morphism $$\bigsqcup_{\boldsymbol{\delta}\in D(\mathsf{C}_{k, (\mathbf{d}, \mathbf{m})})} \left(\mathbb{P}\pi^{(\boldsymbol{\delta})}_* \mathcal{P}_{\boldsymbol{\delta}}\boxtimes \hat{\mathcal{Q}}_{\mathsf{C}_{k, (\mathbf{d}-\boldsymbol{\delta}, \mathbf{m})}}\right)\to \hat{\mathcal{Q}}_{\mathsf{C}_{k, (\mathbf{d}, \mathbf{m})}}\setminus \mathcal{M}_{\mathsf{C}_{k, (\mathbf{d}, \mathbf{m})}},$$ where the box product is over $\mathrm{Pic}^{\boldsymbol{\delta}, \mathbf{d}- \boldsymbol{\delta}}_{\mathsf{C}_{k, \mathbf{m}}}$ and has fibres isomorphic to $\mathbb{P}^{(r+1)(d-1)-1}.$
\end{lemma}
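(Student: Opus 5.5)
The plan is to copy the proof of Lemma \ref{lem:propsurj} and adapt it to the cycle $\mathsf{C}_{k,\mathbf{m}}$. The only new feature is that a basepoint now lies on some definite component of the cycle. I will construct the map for each $\boldsymbol{\delta}\in D(\mathsf{C}_{k,(\mathbf{d},\mathbf{m})})$ separately, then check properness, the image, surjectivity, and the fibre description, in that order.

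\emph{Construction.} A multi-degree $\boldsymbol{\delta}$ with $\sum_v\boldsymbol{\delta}(v)=1$ is supported on one vertex $v_0$. Tensor product of line bundles gives a map $m':\mathrm{Pic}^{\boldsymbol{\delta},\mathbf{d}-\boldsymbol{\delta}}_{\mathsf{C}_{k,\mathbf{m}}}\to\mathrm{Pic}^{\mathbf{d}}_{\mathsf{C}_{k,\mathbf{m}}}$. Over it there is a multiplication of sections
\[
\pi^{(\boldsymbol{\delta})}_*\mathcal{P}_{\boldsymbol{\delta}}\otimes\pi^{(\mathbf{d}-\boldsymbol{\delta})}_*\mathcal{P}_{\mathbf{d}-\boldsymbol{\delta}}^{\oplus r+1}\to (m')^*\pi^{(\mathbf{d})}_*\mathcal{P}_{\mathbf{d}}^{\oplus r+1}.
\]
This is nonzero on pure tensors $s\otimes t$ with $s,t\neq 0$, because a nonzero section of a line bundle on a reduced nodal curve is not a zero divisor on the component where it is nonvanishing. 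So the Segre-type map
\[
\mathbb{P}\pi^{(\boldsymbol{\delta})}_*\mathcal{P}_{\boldsymbol{\delta}}\times_{\mathrm{Pic}^{\boldsymbol{\delta},\mathbf{d}-\boldsymbol{\delta}}}\hat{\mathcal{Q}}_{\mathsf{C}_{k,(\mathbf{d}-\boldsymbol{\delta},\mathbf{m})}}\to\hat{\mathcal{Q}}_{\mathsf{C}_{k,(\mathbf{d},\mathbf{m})}}
\]
is a well-defined morphism over $m'$.

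\emph{Fibres.} A degree $\boldsymbol{\delta}$ line bundle on the cycle has $h^0=1$ (genus one, total degree one). Its section vanishes at a single smooth point of the component $v_0$, and $\mathbb{P}\pi^{(\boldsymbol{\delta})}_*\mathcal{P}_{\boldsymbol{\delta}}$ recovers the universal smooth locus of that component. The fibre of $\hat{\mathcal{Q}}_{\mathsf{C}_{k,(\mathbf{d}-\boldsymbol{\delta},\mathbf{m})}}$ over a point of the Picard stack is $\mathbb{P}(H^0(L')^{\oplus r+1})$ with $h^0(L')=d-1$, so the box product has fibres $\mathbb{P}^{(r+1)(d-1)-1}$. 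The vanishing $h^1=0$ needs $\deg\geq 1$ on the cycle with all component degrees non-negative; I would verify this via the normalization sequence used in the proof of Lemma \ref{lem:Piccyc}.

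\emph{Properness.} The source is projective over $\mathrm{Pic}^{\boldsymbol{\delta},\mathbf{d}-\boldsymbol{\delta}}_{\mathsf{C}_{k,\mathbf{m}}}$, and the target is projective over $\mathrm{Pic}^{\mathbf{d}}_{\mathsf{C}_{k,\mathbf{m}}}$. The step here that needs real care, and that I expect to be the main obstacle, is showing $m'$ is proper: it is a morphism of $\mathbb{G}_m$-torsors over $\mathfrak{M}_{\mathsf{C}_{k,\mathbf{m}}}$, and properness is not automatic. I would argue as follows. The composite to $\mathrm{Pic}^{\mathbf{d}}$ factors through the locus where the degree-$\boldsymbol{\delta}$ factor is $\mathcal{O}(q)$ for $q$ a smooth point of $C_{v_0}$. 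The map $(q,[t])\mapsto(L'(q),[s_q t])$ is then a map out of a $\mathbb{P}^{(r+1)(d-1)-1}$-bundle over the universal curve component $C_{v_0}^{\mathrm{sm}}$. One has to rule out $q$ escaping to a node. For this, note that a limit in which $q$ tends to a node is just a limit in the target where the basepoint moves to the node; that limit is realised by $q$ in the adjacent component with $\boldsymbol{\delta}'$ supported there. So I would apply the valuative criterion to the disjoint union over all $\boldsymbol{\delta}$, not to each piece separately, and do a local computation at a node.

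\emph{Image and surjectivity.} Every image point has a basepoint at $q$, so it lands in $\hat{\mathcal{Q}}_{\mathsf{C}_{k,(\mathbf{d},\mathbf{m})}}\setminus\mathcal{M}_{\mathsf{C}_{k,(\mathbf{d},\mathbf{m})}}$. For surjectivity, take $[s_0:\dots:s_r]$ with a basepoint. If the basepoint is a smooth point $q\in C_{v_0}$, set $\boldsymbol{\delta}$ to be the unit degree at $v_0$ and divide every $s_i$ by $s_q$, exactly as in Lemma \ref{lem:propsurj}. A common zero at a node is the delicate case. I would handle it by letting each $s_i$ vanish on both branches and showing that dividing by a section of a degree-$\boldsymbol{\delta}$ bundle supported on either adjacent component still works. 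If it does not work as stated, I would enlarge the statement of the lemma to include these boundary strata.
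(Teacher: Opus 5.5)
The paper offers only the analogy with Lemma \ref{lem:propsurj}: multiply sections, use projectivity for properness, and divide by $s_q$ at a basepoint. You follow that route and rightly observe that the analogy is not automatic here. Your construction is fine. So is your computation that a multidegree-$\boldsymbol{\delta}$ bundle has a unique section up to scaling, vanishing at one smooth point of $C_{v_0}$. Surjectivity onto tuples with a common zero at a smooth point also holds.

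The gap is the step you rely on for the two remaining issues. You claim that a limit in which $q$ runs into a node ``is realised by $q$ in the adjacent component with $\boldsymbol{\delta}'$ supported there''. You also hope that a tuple vanishing at a node can be divided by a section supported on an adjacent component. By your own fibre computation, such a section vanishes at exactly one \emph{smooth} point. So every point in the image of every piece of the disjoint union has a common zero at a smooth point of the curve. A tuple whose only common zero is a node is therefore never in the image, and the valuative criterion fails.

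This cannot be repaired: the statement as written is false. Take $k=1$, the nodal rational curve with normalisation coordinate $z$ and $0\sim\infty$, and $d=2$.
\begin{itemize}
\item Sections of the degree-$2$ bundle $L_\lambda$, $\lambda\in\mathbb{C}^\star$, are $a_0+a_1z+a_2z^2$ with $a_0=\lambda a_2$. Tensor product multiplies gluing parameters, and $\mathcal{O}(q)=L_{-q}$ for $q\in\mathbb{C}^\star$.
\item The tuple $(c_0z,\dots,c_rz)$ lies in $\hat{\mathcal{Q}}_{\mathsf{C}_{1,(2,\mathbf{m})}}\setminus\mathcal{M}_{\mathsf{C}_{1,(2,\mathbf{m})}}$, since its only common zero is the node.
\item It is not in the image, because no $c_iz$ is divisible by $z-q$ with $q\neq 0$.
\item It is nevertheless the limit as $q\to 0$ of the image points $(z-q)(1-\lambda^{-1}qz)\cdot(c_0,\dots,c_r)$. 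These come from $\mathcal{O}(q)$ and $L'_q=L_{-\lambda/q}$, with $L=L_\lambda$ fixed throughout.
\end{itemize}
So the image is not closed, and the map is neither proper nor surjective.

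The root cause is this. In Lemma \ref{lem:propsurj}, properness comes from $\mathcal{C}_{1,n}$ being proper over $\mathcal{M}_{1,n}$. Here $\mathbb{P}\pi^{(\boldsymbol{\delta})}_*\mathcal{P}_{\boldsymbol{\delta}}$ is the smooth locus $C^{\mathrm{sm}}_{v_0}\cong\mathbb{G}_m$ of one component, and its missing boundary points are exactly the nodes.

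Your fallback of enlarging the statement is therefore required, not optional. A correct version uses the incidence correspondence $\{(x,[s]) : s(x)=0\}\subset\mathfrak{C}_{\mathsf{C}_{k,\mathbf{m}}}\times_{\mathfrak{M}_{\mathsf{C}_{k,\mathbf{m}}}}\hat{\mathcal{Q}}_{\mathsf{C}_{k,(\mathbf{d},\mathbf{m})}}$.
\begin{itemize}
\item It is closed, hence proper over $\hat{\mathcal{Q}}_{\mathsf{C}_{k,(\mathbf{d},\mathbf{m})}}$, because the universal curve is proper.
\item Its image is exactly the basepoint locus.
\item Over the smooth points of components with $\mathbf{d}(v)>0$, it is isomorphic to the disjoint union in the lemma via $(x,L',[t])\mapsto(x,L'(x),[s_xt])$.
\end{itemize}
What the lemma omits is the locus of tuples with a common zero at a node $p$. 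These are the sections of $\nu_p^*L(-p'-p'')$ on the partial normalisation $\nu_p$, and they must be added as extra strata.
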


\begin{corollary}\label{cor:Mcycle}
  The pure weight cohomology of $\mathcal{M}_{\mathsf{C}_{k, (\mathbf{d}, \mathbf{m})}}$ is generated by $H_{\hat{\mathcal{Q}}}^i,$ $d(r+1)-r+1\leq i\leq d(r+1),$ where $H_{\hat{\mathcal{Q}}}\in H^2(\mathcal{M}_{\mathsf{C}_{k, (\mathbf{d}, \mathbf{m})}})$ is the hyperplane class, and $W_\star H^\star(\mathfrak{M}_{\mathsf{C}_{k, \mathbf{m}}}).$ The off-by-one weight graded piece $\mathrm{gr}^W_{\star+1}H^\star(\mathcal{M}_{\mathsf{C}_{k, (\mathbf{d}, \mathbf{m})}})$ is generated as a $W_\star H^\star(\mathcal{M}_{\mathsf{C}_{k, (\mathbf{d}, \mathbf{m})}})$-module by $\mathrm{gr}^W_{\star + 1}H^\star(\mathrm{Pic}^{\mathbf{d}}_{\mathsf{C}_{k, \mathbf{m}}})$ and the pure weight basepoint classes $\mathrm{gr}^W_\star H^\star(\hat{\mathcal{Q}}_{\mathsf{C}_{k, (\mathbf{d}, \mathbf{m})}}\setminus \mathcal{M}_{\mathsf{C}_{k, (\mathbf{d}, \mathbf{m})}}),$ which receives a surjection $$\bigoplus_{\boldsymbol{\delta}\in D(\mathsf{C}_{k, (\mathbf{d}, \mathbf{m})})} \mathrm{gr}^W_\star H^\star(\mathbb{P}\pi^{(\boldsymbol{\delta})}_* \mathcal{P}_{\boldsymbol{\delta}}\boxtimes \hat{\mathcal{Q}}_{\mathsf{C}_{k, (\mathbf{d}-\boldsymbol{\delta}, \mathbf{m})}})\to \mathrm{gr}^W_\star H^\star(\hat{\mathcal{Q}}_{\mathsf{C}_{k, (\mathbf{d}, \mathbf{m})}}\setminus \mathcal{M}_{\mathsf{C}_{k, (\mathbf{d}, \mathbf{m})}}).$$
\end{corollary}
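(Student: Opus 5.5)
The plan is to imitate the proof of Lemma \ref{lem:m1nrd} for the cycle case, using the open immersion $\mathcal{M}_{\mathsf{C}_{k, (\mathbf{d}, \mathbf{m})}}\subset \hat{\mathcal{Q}}_{\mathsf{C}_{k, (\mathbf{d}, \mathbf{m})}}$ and the Borel--Moore excision sequence for the basepoint complement $\mathcal{B}_{\mathsf{C}}:=\hat{\mathcal{Q}}_{\mathsf{C}_{k, (\mathbf{d}, \mathbf{m})}}\setminus \mathcal{M}_{\mathsf{C}_{k, (\mathbf{d}, \mathbf{m})}}$, with maps $\phi: H^{\mathrm{BM}}_k(\mathcal{B}_{\mathsf{C}})\to H^{\mathrm{BM}}_k(\hat{\mathcal{Q}}_{\mathsf{C}_{k, (\mathbf{d}, \mathbf{m})}})$ and the boundary map $H^{\mathrm{BM}}_k(\mathcal{M}_{\mathsf{C}_{k, (\mathbf{d}, \mathbf{m})}})\to H^{\mathrm{BM}}_{k-1}(\mathcal{B}_{\mathsf{C}})$. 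By the weight bounds on Borel--Moore homology, the sequence truncates, as in the introduction, to
\[\gr^W_{-k}H^{\mathrm{BM}}_k(\mathcal{B}_{\mathsf{C}})\xrightarrow{\phi}\gr^W_{-k}H^{\mathrm{BM}}_k(\hat{\mathcal{Q}}_{\mathsf{C}_{k, (\mathbf{d}, \mathbf{m})}})\twoheadrightarrow \gr^W_{-k}H^{\mathrm{BM}}_k(\mathcal{M}_{\mathsf{C}_{k, (\mathbf{d}, \mathbf{m})}})\to 0\]
in pure weight. In off-by-one weight there is a short exact sequence whose sub is $\mathrm{coker}(\gr^W_{-k+1}\phi)$ and whose quotient is $\ker(\gr^W_{-k+1}\varphi)$ on the pure weight classes $\gr^W_{-k+1}H^{\mathrm{BM}}_{k-1}(\mathcal{B}_{\mathsf{C}})$. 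Dualising by Poincaré duality will give the cohomological form of the statement.

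First I determine the ambient space. The projective bundle formula for $\hat{\mathcal{Q}}_{\mathsf{C}_{k, (\mathbf{d}, \mathbf{m})}}\to \mathrm{Pic}^{\mathbf{d}}_{\mathsf{C}_{k, \mathbf{m}}}$ has fibres $\mathbb{P}^{(r+1)d-1}$, since $h^0=d$ on an arithmetic genus one curve when $d\geq 1$. Combined with Lemma \ref{lem:Piccyc}, it shows that $W_\star H^\star(\hat{\mathcal{Q}}_{\mathsf{C}_{k, (\mathbf{d}, \mathbf{m})}})$ is generated by $H_{\hat{\mathcal{Q}}}$ over $W_\star H^\star(\mathfrak{M}_{\mathsf{C}_{k, \mathbf{m}}})$. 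It also shows that the off-by-one piece is generated by $\gr^W_{\star+1}H^\star(\mathrm{Pic}^{\mathbf{d}}_{\mathsf{C}_{k, \mathbf{m}}})$ as a module over the pure part. The restriction to the open subset $\mathcal{M}_{\mathsf{C}_{k, (\mathbf{d}, \mathbf{m})}}$ is surjective on pure weight, which gives generation of the pure piece.

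To see that the powers of $H_{\hat{\mathcal{Q}}}$ in the stated range survive, I compute the image of $\phi$ using the proper surjection of Lemma \ref{lem:mckdm}. Each summand $\mathbb{P}\pi^{(\boldsymbol{\delta})}_*\mathcal{P}_{\boldsymbol{\delta}}\boxtimes \hat{\mathcal{Q}}_{\mathsf{C}_{k, (\mathbf{d}-\boldsymbol{\delta}, \mathbf{m})}}$ is a projective bundle over $\mathrm{Pic}^{\boldsymbol{\delta},\mathbf{d}-\boldsymbol{\delta}}_{\mathsf{C}_{k, \mathbf{m}}}$. The preceding lemma gives its pure weight cohomology as pulled back from $\mathfrak{M}_{\mathsf{C}_{k, \mathbf{m}}}$ together with fibre hyperplane powers. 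Pushing forward along the multiplication map, I expect the relative hyperplane class to pull back to the hyperplane class, and the fibre dimension to drop by $r$ (the basepoint locus has codimension $r$). So the image of $\phi$ is spanned by cycles whose Poincaré duals are $H_{\hat{\mathcal{Q}}}^{j}\cdot(\text{base})$ with $j\geq r$. This is the basepoint-relation mechanism, and it kills the complementary range of hyperplane powers. Proper surjectivity guarantees that the pure weight BM homology of $\mathcal{B}_{\mathsf{C}}$ is hit, which is the claimed surjection onto the basepoint classes.

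The off-by-one statement then follows formally. The cokernel part is generated by the off-by-one classes of $\mathrm{Pic}^{\mathbf{d}}_{\mathsf{C}_{k, \mathbf{m}}}$, times hyperplane powers. The kernel part consists of pure weight basepoint classes that map to zero, and these I simply include as generators without computing the kernel exactly, since the statement only asks for generation.

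The main obstacle is the compatibility of the multiplication map with the Picard torsor structure on the cycle. Unlike $\mathrm{Pic}^d_{1,n}$, where projection formula with $\Theta$ is used, here $\mathrm{Pic}^{\mathbf{d}}_{\mathsf{C}_{k, \mathbf{m}}}$ is a $\mathbb{G}_m$-torsor carrying the sign-twisted monodromy. I would first check that $m'\colon\mathrm{Pic}^{\boldsymbol{\delta},\mathbf{d}-\boldsymbol{\delta}}\to\mathrm{Pic}^{\mathbf{d}}$ is a group-multiplication of torsors, so that its pushforward on $H^1(\mathbb{G}_m)$-type classes is computable. I would also check that pure weight classes on the source lie in the span of base classes and hyperplane powers, which the preceding lemma supplies.
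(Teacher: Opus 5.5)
Your proposal is correct and is essentially the paper's argument: the paper gives no separate proof of Corollary~\ref{cor:Mcycle}, only the remark that it follows by considerations analogous to Lemma~\ref{lem:m1nrd}, namely the excision sequence for $\mathcal{M}_{\mathsf{C}_{k, (\mathbf{d}, \mathbf{m})}}\subset \hat{\mathcal{Q}}_{\mathsf{C}_{k, (\mathbf{d}, \mathbf{m})}}$, the projective bundle formula over $\mathrm{Pic}^{\mathbf{d}}_{\mathsf{C}_{k, \mathbf{m}}}$ combined with Lemma~\ref{lem:Piccyc}, and the surjection of Lemma~\ref{lem:mckdm} onto the basepoint locus, which is exactly your route. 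One harmless inaccuracy: $\mathrm{im}(\phi)$ is not literally spanned by classes $H_{\hat{\mathcal{Q}}}^{j}\cdot(\text{base})$ with $j\geq r$, because $m^*H_{\hat{\mathcal{Q}}}$ differs from the relative hyperplane class of the source by a class pulled back from $\mathrm{Pic}^{\boldsymbol{\delta},\mathbf{d}-\boldsymbol{\delta}}_{\mathsf{C}_{k,\mathbf{m}}}$, so the basepoint cycles have the shape $c\,H_{\hat{\mathcal{Q}}}^{r}+(\text{base})\cdot H_{\hat{\mathcal{Q}}}^{r-1}+\cdots$ with $c\neq 0$; only this nonzero leading coefficient is needed to reduce to the stated range of hyperplane powers.
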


\section{Generators}
In this section, we use Künneth formula to assemble the results from the previous section and describe the pure weight graded pieces $\gr^{W}_{-\star}H_\star^{\mathrm{BM}}(\Mtil_{[\mathbf{G},\rho]}).$

\begin{theorem}\label{thm:gens}
  \begin{enumerate}
    \item When $[\mathbf{G},\rho]$ has a genus one vertex with positive degree, $\gr^{W}_{-\star}H_\star^{\mathrm{BM}}(\Mtil_{[\mathbf{G},\rho]})$ are $\mathrm{Aut}([\mathbf{G}])$-orbits of
    \begin{enumerate}[(a)]
      \item a class in\footnote{The notation $(L\cup L')(\mathbf{G}^c)$ is a short hand for $L(\mathbf{G}^c)\cup L'(\mathbf{G}^c)$, namely both the legs assigned to $\mathbf{G}^c$ and the legs connecting $\mathbf{G}^c$ to the remainder of the graph.} $\gr^{W}_{-\star}H_\star^{\mathrm{BM}}(\mathcal{M}_{1,(L\cup L')(\mathbf{G}^c)}(\mathbb{P}^r,\boldsymbol{\delta}_{\mathbf{G}^c})),$ which is generated by 
      \begin{enumerate}
        \item pullback from $\gr^{W}_{-\star}H_\star^{\mathrm{BM}}(\mathcal{M}_{1,(L\cup L')(\mathbf{G}^c)})$,
        \item their cap products  with $H_{\hat{\mathcal{Q}}}^i, \Theta$ for $d(r + 1) - r + 1 \leq  i \leq d(r + 1)$ (Lemma \ref{lem:m1nrd}), 
      \end{enumerate} 
      \item a class in $\prod_{\ell\in L'(\mathbf{G}^c)}\Mbar^*_{0, \boldsymbol{m}_{\ell}}(\mathbb{P}^r, \boldsymbol{\delta}_{\ell}).$
    \end{enumerate}
    \item When the core $[\mathbf{G}]$ is a cycle $\mathsf{C}_{k, (\mathbf{d}, \mathsf{m})}$ of genus zero vertices with positive total degree, $\gr^W_{-\star} H^{\mathrm{BM}}_\star(\mathcal{M}^{\mathrm{st}}_{[\mathbf{G}]})$ are $\mathrm{Aut}([\mathbf{G}])$-orbits of
    \begin{enumerate}
      \item a class in $\gr^{W}_{-\star}H_\star^{\mathrm{BM}}(\mathcal{M}_{\mathsf{C}_{k, (\mathbf{d}, \mathsf{m})}}),$ which is generated by \begin{enumerate}
        \item pullback from $\gr^{W}_{-\star}H_\star^{\mathrm{BM}}(\mathfrak{M}_{\mathsf{C}_{k, \mathbf{m}}}),$ generated by polynomials of $\psi$-classes of bivalent vertices (Lemma \ref{lem:mckdm}) up to $\mathrm{Aut}(\mathsf{C}_{k, \mathbf{m}})$-action, 
        \item their cap product with $H_{\hat{\mathcal{Q}}},$
      \end{enumerate}
        \item a class in $\prod_{\ell\in L'(\mathbf{G}^c)}\Mbar^*_{0, \boldsymbol{m}_{\ell}}(\mathbb{P}^r, \boldsymbol{\delta}_{\ell}).$
    \end{enumerate}
    The same formula holds for the stratum $\Mtil_{1,n}(\mathbb{P}^r,d),$ with $\mathrm{Aut}(\mathbf{G}, \rho)$ replacing $\mathrm{Aut}(\mathbf{G})$ in the case of $\Mtil_{[\mathbf{G},\rho]}.$
    \item When the core has total degree zero and is a genus one vertex, $\gr^{W}_{-\star}H_\star^{\mathrm{BM}}(\mathcal{M}^{\mathrm{st}}_{[\mathbf{G}]})$ are $\mathrm{Aut}([\mathbf{G}])$-orbits of tensor products of \begin{itemize}
      \item $\gr^{W}_{-\star}H_\star^{\mathrm{BM}}(\mathcal{M}_{1,(L\cup L')(\mathbf{G}^c)})\otimes H^\star(\mathbb{P}^r),$ 
      \item a class in $\prod_{\ell\in L'(\mathbf{G}^c)}\Mbar^*_{0, \boldsymbol{m}_{\ell}}(\mathbb{P}^r, \boldsymbol{\delta}_{\ell}).$
    \end{itemize}
    The formula for $\gr^{W}_{-\star}H_\star^{\mathrm{BM}}(\Mtil_{[\mathbf{G},\rho]})$ is the orbits of tensor products of the above pulled back classes capped with $\psi_v^{i_v}, 1\leq i_v\leq r-1,$ where $v$ runs over all bivalent vertices in $L'([\mathbf{G}^{cc}])$ (Lemma \ref{lem:MF}).

    \item When the core $[\mathbf{G}]$ is a cycle $\mathsf{C}_{k, (\mathbf{d}, \mathsf{m})}$ of genus zero vertices with zero total degree, the pure weight Borel--Moore homology groups of the strata $\mathcal{M}^{\mathrm{st}}_{[\mathbf{G}]}$ and $\Mtil_{[\mathbf{G}]}$ are spanned by $\mathrm{Aut}(\mathbf{G})$-orbits of tensor products of all the generators in the previous item apart from $\gr^{W}_{-\star}H_\star^{\mathrm{BM}}(\mathcal{M}_{1,(L\cup L')(\mathbf{G}^c)}).$
  \end{enumerate}
\end{theorem}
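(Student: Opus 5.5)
The plan is to combine the Künneth decomposition of Lemma \ref{lem:kunneth} (and the Remark after it for the positive-degree core) with the weight computations of Section \ref{sec:build}, case by case. The key structural input is that the Künneth isomorphisms respect mixed Hodge structures. All factors have Borel--Moore weights bounded above by minus their degree, so in a tensor product of such factors the pure weight piece $\gr^W_{-\star}$ is exactly the tensor product of the pure weight pieces of the factors. Any factor contributing a class of weight strictly above $-k$ in degree $k$ cannot be compensated by another factor. So first I would establish, for each stratum of the ordered cover $\Mtil^{\mathrm{ord}}_{[\mathbf{G},\rho]}$ (resp. $\mathcal{M}^{\mathrm{st}}$), that $\gr^W_{-\star}H^{\mathrm{BM}}_\star$ is the tensor product of the pure pieces. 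Then I would pass to $\mathrm{Aut}$-coinvariants. Taking coinvariants under a finite group is exact over $\mathbb{Q}$ and commutes with $\gr^W$, which yields the ``$\mathrm{Aut}$-orbits'' formulation.

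Next I would identify each factor.

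Case (1): the core factor is $\mathcal{M}_{1,(L\cup L')(\mathbf{G}^c)}(\mathbb{P}^r,\boldsymbol{\delta}_{\mathbf{G}^c})$. By Lemma \ref{lem:m1nrd}(1), its pure piece is $\gr^W H^\star(\mathrm{Pic}^d)\otimes H_{\hat{\mathcal{Q}}}^i$. Lemma \ref{lem:pics}(1) rewrites $\gr^W H^\star(\mathrm{Pic}^d)$ as pullbacks from $\mathcal{M}_{1,\bullet}$ together with $\Theta$, and Poincaré duality transports this to Borel--Moore homology. The degree range on $H_{\hat{\mathcal{Q}}}$ comes from dualising $0\le i\le r$ against the dimension. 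The rational tail factors $\Mbar^*_{0,m}(\mathbb{P}^r,\delta)$ are smooth and proper, so they are entirely pure.

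Case (2): I would use Corollary \ref{cor:Mcycle} together with Lemma \ref{lem:Piccyc}, which give the $\psi$-polynomials of bivalent vertices and the hyperplane powers.

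Cases (3) and (4): by Lemma \ref{lem:kunneth}, the factors are $H_\star(\mathbb{P}^r)$, the stratum $\Mtil_{\mathbf{G}^{cc}}$, and $\mathcal{M}^{\mathbf{F}}$. For $\Mtil_{\mathbf{G}^{cc}}$, which is a torus bundle over $\mathcal{M}_{\mathbf{G}^{cc}}\subset\Mbar_{1,n}$, the pure part is pulled back from the base. The torus $H^1$ classes have weight $2$ in degree $1$, so they are off-by-one and drop out. The base is a product of $\mathcal{M}_{1,\bullet}$ with $\mathcal{M}_{0,\bullet}$ factors, and the latter have pure piece $H^0$ only. For $\mathcal{M}^{\mathbf{F}}$, Lemma \ref{lem:MF} gives the span of $1,\psi,\dots,\psi^{r-1}$ when all radius vertices are uni- or bivalent, and $\mathbb{Q}$ otherwise. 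In case (4) the $\mathcal{M}_{1,\bullet}$ factor is replaced by cycle-core contributions, which by Lemma \ref{lem:Piccyc} with zero degree have no further pure classes beyond $\psi$'s.

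The main obstacle is the first step: justifying that the Künneth formula for an iterated Zariski-locally-trivial fibration holds at the level of weight-graded pieces, not just additively. I would argue via the Leray spectral sequence with trivial monodromy, as in Corollary \ref{cor:genDF}. The base changes of $\mathrm{ev}_{n+1}$ are trivial over a $\mathbb{P}^r$ whose cohomology is pure Tate, so the spectral sequence degenerates compatibly with weights, and one then applies strictness of morphisms of mixed Hodge structures. A secondary subtlety is matching the $\psi_v$ classes across the identification $\psi_v=\psi_w$ from Lemma \ref{lem:MF}, so that only one power per level survives. The exponent range $1\le i_v\le r-1$ needs to be checked against the truncation $\beta_v\mapsto\psi^r$.
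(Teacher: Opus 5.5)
Your proposal is essentially the paper's argument. The paper gives no separate proof and obtains the theorem exactly this way: it feeds Lemmas \ref{lem:m1nrd}, \ref{lem:pics}, \ref{lem:Piccyc}, \ref{lem:MF} and Corollary \ref{cor:Mcycle} into the weight-compatible K\"unneth decomposition of Lemma \ref{lem:kunneth} (and the remark after it), then takes $\mathrm{Aut}$-coinvariants. Two small corrections: the weights of $H_k^{\mathrm{BM}}$ are bounded \emph{below} by $-k$, so the pure piece is the lowest weight; and because the statement is only about spanning, you only need that Leray differentials out of the pure pieces vanish for weight reasons, not degeneration of the Leray spectral sequences, which need not hold for the non-proper fibres.
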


\begin{corollary}[Theorem \ref{thm:maingen}]\label{cor:maingen}
  With the same notation as above, there is a surjection $$\bigoplus_{[\mathbf{G},\rho]}\gr^{W}_{-\star}H_\star^{\mathrm{BM}}(\Mtil_{[\mathbf{G},\rho]}) \twoheadrightarrow H_\star(\Mtil_{1,n}(\mathbb{P}^r,d)),$$ and the pure weight Borel--Moore homology groups of the strata are spanned by the list of classes from Theorem \ref{thm:gens}.
\end{corollary}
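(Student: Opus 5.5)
The surjection will come from the stratification spectral sequence in Borel--Moore homology attached to the coarse central alignment stratification of $\Mtil_{1,n}(\mathbb{P}^r,d)$, as set up in the introduction:
$$E^1_{p,q}=\bigoplus_{\dim \Mtil_{[\mathbf{G},\rho]}=p} H^{\mathrm{BM}}_{p+q}(\Mtil_{[\mathbf{G},\rho]})\Rightarrow H_{p+q}(\Mtil_{1,n}(\mathbb{P}^r,d)).$$
First I check that the coarse strata $\Mtil_{[\mathbf{G},\rho]}$ form a finite stratification by locally closed smooth substacks. The closure of each stratum should be a union of strata, indexed by the morphisms in the category of coarse classes. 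Each stratum is smooth by the fibre product description in Lemma \ref{lem:kunneth}. This gives a filtration by closed unions of strata, ordered by dimension, and hence the spectral sequence.

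\textbf{Step 1: weight truncation.} Since $\Mtil_{1,n}(\mathbb{P}^r,d)$ is smooth and proper, $H_{p+q}$ is pure of weight $-(p+q)$. The differentials are morphisms of mixed Hodge structures (\cite[Lemma 3.8]{ara}). So $E^\infty_{p,q}=\gr^W_{-(p+q)}E^\infty_{p,q}$, and this is a subquotient of $\gr^W_{-(p+q)}E^1_{p,q}$.

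\textbf{Step 2: surjectivity.} I need the weight bound that $H^{\mathrm{BM}}_k$ of any variety or DM stack has weights $\geq -k$. The target of $d^k$ out of $E^k_{p,q}$ is $E^k_{p-k,q+k-1}$, which lives in total degree $p+q-1$. Its weight $-(p+q)$ part is therefore of weight one below the minimal possible weight, so it vanishes. Hence no nonzero pure-weight class is hit in a way that obstructs anything, and the pure-weight parts only receive images. This gives the chain of surjections $\gr^W_{-(p+q)}E^1_{p,q}\twoheadrightarrow E^\infty_{p,q}$ displayed in the introduction.

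\textbf{Step 3: lifting to $H_\star$.} Because $\mathbb{Q}$-vector spaces split, the associated graded of the abutment filtration is generated by these images. Lifting each pure class to the stratum closure and pushing forward via $\iota_{[\mathbf{G},\rho]}$ produces a surjection onto $H_\star$. I still need that the restriction $\gr^W_{-\star}H^{\mathrm{BM}}_\star(\Mbar_{[\mathbf{G},\rho]})\to\gr^W_{-\star}H^{\mathrm{BM}}_\star(\Mtil_{[\mathbf{G},\rho]})$ is surjective; this again follows from the weight bound applied to the excision sequence.

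\textbf{Step 4: identifying the generators.} By Lemma \ref{lem:kunneth} and the compatibility of Künneth with weights, the pure-weight piece of each stratum is the $\mathrm{Aut}[\mathbf{G},\rho]$-coinvariants of tensor products of the pure pieces of the factors. The factors are:
\begin{itemize}
\item $H_\star(\mathbb{P}^r)$;
\item $\Mtil_{\mathbf{G}^{cc}}$, a torus bundle over a stratum of $\Mbar_{1,n}$, so its pure part is pulled back from $W$ of products of $\mathcal{M}_{1,m}$ and $\mathcal{M}_{0,m}$;
\item $\mathcal{M}^{\mathbf{F}}$, whose pure part is given by Lemma \ref{lem:MF};
\item the rational tails, which are proper.
\end{itemize}
In the positive degree core case, the pure part comes from Lemma \ref{lem:m1nrd}, Lemma \ref{lem:pics} and Corollary \ref{cor:Mcycle}. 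Collecting these across the cases gives exactly the list in Theorem \ref{thm:gens}, which is the claimed spanning statement.

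The main obstacle is Step 3, specifically making the weight inequality precise for non-proper smooth DM stacks and for torus torsors over strata. The bound $\gr^W_{j}H^{\mathrm{BM}}_k=0$ for $j<-k$ is standard for varieties. I would transfer it to these stacks through their coarse spaces, using rational coefficients.
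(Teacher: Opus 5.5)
Your proposal is correct and follows essentially the paper's argument. You weight-truncate the stratification spectral sequence, using the bound that $H^{\mathrm{BM}}_{p+q-1}$ has weights $\geq -(p+q)+1$ to see that pure classes are permanent cycles and so $\gr^W_{-(p+q)}E^1_{p,q}\twoheadrightarrow E^\infty_{p,q}$. You then lift pure classes to the stratum closures, and identify the pure pieces of the strata via Lemma \ref{lem:kunneth} and the building-block lemmas, which is what Theorem \ref{thm:gens} assembles. The only cosmetic difference is in the lifting step: the paper identifies $E^\infty_{p,q}$ with images from closures via the proper surjection $\bigsqcup_{\dim=p}\Mbar_{[\mathbf{G},\rho]}\to X_p$, whereas you lift directly through the excision sequence of $\Mtil_{[\mathbf{G},\rho]}\subset\Mbar_{[\mathbf{G},\rho]}$, which works equally well and needs no smoothness of the closure.
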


\begin{remark}
  An analogous surjection $$\bigoplus_{[\mathbf{G}]}\gr^{W}_{-\star}H_\star^{\mathrm{BM}}(\mathcal{M}_{[\mathbf{G}]}) \twoheadrightarrow \mathrm{gr}^W_{-\star} H_\star(\Mbar_{1,n}(\mathbb{P}^r,d)),$$ holds for the pure weight homology group of the stable maps space $\Mbar_{1,n}(\PP^r,d).$
\end{remark}

\subsection{Classes on strata closure}\label{subsec:strcl}

We relate the surjection to homology groups of the strata closures $\Mtil_{[\mathbf{G},\rho]}\subset \Mbar_{[\mathbf{G},\rho]}$. The spectral sequence is induced by the filtration $$\varnothing = X_{-1}\subset \cdots \subset X_p\subset X_{p+1}\subset\cdots\subset X_{\dim \mathcal{M}_{1,n}(\PP^r,d)} = \Mtil_{1,n}(\PP^r,d),$$ where $X_p$ is closed substack given by the union of strata with dimension less than or equal to $p.$ By construction, 
 \begin{align*}
 E_{p,q}^{\infty} & = \frac{\mathrm{im}(H_{p+q}(X_p)\to H_{p+q}(\Mtil_{1,n}(\PP^r,d)))}{\mathrm{im}(H_{p+q}(X_{p-1})\to H_{p+q}(\Mtil_{1,n}(\PP^r,d)))}\\ &  = \frac{\mathrm{im}\left(\bigoplus_{\dim \Mtil_{[\mathbf{G},\rho]} = p}H_{p+q}(\Mbar_{[\mathbf{G},\rho]})\to H_{p+q}(\Mtil_{1,n}(\PP^r,d))\right)}{\mathrm{im}\left(\bigoplus_{\dim \Mtil_{[\mathbf{G}',\rho']} = p-1}H_{p+q}(\Mbar_{[\mathbf{G}',\rho']})\to H_{p+q}(\Mtil_{1,n}(\PP^r,d))\right)}
 \end{align*}
The second equality uses the fact that $\bigsqcup_{\dim \Mtil_{[\mathbf{G},\rho]} = p} \Mbar_{[\mathbf{G},\rho]}\to X_p$ is proper and surjective and hence induces a surjective pushforward map of pure weight homology groups. 

Combining the above with the surjection $\bigoplus_{\dim \Mtil_{[\mathbf{G},\rho]} = p}\gr^W_{-(p+q)}H_{p+q}^{\mathrm{BM}}(\Mtil_{[\mathbf{G},\rho]})\twoheadrightarrow E^{\infty}_{p,q},$ we see that lifts of $\gr^W_{-\star}H_{\star}^{\mathrm{BM}}(\Mtil_{[\mathbf{G},\rho]})$ along the surjection from $H_\star(\Mbar_{[\mathbf{G},\rho]})$ generate $H_{\star}(\Mtil_{1,n}(\PP^r,d)),$ leading to the statement in Theorem \ref{thm:maingen}.

While there is no canonical way to lift pure weight classes on $\Mtil_{[\mathbf{G},\rho]}$ to $\Mbar_{[\mathbf{G},\rho]}$, the modular interpretation of the pure weight classes points to convenient choices of lifts:
\begin{enumerate}
  \item The pure weight cohomology classes on $\mathcal{M}_{1,n}$ are the fundamental class and pullback from $\mathsf{S}_{k+1}=\gr^W_{k} H^k(\mathcal{M}_{1,k}),$ and it is known that $H^k(\Mbar_{1,k})\to \gr^W_{k} H^k(\mathcal{M}_{1,k})$ is an isomorphism.
  \item Identifying $\mathrm{Pic}_{1,n'}^d$ with the universal curve over $\mathcal{M}_{1,n'},$ the relative polarisation can be lifted to the relative tangent bundle of the universal curve over $\Mbar_{1,n'}$ or $\Mbar_{1,n'}$ and pulled back to strata closures.
  \item When $r\geq 2,$ fixing some codimension 2 linear subspace $L\subset \mathbb{P}^r,$ let $\mathcal{H}$ be the divisor in $\Mtil_{1,n}(\PP^r,d)$ consisting of maps that meet the $L.$ The same argument from \cite[Lemma 1.1.1]{Pandharipande1999} implies that when restricted to $\mathcal{M}_{1,n}(\PP^r,d),$ the class $\mathcal{H}$ is equal to $\delta_r\cdot H_{\hat{\mathcal{Q}}}$ for some $\delta_r\in \mathbb{Z}_{>0}.$ Hence, $\mathcal{H}/\delta_r$ is a lift of $H_{\hat{\mathcal{Q}}}.$
\end{enumerate}

In short, the homology classes in $\Mtil_{1,n}(\PP^r,d)$ are either tautological classes, boundary strata, or (dual to) the cusp forms on $\mathcal{M}_{1,n'}.$ We now use this to deduce qualitative properties of the (co)homology groups of $\Mtil_{1,n}(\PP^r,d)$.

\subsection{Hodge and Tate conjectures}
The description of the generators allows us to verify the Hodge and Tate conjectures on the Vakil--Zinger space. We recall that the stack $\Mtil_{1,n}(\PP^r,d)$ is smooth and proper over $\mathrm{Spec}(\mathbb{Z}).$
\begin{corollary}\label{cor:HT}
  \begin{enumerate}
    \item The cycle class map $A^\star(\Mtil_{1,n}(\PP^r,d))_{\mathbb{Q}}\to H^{2\star}(\Mtil_{1,n}(\PP^r,d))_{\mathbb{Q}}$ is surjective.
    \item Let $K$ be a finite extension of $\mathbb{Q}$ or a finite field, and let $\Mtil_{1,n}(\PP^r,d)_{\overline{K}}$ be the base change of $\Mtil_{1,n}(\PP^r,d)$ to $\mathrm{Spec}(\overline{K})$, and similar for $\Mtil_{1,n}(\PP^r,d)_{{K}}.$ For any $\ell$ invertible in $K,$ the cycle class map $A^\star(\Mtil_{1,n}(\PP^r,d)_{{K}})_{\mathbb{Q}_{\ell}}\to H^{2\star}(\Mtil_{1,n}(\PP^r,d)_{\overline{K}}, \mathbb{Q}_\ell(\star))^{\mathrm{Gal}(\overline{K}/K)}$ is surjective.
  \end{enumerate}
\end{corollary}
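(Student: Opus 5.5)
The plan is to use the generators of Corollary \ref{cor:maingen}: every homology class of $\Mtil_{1,n}(\PP^r,d)$ is a pushforward $(\iota_{[\mathbf{G},\rho]})_*\tilde F$, where $\tilde F$ lifts a pure weight class on a stratum. Since $\Mtil_{1,n}(\PP^r,d)$ is smooth and proper, Poincaré duality turns $H_{2k}$ into $H^{2\dim-2k}$. For both (1) and (2) it therefore suffices to show that the generators lying in even degree can be chosen to be algebraic cycle classes. For (2) they should moreover be defined over $K$.

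The key step is a parity and weight analysis of the generators in Theorem \ref{thm:gens}. Each generator is a tensor product, via the Künneth formula of Lemma \ref{lem:kunneth}, of the following factors:
\begin{enumerate}
\item pullbacks from $W H^\star(\mathcal{M}_{1,m})$, which are the fundamental class or pullbacks of $\mathsf{S}_{k+1}$ with $k$ odd;
\item powers of $\Theta$ and $H_{\hat{\mathcal{Q}}}$;
\item $\psi$-classes coming from $\mathsf{M}_{0,1}$ and $\mathsf{M}_{0,2}$;
\item classes on the factors $\Mbar^*_{0,m}(\PP^r,\delta)$ and $H_\star(\PP^r)$.
\end{enumerate}
Factors (2) and (3) are Chow classes, either divisors or classes pulled back along classifying stacks. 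Factor (4) is algebraic, since the homology of genus zero stable maps is spanned by tautological cycles \cite{opretorus}. The only non-Tate Hodge structures therefore enter through $\mathsf{S}_{k+1}$, which has odd weight $k$. The next step is to argue that a product containing an odd number of $\mathsf{S}$ factors has odd degree, so the even-degree generators involve an even number of them. Here I would use Petersen's result \cite{pet} that the even cohomology of $\Mbar_{1,m}$ is spanned by boundary strata classes. On the closure, the pure weight classes of $\mathcal{M}_{1,m}$ in even degree are then restrictions of tautological, hence algebraic, classes. Any product of cusp-form classes that appears in even degree should also be handled this way, by working on $\Mbar_{1,m}$ or its products and using that their even cohomology is algebraic.

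The third step is to choose algebraic lifts as in §\ref{subsec:strcl}. I would lift $\Theta$ by the relative cotangent class on the universal curve and $H_{\hat{\mathcal{Q}}}$ by $\mathcal{H}/\delta_r$. The $\psi$-classes lift as $\psi$-classes of nodes or markings on the stratum closure, and the strata classes are themselves algebraic. Because the lift can be changed by classes supported on the boundary $\Mbar_{[\mathbf{G},\rho]}\setminus\Mtil_{[\mathbf{G},\rho]}$, an induction on stratum dimension using the filtration $X_p$ shows that it is enough for each $E^\infty_{p,q}$ piece with $p+q$ even to be hit by algebraic cycles. Pushforward along $\iota_{[\mathbf{G},\rho]}$ preserves algebraicity, which gives (1). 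For (2), all of these cycles are defined over $\mathbb{Z}$ because strata, $\psi$, $\Theta$, $\mathcal{H}$ and boundary classes are defined over $\mathbb{Z}$. The same stratification and weight argument works in $\ell$-adic cohomology with Frobenius weights, so the cycle classes span $H^{2\star}(\cdot_{\overline K},\mathbb{Q}_\ell(\star))$. The Galois invariants are then spanned by cycles over $K$, which gives the Tate statement.

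The main obstacle is the even-degree products of two or more $\mathsf{S}$-factors, for example $\mathsf{S}_{12}\otimes\mathsf{S}_{12}$ on a stratum with two genus one pieces. I expect this cannot occur here: a genus one map space has only one genus one core, so each generator carries at most one $\mathsf{S}$ factor. The rest of the generator is Tate, so any $\mathsf{S}$ contribution sits in odd degree. Even-degree classes are then products of Tate classes, all algebraic by the list above. I would check this carefully against case (1) of Theorem \ref{thm:gens}, confirming that $\Theta$ and $H_{\hat{\mathcal{Q}}}$ preserve parity.
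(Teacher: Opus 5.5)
Your proposal is correct, and for (1) it is the paper's argument. The even-degree generators of Theorem \ref{thm:gens} are products of tautological classes with fundamental classes of strata; they lift to algebraic classes on the closures as in \S\ref{subsec:strcl}, and pushing forward gives algebraic cycles spanning the even homology.

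You also supply the justification the paper leaves implicit:
\begin{enumerate}
\item A stratum has only one genus one factor, namely $\mathcal{M}_{1,m}$ or a cycle of rational curves. By Petersen, the pure weight classes of $\mathcal{M}_{1,m}$ are $H^0$ together with odd-weight $\mathsf{S}_{k+1}$.
\item Every other factor ($\Theta$, $H_{\hat{\mathcal{Q}}}$, $\psi$-classes, genus zero stable maps, $H_\star(\PP^r)$) is Tate and sits in even degree.
\item Hence an even-degree generator contains no $\mathsf{S}_{k+1}$.
\item Your induction over the filtration $X_p$ makes the choice of lifts irrelevant.
\end{enumerate}
Drop the intermediate suggestion that the even cohomology of products of $\Mbar_{1,m}$'s is algebraic. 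It is false: $H^{11}(\Mbar_{1,11})^{\otimes 2}\subset H^{22}(\Mbar_{1,11}\times\Mbar_{1,11})$ contains classes of Hodge type $(22,0)$. Your single-core observation makes it unnecessary anyway.

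For (2) your route genuinely differs. You re-run the stratification and weight argument in $\ell$-adic cohomology with Frobenius weights. The paper instead transports the Betti result through a comparison isomorphism: proper base change from Betti to \'etale cohomology, with Frobenius replacing complex conjugation over finite fields.

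The comparison route is cheaper. It only needs two facts: the spanning cycles (strata closures, $\psi$, $\Theta$, $\mathcal{H}$, genus zero tautological classes) are defined over $\mathbb{Z}$, and cycle class maps are compatible with the comparison. Then all of $H^{2\star}(\Mtil_{1,n}(\PP^r,d)_{\overline K},\mathbb{Q}_\ell(\star))$ is spanned by classes of cycles defined over $K$. Galois therefore acts trivially, and surjectivity onto the invariants follows at once.

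Your route instead requires re-verifying every building block $\ell$-adically, including in positive characteristic in the finite field case. These blocks are the Farb--Wolfson type computation of Lemma \ref{lem:mapwdelta}, Petersen's description of $W_\star H^\star(\mathcal{M}_{1,m})$, and the Künneth and Leray arguments. This is plausible, but it is substantially more work for the same conclusion.
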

\begin{proof}
  In even degrees, all the generators given by Theorem \ref{thm:gens} are cap products of tautological classes with fundamental classes of strata. Lifting these tautological classes to their strata closures in \ref{subsec:strcl} and pushing forward, we see that the even homology groups of $\Mtil_{1,n}(\PP^r,d)$ are generated by algebraic cycles, which proves item (1). We also observe that the algebraic cycles corresponding to tautological classes are defined over $K.$
   
  Suppose $K$ is a number field, and fix embeddings $\sigma: K\to \mathbb{C}$ and $\overline{\sigma}\to \mathbb{C}.$ Proper base change \cite[Exp. XII]{SGA43} gives a comparison isomorphism $$H^{2\star}(\Mtil_{1,n}(\PP^r,d))\otimes_{\mathbb{Q}}\mathbb{Q}_\ell(\star)\cong H^{2\star}(\Mtil_{1,n}(\PP^r,d)_K, \mathbb{Q}_\ell(\star))$$ between the Betti cohomology of the complex analytic space $\Mtil_{1,n}(\PP^r,d)(\mathbb{C})$ on the left hand side and the $\ell$-adic étale cohomology on the right hand side. Under the isomorphism, complex conjugation\footnote{For simplicity, we assume the embedding $\sigma$ is a real embedding. Analogous statements hold for the general case.} action on the left hand side corresponds to $\tilde{c}\in \mathrm{Gal}(\overline{K}/K)$ on the right hand side. As all the even Betti cohomology groups are Hodge--Tate, the comparison isomorphism gives $$H^{2\star}(\Mtil_{1,n}(\PP^r,d)_K, \mathbb{Q}_\ell(\star))^{\mathrm{Gal}(\overline{K}/K)}\subseteq H^{2\star}(\Mtil_{1,n}(\PP^r,d)_K, \mathbb{Q}_\ell(\star))^{\langle\tilde{c}\rangle}\cong H^{2\star}(\Mtil_{1,n}(\PP^r,d))\otimes_{\mathbb{Q}}\mathbb{Q}_\ell(\star).$$ Since the right hand side receives a surjection from tautological classes that are defined over $K,$ item (2) holds. The same argument with the Frobenius automorphism in place $\tilde{c}\in \mathrm{Gal}(\overline{K}/K)$ proves the case when $K$ is a finite field.
\end{proof}

\subsection{Degrees with odd cohomology}\label{sec:degodd}

The description of the generators controls the Hodge structures on $H^\star(\Mtil_{1,n}(\mathbb{P}^r,d))$ and in turn implies vanishing results of cohomology groups.

In the following, let $\mathsf{L} = H^2(\mathbb{P}^1)$ be the Tate Hodge structure, and let $W_k H^k(\mathcal{M}_{1,k})=: \mathsf{S}_{k+1}$ be the weight $k$ Hodge structure corresponding to $\mathrm{SL}_{2}(\mathbb{Z})$-cusp forms of weight $k+1$ under the Eichler--Shimura isomorphism.

\begin{corollary}
  The Hodge structures present in $H^\star(\Mtil_{1,n}(\mathbb{P}^r,d))$ are products of $\mathsf{L}$ and $\mathsf{S}_{k+1}.$
\end{corollary}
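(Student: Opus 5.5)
By Poincaré duality on the smooth proper stack $\Mtil_{1,n}(\PP^r,d)$, it suffices to show that every graded piece of $H_\star(\Mtil_{1,n}(\PP^r,d))$ is a subquotient of a direct sum of tensor products of $\mathsf{L}$ and the $\mathsf{S}_{k+1}$. (By semisimplicity of polarisable pure Hodge structures, such a subquotient is then a direct summand.) We use Corollary \ref{cor:maingen}: $H_\star(\Mtil_{1,n}(\PP^r,d))$ receives a surjection, compatible with mixed Hodge structures, from $\bigoplus_{[\mathbf{G},\rho]}\gr^W_{-\star}H^{\mathrm{BM}}_\star(\Mtil_{[\mathbf{G},\rho]})$. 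The differentials of the stratification spectral sequence are morphisms of mixed Hodge structures, and the category of polarisable pure Hodge structures of a fixed weight is abelian. Hence it is enough to show that each $\gr^W_{-\star}H^{\mathrm{BM}}_\star(\Mtil_{[\mathbf{G},\rho]})$ is built from $\mathsf{L}$ and the $\mathsf{S}_{k+1}$. Homology carries dual and negatively twisted structures, but $\mathsf{S}_{k+1}^\vee\cong\mathsf{S}_{k+1}\otimes\mathsf{L}^{-k}$, so the class of allowed structures is closed under these operations.

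Next we apply the Künneth formula of Lemma \ref{lem:kunneth}, together with the remark following it for cores of positive degree. Weight graded pieces of a tensor product are sums of tensor products of weight graded pieces, so the pure piece of a stratum is a sum of tensor products of pure pieces of the factors. Taking $\mathrm{Aut}[\mathbf{G},\rho]$-coinvariants produces a direct summand, so it does not leave the class. We then go through the factors using Theorem \ref{thm:gens}.
\begin{itemize}
\item $H_\star(\PP^r)$ is a sum of powers of $\mathsf{L}$.
\item The genus zero factors $\Mbar^*_{0,m}(\PP^r,\delta)$ have homology spanned by tautological algebraic cycles \cite{opretorus}, so their homology is Tate.
\item The factor $\mathcal{M}^{\mathbf{F}}$ has pure weight cohomology spanned by powers of $\psi$ (Lemma \ref{lem:MF}), hence is Tate.
\item The centrally aligned stratum $\Mtil_{\mathbf{G}^{cc}}$ is a torus bundle over a stratum of $\Mbar_{1,n}$. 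Its pure weight cohomology comes from that of a product $\mathcal{M}_{1,m}\times\prod\mathcal{M}_{0,m_i}$, with torus and $\psi$ contributions of Tate type. The genus zero factors are pure Tate, and $W_\star H^\star(\mathcal{M}_{1,m})$ is spanned by $1$ and classes pulled back from $\mathsf{S}_{k+1}$ (the input recalled in \S\ref{subsec:strcl}, following \cite{pet,clp}).
\item For positive degree cores, Lemmas \ref{lem:pics} and \ref{lem:m1nrd} express $\gr^W H^\star(\mathcal{M}_{1,n'}(\PP^r,d'))$ as pure pieces of $\mathcal{M}_{1,m}$ tensored with the Tate classes $\Theta$ and $H_{\hat{\mathcal{Q}}}$.
\item Cycle cores are covered by Lemma \ref{lem:Piccyc} and Corollary \ref{cor:Mcycle}, which give $\psi$-polynomials and powers of $H_{\hat{\mathcal{Q}}}$, all Tate.
\end{itemize}

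The step most likely to cause trouble is the genus one input: confirming that the pure weight cohomology of $\mathcal{M}_{1,m}$, and of its Picard analogues via Lemma \ref{lem:pics}, involves only $\mathsf{L}$ and $\mathsf{S}_{k+1}$. For this we will cite the Eichler--Shimura and Petersen description of $W_\star H^\star(\mathcal{M}_{1,m})$. That description comes from the Leray spectral sequence of the fibre power of the universal elliptic curve and the local systems $\mathrm{Sym}^k\mathbb{V}$ on $\mathcal{M}_{1,1}$, whose pure parts are exactly $\mathsf{S}_{k+1}$ and Tate twists. All remaining steps are formal bookkeeping of tensor products, twists and coinvariants.
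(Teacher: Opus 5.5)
Your proposal is correct and follows essentially the same route as the paper. The generators from Theorem~\ref{thm:gens} are either Tate ($\Theta$, $H_{\hat{\mathcal{Q}}}$, $\psi$-classes, genus zero stable map factors) or pulled back from $W_\star H^\star(\mathcal{M}_{1,m})$, and the latter is built from the $\mathsf{S}_{k+1}$. The differences are minor: the paper cites \cite[\S2]{clp} for the genus one input (each $W_kH^k(\mathcal{M}_{1,n})$ is generated by pullbacks of $W_kH^k(\mathcal{M}_{1,k})=\mathsf{S}_{k+1}$ along forgetful maps), whereas you derive it from the Eichler--Shimura/Leray description on fibre powers of the universal curve, and you also spell out the semisimplicity, duality and coinvariant bookkeeping that the paper leaves implicit.
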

\begin{proof}
  The generators in Theorem \ref{thm:gens} are either Chern classes--with Tate Hodge structure--or pulled back from $W_\star H^\star(\mathcal{M}_{1,n}).$ It has been proven in \cite[§2]{clp} that each $W_k H^k(\mathcal{M}_{1,n})$ is generated by pullback of the Hodge structures $W_k H^k(\mathcal{M}_{1,k})\to W_k H^k(\mathcal{M}_{1,n})$ along the forgetful maps $\mathcal{M}_{1,n}\to \mathcal{M}_{1,k}.$ Therefore, each $W_\star H^\star(\mathcal{M}_{1,n})$ has Hodge structure $\mathsf{S}_{k+1}.$
\end{proof}

As the smallest weight for the first non-zero  $\mathrm{SL}_2(\mathbb{Z})$-cusp form is 12, $\mathsf{S}_{k+1}$ is non-zero for the first time when $k = 11.$ This recovers the following vanishing result by Fontanari \cite{Fontanari}.
\begin{corollary}\label{cor:oddvan}
  For odd $k<11$ and all $n,d$, we have $H^\star(\Mtil_{1,n}(\mathbb{P}^r,d)) = 0.$
\end{corollary}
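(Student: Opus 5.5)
The statement (the intended reading is $H^k(\Mtil_{1,n}(\PP^r,d))=0$ for odd $k<11$) should follow from three facts: purity, the generators of Corollary \ref{cor:maingen}, and the preceding corollary identifying the Hodge structures that can occur. Since $\Mtil_{1,n}(\PP^r,d)$ is a smooth proper Deligne--Mumford stack, $H^k$ is pure of weight $k$. By Poincaré duality it suffices to treat $H_k$, or equivalently the dual Hodge structures; all weights are negated and nothing else changes.

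\textbf{Step 1: control each generator.} By Corollary \ref{cor:maingen}, $H_\star(\Mtil_{1,n}(\PP^r,d))$ is a quotient of $\bigoplus_{[\mathbf{G},\rho]}\gr^W_{-\star}H^{\mathrm{BM}}_\star(\Mtil_{[\mathbf{G},\rho]})$. It is therefore enough to show that every summand vanishes in odd total degree below the relevant bound. By Theorem \ref{thm:gens}, each generator is a tensor product of the following classes, capped with the fundamental class of a stratum of complex dimension $p$:
\begin{itemize}
\item classes pulled back from $W_\star H^\star(\mathcal{M}_{1,m})$;
\item Chern classes $\Theta$, $H_{\hat{\mathcal{Q}}}$ and $\psi_v$, together with classes of $H^\star(\PP^r)$;
\item classes from the smooth proper spaces $\Mbar^*_{0,m}(\PP^r,\delta)$.
\end{itemize}
The Chern classes are of Tate type, with even degree and weight. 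The genus zero stable map spaces have only algebraic, Tate, even cohomology by \cite{opretorus}. The fundamental class of a stratum of complex dimension $p$ contributes $\mathsf{L}^{p}$, which has even degree.

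\textbf{Step 2: locate the odd part.} The only odd contributions therefore come from $W_jH^j(\mathcal{M}_{1,m})$ with $j$ odd. By \cite[§2]{clp}, as quoted in the previous corollary, each such group is spanned by pullbacks of $\mathsf{S}_{j'+1}=W_{j'}H^{j'}(\mathcal{M}_{1,j'})$ along forgetful maps, with $j'\le j$ of the same parity. By Eichler--Shimura, $\mathsf{S}_{j'+1}$ is isomorphic to the space of cusp forms of weight $j'+1$ for $\mathrm{SL}_2(\mathbb{Z})$. This space vanishes for $j'+1<12$ and for odd weights, so an odd pure class on $\mathcal{M}_{1,m}$ occurs only in degree $j\ge 11$.

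\textbf{Step 3: assemble.} A generator has odd cohomological degree only if it involves an odd number of odd factors. Since Step 2 rules out any odd factor of degree below $11$, every nonzero odd-degree class has degree at least $11$. The degree shift from the fundamental classes of strata needs care, so I would phrase the whole argument in Hodge-theoretic terms. Every Hodge structure that appears in $H^\star$ is a tensor product of copies of $\mathsf{L}$ and of the $\mathsf{S}_{k+1}$. Since $\mathsf{L}$ has even weight, a pure Hodge structure of odd weight $k$ must contain at least one factor $\mathsf{S}_{j+1}$ with $j\ge 11$. Its weight is then at least $11$, which forces $k\ge 11$. Hence $H^k=0$ for odd $k<11$.

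The main obstacle is making Step 3 rigorous. One has to check that the Poincaré duality twist, $H^{\mathrm{BM}}_{i}\cong H^{2\dim-i}(-\dim)$, is a pure Tate twist and does not alter the parity of weights. One also has to check that no other odd-weight input enters through the off-by-one pieces. The second point is harmless, because Corollary \ref{cor:maingen} shows that only pure-weight pieces of strata surject onto $H_\star$.
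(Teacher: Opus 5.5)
Your proposal is correct and follows the paper's proof. The ingredients are the same: purity of $H^k$ for the smooth proper stack, the preceding corollary that only products of $\mathsf{L}$ and $\mathsf{S}_{k+1}$ occur, and the fact that the first nonzero odd-weight structure is $\mathsf{S}_{12}$, of weight $11$. Your Steps 1--2 re-derive that preceding corollary from Theorem \ref{thm:gens}, where the paper simply cites it. The twist concern you flag is settled by noting that in cohomology only nonnegative powers of $\mathsf{L}$ arise (via Gysin pushforward from strata), or equivalently by effectivity of the Hodge structure on $H^k$.
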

\begin{proof}
  As $\Mtil_{1,n}(\mathbb{P}^r,d)$ is smooth and proper, $H^k(\Mtil_{1,n}(\mathbb{P}^r,d))$ carries a Hodge structure of weight $k.$ On the other hand, the above implies that the first odd weight Hodge structure that could be present on $H^\star(\Mtil_{1,n}(\mathbb{P}^r,d))$ is $\mathsf{S}_{12},$ which is off weight 11. Therefore, no odd cohomology groups in degree less than 11 can be non-vanishing.
\end{proof}

On the other hand, when $n\geq 11,$ the pullback from $\mathcal{M}_{1,11}$ gives $H^{11}(\Mtil_{1,n}(\mathbb{P}^r,d))\neq 0,$ for instance. Taking a step further, we fix $n=0$ and ask for the smallest smallest $d$ such that $\Mtil_{1,0}(\mathbb{P}^r,d)$ has odd cohomology.

\begin{corollary}
  When $r\geq 11,$ the smallest $d$ such that $\Mtil_{1,0}(\mathbb{P}^r,d)$ has odd cohomology is $d=11.$ 
\end{corollary}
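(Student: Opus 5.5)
Two things have to be shown: for $d\le 10$ the odd cohomology of $\Mtil_{1,0}(\mathbb{P}^r,d)$ vanishes, and for $d=11$ it does not. Both come from the Hodge-theoretic description of generators in Theorem \ref{thm:gens} and Corollary \ref{cor:maingen}. By the previous corollaries, an odd-degree class in $H_\star(\Mtil_{1,0}(\mathbb{P}^r,d))$ must come from a generator $(\iota_{[\mathbf{G},\rho]})_*\tilde F$ whose $F$ involves a nonzero factor from $W_\star H^\star(\mathcal{M}_{1,m})$ carrying some $\mathsf{S}_{k+1}$ with $k$ odd. Every other building block is of Tate type: Chern classes $\Theta$, $H_{\hat{\mathcal{Q}}}$, the $\psi$-classes, and the classes on $\Mbar^*_{0,m}(\mathbb{P}^r,\delta)$.

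\textbf{Vanishing for $d\le 10$.} Such a factor only appears when the core of $[\mathbf{G},\rho]$ is a single genus one vertex (cases (1) and (3) of Theorem \ref{thm:gens}). In those cases the genus one vertex contributes $W_\star H^\star(\mathcal{M}_{1,m})$ with $m=|(L\cup L')(\mathbf{G}^c)|$. Since $n=0$, all these special points are edges to rational tails or to the contraction radius. Each such subgraph is stable and carries no markings, so it has degree $\ge 1$; hence $m\le d$. By \cite[§2]{clp}, $W_kH^k(\mathcal{M}_{1,m})$ is pulled back from $\mathsf{S}_{k+1}$ on $\mathcal{M}_{1,k}$ with $k\le m$, and $\mathsf{S}_{k+1}$ first becomes nonzero at $k=11$. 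So for $d\le 10$ every generator is Tate, and odd cohomology vanishes because $\Mtil_{1,0}(\mathbb{P}^r,d)$ has pure cohomology.

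\textbf{Nonvanishing for $d=11$.} I would take the stratum whose central genus one vertex has degree $0$ and carries $11$ legs. Each leg goes to a degree one vertex, and all of these lie on a single level $\rho=1$; this is a valid central alignment since the total degree is $11>1$. The Künneth formula of Lemma \ref{lem:kunneth} yields a pure weight class $\mathsf{S}_{12}\otimes(\text{fundamental classes})$ on this stratum, of weight $11$ in the appropriate degree. The hypothesis $r\ge 11$ should be what guarantees that the factorisation space $\mathcal{M}^{\mathbf F}$ for $11$ degree-one vertices is nonempty. Indeed, Lemma \ref{lem:mapwdelta} requires the tangent vectors $v_i\neq0$, and $\mathsf{D}^*_{\boldsymbol{\delta}}$ in Lemma \ref{lem:Dtilstar} with all entries nonzero is nonempty. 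If dimension considerations matter, $r\ge 11$ is the range where the $11$ vectors are generic apart from one dependency, which keeps the relevant weight/degree bookkeeping uniform.

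\textbf{Main obstacle.} The hard step is showing this generator survives to $E^\infty$, that is, it is not killed by a relation. Relations are images of off-by-one weight classes $\mathrm{gr}^W_{-\star+1}$ from neighbouring strata (Theorem \ref{thm:rels}), and $\mathsf{S}_{12}$-isotypic relations could only come from off-by-one pieces that themselves contain $\mathsf{S}_{12}$. My plan is to take $\mathsf{S}_{12}$-isotypic parts of the whole spectral sequence. Among the strata with $m\ge11$, only those whose genus one vertex has $11$ legs to degree-one pieces contribute. I would then check that the off-by-one $\mathsf{S}_{12}$ part of the adjacent strata either vanishes or maps with a cokernel that leaves a surviving class. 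Concretely, for the generic such stratum the relevant differentials are the torus relations $\alpha_v-\alpha_w\mapsto\psi_v-\psi_w$ (Lemma \ref{lem:MF}) and the $\beta_v$ classes, all tensored with $\mathsf{S}_{12}$. Counting shows they cannot hit the class in the given degree.

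An alternative route avoids this bookkeeping: exhibit a proper map from the stratum closure that pulls back the $\mathsf{S}_{12}$ class of $\Mbar_{1,11}$ nontrivially. This uses the forgetful/stabilisation map $\Mtil_{1,0}(\mathbb{P}^r,11)\dashrightarrow\Mbar_{1,11}$ obtained by marking preimages of a hyperplane. If that map can be made to work, it gives nonvanishing directly, since $H^{11}(\Mbar_{1,11})\neq0$.
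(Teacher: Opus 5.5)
You have a genuine gap in the existence half; your lower bound (every branch at the genus one vertex has positive degree, so $m\le d$) is the same as the paper's argument.

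\textbf{The class you construct is zero on the stratum.} You never pass from the ordered stratum to $\Mtil_{[\mathbf{G},\rho]}$ itself. By Lemma \ref{lem:kunneth}, $H^{\mathrm{BM}}_\star(\Mtil_{[\mathbf{G},\rho]})$ is the $\mathrm{Aut}[\mathbf{G},\rho]$-coinvariants of the ordered K\"unneth product. Here $\mathrm{Aut}[\mathbf{G},\rho]\cong S_{11}$ permutes the eleven identical degree-one vertices. Since $W_{11}H^{11}(\mathcal{M}_{1,11})\cong \mathsf{S}_{12}\otimes\mathrm{sgn}_{S_{11}}$, your class $\mathsf{S}_{12}\otimes(\text{fundamental classes})$ is anti-invariant and dies in the coinvariants. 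No spectral-sequence bookkeeping afterwards can rescue a class that is already zero.

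This sign is exactly where $r\ge 11$ enters in the paper. It pairs $\mathrm{sgn}_{S_{11}}$ with the alternating part $\bigwedge^{11}\bigl(\bigoplus_{i=0}^{r-1}\psi^i\bigr)$ of the vertex $\psi$-classes, which is nonzero iff $r\ge 11$ (degree $121$ on the stratum when $r=11$). It then asserts survival by a Hodge-type argument.

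Your proposed role for $r\ge 11$ is not right. Eleven nonzero vectors in $T_p\PP^r$ summing to zero exist for every $r\ge 1$, so $\mathcal{M}^{\mathbf{F}}$ is nonempty regardless of $r$.

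Your alternative route fails for the same reason. The preimages of a hyperplane are unordered, so at best you get a rational map to $[\Mbar_{1,11}/S_{11}]$. But $H^{11}(\Mbar_{1,11})^{S_{11}}=0$.

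\textbf{Caution about the paper's repair.} If you import it, note that it treats $\psi_{v_1},\dots,\psi_{v_{11}}$ as eleven independent classes. Lemma \ref{lem:MF}, and the relation $\psi_v=\psi_w$ built into $\boldsymbol{\mathsf{P}}$, instead identify them all with a single $\psi$ on $\mathcal{M}^{\mathbf{F}}$, on which $S_{11}$ then acts trivially. Taken at face value, this kills the class again:
\begin{enumerate}
\item Definition \ref{defn:centrgrph} forces at least two degree-one vertices on the top level, since their degrees must sum to more than one.
\item A transposition of two such vertices acts by $-1$ on the $\mathsf{S}_{12}$-part.
\item It acts trivially on every other pure factor: $H^\star(\PP^r)$, the single-$\psi$ span, and any rational tails.
\end{enumerate}
So the sign-isotypic part would vanish for every candidate stratum in $\Mtil_{1,0}(\PP^r,11)$. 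The step your argument must actually supply is Tate classes on the stratum that genuinely vary from vertex to vertex and can be antisymmetrised over $S_{11}$. Nonemptiness of $\mathcal{M}^{\mathbf{F}}$ and survival in the spectral sequence are secondary to that.
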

\begin{proof}
  By considering pullback from $\mathcal{M}_{1,11}$ to strata in $\Mtil_{1,0}(\mathbb{P}^r,d)$, we see that $d=11$ is a lower bound. It suffices to show that the following centrally aligned type in $\Mtil_{1,0}(\mathbb{P}^r,11)$ has non-vanishing odd cohomology when $r\geq 10.$

  $$\begin{tikzpicture}[x=0.75pt,y=0.75pt,yscale=-1,xscale=1]

\draw   (25.86,65.67) .. controls (25.86,53.76) and (35.52,44.1) .. (47.43,44.1) .. controls (59.34,44.1) and (69,53.76) .. (69,65.67) .. controls (69,77.59) and (59.34,87.24) .. (47.43,87.24) .. controls (35.52,87.24) and (25.86,77.59) .. (25.86,65.67) -- cycle ;
\draw    (66,76.77) -- (99.68,110.45) ;
\draw  [fill={rgb, 255:red, 0; green, 0; blue, 0 }  ,fill opacity=1 ] (94.52,110.45) .. controls (94.52,107.6) and (96.83,105.29) .. (99.68,105.29) .. controls (102.53,105.29) and (104.84,107.6) .. (104.84,110.45) .. controls (104.84,113.3) and (102.53,115.61) .. (99.68,115.61) .. controls (96.83,115.61) and (94.52,113.3) .. (94.52,110.45) -- cycle ;
\draw    (68,60.77) -- (101.68,27.09) ;
\draw  [fill={rgb, 255:red, 0; green, 0; blue, 0 }  ,fill opacity=1 ] (96.52,27.09) .. controls (96.52,24.24) and (98.83,21.93) .. (101.68,21.93) .. controls (104.53,21.93) and (106.84,24.24) .. (106.84,27.09) .. controls (106.84,29.94) and (104.53,32.25) .. (101.68,32.25) .. controls (98.83,32.25) and (96.52,29.94) .. (96.52,27.09) -- cycle ;
\draw    (69,65.67) -- (99.92,46.43) ;
\draw  [fill={rgb, 255:red, 0; green, 0; blue, 0 }  ,fill opacity=1 ] (94.76,46.43) .. controls (94.76,43.58) and (97.07,41.27) .. (99.92,41.27) .. controls (102.76,41.27) and (105.07,43.58) .. (105.07,46.43) .. controls (105.07,49.28) and (102.76,51.59) .. (99.92,51.59) .. controls (97.07,51.59) and (94.76,49.28) .. (94.76,46.43) -- cycle ;
\draw  [dash pattern={on 4.5pt off 4.5pt}][line width=0.75]  (18.25,16.08) -- (82.79,16.08) -- (82.79,123.32) -- (18.25,123.32) -- cycle ;
\draw  [dash pattern={on 4.5pt off 4.5pt}][line width=0.75]  (18.25,16.08) -- (167.04,16.08) -- (167.04,123.32) -- (18.25,123.32) -- cycle ;

\draw (43,59) node [anchor=north west][inner sep=0.75pt]    {$1$};
\draw (42.33,27.34) node [anchor=north west][inner sep=0.75pt]  [font=\small]  {$0$};
\draw (110.93,21.14) node [anchor=north west][inner sep=0.75pt]  [font=\small]  {$1$};
\draw (72,142.38) node [anchor=north west][inner sep=0.75pt]    {$[\mathbf{G} ,\rho]$};
\draw (110.93,105.4) node [anchor=north west][inner sep=0.75pt]  [font=\small]  {$1$};
\draw (110.93,40.4) node [anchor=north west][inner sep=0.75pt]  [font=\small]  {$1$};
\draw (92.46,59.45) node [anchor=north west][inner sep=0.75pt]    {$\dotsc $};
\draw (84.77,17.82) node [anchor=north west][inner sep=0.75pt]  [font=\LARGE,rotate=-360]  {$\begin{drcases}
 & \\
 & 
\end{drcases}$};
\draw (140,61.4) node [anchor=north west][inner sep=0.75pt]    {$11$};
\draw (30,127.2) node [anchor=north west][inner sep=0.75pt]  [font=\small]  {$\rho =0$};
\draw (102.15,127.38) node [anchor=north west][inner sep=0.75pt]  [font=\small]  {$\rho =1$};

\end{tikzpicture}$$ From Theorem \ref{thm:gens}, the pure weight cohomology contribution from the stratum is $$\left[W_\star H^\star(\mathcal{M}_{1,11})\otimes \bigotimes_{j=1}^{11}\left(\bigoplus_{i=0}^{r-1} \psi_{v_j}^{i}\right)\right]^{S_{11}},$$ where $S_{11}\cong \mathrm{Aut}([\mathbf{G},\rho]).$ The odd cohomology all comes from setting $\star=11$. 

As an $S_{11}$-representation, $W_{11} H^{11}(\mathcal{M}_{1,11})\cong \mathrm{sgn}_{S_{11}}\otimes \mathsf{S}_{12},$ \cite[§5]{getzresolv}. Thus the odd cohomology simplifies to $\mathsf{S}_{12}\otimes \bigwedge^{11}\left(\bigoplus_{i = 0}^{r-1}\psi^i \right),$ which is non-zero if and only if $\dim \left(\bigoplus_{i = 0}^{r-1}\psi^i\right)\geq 11,$ namely when $r\geq 11.$ When $r = 11,$ the corresponding cohomological degree on the stratum is $121 = 11 + 2\cdot \binom{11}{2},$ which maps to $H^{123}(\Mtil_{1,0}(\mathbb{P}^r,11))$ under the Gysin pushforward. Because the odd degree generators are of pure weight carry Hodge structures $\mathsf{S}_{12},$ the differentials to and from them in the stratification spectral sequence for $H^\star(\Mtil_{1,0}(\mathbb{{P}}^r,11))$ all vanish, so the generators survive to $H^\star(\Mtil_{1,0}(\mathbb{{P}}^r,11)).$
\end{proof}

\begin{example}
  The smallest odd cohomological degree $k$ such that $H^k(\Mtil_{1,0}(\mathbb{P}^r,d))\neq 0$ is realized as $k = 13$ and $d = \binom{12}{2} = 66.$ The relevant dual graph stratum is given by $$\begin{tikzpicture}[x=0.75pt,y=0.75pt,yscale=-1,xscale=1]

\draw   (25.86,65.67) .. controls (25.86,53.76) and (35.52,44.1) .. (47.43,44.1) .. controls (59.34,44.1) and (69,53.76) .. (69,65.67) .. controls (69,77.59) and (59.34,87.24) .. (47.43,87.24) .. controls (35.52,87.24) and (25.86,77.59) .. (25.86,65.67) -- cycle ;
\draw    (66,76.77) -- (99.68,110.45) ;
\draw  [fill={rgb, 255:red, 0; green, 0; blue, 0 }  ,fill opacity=1 ] (94.52,110.45) .. controls (94.52,107.6) and (96.83,105.29) .. (99.68,105.29) .. controls (102.53,105.29) and (104.84,107.6) .. (104.84,110.45) .. controls (104.84,113.3) and (102.53,115.61) .. (99.68,115.61) .. controls (96.83,115.61) and (94.52,113.3) .. (94.52,110.45) -- cycle ;
\draw    (68,60.77) -- (101.68,27.09) ;
\draw  [fill={rgb, 255:red, 0; green, 0; blue, 0 }  ,fill opacity=1 ] (96.52,27.09) .. controls (96.52,24.24) and (98.83,21.93) .. (101.68,21.93) .. controls (104.53,21.93) and (106.84,24.24) .. (106.84,27.09) .. controls (106.84,29.94) and (104.53,32.25) .. (101.68,32.25) .. controls (98.83,32.25) and (96.52,29.94) .. (96.52,27.09) -- cycle ;
\draw    (69,65.67) -- (99.92,46.43) ;
\draw  [fill={rgb, 255:red, 0; green, 0; blue, 0 }  ,fill opacity=1 ] (94.76,46.43) .. controls (94.76,43.58) and (97.07,41.27) .. (99.92,41.27) .. controls (102.76,41.27) and (105.07,43.58) .. (105.07,46.43) .. controls (105.07,49.28) and (102.76,51.59) .. (99.92,51.59) .. controls (97.07,51.59) and (94.76,49.28) .. (94.76,46.43) -- cycle ;
\draw  [dash pattern={on 4.5pt off 4.5pt}][line width=0.75]  (18.25,16.08) -- (82.79,16.08) -- (82.79,123.32) -- (18.25,123.32) -- cycle ;
\draw  [dash pattern={on 4.5pt off 4.5pt}][line width=0.75]  (18.25,16.08) -- (149.34,16.08) -- (149.34,123.32) -- (18.25,123.32) -- cycle ;

\draw (41.79,56.98) node [anchor=north west][inner sep=0.75pt]    {$1$};
\draw (42.33,27.34) node [anchor=north west][inner sep=0.75pt]  [font=\small]  {$0$};
\draw (111.93,21.14) node [anchor=north west][inner sep=0.75pt]  [font=\small]  {$1$};
\draw (55,150.38) node [anchor=north west][inner sep=0.75pt]    {$[\mathbf{G} ,\rho ]$};
\draw (108.93,105.4) node [anchor=north west][inner sep=0.75pt]  [font=\small]  {$11$};
\draw (111.93,41.4) node [anchor=north west][inner sep=0.75pt]  [font=\small]  {$2$};
\draw (92.46,70.45) node [anchor=north west][inner sep=0.75pt]    {$\dots$};
\draw (30,127.2) node [anchor=north west][inner sep=0.75pt]  [font=\small]  {$\rho =0$};
\draw (102.15,127.38) node [anchor=north west][inner sep=0.75pt]  [font=\small]  {$\rho =1$};

\end{tikzpicture}$$ which has trivial automorphism group. Therefore, the stratum contributes non-vanishing $H^{11},$ which survives in the spectral sequence and Gysin pushes forward to $H^{13}(\Mtil_{1,0}(\mathbb{P}^r, 66)).$ On the other hand, we already know that the minimum $k$ needs to be $k\geq 11$, but $\mathrm{gr}^W_{11} H^{11}(\mathcal{M}_{1,0}(\mathbb{P}^r,d))$ vanishes for all $r, d.$ Therefore the smallest such $k$ is $k=13.$
\end{example}

\subsection{Rational Picard group}
As a prelude to the next section, we determine $H^2(\Mtil_{1,n}(\PP^r,d))$ and show that it is isomorphic to the rational Picard group.

For this section, we set $\mathrm{D}  = \dim \Mtil_{1,n}(\mathbb{P}^r,d).$ After dualising to $H^{\mathrm{BM}}_{2\mathrm{D}-2}(\Mtil_{1,n}(\mathbb{P}^r,d))$ and inspecting the stratification spectral sequence, we observe that $H^2(\Mtil_{1,n}(\mathbb{P}^r,d))$ is generated by the classes $\Theta, H_{\hat{\mathcal{Q}}}$ on $\mathcal{M}_{1,n}(\mathbb{P}^r,d)$ (Lemma \ref{lem:m1nrd}) and the fundamental classes of the boundary divisors.
\begin{proposition}[Corollary \ref{cor:F}]\label{prop:Pic}
  The classes listed above form a basis of $H^2(\Mtil_{1,n}(\mathbb{P}^r,d)),$ and the cycle class map $A^1_{\mathbb{Q}}(\Mtil_{1,n}(\mathbb{P}^r,d))\to H^2(\Mtil_{1,n}(\mathbb{P}^r,d))$ is an isomorphism.
\end{proposition}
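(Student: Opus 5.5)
We work with the stratification spectral sequence in homological degree $2\mathrm{D}-2$, dual to $H^2$. Only two kinds of strata contribute to $E^\infty_{p,q}$ with $p+q=2\mathrm{D}-2$. The open stratum $\mathcal{M}_{1,n}(\mathbb{P}^r,d)$ (with $p=\mathrm{D}$) contributes through $\gr^W_{-(2\mathrm{D}-2)}H^{\mathrm{BM}}_{2\mathrm{D}-2}$, which by Lemma \ref{lem:m1nrd} and Poincaré duality is $\gr^W_2H^2(\mathcal{M}_{1,n}(\mathbb{P}^r,d))$. Each codimension-one stratum $\Mtil_{[\mathbf{G},\rho]}$ contributes its fundamental class. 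All deeper strata have dimension below $\mathrm{D}-1$ and cannot carry Borel--Moore classes of degree $2\mathrm{D}-2$.

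\textbf{Step 1 (the open part).} We identify $\gr^W_2H^2(\mathcal{M}_{1,n}(\mathbb{P}^r,d))$ with $\mathbb{Q}\Theta\oplus\mathbb{Q}H_{\hat{\mathcal{Q}}}$. Lemma \ref{lem:m1nrd}(1) gives
\[
\gr^W_2H^2 = \gr^W_2H^2(\mathrm{Pic}^d_{1,n})\oplus \mathbb{Q}H_{\hat{\mathcal{Q}}}.
\]
Lemma \ref{lem:pics}(1) gives
\[
\gr^W_2H^2(\mathrm{Pic}^d_{1,n}) \cong W_2H^2(\mathcal{M}_{1,n+1})\oplus \mathbb{Q}\Theta.
\]
The first summand vanishes because $W_2H^2(\mathcal{M}_{1,m})=0$: the only pure classes of $\mathcal{M}_{1,m}$ are $1$ and the $\mathsf{S}_{k+1}$ with $k\ge 11$.

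\textbf{Step 2 (independence).} We show no relation kills any of these generators. Relations on $E^\infty$ in this total degree come from differentials out of $\gr^W_{-(2\mathrm{D}-2)}E^k_{p+k,\,q-k+1}$ with total degree $2\mathrm{D}-1$.

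The differentials landing in the open stratum's column are ruled out by positioning: $p=\mathrm{D}$ is the top of the filtration, so no differential can hit it.

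The differentials landing on a divisorial fundamental class originate in $\gr^W_{-(2\mathrm{D}-2)}H^{\mathrm{BM}}_{2\mathrm{D}-1}(\mathcal{M}_{1,n}(\mathbb{P}^r,d))$. By Poincaré duality this is $\gr^W_2H^1(\mathcal{M}_{1,n}(\mathbb{P}^r,d))$, which vanishes by Lemma \ref{lem:H1wt2}. Hence every $d^k$ into the divisor terms is zero.

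So $E^1=E^\infty$ in these positions. Together with the surjectivity already observed, this shows that $\Theta$, $H_{\hat{\mathcal{Q}}}$ (via lifts as in \S\ref{subsec:strcl}) and the boundary divisor classes form a basis, once one checks the filtration splits as a vector space.

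\textbf{Step 3 (Chow comparison).} Surjectivity of the cycle class map in degree one follows from Corollary \ref{cor:HT}(1), or directly: each basis element is algebraic. The boundary divisors are algebraic, $\Theta$ lifts via the relative tangent bundle, and $H_{\hat{\mathcal{Q}}}$ lifts via $\mathcal{H}/\delta_r$. For injectivity we bound $\dim A^1_{\mathbb{Q}}$ from above by the same count, using the localisation sequence
\[
\bigoplus A_{\mathrm{D}-1}(\text{divisors})\to A^1(\Mtil)\to A^1(\mathcal{M}_{1,n}(\mathbb{P}^r,d))\to 0.
\]
Two facts then finish the bound. First, each boundary divisor is irreducible, so it contributes at most one class. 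Second, $A^1_{\mathbb{Q}}(\mathcal{M}_{1,n}(\mathbb{P}^r,d))$ is spanned by $\Theta$ and $H_{\hat{\mathcal{Q}}}$. This second fact follows from the projective bundle formula over $\mathrm{Pic}^d_{1,n}$, the fact that $A^1_{\mathbb{Q}}(\mathcal{M}_{1,m})=0$, and $A^1$ of the universal curve being spanned by $\Theta$ modulo pullbacks.

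Hence $\dim A^1_{\mathbb{Q}}\le \dim H^2$, and the surjection is an isomorphism.

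The main obstacle is Step 2. One must verify that no higher differential from some odd-degree off-by-one class on an intermediate stratum hits a divisor class. By the weight/degree bookkeeping, only the open stratum has the right dimension to reach codimension one, so Lemma \ref{lem:H1wt2} should suffice. A secondary check is the irreducibility of each coarse divisorial stratum, which we expect from its description as an irreducible fibre product in Lemma \ref{lem:kunneth}.
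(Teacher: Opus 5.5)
Apart from one point, your argument is the paper's own. It uses the same two contributions to $H^{\mathrm{BM}}_{2\mathrm{D}-2}$ in the stratification spectral sequence and the same appeal to Lemma \ref{lem:H1wt2} to rule out differentials into the divisor classes. (You correctly use $\gr^W_2H^1$; the paper's text has the typo $\gr^W_2H^2$.) It also uses the same Chow count via excision from $\hat{\mathcal{Q}}_{1,n}(\PP^r,d)$ and the projective bundle formula. Your irreducibility check is not needed: $A_{\mathrm{D}-1}$ and $H^{\mathrm{BM}}_{2\mathrm{D}-2}$ of the boundary are both freely spanned by its irreducible components, so the counts agree either way.

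\textbf{The gap: Step 1 fails when $r=1$.} The identification $\gr^W_2H^2(\mathcal{M}_{1,n}(\PP^r,d))=\mathbb{Q}\Theta\oplus\mathbb{Q}H_{\hat{\mathcal{Q}}}$ requires $r\geq 2$. You inherit this from Lemma \ref{lem:m1nrd}(1), and the paper's proof has the same problem.

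Weight-truncate the excision sequence for $\mathcal{B}\subset\hat{\mathcal{Q}}_{1,n}(\PP^r,d)$. It shows that $W_2H^2(\hat{\mathcal{Q}}_{1,n}(\PP^r,d))=\mathbb{Q}\Theta\oplus\mathbb{Q}H_{\hat{\mathcal{Q}}}$ surjects onto $W_2H^2(\mathcal{M}_{1,n}(\PP^r,d))$. The kernel is spanned by the classes of the $(\mathrm{D}-1)$-dimensional components of $\mathcal{B}$. Since $\mathcal{B}$ has codimension $r$, the kernel vanishes for $r\ge 2$, and there your Step 1 is fine.

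For $r=1$ and $d\ge 2$, however, $\mathcal{B}$ is a divisor. It cuts each fibre $\mathbb{P}(H^0(E,L)^{\oplus 2})$ in a hypersurface of some degree $e>0$. Hence $[\mathcal{B}]=eH_{\hat{\mathcal{Q}}}+a\Theta\neq 0$, and this combination restricts to zero on $\mathcal{M}_{1,n}(\PP^1,d)$. The lemma overlooks that $m_*$ does not kill the fundamental class.

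Concretely, take $d=2$. Two sections of $L$ share a zero iff they are proportional. So the fibres of $\mathcal{M}_{1,n}(\PP^1,2)\to\mathrm{Pic}^2_{1,n}$ are $\PP^3\setminus(\PP^1\times\PP^1)\cong\mathrm{PGL}_2$, which is rationally a $3$-sphere. Therefore $W_2H^2(\mathcal{M}_{1,n}(\PP^1,2))=\mathbb{Q}\Theta$, and your proposed basis of $H^2(\Mtil_{1,n}(\PP^1,2))$ is linearly dependent.

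\textbf{Repair.} Drop $H_{\hat{\mathcal{Q}}}$ from the list when $r=1$.
\begin{itemize}
\item Step 2 survives: Lemma \ref{lem:H1wt2} still holds for $r=1$, precisely because $[\mathcal{B}]\neq 0$ in $H^{\mathrm{BM}}_{2\mathrm{D}-2}(\hat{\mathcal{Q}}_{1,n}(\PP^1,d))$. The paper's dimension argument does not literally apply in that case.
\item In Step 3, the same relation holds in $A^1_{\mathbb{Q}}(\mathcal{M}_{1,n}(\PP^1,d))$ by localisation. So the dimension count, and the isomorphism of the cycle class map, go through with the corrected basis.
\end{itemize}
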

\begin{proof}
  We calculate $H^{\mathrm{BM}}_{2\mathrm{D}-2}(\Mtil_{1,n}(\mathbb{P}^r,d))$ via the stratification spectral sequence. 
  
  For dimension reasons, the only $E_1$ pages that can possibly contribute are $\gr^{W}_{-2(\mathrm{D}-1)} E^1_{\mathrm{D}-1, \mathrm{D}-1}$ and $\gr^{W}_{-2(\mathrm{D}-1)} E_1^{\mathrm{D}, \mathrm{D}-2} = H^{\mathrm{BM}}_{2(\mathrm{D}-1)}(\mathcal{M}_{1,n}(\mathbb{P}^r,d)).$ By considering the cohomological degree and weight, the latter survives to $E_\infty$-page as a subquotient of $H_c^{2(\mathrm{D}-1)}(\Mtil_{1,n}(\mathbb{P}^r,d)).$
  
  The page $E_1^{\mathrm{D}-1, \mathrm{D}-1}$ is given by the direct sum of the fundamental classes of the boundary stratum and is hence pure of weight $2(\mathrm{D}-1)$. For weight reasons, the only possibly non-zero differential will be $d_1: E^1_{\mathrm{D}-1, \mathrm{D}-1}\to E^1_{\mathrm{D}, \mathrm{D}-1},$ and $d^1$ is non-zero if and only if the composition $$E^1_{\mathrm{D}, \mathrm{D}-1}\to E^1_{\mathrm{D}-1, \mathrm{D}-1}\to \mathrm{gr}^W_{2(\mathrm{D}-1)}E_1^{\mathrm{D}, \mathrm{D}-1}=\mathrm{gr}^W_{2(\mathrm{D}-1)} H_c^{2\mathrm{D}-1}(\mathcal{M}_{1,n}(\mathbb{P}^r,d))$$ is non-zero. However, the latter is dual to $\mathrm{gr}^W_{2} H^{2}(\mathcal{M}_{1,n}(\mathbb{P}^r,d)),$ which vanishes from Lemma \ref{lem:H1wt2}. Therefore, the whole page $E_1^{\mathrm{D}-1, \mathrm{D}-1}$ survives to $E_\infty$ as a subquotient of $H_c^{2(\mathrm{D}-1)}(\Mtil_{1,n}(\mathbb{P}^r,d)).$ The basis of $H^{2}(\Mtil_{1,n}(\mathbb{P}^r,d))$ follows from combining the two subquotients of $H_c^{2(\mathrm{D}-1)}(\Mtil_{1,n}(\mathbb{P}^r,d))$ and dualising. 
  
  We turn to the Picard group of the mapping spaces. Recall that\footnote{This follows from the fact that $A^1(\mathcal{M}_{1,n}) = 0$ and $\mathrm{Pic}^d_{1,n}\cong \mathcal{C}_{1,n},$ so we use the excision sequence $\bigoplus_{n}A_\star(\mathcal{M}_{1,n})\to A_\star(\mathcal{C}_{1,n})\to A_\star(\mathcal{M}_{1,n+1})\to 0.$ Each $A^0(\mathcal{M}_{1,n})$ pushes forward to $\Theta,$ which is non-zero as it is mapped to a non-zero class in the cycle class map.} $A^1_{\mathbb{Q}}(\mathrm{Pic}^d_{1,n})=\langle \Theta\rangle,$ then by projective bundle formula on $\hat{\mathcal{Q}}_{1,n}(\mathbb{P}^r,d)\to \mathrm{Pic}^d_{1,n},$ we have that $A_{\mathrm{d}_{n,r,d}-1}(\hat{\mathcal{Q}}_{1,n}(\mathbb{P}^r,d))\cong \langle \Theta, H_{\hat{Q}}\rangle.$ Therefore, by excision sequences for the open embedding $\mathcal{M}_{1,n}(\mathbb{P}^r,d)\subset \hat{\mathcal{Q}}_{1,n}(\mathbb{P}^r,d)$ as well as $\mathcal{M}_{1,n}(\mathbb{P}^r,d)\subset \Mtil_{1,n}(\mathbb{P}^r,d),$ we have that $A^1_{\mathbb{Q}}(\Mtil_{1,n}(\mathbb{P}^r,d))$ is generated by the boundary divisors, $H$ and $\Theta.$

  Let $\mathcal{B}$ be the vector space freely generated by the set of boundary divisors, $H_{\hat{Q}}$, and $\Theta.$ The previous result on the basis of $H^2(\Mtil_{1,n}(\mathbb{P}^r,d))$ gives an isomorphism $H^2(\Mtil_{1,n}(\mathbb{P}^r,d))\cong \mathcal{B},$ then the cycle class map composes to the identity $\mathrm{id}_{\mathcal{B}},$ $$\mathcal{B}\twoheadrightarrow A^1_{\mathbb{Q}}(\Mtil_{1,n}(\mathbb{P}^r,d))\to H^2(\Mtil_{1,n}(\mathbb{P}^r,d))\cong \mathcal{B},$$ hence the cycle class map is an isomorphism.
\end{proof}

\section{Relations}
Relations among the generators correspond to images of the differentials $\gr^W_{-(p+q)}E^r_{p+r, q-r+1}\to \gr^W_{-(p+q)}E^r_{p,q}.$ They are direct sums of boundary maps $$\gr^W_{-(p+q)}H^{\mathrm{BM}}_{p+q+1}(\mathcal{M}_{[\mathbf{G},\rho]})\to \gr^W_{-(p+q)}H^{\mathrm{BM}}_{p+q}(\mathcal{M}_{[\mathbf{G}',\rho']})$$ or their subquotients, where $[\mathbf{G}',\rho']\to [\mathbf{G},\rho]$ is either a core edge contraction or level merge; higher page differentials map between the subquotients of strata related by a composition of such morphisms. The off-by-one weight pieces $\gr^W_{-(p+q)}H^{\mathrm{BM}}_{p+q+1}(\mathcal{M}_{[\mathbf{G},\rho]})$ and their modular interpretation given in Lemmas \ref{lem:m1nrdoff}, \ref{lem:MF}, and \ref{cor:Mcycle} allows us to qualitatively determine the differentials and describe the resulting relations in §\ref{subsec:baseptrel}, \ref{subsec:pres}. In short, they are:
\begin{enumerate}
  \item pullback of off-by-one weight classes on $\mathcal{M}_{0,n'}$ or $\mathcal{M}_{1,n'},$ corresponding to pullback of the WDVV or Getzler's relation, respectively,
  \item classes associated to quasimaps with basepoints, corresponding to relations among strata (possibly decorated by tautological classes) with rational tails, which we denote as basepoint relations,
  \item Künneth tensor products of the above two with pure weight classes supported on vertices, which are cup products of the above two types of relations with other cohomology classes.
\end{enumerate}

We now go over each source of off-by-one weight classes and describe their behaviours in the spectral sequence.

\subsection{Core with positive degree}\label{subsec:basptrelcomp}
This section deals with off-by-one classes on $\mathcal{M}_{1,n}(\PP^r,d)$ when $d>0.$ We note that the discussion applies to case of positive degree maps from a cycle of rational curves as well.

We recall the short exact sequence from Lemma \ref{lem:m1nrdoff}
\[ 0\to \frac{\mathrm{gr}^W_{-k+1}H_k^{\mathrm{BM}}(\hat{\mathcal{Q}}_{1,n}(\mathbb{P}^r,d))}{\mathrm{im}(\phi)}\to \mathrm{gr}^W_{-k+1} H_k^{\mathrm{BM}}(\mathcal{M}_{1,n}(\PP^r,d))\to \ker(\gr^W_{-k+1}\varphi)\to 0,\] where $\gr^W_{-k+1}\varphi$ is the pure weight truncation of the pushforward map $H^{\mathrm{BM}}_{k-1}(\mathcal{B})\to H_{k-1}^{\mathrm{BM}}(\mathcal{Q}_{1,n}(\mathbb{P}^r,d)).$

The off-by-one classes on $\hat{\mathcal{Q}}_{1,n}(\PP^r,d)$ are generated by pullback from $\gr^W_{-\star+1}H_\star^{\mathrm{BM}}(\mathrm{Pic}^d_{1,n}),$ which by Lemma \ref{lem:M1noff} are in turn generated by pullback from $\mathcal{M}_{1,n'}.$ Therefore, the off-by-one classes in $\mathrm{gr}^W_{-k+1}H_k^{\mathrm{BM}}(\hat{\mathcal{Q}}_{1,n}(\mathbb{P}^r,d))/\mathrm{im}(\phi)$ are pullback of WDVV and Getzler's relations along $H^2(\Mbar_{0,n'})\to H^2(\Mbar_{[\mathbf{G},\rho]})$ and $H^4(\Mbar_{1,n'})\to H^4(\Mbar_{[\mathbf{G},\rho]}).$ 

Comparing the stratification spectral sequences associated to (strata of) $\Mtil_{1,n}(\PP^r,d)$ and those of $\Mbar_{0,n'}$ and $\Mbar_{1,n'}$ under pullback, we see that the off-by-one classes pulled back from moduli of curves and their Künneth tensor products with pure weight classes do not survive to the $E^2$- resp. $E^3$-pages of the spectral sequence.

We now focus on the cokernel in the short exact sequence.

\begin{definition}
  We define $B_{k-1}(n,r,d):=\ker(\varphi: \gr^W_{-k+1} H^{\mathrm{BM}}_{k-1}(\mathcal{B})\to \gr^W_{-k+1} H_{k-1}^{\mathrm{BM}}(\mathcal{Q}_{1,n}(\mathbb{P}^r,d))).$ 
\end{definition}

Because $B_{k-1}(n,r,d)$ arises from the complement $\hat{\mathcal{Q}}_{1,n}(\PP^r,d)\setminus \mathcal{M}_{1,n}(\PP^r,d)$ that corresponds to tuples of sections with basepoints, we refer to elements in $B_{k-1}(n,r,d)$ as the basepoint off-by-one classes of $\mathcal{M}_{1,n}(\PP^r,d).$

\subsubsection{Rational tail contractions}\label{subsec:ratailcon}

We describe the differentials from basepoint off-by-one classes on $\mathcal{M}_{1,n}(\PP^r,d)$ associated to contracting rational tails.

Let $d>\delta$ and let $\mathbf{G}_{\delta}$ be the $(1,n,d)$-graph given below. It specifies a coarse stratum $\Mtil_{[\mathbf{G}_{\delta}]}$ in $\Mtil_{1,n}(\PP^r,d)$ that agrees with the stable maps stratum specified by the same graph. Its unique edge contraction recovers the $(1,n,d)$-graph associated to $\mathcal{M}_{1,n}(\PP^r,d).$ 

\[\begin{tikzpicture}[x=0.75pt,y=0.75pt,yscale=-1,xscale=1]

\draw   (12.86,45.89) .. controls (12.86,33.98) and (22.52,24.32) .. (34.43,24.32) .. controls (46.34,24.32) and (56,33.98) .. (56,45.89) .. controls (56,57.8) and (46.34,67.46) .. (34.43,67.46) .. controls (22.52,67.46) and (12.86,57.8) .. (12.86,45.89) -- cycle ;
\draw    (56,45.89) -- (114.88,45.89) ;
\draw    (43.43,65.46) -- (49.94,83.06) ;
\draw    (27.73,66.53) -- (21.03,82.86) ;
\draw   (196.86,44.89) .. controls (196.86,32.98) and (206.52,23.32) .. (218.43,23.32) .. controls (230.34,23.32) and (240,32.98) .. (240,44.89) .. controls (240,56.8) and (230.34,66.46) .. (218.43,66.46) .. controls (206.52,66.46) and (196.86,56.8) .. (196.86,44.89) -- cycle ;
\draw    (227.43,64.46) -- (233.94,82.06) ;
\draw    (211.73,65.53) -- (205.03,81.86) ;
\draw  [dash pattern={on 4.5pt off 4.5pt}] (114.88,45.89) .. controls (114.88,40.44) and (119.3,36.03) .. (124.74,36.03) .. controls (130.19,36.03) and (134.61,40.44) .. (134.61,45.89) .. controls (134.61,51.34) and (130.19,55.75) .. (124.74,55.75) .. controls (119.3,55.75) and (114.88,51.34) .. (114.88,45.89) -- cycle ;

\draw (28.79,37.19) node [anchor=north west][inner sep=0.75pt]    {$1$};
\draw (19.33,12.56) node [anchor=north west][inner sep=0.75pt]  [font=\small]  {$d-\delta $};
\draw (119.22,23.67) node [anchor=north west][inner sep=0.75pt]  [font=\small]  {$\delta $};
\draw (75,109.4) node [anchor=north west][inner sep=0.75pt]    {$\mathbf{G}_{\delta }$};
\draw (200,109.4) node [anchor=north west][inner sep=0.75pt]    {$\mathbf{G}_{\delta } /e$};
\draw (25,78.26) node [anchor=north west][inner sep=0.75pt]    {$\dotsc $};
\draw (7,89.38) node [anchor=north west][inner sep=0.75pt]    {$m_i, i\in [n]$};
\draw (80,30.4) node [anchor=north west][inner sep=0.75pt]    {$e$};
\draw (212.79,36.19) node [anchor=north west][inner sep=0.75pt]    {$1$};
\draw (213.33,11.56) node [anchor=north west][inner sep=0.75pt]  [font=\small]  {$d$};
\draw (209,77.26) node [anchor=north west][inner sep=0.75pt]    {$\dotsc $};
\draw (190,88.38) node [anchor=north west][inner sep=0.75pt]    {$m_i, i\in [n]$};
\end{tikzpicture}
\]

The basic geometric picture is that the stable map on the rational tail gets contracted as a basepoint on the genus one curve. As explained in §\ref{sec:introgenrel}, the rational tail contraction induces a map of pairs $(\mathcal{M}_{1,n}\cup \Mtil_{[\mathbf{G}_{\delta}]},  \Mtil_{[\mathbf{G}_{\delta}]})\to (\hat{\mathcal{Q}}_{1,n}(\PP^r,d), \hat{\mathcal{Q}}_{1,n}(\PP^r,d)\setminus \mathcal{M}_{1,n}(\PP^r,d))$ and hence a commutative diagram that compares the boundary morphisms associated to the pairs. Since the basepoint classes are pure weight classes on $\hat{\mathcal{Q}}_{1,n}(\PP^r,d)\setminus \mathcal{M}_{1,n}(\PP^r,d),$ the behaviour of the basepoint classes $B_{k-1}(n,r,d)$ under the differential is completely determined by the comparison commutative diagram.

\begin{lemma}\label{lem:cobratail}
  The boundary map $\partial: \gr^W_{-(k-1)}H_k^{\mathrm{BM}}(\mathcal{M}_{1,n}(\PP^r,d))\to \mathrm{gr}^W_{-(k-1)}H_{k-1}^{\mathrm{BM}}(\Mtil_{[\mathbf{G}_{\delta}]})$ fits into the following commutative diagram \[\begin{tikzcd}
	{\gr^W_{-(k-1)}H_k^{\mathrm{BM}}(\mathcal{M}_{1,n}(\PP^r,d))} & {\gr^W_{-(k-1)}H_{k-1}^{\mathrm{BM}}(\Mtil_{[\mathbf{G}_{\delta}]})} \\
	{B_{k-1}(n,r,d)} & {\mathrm{gr}^W_{-(k-1)}H_{k-1}^{\mathrm{BM}}({\mathcal{M}}_{1,n+1}(\PP^r,d-\delta))} 
	\arrow["\partial", from=1-1, to=1-2]
	\arrow[two heads, from=1-1, to=2-1]
	\arrow["{s}", from=2-2, to=1-2]
	\arrow[from=2-1, to=2-2]
\end{tikzcd}\]where $s$ sends $\sigma\mapsto \sigma\otimes [\mathrm{pt}]$ under the Künneth formula $H_{\star}^{\mathrm{BM}}(\Mtil_{[\mathbf{G}_{\delta}]})\cong H^{\mathrm{BM}}_{\star}(\mathcal{M}_{1,n+1}(\PP^r,d-\delta))\otimes H_\star(\Mbar_{0,0}^*(\PP^r,\delta)).$ The map $s$ is a section of the proper pushforward map along the forgetful map $\Mtil_{[\mathbf{G}_{\delta}]}\to \mathcal{M}_{1,n+1}(\PP^r,d-\delta).$
\end{lemma}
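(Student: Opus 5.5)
The plan is to compare two boundary maps through the ``collapse'' map of pairs described in the introduction:
\[
c\colon (\mathcal{M}_{1,n}(\PP^r,d)\cup \Mtil_{[\mathbf{G}_{\delta}]}, \Mtil_{[\mathbf{G}_{\delta}]})\to (\hat{\mathcal{Q}}_{1,n}(\PP^r,d), \mathcal{B}).
\]
The map $c$ sends a map with a degree-$\delta$ rational tail attached at $q$ to the quasimap $(L\otimes\mathcal{O}(\delta q), [s_q^\delta s_0:\dots:s_q^\delta s_r])$.

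We first check that $c$ is a well-defined proper morphism on the union. On $\Mtil_{[\mathbf{G}_{\delta}]}$, we use the Künneth description of Lemma \ref{lem:kunneth}. By that lemma, $\Mtil_{[\mathbf{G}_{\delta}]}\to \mathcal{M}_{1,n+1}(\PP^r,d-\delta)$ is a Zariski locally trivial fibration with fibre $\Mbar^*_{0,0}(\PP^r,\delta)$. The map $c$ then factors as this forgetful projection $f$, followed by the $\delta$-fold basepoint multiplication $\mu\colon \mathcal{M}_{1,n+1}(\PP^r,d-\delta)\to\mathcal{B}$. The map $\mu$ is an iterate of the multiplication in Lemma \ref{lem:propsurj}, restricted to the diagonal point $q$. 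Continuity of $c$ across the divisor follows from the standard comparison between stable maps and quasimaps: near a rational tail, the sections vanish to order $\delta$ at the attaching point.

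Functoriality of the excision long exact sequence for proper maps of pairs then gives a commutative square. Its top row is $\partial$, its bottom row is the boundary map $H^{\mathrm{BM}}_k(\mathcal{M}_{1,n}(\PP^r,d))\to H^{\mathrm{BM}}_{k-1}(\mathcal{B})$, and its right vertical arrow is the pushforward $c_*=\mu_*f_*$. Since $c$ is the identity on $\mathcal{M}_{1,n}(\PP^r,d)$, the left vertical arrow is also the identity. After weight truncation the bottom boundary map lands in $\ker(\gr^W\varphi)=B_{k-1}(n,r,d)$, and by Lemma \ref{lem:m1nrdoff} it is surjective onto it. This accounts for the two-headed arrow in the diagram.

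It remains to go back from $\mathcal{B}$ to $\Mtil_{[\mathbf{G}_{\delta}]}$, and this is the main obstacle: $c_*$ runs the wrong way. The plan is to use the Künneth decomposition $H^{\mathrm{BM}}_\star(\Mtil_{[\mathbf{G}_{\delta}]})\cong H^{\mathrm{BM}}_\star(\mathcal{M}_{1,n+1}(\PP^r,d-\delta))\otimes H_\star(\Mbar^*_{0,0}(\PP^r,\delta))$. Pushforward along $f$ kills every class whose fibre factor is not $[\mathrm{pt}]$, and sends $\sigma\otimes[\mathrm{pt}]\mapsto\sigma$; hence $s$ is a section of $f_*$. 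We then argue by weights and dimension that $\partial(x)$ has only a $[\mathrm{pt}]$-component. The off-by-one weight classes on $\mathcal{M}_{1,n}(\PP^r,d)$ that survive into $\gr^W_{-(k-1)}H^{\mathrm{BM}}_{k-1}$ of the stratum must be pure weight there. A class $\sigma\otimes\tau$ with $\tau$ of positive degree in the proper fibre would have to arise from a deformation moving the tail inside $\Mbar^*_{0,0}$. But the normal direction to the divisor only sees the smoothing parameter at $q$, which is independent of the fibre.

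Concretely, we would compute $\partial$ by restricting to a transverse slice: fix a point of the fibre and use the local product structure, as in the Gysin/tubular-neighbourhood description of $\partial$. This shows $\partial x=\bar x\otimes[\mathrm{pt}]$ for some class $\bar x$. Applying $f_*$ and the commutative square then identifies $\bar x$. Finally, we must check that $\mu_*$ identifies $\gr^W H^{\mathrm{BM}}(\mathcal{M}_{1,n+1}(\PP^r,d-\delta))$ with the relevant part of $\gr^W H^{\mathrm{BM}}(\mathcal{B})$ compatibly with $B_{k-1}$. This should follow from the Lemma \ref{lem:m1nrdoff} description of $B_{k-1}$ via $\mathcal{C}_{1,n}\times\hat{\mathcal{Q}}_{1,n}(\PP^r,d-1)$, which defines the bottom arrow of the diagram.
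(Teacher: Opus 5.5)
\textbf{The key step fails.} The content of the lemma is that $\partial x$ has no Künneth component $\sigma\otimes\tau$ with $\tau\in H_{>0}(\Mbar^*_{0,0}(\PP^r,\delta))$. Your argument does not establish this, and whether it holds depends on how the Künneth isomorphism is chosen.

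Weights cannot isolate it. The fibre is smooth and proper, so every component $\sigma\otimes\tau$ with $\sigma$ pure has the same weight as $\partial x$. A transverse slice does not help either. Gysin restriction to $\mathcal{M}_{1,n+1}(\PP^r,d-\delta)\times\{y\}$ reads off the coefficient of the fibre's fundamental class, while $f_*$ reads off the coefficient of $[\mathrm{pt}]$; the intermediate components are seen by neither. Nor is the smoothing parameter independent of the fibre: the normal bundle of the divisor involves the tangent line of the tail at the node, which varies over $\Mbar^*_{0,0}(\PP^r,\delta)$.

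For $\delta=1$ this is explicit. Let $U\subset\hat{\mathcal{Q}}_{1,n}(\PP^r,d)$ be the open set of quasimaps with at most one simple, unmarked basepoint. Over $U$ the collapse map is the blow-up along $Z\cong\mathcal{M}_{1,n+1}(\PP^r,d-1)$, with exceptional divisor $\Mtil_{[\mathbf{G}_1]}=\PP(N_Z)\xrightarrow{\pi}Z$ and normal bundle $\mathcal{O}(-1)$. Now $\partial x$ lies in the kernel of pushforward into $\mathcal{M}_{1,n}(\PP^r,d)\cup\Mtil_{[\mathbf{G}_1]}$. By smoothness, $\pi_*$ is injective on that kernel. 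The excess intersection formula then forces
\[
\partial x=c_{r-1}\bigl(\pi^*N_Z/\mathcal{O}(-1)\bigr)\cap\pi^!\bar x=\zeta^{r-1}\cap\pi^!\bar x+\sum_{j\geq 1}\zeta^{r-1-j}\cap\pi^!\bigl(c_j(N_Z)\cap\bar x\bigr),
\]
where $\bar x=\pi_*\partial x$ and $\zeta=c_1(\mathcal{O}(1))$. In the Leray--Hirsch splitting by powers of $\zeta$, the terms with $j\geq 1$ are fibre-positive components that need not vanish.

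So the missing idea is global, not local:
\begin{enumerate}
\item Work on the preimage $V$ of $\mathcal{M}_{1,n}$ in $\Mtil_{1,n}(\PP^r,d)$, which is proper over $\hat{\mathcal{Q}}_{1,n}(\PP^r,d)$.
\item Show that pushforward to $\mathcal{B}$ is injective on the image of the boundary map. This holds because $V$ and $\hat{\mathcal{Q}}$ are smooth, so classes from $\hat{\mathcal{Q}}$ pull back to $V$ and have zero boundary.
\item Identify the element of that image with the right pushforward. This pins down which lift of $[\mathrm{pt}]$ makes $\partial x=\bar x\otimes[\mathrm{pt}]$ true, e.g.\ $c_{r-1}$ of the excess bundle when $\delta=1$.
\end{enumerate}

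\textbf{Properness also fails.} On $\mathcal{M}_{1,n}(\PP^r,d)\cup\Mtil_{[\mathbf{G}_\delta]}$, the tail's attaching point can run into a marking. Points of $\mathcal{M}_{1,n}(\PP^r,d)$ can also degenerate onto other boundary strata of $V$. Such families have no limit in your domain but do have limits in $\hat{\mathcal{Q}}_{1,n}(\PP^r,d)$, so your map of pairs is not proper. Likewise $\mu$ is a locally closed embedding with non-closed image. Hence neither $c_*$ nor $\mu_*$ is defined on Borel--Moore homology. Your final step recovers $\bar x$, and the factorisation through $B_{k-1}(n,r,d)$, from injectivity of $\mu_*$, so it has no footing.

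The paper handles this part differently and never pushes forward from the open stratum:
\begin{enumerate}
\item It shows that $\pi_*\partial$ kills the image of $\gr^W_{-k+1}H_k^{\mathrm{BM}}(\hat{\mathcal{Q}}_{1,n}(\PP^r,d))$, since these classes are pulled back from $\mathrm{Pic}^d_{1,n}$. This defines the bottom arrow on $B_{k-1}(n,r,d)$.
\item It computes that arrow using the proper map $\hat{\mathcal{Q}}_{1,n+1}(\PP^r,d-\delta)\to\hat{\mathcal{Q}}_{1,n+1}(\PP^r,d-1)$ given by multiplication by $s_{p_{n+1}}^{\delta-1}$. This map lies over an isomorphism of Picard stacks and sends the fundamental class to $H_{\hat{\mathcal{Q}}}^{(\delta-1)(r+1)}\cap[\hat{\mathcal{Q}}_{1,n+1}(\PP^r,d-1)]$, so the projection formula determines its pushforward.
\item Composing with the surjection onto $B_{k-1}(n,r,d)$ from Lemma \ref{lem:m1nrdoff} gives the bottom arrow.
\end{enumerate}
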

\begin{proof}
  As $\mathrm{gr}^W_{-k+1}H_k^{\mathrm{BM}}(\hat{\mathcal{Q}}_{1,n}(\mathbb{P}^r,d))$ is generated by pullback from $\mathrm{gr}^W_{-\star+1}H_\star^{\mathrm{BM}}({\mathrm{Pic}}^d_{1,n}),$ given any $$g\in \frac{\mathrm{gr}^W_{-k+1}H_k^{\mathrm{BM}}(\hat{\mathcal{Q}}_{1,n}(\mathbb{P}^r,d))}{\mathrm{im}(\phi)}\subset \gr^W_{-(k-1)}H_k^{\mathrm{BM}}(\mathcal{M}_{1,n}(\PP^r,d)),$$ we have $\pi_* \partial (g) = 0.$ Therefore, there is a well-defined map $$B_{k-1}(n,r,d)\to \gr^W_{(k-1)}H_{k-1}^{\mathrm{BM}}(\mathcal{M}_{1,n+1}(\PP^r,d-\delta)).$$

  In the following, we use $\mathrm{d}_\mathcal{B}$ as a shorthand for $\dim(\mathcal{B}).$ Recall from Lemma \ref{lem:m1nrd} that the basepoint classes $B_{\star}(n,r,d)$ receive a surjection from $\gr^W_{2\mathrm{d}_\mathcal{B}-\star}H^{2\mathrm{d}_\mathcal{B}-\star}(\hat{\mathcal{Q}}_{1,n+1}(\PP^r,d-1)).$ This allows us to describe the boundary map geometrically as follows.

  Consider the map $\hat{\mathcal{Q}}_{1,n+1}(\PP^r,d-\delta)\to \hat{\mathcal{Q}}_{1,n+1}(\PP^r,d-1)$ given as a map over $\mathcal{M}_{1,n+1}$ by $$(\mathcal{L}, [s_0:\cdots:s_r])\mapsto (\mathcal{L}\otimes \mathcal{O}((\delta-1)s_{p+1}), s_{p+1}^{\delta-1}\otimes [s_0:\cdots:s_r]).$$

  As the map lies over the isomorphism $\Pic_{1,n+1}^{d-\delta}\to \Pic_{1,n+1}^{d-1}$ over $\mathcal{M}_{1,n}$ given by $\mathcal{L}\mapsto \mathcal{L}\otimes \mathcal{O}((\delta-1)p_{n+1}),$ the cohomology pullback is an isomorphism in cohomological degrees upto $\dim \hat{\mathcal{Q}}_{1,n+1}(\PP^r,d-\delta).$ On the other hand, the proper pushforward sends $[\hat{\mathcal{Q}}_{1,n+1}(\PP^r,d-\delta)]$ to $H_{\hat{\mathcal{Q}}}^{(\delta-1)(r+1)}\cap [\hat{\mathcal{Q}}_{1,n+1}(\PP^r,d-1)].$ Projection formula hence determines the proper pushforward map on the level of Borel--Moore homology groups. The desired boundary map factors through the commutative diagram \[\begin{tikzcd}
	{B_{\star}(n,r,d)} & {\mathrm{gr}^W_{-\star}H_{\star}^{\mathrm{BM}}({\mathcal{M}}_{1,n+1}(\PP^r,d-\delta))} \\
	{\mathrm{gr}^W_{-\star}H_{\star}^{\mathrm{BM}}({\hat{\mathcal{Q}}}_{1,n+1}(\PP^r,d-1))} & {\mathrm{gr}^W_{-\star}H_{\star}^{\mathrm{BM}}({\hat{\mathcal{Q}}}_{1,n+1}(\PP^r,d-\delta))}
	\arrow[from=1-1, to=1-2]
	\arrow[two heads, from=2-1, to=1-1]
	\arrow[from=2-2, to=2-1]
	\arrow[two heads, from=2-2, to=1-2]
\end{tikzcd}\]
\end{proof}

\begin{remark}
  Since $H_{\star}^{\mathrm{BM}}(\Mtil_{[\mathbf{G}_{\delta}]})\cong H^{\mathrm{BM}}_{\star}(\mathcal{M}_{1,n+1}(\PP^r,d-\delta))\otimes H_\star(\Mbar_{0,0}^*(\PP^r,\delta)),$ the pushforward map $\Mtil_{[\mathbf{G}_{\delta}]}\to \mathcal{M}_{1,n+1}(\PP^r,d-\delta)$ is the linear projection 
  $$H^{\mathrm{BM}}_{\star}(\mathcal{M}_{1,n+1}(\PP^r,d-\delta))\otimes H_\star(\Mbar_{0,0}^*(\PP^r,\delta))\to H^{\mathrm{BM}}_{\star}(\mathcal{M}_{1,n+1}(\PP^r,d-\delta))\otimes [\mathrm{pt}].$$

  Thus, the basepoint relations concern classes coming from the genus one core but not the rational tails. Informally, the complexity of the cohomology of rational tails seen on the homology groups of each stratum `survives' to the limit $H_\star(\Mtil_{1,n}(\PP^r,d)).$
\end{remark}

When there are markings on the rational tail, we apply flat pullback along the forgetful map and reduce to the case of unmarked rational tails as treated above. Let $I\subset [n]$ be a subset, $d>\delta,$ and let $\mathbf{G}_{\delta, I}$ be the $(1,n,d)$-graph given below with specified stratum $\Mtil_{[\mathbf{G}_{\delta, I}]}.$

\[\begin{tikzpicture}[x=0.75pt,y=0.75pt,yscale=-1,xscale=1]

\draw   (12.86,45.89) .. controls (12.86,33.98) and (22.52,24.32) .. (34.43,24.32) .. controls (46.34,24.32) and (56,33.98) .. (56,45.89) .. controls (56,57.8) and (46.34,67.46) .. (34.43,67.46) .. controls (22.52,67.46) and (12.86,57.8) .. (12.86,45.89) -- cycle ;
\draw    (56,45.89) -- (114.88,45.89) ;
\draw    (43.43,65.46) -- (49.94,83.06) ;
\draw    (27.73,66.53) -- (21.03,82.86) ;
\draw    (133.88,45.89) -- (160.74,32.75) ;
\draw    (133.88,45.89) -- (160.85,56.18) ;
\draw   (259.86,44.89) .. controls (259.86,32.98) and (269.52,23.32) .. (281.43,23.32) .. controls (293.34,23.32) and (303,32.98) .. (303,44.89) .. controls (303,56.8) and (293.34,66.46) .. (281.43,66.46) .. controls (269.52,66.46) and (259.86,56.8) .. (259.86,44.89) -- cycle ;
\draw    (290.43,64.46) -- (296.94,82.06) ;
\draw    (274.73,65.53) -- (268.03,81.86) ;
\draw  [dash pattern={on 4.5pt off 4.5pt}] (114.88,45.89) .. controls (114.88,40.44) and (119.3,36.03) .. (124.74,36.03) .. controls (130.19,36.03) and (134.61,40.44) .. (134.61,45.89) .. controls (134.61,51.34) and (130.19,55.75) .. (124.74,55.75) .. controls (119.3,55.75) and (114.88,51.34) .. (114.88,45.89) -- cycle ;

\draw (28.79,37.19) node [anchor=north west][inner sep=0.75pt]    {$1$};
\draw (18.33,11.56) node [anchor=north west][inner sep=0.75pt]  [font=\small]  {$d-\delta $};
\draw (119.22,23.67) node [anchor=north west][inner sep=0.75pt]  [font=\small]  {$\delta $};
\draw (75,109.4) node [anchor=north west][inner sep=0.75pt]    {$\mathbf{G}_{\delta, I}$};
\draw (266,109.4) node [anchor=north west][inner sep=0.75pt]    {$\mathbf{G}_{\delta, I} /e$};
\draw (25.5,78.26) node [anchor=north west][inner sep=0.75pt]    {$\dotsc $};
\draw (6,89.38) node [anchor=north west][inner sep=0.75pt]    {$m_{i} ,\ i\in I$};
\draw (156.22,54.7) node [anchor=north west][inner sep=0.75pt]  [rotate=-270]  {$\dotsc $};
\draw (120,64.38) node [anchor=north west][inner sep=0.75pt]    {$m_{j} ,\ j\in [ n] \setminus I$};
\draw (80,30.4) node [anchor=north west][inner sep=0.75pt]    {$e$};
\draw (275.79,36.19) node [anchor=north west][inner sep=0.75pt]    {$1$};
\draw (274.33,11.56) node [anchor=north west][inner sep=0.75pt]  [font=\small]  {$d$};
\draw (272,77.26) node [anchor=north west][inner sep=0.75pt]    {$\dotsc $};
\draw (249,88.38) node [anchor=north west][inner sep=0.75pt]    {$m_{i} ,\ i\in [ n]$};

\end{tikzpicture}\]

We still have a map between pairs $$(\mathcal{M}_{1,n}(\PP^r,d)\cup \mathcal{M}_{[\mathbf{G}_{\delta, I}]},  \mathcal{M}_{[\mathbf{G}_{\delta, I}]})\to (\hat{Q}_{1,|I|+1}(\PP^r,d),\hat{\mathcal{Q}}_{1,|I|+1}(\PP^r,d)\setminus \mathcal{M}_{1,|I|+1}(\PP^r,d)),$$ and the induced commutative diagram between the boundary maps is compatible with flat pullback along the natural map that forgets the markings in $[n]\setminus I.$

\begin{lemma}\label{lem:pbss}
  Let $\mathbf{G}_{\delta, I}^{\setminus I}$ be the $(1, n-|I|, d)$-graph given by forgetting the markings in $I$ from $\mathbf{G}_{\delta, I}$ as shown in the following figure.
  \[\begin{tikzpicture}[x=0.75pt,y=0.75pt,yscale=-1,xscale=1]

\draw   (25.86,45.89) .. controls (25.86,33.98) and (35.52,24.32) .. (47.43,24.32) .. controls (59.34,24.32) and (69,33.98) .. (69,45.89) .. controls (69,57.8) and (59.34,67.46) .. (47.43,67.46) .. controls (35.52,67.46) and (25.86,57.8) .. (25.86,45.89) -- cycle ;
\draw    (69,45.89) -- (127.88,45.89) ;
\draw    (56.43,65.46) -- (62.94,83.06) ;
\draw    (40.73,66.53) -- (34.03,82.86) ;
\draw  [dash pattern={on 4.5pt off 4.5pt}] (127.88,45.89) .. controls (127.88,40.44) and (132.3,36.03) .. (137.74,36.03) .. controls (143.19,36.03) and (147.61,40.44) .. (147.61,45.89) .. controls (147.61,51.34) and (143.19,55.75) .. (137.74,55.75) .. controls (132.3,55.75) and (127.88,51.34) .. (127.88,45.89) -- cycle ;

\draw (41.79,37.19) node [anchor=north west][inner sep=0.75pt]    {$1$};
\draw (31.33,11.56) node [anchor=north west][inner sep=0.75pt]  [font=\small]  {$d-\delta $};
\draw (132.22,23.67) node [anchor=north west][inner sep=0.75pt]  [font=\small]  {$\delta $};
\draw (77,109.4) node [anchor=north west][inner sep=0.75pt]    {$\mathbf{G}_{\delta ,\ I}^{\setminus I}$};
\draw (40.03,78.26) node [anchor=north west][inner sep=0.75pt]    {$\dotsc $};
\draw (6,89.38) node [anchor=north west][inner sep=0.75pt]    {$m_{i} ,\ i\in [ n] \setminus I$};
\draw (93,30.4) node [anchor=north west][inner sep=0.75pt]    {$e$};
\end{tikzpicture}\]

The boundary map $B_{k-1}(n, r,d)\to \gr_{-(k-1)}^{W}H_{k-1}^{\mathrm{BM}}(\Mtil_{[\mathbf{G}_{\delta, I}]})$ is determined by the following commutative diagram
\[\begin{tikzcd}
	{B_{k-1-|I|}([n]\setminus I, r,d)} & {\gr_{-(k-1-|I|)}^{W}H_{k-1-|I|}^{\mathrm{BM}}(\Mtil_{[\mathbf{G}_{\delta, I}^{\setminus I}]})} \\
	{B_{k-1}(n, r,d)} & {\gr_{-(k-1)}^{W}H_{k-1}^{\mathrm{BM}}(\Mtil_{[\mathbf{G}_{\delta, I}]})}
	\arrow[from=1-1, to=1-2]
	\arrow[from=1-1, to=2-1]
	\arrow[from=1-2, to=2-2]
	\arrow[from=2-1, to=2-2]
\end{tikzcd}\]
where the vertical maps are flat pullbacks along forgetful maps that forgets the marked points indexed by $I$ (and stabilises if necessary).

The above diagrams are compatible with the commutative diagram in the proof of Lemma \ref{lem:cobratail}.
\end{lemma}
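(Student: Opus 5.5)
The plan is to obtain the square as the Borel--Moore homology shadow of a commutative diagram of pairs that is compatible with forgetting the markings in $I$, and then to reduce to Lemma \ref{lem:cobratail}. Write $\pi_I$ for the forgetful map (forget the markings indexed by $I$ and stabilise). On the open part $\mathcal{M}_{1,n}(\PP^r,d)\cup \Mtil_{[\mathbf{G}_{\delta,I}]}$, the map $\pi_I$ sends the pair
\[(\mathcal{M}_{1,n}(\PP^r,d)\cup \Mtil_{[\mathbf{G}_{\delta,I}]},\ \Mtil_{[\mathbf{G}_{\delta,I}]})\]
to the pair
\[(\mathcal{M}_{1,n-|I|}(\PP^r,d)\cup \Mtil_{[\mathbf{G}^{\setminus I}_{\delta,I}]},\ \Mtil_{[\mathbf{G}^{\setminus I}_{\delta,I}]}).\]
Here forgetting markings on the genus one core never contracts the positive-degree rational tail. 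Likewise, on the quasimap side $\pi_I$ lifts to $\hat{\mathcal{Q}}_{1,n}(\PP^r,d)\to \hat{\mathcal{Q}}_{1,n-|I|}(\PP^r,d)$, which is the base change of $\mathrm{Pic}^d_{1,n}\to \mathrm{Pic}^d_{1,n-|I|}$ along $\mathcal{M}_{1,n}\to\mathcal{M}_{1,n-|I|}$. This base change sends $\mathcal{B}_{1,n}$ onto the preimage of $\mathcal{B}_{1,n-|I|}$. The first step is to check that this forgetful map is flat, in fact smooth on these open loci, so that flat pullback on Borel--Moore homology is defined, shifts degree by $2|I|$ in the indexing convention of the statement, and is compatible with weights.

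Next, flat pullback commutes with the excision long exact sequences (the boundary maps of pairs) because pullback is compatible with restriction to opens and with the localisation triangle. Apply this twice. For the pair $(\hat{\mathcal{Q}},\mathcal{B})$, it shows that pullback maps $B_{\star}([n]\setminus I,r,d)$ into $B_{\star}(n,r,d)$, since the kernel of $\varphi$ is preserved by the projection formula. For the pair involving $\Mtil_{[\mathbf{G}_{\delta,I}]}$, it shows that pullback intertwines the stratification boundary maps $\partial$. Together these give commutativity of the square at the level of $H^{\mathrm{BM}}$. Passing to weight-graded pieces is then formal, since all maps are morphisms of mixed Hodge structures.

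The third step justifies ``determined by''. We need the image of the left vertical map to be large enough, together with classes on which the boundary map is already known, to control $B_{k-1}(n,r,d)$. For this, use the description in Lemma \ref{lem:m1nrdoff}(2b): $B_\star$ is generated by $\gr^W H^\star(\hat{\mathcal{Q}}_{1,n+1}(\PP^r,d-1))$ capped with the fundamental class. Then use Lemma \ref{lem:pics} together with the fact that the pure weight cohomology of $\mathcal{M}_{1,n}$ is pulled back along forgetful maps, as in \cite[§2]{clp}. These show that the generators are products of pulled-back classes from the $n-|I|$ markings' space with pure classes pulled back from $\mathcal{M}_{1,n}$. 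The projection formula then shows that $\partial$ is $H^\star$-linear for these latter classes, so $\partial$ is determined by its values on pulled-back generators.

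Compatibility with the diagram in the proof of Lemma \ref{lem:cobratail} follows because the section $s$ and the multiplication maps $\hat{\mathcal{Q}}_{1,n+1}(\PP^r,d-\delta)\to\hat{\mathcal{Q}}_{1,n+1}(\PP^r,d-1)$ are defined fibrewise over $\mathcal{M}_{1,\bullet}$, and so commute with base change along forgetful maps.

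I expect the main obstacle to be the third step: verifying that forgetting and stabilising behaves well near strata where a marked point of $I$ sits on the core next to the tail attachment. This is where stabilisation could alter the stratum. I would handle it by restricting to the open loci where no contraction occurs, which suffices for the boundary map between adjacent strata.
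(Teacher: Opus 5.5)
Your mechanism is the paper's: flat pullback along a forgetful map commutes with the localisation boundary maps, and pure classes are controlled through forgetful pullbacks as in \cite[\S 2]{clp}. However, you forget the wrong markings.

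The figure in the statement shows $\mathbf{G}_{\delta,I}^{\setminus I}$ with an \emph{unmarked} rational tail. The paragraph before the lemma says the aim is to ``reduce to the case of unmarked rational tails'' by forgetting the markings carried by the tail. Inside the lemma, $I$ plays the role of the tail markings, which is the opposite of the figure defining $\mathbf{G}_{\delta,I}$; that is why the top-left corner is $B([n]\setminus I,r,d)$. Only with this choice is the top row an instance of Lemma \ref{lem:cobratail}, whose section $s\colon\sigma\mapsto\sigma\otimes[\mathrm{pt}]$ lives on the unmarked tail factor $\Mbar^*_{0,0}(\PP^r,\delta)$.

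Forgetting the core markings, as you do, keeps the tail marked in the top row. Your square then relates two boundary maps, neither of which has been computed, and the reduction to Lemma \ref{lem:cobratail} you announce never takes place. With the correct choice your stabilisation worry also disappears, since forgetting markings on a positive-degree tail, or on the smooth genus one domain of the open stratum, contracts nothing.

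The second gap is in Step 3: the decomposition you assert is false. By Lemmas \ref{lem:m1nrdoff}(2b) and \ref{lem:pics}, the generators of $B_{k-1}(n,r,d)$ carry pure classes on $\mathcal{M}_{1,n+2}$, and this marking set contains the basepoint. Consider a cusp-form class pulled back along $\mathcal{M}_{1,n+2}\to\mathcal{M}_{1,J}$, with $J$ containing both the basepoint marking and a forgotten marking. Such a class:
\begin{itemize}
\item is not a product, e.g.\ in degree $11$, since $\mathcal{M}_{1,m}$ has no pure cohomology in degrees $1,\dots,10$;
\item is not pulled back from the reduced marking set;
\item is not a class on $\mathcal{M}_{1,n}(\PP^r,d)\cup\Mtil_{[\mathbf{G}_{\delta,I}]}$.
\end{itemize}
So neither the square nor $H^\star$-linearity of $\partial$ reaches it.

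The remark after the lemma, invoking Petersen's description of pure classes and \cite[Proposition 2.2]{clp}, points to what is needed: a case analysis on $J$. Once the forgotten markings are the tail markings, classes with $J$ disjoint from them come from the top row. Classes whose $J$ contains two of the points sent to the attaching node (tail markings or the basepoint) contribute nothing on $\Mtil_{[\mathbf{G}_{\delta,I}]}$. The reason is that the $(k,0)$-part of $\mathsf{S}_{k+1}$ cannot survive restriction to $\Mbar_{1,m}\times\Mbar_{0,N}$ with $m<k$. Your linearity observation is a useful supplement for classes that genuinely extend from $\mathcal{M}_{1,n}$, but it does not replace this analysis.
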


\begin{remark}
  We recall a result of Petersen \cite[Theorems 1.1, 1.3]{pet} that the pure weight cohomology classes on $\mathcal{M}_{1,n}$ are either $H^0(\mathcal{M}_{1,n})$ or odd cohomology groups corresponding to cusp forms. The behaviour of the latter classes under pullbacks along forgetful maps $\mathcal{M}_{1,n}\to \mathcal{M}_{1,n-k}$ are described by \cite[Proposition 2.2]{clp}. These facts ensure that the above commutative diagram indeed determines the boundary map.
\end{remark}

Before moving on to rational tail contractions between strata of higher codimension, we make the following observation.

\begin{lemma}\label{lem:g1baseptE2}
  The basepoint classes $B_{k-1}(n, r,d)$ do not survive to the $E^2$-page.
\end{lemma}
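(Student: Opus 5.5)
To prove that $B_{k-1}(n,r,d)$ does not survive to $E^2$, I will show that the $d^1$-differential restricted to the basepoint classes is injective. Recall that $d^1$ out of the open stratum $\mathcal{M}_{1,n}(\PP^r,d)$ is the direct sum of the boundary maps to the codimension one strata. So it suffices to produce, for each nonzero class in $B_{k-1}(n,r,d)$, a codimension one stratum of the form $\Mtil_{[\mathbf{G}_{\delta,I}]}$ on which its boundary image is nonzero. Here the basepoint class is viewed as a lift in $\gr^W_{-(k-1)}H_k^{\mathrm{BM}}(\mathcal{M}_{1,n}(\PP^r,d))$ via the surjection in Lemma \ref{lem:m1nrdoff}.

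\emph{Case of no markings on the tail ($I$ empty).} I will take the simplest rational tail, $\delta=1$ with no markings, i.e. the stratum $\Mtil_{[\mathbf{G}_1]}$. By Lemma \ref{lem:cobratail}, the boundary map $B_{k-1}(n,r,d)\to \gr^W_{-(k-1)}H^{\mathrm{BM}}_{k-1}(\Mtil_{[\mathbf{G}_1]})$ factors as the horizontal map to $\gr^W_{-(k-1)}H^{\mathrm{BM}}_{k-1}(\mathcal{M}_{1,n+1}(\PP^r,d-1))$ followed by the section $s$. Since $s$ is injective, being a section of a pushforward, it remains to show the horizontal map is injective. For $\delta=1$ the comparison map $\hat{\mathcal{Q}}_{1,n+1}(\PP^r,d-\delta)\to\hat{\mathcal{Q}}_{1,n+1}(\PP^r,d-1)$ in the proof of Lemma \ref{lem:cobratail} is the identity. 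The diagram then says the horizontal map is induced by restricting pure weight classes of $\hat{\mathcal{Q}}_{1,n+1}(\PP^r,d-1)$ to its open basepoint-free locus $\mathcal{M}_{1,n+1}(\PP^r,d-1)$. By Lemma \ref{lem:m1nrd}, this restriction kills exactly the powers $H_{\hat{\mathcal{Q}}}^{a}$ with $a$ beyond the range $a\le r$ (equivalently, the image of $\phi$ for degree $d-1$). I must check that this kernel coincides with the kernel of the surjection onto $B_{k-1}(n,r,d)$. I will argue this by comparing with the kernel of $m_*$ computed in Lemma \ref{lem:m1nrdoff}(2-b): classes supported on the deeper basepoint locus, i.e. with two or more basepoints, push forward to classes already accounted for.

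\emph{Case with markings on the tail.} For tails carrying markings, Lemma \ref{lem:pbss} reduces to the unmarked case by flat pullback along forgetful maps. Flat pullback is injective on the pure weight pieces by Petersen's description and \cite[Proposition 2.2]{clp}, so injectivity is preserved.

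\emph{Cancellation in the direct sum.} The $d^1$ also has components to other codimension one strata: genus-zero-core strata, and the level strata where the core degree drops to zero. These components cannot cancel the nonzero component found above, since the target is a direct sum. Hence $d^1$ is injective on $B_{k-1}(n,r,d)$, and no basepoint class survives to $E^2$.

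\emph{Main obstacle.} The delicate step is the identification of kernels in the first case. Classes in $B_{k-1}$ that come from $\gr^W_\star H^\star(\hat{\mathcal{Q}}_{1,n+1}(\PP^r,d-1))$ multiplied by high powers of $H_{\hat{\mathcal{Q}}}$ might die on $\mathcal{M}_{1,n+1}(\PP^r,d-1)$ without being zero in $B_{k-1}$. If so, I would instead use a tail of larger degree $\delta$. There the projection formula gives pushforward $H_{\hat{\mathcal{Q}}}^{(\delta-1)(r+1)}\cap$, which shifts the $H_{\hat{\mathcal{Q}}}$-power, so I would choose $\delta$ adapted to the class to detect it.
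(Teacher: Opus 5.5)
The step you flag as the main obstacle is where the argument breaks. The problem is the \emph{support} of the classes, not their $H_{\hat{\mathcal{Q}}}$-degree.

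Your reduction for $\delta=1$ is right. Up to the injective section $s$, the component of $d^1$ on $\Mtil_{[\mathbf{G}_1]}$ is restriction from $\mathcal{B}$ to the open locus $\mathcal{B}^\circ\cong\mathcal{M}_{1,n+1}(\PP^r,d-1)$ of quasimaps with one simple basepoint \emph{away from the markings}. However, the space written $\hat{\mathcal{Q}}_{1,n+1}(\PP^r,d-1)$ in Lemma \ref{lem:cobratail} is really $\mathcal{C}_{1,n}\times_{\mathcal{M}_{1,n}}\hat{\mathcal{Q}}_{1,n}(\PP^r,d-1)$. The complement of $\mathcal{M}_{1,n+1}(\PP^r,d-1)$ in it contains more than the $(d-1)$-basepoint locus your kernel discussion addresses. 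It also contains the loci $\sigma_i\times\hat{\mathcal{Q}}_{1,n}(\PP^r,d-1)$ where the new basepoint is the marking $p_i$. These map isomorphically onto the divisors $\mathcal{B}_i\subset\mathcal{B}$ of quasimaps vanishing at $p_i$.

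Any element of $B_{k-1}(n,r,d)$ supported on $\bigcup_i\mathcal{B}_i$ restricts to zero on $\mathcal{B}^\circ$, so it has zero component on $\Mtil_{[\mathbf{G}_1]}$, and nothing in your argument excludes such elements. A count suggests they do occur once $n\geq 2$: the classes $\gamma\cap[\mathcal{B}_i]$, with $\gamma$ pure of degree two, outnumber the pure classes of $\hat{\mathcal{Q}}_{1,n}(\PP^r,d)$ they push forward to.

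Your fallback to larger $\delta$ cannot help. An unmarked degree-$\delta$ tail collapses to a single basepoint away from the markings, a locus disjoint from every $\mathcal{B}_i$. Over the complement of $\bigcup_i\mathcal{B}_i$ in $\hat{\mathcal{Q}}_{1,n}(\PP^r,d)$ such a class has vanishing boundary and hence extends, so all those components vanish too. Your markings paragraph does not help either. Lemma \ref{lem:pbss} only describes marked-tail components on classes pulled back from fewer markings. Injectivity of flat pullback is also not the joint injectivity of the direct sum that you actually need.

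The missing ingredient is exactly what the paper's one-line proof uses: degree-one rational tails carrying \emph{one} marking $i$, i.e.\ the strata $\Mtil_{[\mathbf{G}_{1,[n]\setminus\{i\}}]}$. Collapsing such a tail produces a simple basepoint at $p_i$, so these strata lie over the open part of $\mathcal{B}_i$. The same comparison of boundary maps as in Lemma \ref{lem:cobratail}, run near $\mathcal{B}_i$, then detects the classes concentrated there. Together with the unmarked degree-one tail, this covers the dense open locus of $\mathcal{B}$ with exactly one simple basepoint. That is the content of the paper's phrase ``$B_{k-1}(n,r,d)$ arise from quasi-maps with degree one basepoints'' and ``(with zero or one markings)''.

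Your remaining assertion, that classes supported on the locus with two or more basepoints are ``already accounted for'', is given without argument. The paper's proof takes this point for granted as well. Independently of that, your proof has a genuine hole at the marked points.
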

\begin{proof}
  Because $B_{k-1}(n, r,d)$ arise from quasi-maps with degree one basepoints, the differentials given by contracting degree one rational tails (with zero or one markings) map all of $B_{k-1}(n,r,d)$ to non-zero classes on the next term in the $E^1$-page.
\end{proof}

Iterating the procedure described above determines arbitrary contractions of rational tails to the core. Let $\mathbf{G}$ be an $(1,n,d)$-graph with positive core degree, and let $e$ be an edge that connects one rational tail to the core.

\begin{definition}
  Let $\mathbf{G}^{(e)}$ be the subgraph of $\mathbf{G}$ with all rational tails cut off except the one connected to the core by $e.$
\end{definition}

Recall from Lemma \ref{lem:kunneth} that we have Künneth formulas for the Borel--Moore homology groups of $\Mtil_{\mathbf{G}}^{\mathrm{ord}}$ and $\Mtil_{\mathbf{G}/e}^{\mathrm{ord}}$.

\begin{lemma}\label{lem:kunnss}
  The boundary map is $\mathrm{gr}^W_{-(k-1)}H_{k}^{\mathrm{BM}}(\Mtil_{\mathbf{G}/e})\to \mathrm{gr}^W_{-(k-1)}H_{k-1}^{\mathrm{BM}}(\Mtil_{\mathbf{G}})$ is determined by the commutative diagram

  \[\begin{tikzcd}
	{\mathrm{gr}^W_{-(k-1)}H_{k}^{\mathrm{BM}}(\Mtil_{\mathbf{G}/e}^{\mathrm{ord}})} & {\mathrm{gr}^W_{-(k-1)}H_{k-1}^{\mathrm{BM}}(\Mtil_{\mathbf{G}}^{\mathrm{ord}})} \\
	{\mathrm{gr}^W_{-(k-1)}H_{k}^{\mathrm{BM}}(\Mtil_{\mathbf{G}/e})} & {\mathrm{gr}^W_{-(k-1)}H_{k-1}^{\mathrm{BM}}(\Mtil_{\mathbf{G}})}
	\arrow[from=1-1, to=1-2]
	\arrow[from=1-1, to=2-1]
	\arrow[from=1-2, to=2-2]
	\arrow[from=2-1, to=2-2]
\end{tikzcd}\]
Here the top horizontal map is the Künneth product of the boundary map $$\mathrm{gr}^W_{-(\star-1)}H_{\star}^{\mathrm{BM}}(\Mtil_{\mathbf{G}^{(e)}/e}^{\mathrm{ord}})\to \mathrm{gr}^W_{-(\star-1)}H_{\star}^{\mathrm{BM}}(\Mtil_{\mathbf{G}^{(e)}}^{\mathrm{ord}})$$ and identity on the other Künneth components, which are the cohomology groups of the other rational tails. The two vertical maps are the quotient maps given by taking $\mathrm{Aut}(\mathbf{G}/e)$ and $\mathrm{Aut}(\mathbf{G})$-actions.
\end{lemma}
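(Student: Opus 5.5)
The plan is to reduce everything to the ordered (labelled) strata, where Lemma \ref{lem:kunneth} gives a product decomposition, and then descend along the finite quotients by automorphism groups. First I will set up the local geometry near $\Mtil_{\mathbf{G}}^{\mathrm{ord}}$ inside $\Mtil_{\mathbf{G}}^{\mathrm{ord}}\cup \Mtil_{\mathbf{G}/e}^{\mathrm{ord}}$. Restricting a map to the core together with the rational tail attached by $e$, and separately to each of the remaining rational tails, gives a Zariski locally trivial iterated fibration. The first factor is the partial union $\Mtil^{\mathrm{ord}}_{\mathbf{G}^{(e)}}\cup \Mtil^{\mathrm{ord}}_{\mathbf{G}^{(e)}/e}$, with the enhanced legs recording the other tails. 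The remaining factors are the spaces $\Mbar^*_{0,m_i}(\PP^r,\delta_i)$ for the other tails. As in the proof of Lemma \ref{lem:kunneth}, local triviality holds because these fibrations are base changes of $\mathrm{ev}_{n+1}$. The important point is that smoothing the node $e$ does not touch the other tails. So the pair $(\Mtil_{\mathbf{G}/e}^{\mathrm{ord}}\cup\Mtil_{\mathbf{G}}^{\mathrm{ord}},\Mtil_{\mathbf{G}}^{\mathrm{ord}})$ should be, up to this fibration, the product of the pair for $\mathbf{G}^{(e)}$ with the closed proper spaces of the other tails.

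Next I will invoke naturality of the boundary map in the excision long exact sequence for Borel--Moore homology under products: for a pair $(X,Z)$ and a proper space $Y$, the boundary map of $(X\times Y,Z\times Y)$ is $\partial_X\otimes\mathrm{id}$ under Künneth, up to a sign. For iterated locally trivial fibrations with trivial monodromy, which is the setting we have, I would argue exactly as in the proof of Corollary \ref{cor:genDF}: the Leray spectral sequence degenerates to Künneth, and the boundary map is compatible with this because it is induced by a morphism of sheaf complexes over the base. Compatibility with the weight filtration then shows that the truncations $\mathrm{gr}^W_{-(k-1)}$ commute with the identification. Since the other tails have pure cohomology, only the $\mathbf{G}^{(e)}$ factor contributes the off-by-one weight, and this gives the top horizontal arrow.

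Finally I will handle the vertical maps. The maps $\Mtil^{\mathrm{ord}}\to\Mtil$ are finite étale quotients by $\mathrm{Aut}(\mathbf{G}/e)$ and $\mathrm{Aut}(\mathbf{G})$, and Lemma \ref{lem:kunneth} identifies rational Borel--Moore homology of the quotient with coinvariants, so the pushforwards are surjective. The boundary map commutes with proper pushforward along the map of pairs induced by the quotient. There is a subtlety here: the ordered cover of $\Mtil_{\mathbf{G}/e}$ restricted near $\Mtil_{\mathbf{G}}$ is not literally $\Mtil^{\mathrm{ord}}_{\mathbf{G}}$. It is a union of copies indexed by cosets of $\mathrm{Aut}(\mathbf{G})$ in the labelings compatible with $e$. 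I would resolve this by pulling the ordered cover of $\mathbf{G}/e$ back and working with the fibre product, then summing over cosets; this produces the commutative square. Surjectivity of the left vertical map then means that the top arrow determines the bottom one.

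I expect the main obstacle to be precisely this compatibility of labelings between the two ordered covers, that is, making the map of pairs honestly defined over the labelled spaces. The Künneth and naturality steps are formal by comparison.
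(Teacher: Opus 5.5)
Your proposal is correct and follows the approach the paper intends. The paper gives no separate proof of Lemma \ref{lem:kunnss}: it only recalls the K\"unneth formulas of Lemma \ref{lem:kunneth} for the ordered strata, asserts that the boundary map is the K\"unneth product of the $\mathbf{G}^{(e)}$-boundary map with the identity on the other (pure) rational-tail factors, and descends along the $\mathrm{Aut}$-quotients; your naturality argument for the Zariski locally trivial product structure and your treatment of how the two ordered covers match up supply exactly the details the paper leaves implicit.
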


Therefore, the Künneth tensor products of basepoint off-by-one classes on the genus one core and any pure weight class on the rational tail also do not survive to the $E^2$-page.

\subsubsection{Radial merge to the core}\label{subsec:radmercore}
We describe the boundary maps induced by collapsing genus zero maps on the contraction radius to the core. By the same considerations as in Lemmas \ref{lem:pbss} and \ref{lem:kunnss}, it suffices to consider the following level merge: \[\begin{tikzpicture}[x=0.75pt,y=0.75pt,yscale=-0.95,xscale=0.95]

\draw   (11.86,62.89) .. controls (11.86,50.98) and (21.52,41.32) .. (33.43,41.32) .. controls (45.34,41.32) and (55,50.98) .. (55,62.89) .. controls (55,74.8) and (45.34,84.46) .. (33.43,84.46) .. controls (21.52,84.46) and (11.86,74.8) .. (11.86,62.89) -- cycle ;
\draw    (53,54.66) -- (103.88,32.89) ;
\draw  [fill={rgb, 255:red, 0; green, 0; blue, 0 }  ,fill opacity=1 ] (98.72,32.89) .. controls (98.72,30.04) and (101.03,27.73) .. (103.88,27.73) .. controls (106.73,27.73) and (109.04,30.04) .. (109.04,32.89) .. controls (109.04,35.74) and (106.73,38.05) .. (103.88,38.05) .. controls (101.03,38.05) and (98.72,35.74) .. (98.72,32.89) -- cycle ;
\draw    (42.43,82.46) -- (48.94,100.06) ;
\draw    (26.73,83.53) -- (20.03,99.86) ;
\draw   (162.86,62.89) .. controls (162.86,50.98) and (172.52,41.32) .. (184.43,41.32) .. controls (196.34,41.32) and (206,50.98) .. (206,62.89) .. controls (206,74.8) and (196.34,84.46) .. (184.43,84.46) .. controls (172.52,84.46) and (162.86,74.8) .. (162.86,62.89) -- cycle ;
\draw    (193.43,82.46) -- (199.94,100.06) ;
\draw    (177.73,83.53) -- (171.03,99.86) ;
\draw    (54,70.66) -- (104.88,90.62) ;
\draw  [fill={rgb, 255:red, 0; green, 0; blue, 0 }  ,fill opacity=1 ] (99.72,90.62) .. controls (99.72,87.78) and (102.03,85.47) .. (104.88,85.47) .. controls (107.73,85.47) and (110.04,87.78) .. (110.04,90.62) .. controls (110.04,93.47) and (107.73,95.78) .. (104.88,95.78) .. controls (102.03,95.78) and (99.72,93.47) .. (99.72,90.62) -- cycle ;
\draw  [dash pattern={on 4.5pt off 4.5pt}][line width=0.75]  (3.25,5.39) -- (82.76,5.39) -- (82.76,132.19) -- (3.25,132.19) -- cycle ;
\draw  [dash pattern={on 4.5pt off 4.5pt}][line width=0.75]  (3.25,5.39) -- (123.33,5.39) -- (123.33,132.19) -- (3.25,132.19) -- cycle ;

\draw (27.79,54.19) node [anchor=north west][inner sep=0.75pt]    {$1$};
\draw (27.33,23.56) node [anchor=north west][inner sep=0.75pt]  [font=\small]  {$0$};
\draw (97.22,9.67) node [anchor=north west][inner sep=0.75pt]  [font=\small]  {$\delta _{1}$};
\draw (65,148.4) node [anchor=north west][inner sep=0.75pt]    {$(\mathbf{G}_{\boldsymbol{\delta}},\rho)$};
\draw (151,148.4) node [anchor=north west][inner sep=0.75pt]    {$\mathbf{G}_{\boldsymbol{\delta}} /\rho ^{-1}( 1)$};
\draw (27.03,99.26) node [anchor=north west][inner sep=0.75pt]    {$\dotsc $};
\draw (4,106.38) node [anchor=north west][inner sep=0.75pt]    {$m_{i} ,\ i\in [ n]$};
\draw (178.79,54.19) node [anchor=north west][inner sep=0.75pt]    {$1$};
\draw (177.33,24.56) node [anchor=north west][inner sep=0.75pt]  [font=\small]  {$d$};
\draw (177.03,94.26) node [anchor=north west][inner sep=0.75pt]    {$\dotsc $};
\draw (152,106.38) node [anchor=north west][inner sep=0.75pt]    {$m_{i} ,\ i\in [ n]$};
\draw (90.31,71.85) node [anchor=north west][inner sep=0.75pt]  [rotate=-269.27]  {$\dotsc $};
\draw (98.22,98.4) node [anchor=north west][inner sep=0.75pt]  [font=\small]  {$\delta _{k}$};
\draw (24,134.4) node [anchor=north west][inner sep=0.75pt]  [font=\small]  {$\rho =0$};
\draw (84.76,133.59) node [anchor=north west][inner sep=0.75pt]  [font=\small]  {$\rho =1$};
\end{tikzpicture}\]

Contracting the genus zero maps to quasimaps with basepoints on the genus one core, standard functoriality properties again imply
\begin{lemma}\label{lem:cobradmer}
  The boundary map $\partial: \gr^W_{-(k-1)}H_k^{\mathrm{BM}}(\mathcal{M}_{1,n}(\PP^r,d))\to \mathrm{gr}^W_{-(k-1)}H_{k-1}^{\mathrm{BM}}(\Mtil_{[\mathbf{G}_{\boldsymbol{\delta}},\rho]})$ fits into the following commutative diagram \[\begin{tikzcd}
	{\gr^W_{-(k-1)}H_k^{\mathrm{BM}}(\mathcal{M}_{1,n}(\PP^r,d))} & {\gr^W_{-(k-1)}H_{k-1}^{\mathrm{BM}}(\Mtil_{[\mathbf{G}_{\boldsymbol{\delta}},\rho]})} \\
	{B_{k-1}(n,r,d)} & {\mathrm{gr}^W_{-(k-1)}H_{k-1}^{\mathrm{BM}}({\mathcal{M}}_{1,n+k}(\PP^r,0))} \\
	{\gr^W_{-(k-1)}H_{k-1}^{\mathrm{BM}}(\mathcal{B})} & {\mathrm{gr}^W_{-(k-1)}H_{k-1}^{\mathrm{BM}}({\mathcal{C}}_{1,n}^k\times_{\mathcal{M}_{1,n}}\hat{\mathcal{Q}}_{1,n}(\PP^r,0))}
	\arrow["\partial", from=1-1, to=1-2]
	\arrow[two heads, from=1-1, to=2-1]
	\arrow["{\pi_*}", from=1-2, to=2-2]
	\arrow[hook, from=2-1, to=3-1]
	\arrow[from=2-1, to=2-2]
	\arrow["\iota"', two heads, from=3-2, to=2-2]
	\arrow["{m_*}", from=3-2, to=3-1]
\end{tikzcd}\]where $\pi:\Mtil_{[\mathbf{G}_{\boldsymbol{\delta}},\rho]}\to \mathcal{M}_{1,n+k}(\PP^r,0)$ contracts the collection of genus zero maps and records the contraction points on the elliptic curve, the map $m: \mathcal{M}_{1,n+k}(\PP^r,0)\to \mathcal{B}\subset \hat{\mathcal{Q}}_{1,n}(\PP^r,d)$ brings in the last $k$ marked points as basepoints with multiplicities $\delta_1,\dots,\delta_k,$ and $\iota$ denotes the open embedding $$\mathcal{M}_{1,n+k}(\PP^r,0)\subset \mathcal{C}_{1,n}^k\times_{\mathcal{M}_{1,n}}\hat{\mathcal{Q}}_{1,n}(\PP^r,0).$$
\end{lemma}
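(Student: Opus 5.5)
The plan is to mirror the proof of Lemma \ref{lem:cobratail}, replacing the single rational tail by the level $\rho^{-1}(1)$ of genus zero maps of degrees $\delta_1,\dots,\delta_k$. First I will construct the map of pairs
$$(\mathcal{M}_{1,n}(\PP^r,d)\cup \Mtil_{[\mathbf{G}_{\boldsymbol{\delta}},\rho]},\ \Mtil_{[\mathbf{G}_{\boldsymbol{\delta}},\rho]})\to (\hat{\mathcal{Q}}_{1,n}(\PP^r,d),\ \mathcal{B}).$$
On the open part it is the inclusion. On the stratum it is the composite $m\circ\pi$: it contracts each degree-$\delta_i$ genus zero map to its attaching point $q_i$ on the core elliptic curve $E$, and records the quasimap $\mathcal{O}_E(\sum\delta_i q_i)$ with sections $s_{q_1}^{\delta_1}\cdots s_{q_k}^{\delta_k}\cdot[c_0:\dots:c_r]$. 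Here $[c_0:\dots:c_r]$ is the constant map of the contracted core, i.e.\ the point of $\PP^r$ in the $H_\star(\PP^r)$ Künneth factor of Lemma \ref{lem:kunneth}.

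To see that this is continuous (indeed a morphism from the open-plus-divisor neighbourhood), I will use the modular description from \cite{rspw}. Near the stratum, a smoothing of the central alignment contracts the level to an elliptic singularity. In the limit, the line bundle on the smooth elliptic curve degenerates to one with the stated basepoint divisor. This is the same collapsing argument as for rational tails, applied simultaneously at $k$ points.

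Naturality of the boundary map in Borel--Moore homology under proper maps of pairs gives a commutative square. Its top row is $\partial$, its bottom row is the boundary map for $(\hat{\mathcal{Q}}_{1,n}(\PP^r,d),\mathcal{B})$, and the right vertical arrow is $(m\circ\pi)_*$. Weight-truncating, the bottom boundary map sends $\gr^W_{-(k-1)}H_k^{\mathrm{BM}}(\mathcal{M}_{1,n}(\PP^r,d))$ onto $\ker(\gr^W\varphi)=B_{k-1}(n,r,d)$ by Lemma \ref{lem:m1nrdoff}. The classes coming from $\hat{\mathcal{Q}}_{1,n}(\PP^r,d)$ are pulled back from $\mathrm{Pic}^d_{1,n}$, so exactly as in Lemma \ref{lem:cobratail} they satisfy $\pi_*\partial(g)=0$. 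Hence $\pi_*\circ\partial$ factors through the surjection onto $B_{k-1}(n,r,d)$, which yields the middle horizontal arrow. The containment $B_{k-1}\hookrightarrow \gr^W H^{\mathrm{BM}}_{k-1}(\mathcal{B})$ is the definition.

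For the bottom square I will factor $m$ as the open embedding $\iota:\mathcal{M}_{1,n+k}(\PP^r,0)\subset \mathcal{C}^k_{1,n}\times_{\mathcal{M}_{1,n}}\hat{\mathcal{Q}}_{1,n}(\PP^r,0)$ followed by the iterated multiplication map (still denoted $m_*$ there), generalising Lemma \ref{lem:propsurj}. In degree zero, $\hat{\mathcal{Q}}_{1,n}(\PP^r,0)$ is a $\PP^r$-bundle of constant sections. The restriction $\iota^*$ is surjective on pure and off-by-one weight pieces, because the complement consists of diagonals and restriction from a smooth space is surjective on pure weight. I will then check that the middle arrow equals the composite of $m_*$ with a chosen lift through $\iota$, using the projection formula as in the proof of Lemma \ref{lem:m1nrd}: the pushforward of $[\,\cdot\,]$ multiplies by powers of $H_{\hat{\mathcal{Q}}}$ and $\Theta_i$.

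The main obstacle will be that the multiplicities $\delta_i$ enter. The multiplication map factors through $q\mapsto \delta q$ on the Picard side, so one must verify that the projection formula computation is compatible with these scalings. It must also be independent of the choice of lift through $\iota$: different lifts differ by classes supported on the diagonals of $\mathcal{C}^k_{1,n}$. I would handle this by noting that such classes push forward into the image of classes with coincident basepoints, i.e.\ fewer-point contractions. These map to zero after passing to $B_{k-1}$, modulo $\mathrm{im}\,\phi$.
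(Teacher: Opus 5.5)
Your overall strategy is the paper's own, and the paper's proof is only one sentence: collapsing the level $\rho^{-1}(1)$ to basepoints and ``standard functoriality'' give the diagram. That strategy is a map of pairs into $(\hat{\mathcal{Q}}_{1,n}(\PP^r,d),\mathcal{B})$, naturality of boundary maps, and factoring $\pi_*\circ\partial$ through $B_{k-1}(n,r,d)$, because classes restricted from $\hat{\mathcal{Q}}_{1,n}(\PP^r,d)$ have zero boundary, exactly as in Lemma \ref{lem:cobratail}. However, two of the steps you spell out would fail as written.

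\emph{Properness.} The naturality you invoke, which the paper leaves implicit, is for \emph{proper} maps of pairs. Neither $\mathcal{M}_{1,n}(\PP^r,d)\cup\Mtil_{[\mathbf{G}_{\boldsymbol{\delta}},\rho]}\to\hat{\mathcal{Q}}_{1,n}(\PP^r,d)$ nor $m\circ\pi$ is proper.
\begin{itemize}
\item The image of the map is not closed: $\mathcal{M}_{1,n}(\PP^r,d)$ is dense in $\hat{\mathcal{Q}}_{1,n}(\PP^r,d)$, while the image of your stratum misses much of $\mathcal{B}$ (coincident basepoints, basepoints at markings).
\item The fibres of $\pi$ are built from the non-compact factorisation spaces $\mathcal{M}^{\mathbf{F}}_{(\boldsymbol{\delta},\boldsymbol{m})}$ of Lemma \ref{lem:kunneth}, so $(m\circ\pi)_*$ is not a Borel--Moore pushforward.
\end{itemize}
To make the functoriality honest, use the contraction $F$ from the full preimage $Y$ of $\mathcal{M}_{1,n}$ in $\Mtil_{1,n}(\PP^r,d)$. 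It is proper, being a morphism between spaces proper over $\mathcal{M}_{1,n}$, and $F^{-1}(\mathcal{B})=Y\setminus\mathcal{M}_{1,n}(\PP^r,d)$. Then $\partial_{\hat{\mathcal{Q}}}=F_*\circ\partial_Y$, and the $E^1$-component you want is the restriction of $\partial_Y$ to the open stratum $\Mtil_{[\mathbf{G}_{\boldsymbol{\delta}},\rho]}$ of $Y\setminus\mathcal{M}_{1,n}(\PP^r,d)$. You must still isolate its contribution from those of the rational-tail strata and the other levels. You must also give $\pi_*$ a meaning on pure weight pieces, for instance through the K\"unneth decomposition.

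\emph{The bottom square.} A lift through $\iota$ followed by $m_*$ goes \emph{from} $\gr^W H^{\mathrm{BM}}_{k-1}(\mathcal{M}_{1,n+k}(\PP^r,0))$ \emph{into} $\gr^W H^{\mathrm{BM}}_{k-1}(\mathcal{B})$. So it cannot ``equal'' the middle arrow, and it genuinely depends on the lift, because pushforwards of diagonal classes are non-zero classes on the coincident-basepoint locus. Take $d=2$ and $\boldsymbol{\delta}=(1,1)$:
\begin{itemize}
\item Every quasimap with a basepoint has the form $[c_0\sigma:\dots:c_r\sigma]$ with $\mathrm{div}(\sigma)=q_1+q_2$. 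So $\mathcal{B}$ is identified with $\mathrm{Sym}^2_{\mathcal{M}_{1,n}}\mathcal{C}_{1,n}\times\PP^r$ and $m$ is the quotient map.
\item $[\Delta\times\PP^r]$ restricts to zero on $\mathcal{M}_{1,n+2}(\PP^r,0)$, yet $m_*[\Delta\times\PP^r]$ is the non-zero diagonal divisor class.
\end{itemize}
Your remedy, that such classes ``map to zero after passing to $B_{k-1}$, modulo $\mathrm{im}(\phi)$'', mixes up the two ends of the short exact sequence of Lemma \ref{lem:m1nrdoff}. Inside $\gr^W_{-(k-1)}H^{\mathrm{BM}}_{k-1}(\mathcal{B})$, where these pushforwards live, $B_{k-1}(n,r,d)$ is a \emph{subspace} (the kernel of $\varphi$). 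It is a quotient only of $\gr^W_{-(k-1)}H^{\mathrm{BM}}_{k}(\mathcal{M}_{1,n}(\PP^r,d))$, which is the side where $\mathrm{im}(\phi)$ enters.

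The bottom square is a zigzag and should be read as follows: the middle arrow, already defined by your top square, sends $m_*x\in B_{k-1}(n,r,d)$ to $\iota^*x$. That is what must be derived from the functoriality above. Its consistency is a condition on $\ker m_*$, after taking coinvariants under automorphisms permuting level vertices of equal degree, not on $\ker\iota^*$.

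Finally, $\iota^*$ is surjective on pure weight pieces but not in general on off-by-one pieces. Only the former is needed here.
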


\begin{remark}
  We will use $(\mathbf{G}_{\boldsymbol{\delta},\boldsymbol{m}},\rho)$ to denote the centrally aligned $(1,n,d)$-graph that consists of a degree zero vertex of genus one and a contraction radius with degree and marking distributions given by $(\boldsymbol{\delta},\boldsymbol{m}).$
\end{remark}

\subsubsection{Core edge contraction}
We consider the $(1,n,d)$-graph $\mathsf{C}_{1,(d, [n])}$ given by a 1-cycle with a single degree-$d$ vertex and all marked points, as shown below. It specifies the locally closed stratum $\mathcal{M}_{\mathsf{C}_{1,(d, [n])}}\subset \Mtil_{1,n}(\PP^r,d).$
\[\begin{tikzpicture}[x=0.75pt,y=0.75pt,yscale=-1,xscale=1]

\draw  [fill={rgb, 255:red, 0; green, 0; blue, 0 }  ,fill opacity=1 ] (40.72,37.89) .. controls (40.72,35.04) and (43.03,32.73) .. (45.88,32.73) .. controls (48.73,32.73) and (51.04,35.04) .. (51.04,37.89) .. controls (51.04,40.74) and (48.73,43.05) .. (45.88,43.05) .. controls (43.03,43.05) and (40.72,40.74) .. (40.72,37.89) -- cycle ;
\draw   (29.78,21.79) .. controls (29.78,12.89) and (36.99,5.68) .. (45.88,5.68) .. controls (54.78,5.68) and (61.99,12.89) .. (61.99,21.79) .. controls (61.99,30.68) and (54.78,37.89) .. (45.88,37.89) .. controls (36.99,37.89) and (29.78,30.68) .. (29.78,21.79) -- cycle ;
\draw    (45.88,37.75) -- (51.17,56.42) ;
\draw    (45.88,37.75) -- (40.5,56.42) ;

\draw (37.03,61.38) node [anchor=north west][inner sep=0.75pt]  [font=\scriptsize]  {$\dotsc $};
\draw (30,90) node [anchor=north west][inner sep=0.75pt]    {$\mathsf{C}_{1,(d, [n])}$};
\draw (7,63.38) node [anchor=north west][inner sep=0.75pt]    {$m_{i} ,\ i\in [ n]$};
\draw (55.33,31.4) node [anchor=north west][inner sep=0.75pt]  [font=\small]  {$d$};
\end{tikzpicture}\]
The basepoint classes $B_{k-1}(n,r,d)$ do not contribute to the boundary map associated to core edge contraction in the following sense.
\begin{lemma}
  The kernel $\ker \partial$ of the boundary map $$\partial:\mathrm{gr}^W_{-(k-1)} H_{k}^{\mathrm{BM}}(\mathcal{M}_{1,n}(\PP^r,d))\to \mathrm{gr}^W_{-(k-1)} H_{k-1}^{\mathrm{BM}}(\mathcal{M}_{\mathsf{C}_{1,(d, [n])}})$$ surjects onto $B_{k-1}(n,r,d).$ 
\end{lemma}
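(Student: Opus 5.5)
The plan is to compare the boundary map $\partial$ for the core edge contraction with the corresponding boundary map on the quasimap side, in the spirit of Lemmas \ref{lem:cobratail} and \ref{lem:cobradmer}. Unlike a rational tail contraction, a core edge contraction does not collapse anything to a basepoint. The degeneration from a smooth elliptic curve to a nodal $1$-cycle happens over the base of curves, and the linear system degenerates along with it. So the first step is to build the partial compactification over the degeneration. Let $\hat{\mathcal{Q}}$ denote the projective bundle $\mathbb{P}(R\pi_*\mathcal{P}_d^{\oplus r+1})$ over the relative Picard stack of the family of curves with smooth or $1$-cycle genus one fibres. This contains $\hat{\mathcal{Q}}_{1,n}(\PP^r,d)$ as an open subset with complement $\hat{\mathcal{Q}}_{\mathsf{C}_{1,(d,[n])}}$. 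The basepoint-free locus in it is $\mathcal{M}_{1,n}(\PP^r,d)\cup \mathcal{M}_{\mathsf{C}_{1,(d,[n])}}$, and the basepoint locus is $\mathcal{B}\cup \mathcal{B}_{\mathsf{C}}$. Here $\mathcal{B}_{\mathsf{C}}$ is described by Lemma \ref{lem:mckdm}: its multiplication map from degree-one sections times $\hat{\mathcal{Q}}_{\mathsf{C}}$ of degree $d-1$ is the nodal analogue of Lemma \ref{lem:propsurj}.

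Next I would use naturality of boundary maps. The boundary map $\partial$ is compatible with the excision sequence of Lemma \ref{lem:m1nrdoff} and with its analogue on the nodal stratum (Corollary \ref{cor:Mcycle}). Write $\partial_{\mathcal{B}}:\mathrm{gr}^W_{-(k-1)}H^{\mathrm{BM}}_{k-1}(\mathcal{B})\to \mathrm{gr}^W_{-(k-1)}H^{\mathrm{BM}}_{k-2}(\mathcal{B}_{\mathsf{C}})$ for the boundary map of the pair $(\mathcal{B}\cup\mathcal{B}_{\mathsf{C}},\mathcal{B}_{\mathsf{C}})$. Then the composite
\[\mathrm{gr}^W_{-(k-1)}H^{\mathrm{BM}}_k(\mathcal{M}_{1,n}(\PP^r,d))\xrightarrow{\partial}\mathrm{gr}^W_{-(k-1)}H^{\mathrm{BM}}_{k-1}(\mathcal{M}_{\mathsf{C}_{1,(d,[n])}})\to \mathrm{gr}^W_{-(k-1)}H^{\mathrm{BM}}_{k-2}(\mathcal{B}_{\mathsf{C}})\]
agrees up to sign with the map obtained by first projecting to $B_{k-1}(n,r,d)$ and then applying $\partial_{\mathcal{B}}$. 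The goal is to show that every element of $B_{k-1}(n,r,d)$ lifts to an element of $\ker\partial$.

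To do this I would take the generators of $B_{k-1}(n,r,d)$ from Lemma \ref{lem:m1nrdoff}(2b). These are pure weight classes on $\hat{\mathcal{Q}}_{1,n+1}(\PP^r,d-1)$ capped with the fundamental class of $\mathcal{C}_{1,n}\times_{\mathcal{M}_{1,n}}\hat{\mathcal{Q}}_{1,n}(\PP^r,d-1)$. By Lemma \ref{lem:pics}, Lemma \ref{lem:m1nrd} and Petersen's description, these are polynomials in $\Theta$ and $H_{\hat{\mathcal{Q}}}$, together with pullbacks of cusp form classes from $\mathcal{M}_{1,m}$.

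The key observation is that each such generator extends over the nodal locus. The classes $\Theta$ and $H_{\hat{\mathcal{Q}}}$ are Chern classes of line bundles defined on the whole degeneration: the relative polarisation extends, as noted in \S\ref{subsec:strcl}. The fundamental class of the multiplication source also extends, because the source family $\mathcal{C}\times\hat{\mathcal{Q}}$ is defined over the partial compactification. For the cusp form classes there is nothing to extend: they are pulled back from $\mathcal{M}_{1,m}$, their weight is odd and at least $11$, and the pure and off-by-one weights of $\mathcal{M}_{\mathsf{C}}$ are Tate by Lemma \ref{lem:Piccyc}. Weight and Hodge type therefore force their boundary to vanish. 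From these extensions I conclude that $\partial_{\mathcal{B}}$ kills the generators of $B_{k-1}(n,r,d)$. Their lifts to $\mathrm{gr}^W_{-(k-1)}H^{\mathrm{BM}}_k(\mathcal{M}_{1,n}(\PP^r,d))$ can then be chosen as restrictions of classes defined on the open set $\mathcal{M}_{1,n}(\PP^r,d)\cup\mathcal{M}_{\mathsf{C}}$, and such restrictions lie in $\ker\partial$ by exactness of the excision sequence for that pair.

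I expect the main obstacle to be the last step: producing a genuine lift in $\mathrm{gr}^W_{-(k-1)}H^{\mathrm{BM}}_k(\mathcal{M}_{1,n}(\PP^r,d))$, and not merely a class on $\mathcal{B}$ with vanishing $\partial_{\mathcal{B}}$. A class of the form $\gamma\cap[\text{multiplication source}]$ has to be pushed to $\mathcal{B}$ and then pulled back through the connecting map into $\mathcal{M}_{1,n}(\PP^r,d)$, and this must be done compatibly across the degeneration. My first attempt is to carry out the whole construction on the total space over the degeneration, where the multiplication map is still proper by Lemma \ref{lem:mckdm}. I would then use the surjection in Corollary \ref{cor:Mcycle}, together with weight purity of the multiplication sources, to see that the connecting maps commute with restriction to the open part.
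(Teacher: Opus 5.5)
You set up the same partial compactification $\hat{\mathcal{Q}}'=\hat{\mathcal{Q}}_{1,n}(\PP^r,d)\cup\hat{\mathcal{Q}}_{\mathsf{C}}$ as the paper (write $\mathcal{M}_{\mathsf{C}},\hat{\mathcal{Q}}_{\mathsf{C}}$ for $\mathcal{M}_{\mathsf{C}_{1,(d,[n])}},\hat{\mathcal{Q}}_{\mathsf{C}_{1,(d,[n])}}$). The problem is that the boundary map you compare with carries no information.

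\begin{itemize}
\item The target of $\partial_{\mathcal{B}}$ is $\gr^W_{-(k-1)}H^{\mathrm{BM}}_{k-2}(\mathcal{B}_{\mathsf{C}})$, and this is zero. Borel--Moore homology in degree $k-2$ has no weights below $-(k-2)$. This is the same bound that makes $\gr^W_{-\star}H^{\mathrm{BM}}_\star$ the lowest piece and makes restriction of pure weight classes to open sets surjective.
\item So the identity in your second paragraph reads $0=0$. The claim that ``$\partial_{\mathcal{B}}$ kills the generators'' holds for every class, independently of your extension arguments. Equivalently, every class of $B_{k-1}(n,r,d)$ extends to $\mathcal{B}\cup\mathcal{B}_{\mathsf{C}}$ for free, and nothing about $\partial$ follows.
\item Your last step asks for lifts that are restrictions from $\mathcal{M}_{1,n}(\PP^r,d)\cup\mathcal{M}_{\mathsf{C}}$. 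By exactness of the excision sequence for that pair, this is literally the statement of the lemma, and the proposal gives no mechanism for it.
\end{itemize}

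Only the cusp-form part is genuinely settled. A class of type $\mathsf{S}_{m+1}\otimes\mathsf{L}^j$ maps to zero in the Tate structure $\gr^W_{-(k-1)}H^{\mathrm{BM}}_{k-1}(\mathcal{M}_{\mathsf{C}})$ (Corollary \ref{cor:Mcycle}). The Tate classes built from $\Theta$, $H_{\hat{\mathcal{Q}}}$ and fundamental classes are where the content lies.

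The paper instead compares $\partial$ with the boundary map $\partial_{\hat{\mathcal{Q}}}$ of the pair $\hat{\mathcal{Q}}_{1,n}(\PP^r,d)\subset\hat{\mathcal{Q}}'$, whose closed complement is $\hat{\mathcal{Q}}_{\mathsf{C}}$.
\begin{itemize}
\item Restricting to $\mathcal{M}_{1,n}(\PP^r,d)\cup\mathcal{M}_{\mathsf{C}}$ gives $\partial\circ\mathrm{res}=\mathrm{res}\circ\partial_{\hat{\mathcal{Q}}}$.
\item Restriction of pure weight classes from $\hat{\mathcal{Q}}_{\mathsf{C}}$ to $\mathcal{M}_{\mathsf{C}}$ is surjective.
\item $B_{k-1}(n,r,d)$ is the cokernel of restriction from $\hat{\mathcal{Q}}_{1,n}(\PP^r,d)$.
\end{itemize}
So the lemma amounts to $\mathrm{im}\,\partial\subseteq\mathrm{res}(\mathrm{im}\,\partial_{\hat{\mathcal{Q}}})$.

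Your extension idea is the right input for this inclusion, but it must be used differently; all classes below are in weight $-(k-1)$.
\begin{itemize}
\item Given $x$, lift $\partial x$ to a class $y$ on $\hat{\mathcal{Q}}_{\mathsf{C}}$.
\item Since $\partial x$ dies on $\mathcal{M}_{1,n}(\PP^r,d)\cup\mathcal{M}_{\mathsf{C}}$, the pushforward of $y$ to $\hat{\mathcal{Q}}'$ equals the pushforward of some class $w$ on $\mathcal{B}\cup\mathcal{B}_{\mathsf{C}}$.
\item Restricting to $\hat{\mathcal{Q}}_{1,n}(\PP^r,d)$, where $y$ contributes nothing, gives $w|_{\mathcal{B}}\in B_{k-1}(n,r,d)$.
\item Suppose $w|_{\mathcal{B}}$ has an extension $w'$ to $\mathcal{B}\cup\mathcal{B}_{\mathsf{C}}$ \emph{whose pushforward to $\hat{\mathcal{Q}}'$ vanishes}. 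Then $w-w'$ comes from $\mathcal{B}_{\mathsf{C}}$.
\item Subtracting its pushforward from $y$ leaves $y|_{\mathcal{M}_{\mathsf{C}}}$ unchanged. The corrected $y$ lies in $\ker\bigl(\gr^W H^{\mathrm{BM}}_{k-1}(\hat{\mathcal{Q}}_{\mathsf{C}})\to\gr^W H^{\mathrm{BM}}_{k-1}(\hat{\mathcal{Q}}')\bigr)=\mathrm{im}\,\partial_{\hat{\mathcal{Q}}}$, as needed.
\end{itemize}

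So what must actually be proved is that classes of $B_{k-1}(n,r,d)$ extend over the nodal locus \emph{while remaining in the kernel} of pushforward to $\hat{\mathcal{Q}}'$. Concretely, the projection-formula computation of $m_*$ in the proof of Lemma \ref{lem:m1nrdoff} must persist over the degeneration, using your extended $\Theta$, $H_{\hat{\mathcal{Q}}}$ and multiplication source.

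The three facts the paper's one-line chase cites (the square, the surjectivity, and the cokernel description) do not by themselves force the inclusion. This kernel-preserving extension is therefore the real geometric content on either route.
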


\begin{proof}
  Let $\mathcal{M}_{\mathsf{C}_{1,[n]}}\subset \Mbar_{1,n}$ be the locally closed substack of nodal elliptic curves with a single node. We can construct the space $\hat{\mathcal{Q}}'$ of quasimaps over $\mathcal{M}_{1,n}\cup \mathcal{M}_{\mathsf{C}_{1,[n]}}$ as a projective bundle over the relative degree-$d$ Picard group of $\mathcal{M}_{1,n}\cup \mathcal{M}_{\mathsf{C}_{1,[n]}}.$ In particular, we have $\hat{\mathcal{Q}}' = \hat{\mathcal{Q}}_{1,n}(\PP^r,d)\cup  \hat{\mathcal{Q}}_{\mathsf{C}_{1,(d, [n])}}$ and an open embedding $\mathcal{M}_{\mathsf{C}_{1,(d, [n])}}\subset \hat{\mathcal{Q}}_{\mathsf{C}_{1,(d, [n])}}$ as basepoint free quasimaps.

  The excision long exact sequences associated to the two pairs $\mathcal{M}_{1,n}(\PP^r,d)\subset \mathcal{M}_{1,n}(\PP^r,d)\cup \mathcal{M}_{\mathsf{C}_{1,(d, [n])}}$ as well as $\hat{\mathcal{Q}}_{1,n}(\PP^r,d)\subset \hat{\mathcal{Q}}_{1,n}(\PP^r,d)\cup \hat{\mathcal{Q}}_{\mathsf{C}_{1,(d, [n])}}$ are compatible with the open embedding $\mathcal{M}_{1,n}(\PP^r,d)\cup \mathcal{M}_{\mathsf{C}_{1,(d, [n])}}\subset \hat{\mathcal{Q}}',$ so we have the following commutative diagram \[\begin{tikzcd}
	{\gr^W_{-(k-1)}H_k^{\mathrm{BM}}(\hat{\mathcal{Q}}_{1,n}(\PP^r,d))} & { \gr^W_{-(k-1)}H_{k-1}^{\mathrm{BM}}(\hat{\mathcal{Q}}_{\mathsf{C}_{1,(d, [n])}})} \\
	{\gr^W_{-(k-1)}H_k^{\mathrm{BM}}({\mathcal{M}}_{1,n}(\PP^r,d))} & { \gr^W_{-(k-1)}H_{k-1}^{\mathrm{BM}}({\mathcal{M}}_{\mathsf{C}_{1,(d, [n])}})} \\
	{B_{k-1}(n,r,d)}
	\arrow[from=1-1, to=1-2]
	\arrow[from=1-1, to=2-1]
	\arrow[two heads, from=1-2, to=2-2]
	\arrow["\partial", from=2-1, to=2-2]
	\arrow[two heads, from=2-1, to=3-1]
\end{tikzcd}\] where the horizontal maps are the boundary maps in the long exact sequences. Since $$B_{k-1}(n,r,d)=\frac{\gr^W_{-(k-1)}H_k^{\mathrm{BM}}({\mathcal{M}}_{1,n}(\PP^r,d))}{\mathrm{im}(\gr^W_{-(k-1)}H_k^{\mathrm{BM}}(\hat{\mathcal{Q}}_{1,n}(\PP^r,d)))},$$ and the vertical arrow on the right is surjective, a diagram chase shows that $\ker(\partial)$ surjects onto $B_{k-1}(n,r,d).$
\end{proof}

Intuitively, this statement says that boundary map associated to core edge contraction reflects degeneration of the underlying curve and is hence `orthogonal' to the basepoint classes. The same construction applies to core edge contractions of nodal elliptic curves in general.

\subsubsection{Core with loops}\label{subsec:baseptloops}
We briefly introduce the notation of basepoint classes associated to maps from a cycles of rational curves. Recall that $\mathsf{C}_{\ell,(\boldsymbol{\delta},\boldsymbol{m})}$ denotes the $(1,n,d)$-graph given by an $\ell$-cycle with degree and marking distributions given by $\boldsymbol{\delta}$ and $\boldsymbol{m}.$

\begin{definition}
  Define $$B_{k-1}(\mathsf{C}_{\ell,(\boldsymbol{\delta},\boldsymbol{m})}):= \ker(\gr^W_{-(k-1)}H_{k-1}^{\mathrm{BM}}(\hat{\mathcal{Q}}_{\mathsf{C}_{\ell,(\boldsymbol{\delta},\boldsymbol{m})}}\setminus \mathcal{M}_{\mathsf{C}_{\ell,(\boldsymbol{\delta},\boldsymbol{m})}})\to \gr^W_{-(k-1)}H_{k-1}^{\mathrm{BM}}(\hat{\mathcal{Q}}_{\mathsf{C}_{\ell,(\boldsymbol{\delta},\boldsymbol{m})}})).$$ It receives a surjection from $\gr^W_{-(k-1)}H_k^{\mathrm{BM}}(\mathcal{M}_{\mathsf{C}_{\ell,(\boldsymbol{\delta},\boldsymbol{m})}}).$
\end{definition}

The behaviours of the basepoint classes $B_{k-1}(\mathsf{C}_{\ell,(\boldsymbol{\delta},\boldsymbol{m})})$ in the spectral seqeunce differentials are analogous to the classes $B_{k-1}(n,r,d)$ associated to $\mathcal{M}_{1,n}(\PP^r,d)$:\begin{enumerate}
  \item The boundary maps corresponding to rational tail contractions or level merge from the contraction radius send $B_{k-1}(\mathsf{C}_{\ell,(\boldsymbol{\delta},\boldsymbol{m})})$ to pure weight classes on the core with lower degrees, which are extended to pure weight classes on the strata by taking $\otimes [\mathrm{pt}]$ in the Künneth formula.
  \item Contractions of degree one rational tails lead to non-zero boundary maps from the whole of $B_{k-1}(\mathsf{C}_{\ell,(\boldsymbol{\delta},\boldsymbol{m})}),$ so they do not survive to the $E^2$-page.
\end{enumerate}

\subsection{Contracted core}
The off-by-one classes potentially contributing to differentials that correspond to radial merges are the off-by-one classes on the core: they are in turn pulled back from the off-by-one classes on $\mathcal{M}_{0,n'},$ $\mathcal{M}_{1,n'},$ and torus bundle fibers. As earlier, the relations they contribute to are the WDVV and Getzler's relations, and the relations in the strata closures of $\mathcal{M}_{1,n}^{\mathrm{cen}},$ which follows from the blow-up formula of $\Mbar_{1,n}^{\mathrm{cen}}\to \Mbar_{1,n}$ \cite{rspw}. They contribute to relations on $\Mtil_{1,n}(\PP^r,d)$ through pullback from $\Mbar_{1,n'}^{\mathrm{cen}}$ to the relevant graph strata and pushing forward.

\subsection{Genus zero maps with factorisation property}
Apart from the core, the other source of off-by-one weight classes is genus zero maps that satisfy the factorisation property, which are denoted as $\mathcal{M}^{\mathbf{F}}_{(\boldsymbol{\delta}^{(r)}, \boldsymbol{m}^{(r)})}:$ the pair $(\boldsymbol{\delta}^{(r)}, \boldsymbol{m}^{(r)})$ indicates the degree and marking distributions on the contraction radius. By Lemma \ref{lem:MF}, up to pure weight classes, the off-by-one classes on its singular cohomology are given by \begin{enumerate}
  \item pullback from $H^1(\mathcal{M}_{0,m'})$,
  \item pullback from $H^{2r-1}(\widetilde{\mathrm{Map}}_{\boldsymbol{\delta}}^{\mathsf{F}}(\mathbb{P}^1, \mathbb{P}^r)),$ which we denote as the genus zero basepoint classes,
  \item the off-by-one classes coming from torus fibres.
\end{enumerate}

Among them, item (1) again corresponds to pullbacks of the WDVV relation. As with the discussion on the torus bundles in the proof of Lemma \ref{lem:MF}, item (3) corresponds to the relation $\psi_v = \psi_w$ for any univalent or bivalent vertices $v,w.$ This exhausts all the off-by-one classes from torus bundles, which hence do not survive to the $E^2$-page of the spectral sequence.

\begin{remark}
  Since the torus bundles parametrising non-vanishing linear dependencies are pulled back from strata of $\Mbar_{1,n}^{\mathrm{cen}}$ \cite{rspw}, the relation $\psi_v = \psi_w$ is pulled back from $\Mbar_{1,n}^{\mathrm{cen}}.$
\end{remark}

The behaviour of the basepoint classes is similar to the discussion in §\ref{subsec:ratailcon}: the rational tails are contracted as genus zero quasimaps with basepoints. To be more precise, let $[\mathbf{G}]$ and $[\mathbf{G}/e]$ be the following coarse equivalence classes of centrally aligned $(1,n,d)$-graphs.

\[\begin{tikzpicture}[x=0.75pt,y=0.75pt,yscale=-1,xscale=1]

\draw  [dash pattern={on 4.5pt off 4.5pt}] (25.86,88.89) .. controls (25.86,76.98) and (35.52,67.32) .. (47.43,67.32) .. controls (59.34,67.32) and (69,76.98) .. (69,88.89) .. controls (69,100.8) and (59.34,110.46) .. (47.43,110.46) .. controls (35.52,110.46) and (25.86,100.8) .. (25.86,88.89) -- cycle ;
\draw    (64,74.89) -- (109.88,56.89) ;
\draw  [fill={rgb, 255:red, 0; green, 0; blue, 0 }  ,fill opacity=1 ] (104.72,56.89) .. controls (104.72,54.04) and (107.03,51.73) .. (109.88,51.73) .. controls (112.73,51.73) and (115.04,54.04) .. (115.04,56.89) .. controls (115.04,59.74) and (112.73,62.05) .. (109.88,62.05) .. controls (107.03,62.05) and (104.72,59.74) .. (104.72,56.89) -- cycle ;
\draw    (56.43,108.46) -- (62.94,126.06) ;
\draw    (40.73,109.53) -- (34.03,125.86) ;
\draw    (66,99.89) -- (109.75,116.52) ;
\draw  [fill={rgb, 255:red, 0; green, 0; blue, 0 }  ,fill opacity=1 ] (105.72,116.62) .. controls (105.72,113.78) and (108.03,111.47) .. (110.88,111.47) .. controls (113.73,111.47) and (116.04,113.78) .. (116.04,116.62) .. controls (116.04,119.47) and (113.73,121.78) .. (110.88,121.78) .. controls (108.03,121.78) and (105.72,119.47) .. (105.72,116.62) -- cycle ;
\draw  [dash pattern={on 4.5pt off 4.5pt}][line width=0.75]  (17.25,25.38) -- (85.81,25.38) -- (85.81,142.86) -- (17.25,142.86) -- cycle ;
\draw  [dash pattern={on 4.5pt off 4.5pt}][line width=0.75]  (17.25,25.38) -- (130.26,25.38) -- (130.26,142.86) -- (17.25,142.86) -- cycle ;
\draw    (110.88,116.62) -- (116.17,135.3) ;
\draw    (110.88,116.62) -- (105.5,135.3) ;
\draw    (109.18,59.86) -- (103.17,40.3) ;
\draw    (109.18,59.86) -- (115.17,40.3) ;
\draw    (109.88,56.89) -- (167.5,39.63) ;
\draw  [dash pattern={on 4.5pt off 4.5pt}] (167.5,39.63) .. controls (167.5,34.18) and (171.92,29.77) .. (177.36,29.77) .. controls (182.81,29.77) and (187.23,34.18) .. (187.23,39.63) .. controls (187.23,45.08) and (182.81,49.49) .. (177.36,49.49) .. controls (171.92,49.49) and (167.5,45.08) .. (167.5,39.63) -- cycle ;
\draw  [dash pattern={on 4.5pt off 4.5pt}] (252.86,89.84) .. controls (252.86,77.93) and (262.52,68.27) .. (274.43,68.27) .. controls (286.34,68.27) and (296,77.93) .. (296,89.84) .. controls (296,101.76) and (286.34,111.41) .. (274.43,111.41) .. controls (262.52,111.41) and (252.86,101.76) .. (252.86,89.84) -- cycle ;
\draw    (291,75.84) -- (336.88,57.84) ;
\draw  [fill={rgb, 255:red, 0; green, 0; blue, 0 }  ,fill opacity=1 ] (331.72,57.84) .. controls (331.72,54.99) and (334.03,52.68) .. (336.88,52.68) .. controls (339.73,52.68) and (342.04,54.99) .. (342.04,57.84) .. controls (342.04,60.69) and (339.73,63) .. (336.88,63) .. controls (334.03,63) and (331.72,60.69) .. (331.72,57.84) -- cycle ;
\draw    (283.43,109.41) -- (289.94,127.01) ;
\draw    (267.73,110.49) -- (261.03,126.82) ;
\draw    (293,100.84) -- (336.75,117.47) ;
\draw  [fill={rgb, 255:red, 0; green, 0; blue, 0 }  ,fill opacity=1 ] (332.72,117.58) .. controls (332.72,114.73) and (335.03,112.42) .. (337.88,112.42) .. controls (340.73,112.42) and (343.04,114.73) .. (343.04,117.58) .. controls (343.04,120.43) and (340.73,122.74) .. (337.88,122.74) .. controls (335.03,122.74) and (332.72,120.43) .. (332.72,117.58) -- cycle ;
\draw  [dash pattern={on 4.5pt off 4.5pt}][line width=0.75]  (244.25,26.34) -- (312.81,26.34) -- (312.81,143.81) -- (244.25,143.81) -- cycle ;
\draw  [dash pattern={on 4.5pt off 4.5pt}][line width=0.75]  (244.25,26.34) -- (357.26,26.34) -- (357.26,143.81) -- (244.25,143.81) -- cycle ;
\draw    (337.88,117.58) -- (343.17,136.25) ;
\draw    (337.88,117.58) -- (332.5,136.25) ;
\draw    (336.18,60.82) -- (330.17,41.25) ;
\draw    (336.18,60.82) -- (342.17,41.25) ;

\draw (41.33,49.56) node [anchor=north west][inner sep=0.75pt]  [font=\small]  {$0$};
\draw (97,170) node [anchor=north west][inner sep=0.75pt]    {$[\mathbf{G}]$};
\draw (37.03,117.26) node [anchor=north west][inner sep=0.75pt]    {$\dotsc $};
\draw (100.31,99.85) node [anchor=north west][inner sep=0.75pt]  [rotate=-269.27]  {$\dotsc $};
\draw (30,149.4) node [anchor=north west][inner sep=0.75pt]  [font=\small]  {$\rho =0$};
\draw (95.15,148.59) node [anchor=north west][inner sep=0.75pt]  [font=\small]  {$\rho =1$};
\draw (101.03,136.26) node [anchor=north west][inner sep=0.75pt]  [font=\scriptsize]  {$\dotsc $};
\draw (117.55,39.79) node [anchor=north west][inner sep=0.75pt]  [font=\scriptsize,rotate=-179.95]  {$\dotsc $};
\draw (191,31.4) node [anchor=north west][inner sep=0.75pt]    {$\delta '$};
\draw (111.88,60.29) node [anchor=north west][inner sep=0.75pt]    {$\delta $};

\draw (138,36) node [anchor=north west][inner sep=0.75pt]    {$e$};
\draw (268.33,50.51) node [anchor=north west][inner sep=0.75pt]  [font=\small]  {$0$};
\draw (295,170) node [anchor=north west][inner sep=0.75pt]    {$[\mathbf{G} /e]$};
\draw (264.03,118.22) node [anchor=north west][inner sep=0.75pt]    {$\dotsc $};
\draw (327.31,100.8) node [anchor=north west][inner sep=0.75pt]  [rotate=-269.27]  {$\dotsc $};
\draw (257,150.35) node [anchor=north west][inner sep=0.75pt]  [font=\small]  {$\rho =0$};
\draw (322.15,149.54) node [anchor=north west][inner sep=0.75pt]  [font=\small]  {$\rho =1$};
\draw (329.03,137.22) node [anchor=north west][inner sep=0.75pt]  [font=\scriptsize]  {$\dotsc $};
\draw (344.55,40.74) node [anchor=north west][inner sep=0.75pt]  [font=\scriptsize,rotate=-179.95]  {$\dotsc $};
\draw (318.88,64.24) node [anchor=north west][inner sep=0.75pt]  [font=\small]  {$\delta +\delta '$};
\draw (32,83) node [anchor=north west][inner sep=0.75pt]   [align=left] {core};
\draw (259,83) node [anchor=north west][inner sep=0.75pt]   [align=left] {core};

\end{tikzpicture}
\]
By Künneth formula and compatibility with flat pullback - analogous to Lemmas \ref{lem:pbss} and \ref{lem:kunnss} - the boundary map of this edge contraction determines the general case of contracting rational tails to the contraction radius. Let $(\boldsymbol{\delta}^{(r)}, \boldsymbol{m}^{(r)})$ be the degree and marking distributions on the contraction radius of $[\mathbf{G}],$ and let $(\boldsymbol{\delta'}^{(r)}, \boldsymbol{m}^{(r)})$ be the ones on $[\mathbf{G}/e],$ so that $\boldsymbol{\delta'}^{(r)} = \boldsymbol{\delta}^{(r)} + \delta'\cdot \mathbf{e}_{v}$ for the vertex $v$ on $[\mathbf{G}]$ and $[\mathbf{G}/e].$

Let $\mathbf{G}^{{cc}}$ be the contraction core of $[\mathbf{G}],$ then $H_{\star}^{\mathrm{BM}}(\Mtil_{[\mathbf{G}]}) = H_{\star}^{\mathrm{BM}}(\Mtil_{[\mathbf{G}^{{cc}}]})\otimes H_{\star}^{\mathrm{BM}}(\Mbar_{0,0}^*(\PP^r,\delta')).$

\begin{lemma}\label{lem:g0baspt}
  When $\delta' = 1,$ the boundary map $$\gr^W_{-(k-1)}H_{k}^{\mathrm{BM}}(\Mtil_{[\mathbf{G}/e]})\to \gr^W_{-(k-1)}H_{k-1}^{\mathrm{BM}}(\Mtil_{[\mathbf{G}]})$$ sends $\beta_v\cap [\mathcal{M}^{\mathbf{F}}_{(\boldsymbol{\delta}^{(r)}, \boldsymbol{m}^{(r)})}]$ to $[\Mtil_{[\mathbf{G}^{cc}]}]\otimes [\mathrm{pt}].$ The boundary map vanishes on $\beta_v\cap H_{k}^{\mathrm{BM}}(\Mtil_{[\mathbf{G}/e]})\subset \gr^W_{-(k-1)}H_{k}^{\mathrm{BM}}(\Mtil_{[\mathbf{G}/e]})$ otherwise.
\end{lemma}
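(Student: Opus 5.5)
The plan is to reduce the computation to the parametrised genus zero picture of Lemma \ref{lem:mapwdelta}, the only place where $\beta_v$ is defined concretely. First I use the Künneth decompositions. The first is Lemma \ref{lem:kunneth} for $\Mtil^{\mathrm{ord}}_{[\mathbf{G}/e]}$ and $\Mtil^{\mathrm{ord}}_{[\mathbf{G}]}$. The second is the analogue of Lemma \ref{lem:kunnss}, which isolates the vertex $v$ and its attached degree $\delta'=1$ rational tail. Together they show that the boundary map is the identity on the factor $H^{\mathrm{BM}}_\star(\Mtil_{[\mathbf{G}^{cc}]})$ and on all other vertices of the contraction radius. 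So it is enough to understand the boundary map between the fibres of $\mathcal{M}^{\mathbf{F}}_{(\boldsymbol{\delta'}^{(r)},\boldsymbol{m}^{(r)})}$ and of $\mathcal{M}^{\mathbf{F}}_{(\boldsymbol{\delta}^{(r)},\boldsymbol{m}^{(r)})}\times \Mbar^*_{0,0}(\PP^r,1)$ over the base $\prod_i\mathcal{M}_{0,\boldsymbol{m}_i}$ and the torus factor. Passing to the parametrised spaces of Definition \ref{defn:MapF} and Lemma \ref{lem:fibDF}, the fibre direction at $v$ is $\mathrm{Map}^{*,w}_{\delta+1}$. The locus of the stratum $[\mathbf{G}]$ is then modelled by the closed stratum $L^{w,\circ}_{\delta+1,-1}\cong \mathbb{A}^1\times\mathrm{Map}^{*,w}_{\delta}$ of quasimaps with exactly one basepoint. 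Bubbling the basepoint off as a degree one rational tail gives the map of pairs, as in Lemma \ref{lem:cobratail}, so the two boundary maps can be compared.

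The key input is the last assertion of the inductive hypothesis in the proof of Lemma \ref{lem:mapwdelta}. It says that the off-by-one class $H^{\mathrm{BM}}_{2(\delta(r+1)-2r)+1}(\mathrm{Map}^{*,w}_{\delta+1})$, which is Poincaré dual to $\beta$, is sent by the boundary map isomorphically onto the fundamental class of $\mathbb{A}^1\times\mathrm{Map}^{*,w}_{\delta}$. In the case $\boldsymbol{\delta}_v=\delta+1=2$ with $\delta=1$, this is instead item (1) of Lemma \ref{lem:Dtilstar}: the boundary image of the fundamental class of the locus $\{v_j=0\}$, consistent with Definition \ref{defn:betav}. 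Lifting through the map of pairs, the boundary of $\beta_v\cap[\mathcal{M}^{\mathbf{F}}]$ is the fundamental class of the stratum $\Mtil_{[\mathbf{G}]}$ after collapsing the tail. Its preimage in the Künneth decomposition is the class $[\Mtil_{[\mathbf{G}^{cc}]}]\otimes[\mathrm{pt}]$. This follows from the section property of $s$ in Lemma \ref{lem:cobratail}, together with the fact that $\Mbar^*_{0,0}(\PP^r,1)\cong\PP^{r-1}$ contributes its point class in the degree compatible with the weight $-(k-1)$ after the dimension count. I would check the degrees: $\beta_v$ has cohomological degree $2r-1$, and the codimension of $\Mtil_{[\mathbf{G}]}$ plus $2(r-1)$ for the point class in $\PP^{r-1}$ adds up correctly.

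For the vanishing statement, take any other pure class capped with $\beta_v$: a $\psi$-power, or a nontrivial class from the base or from the other factors. By the projection formula its boundary is that class capped with the above image. A cap product with the fundamental class $[\Mtil_{[\mathbf{G}^{cc}]}]\otimes[\mathrm{pt}]$ lands in homological degree below the pure-weight range, or involves $\psi_v$ on $[\mathbf{G}]$, where $v$ now has an extra edge; so $\psi_v$ either vanishes by Lemma \ref{lem:MF}(2) or is of the wrong weight. Weight truncation then kills it.

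The main obstacle is the identification in the second step: making the map of pairs, which collapses a degree one tail to a basepoint of multiplicity one, compatible with the quotients by $\mathrm{Aut}(\PP^1,*)$ or $\mathbb{C}^\star$ and with the linear-dependency torus. My approach would be to do the comparison $\mathbb{C}^\star$-equivariantly on the parametrised spaces and then descend through the Leray spectral sequence of Lemma \ref{lem:MF}. That spectral sequence is compatible with the boundary maps because every map involved lies over $\prod_i\mathcal{M}_{0,\boldsymbol{m}_i}$.
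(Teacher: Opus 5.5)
Your treatment of the main case ($\delta'=1$, with $\beta_v$ capped against the fundamental class) spells out the paper's own argument. You collapse the degree-one tail to a simple basepoint at $v$ and compare boundary maps through the map of pairs, as in \S\ref{subsec:ratailcon}. You then use that, by Lemma \ref{lem:mapwdelta}, $\beta$ is the off-by-one class whose boundary is the fundamental class of the basepoint locus. The factorisation through the section $s$ of Lemma \ref{lem:cobratail}, together with your degree count, gives $[\Mtil_{[\mathbf{G}^{cc}]}]\otimes[\mathrm{pt}]$.

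There is a slip in your case split. By Definition \ref{defn:betav}, item (1) of Lemma \ref{lem:Dtilstar} governs $\boldsymbol{\delta}_v=\delta+1=1$, i.e.\ $\delta=0$, not $\boldsymbol{\delta}_v=2$. For $\boldsymbol{\delta}_v=2$, Lemma \ref{lem:mapwdelta} applies directly, since its base case removes exactly $\mathbb{A}^1\times\mathrm{Map}^{*,w}_1$. For $\delta=0$ the fibre $\mathrm{Map}^{*,w}_1\cong\mathbb{A}^1$ contains no basepoint locus. The basepoint instead appears in the base $\widetilde{\mathsf{D}}^*_{\boldsymbol{\delta}}$ as $\{v_j=0\}$, so your model $\mathbb{A}^1\times\mathrm{Map}^{*,w}_{\delta}$ must be replaced by that base-direction comparison.

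The genuine gap is the lemma's second sentence. There ``otherwise'' means $\delta'\neq 1$. The vanishing is asserted on all of $\beta_v\cap H^{\mathrm{BM}}_\star(\Mtil_{[\mathbf{G}/e]})$, which contains $\beta_v\cap[\mathcal{M}^{\mathbf{F}}_{(\boldsymbol{\delta}^{(r)},\boldsymbol{m}^{(r)})}]$, and the paper's proof splits into ``the case $\delta'=1$'' and ``the other cases''.

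You instead try to show that for $\delta'=1$ the boundary kills $c\cap\beta_v\cap[\,\cdot\,]$ for nontrivial pure $c$, and that is false. Your own projection-formula step gives $\partial(c\cap\beta_v\cap[\,\cdot\,])=c|_{\Mtil_{[\mathbf{G}]}}\cap\bigl([\Mtil_{[\mathbf{G}^{cc}]}]\otimes[\mathrm{pt}]\bigr)$. This is again of pure weight, because capping with a pure class shifts degree and weight together, so nothing drops ``below the pure range''. It is nonzero, for instance, when $c$ is pulled back from the $H^\star(\PP^r)$ factor or from the curve stratum $\Mtil_{\mathbf{G}^{cc}}$, neither of which the contraction of $e$ touches. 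These nonzero images are exactly why $\boldsymbol{\mathsf{B}}$ is defined by capping basepoint relations with pure classes.

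Meanwhile the case $\delta'\geq 2$ is never treated. What is missing is the degree count the paper alludes to:
\begin{itemize}
\item As in Lemma \ref{lem:cobratail}, the comparison forces $\partial(\beta_v\cap[\mathcal{M}^{\mathbf{F}}])$ to have the form $\sigma\otimes[\mathrm{pt}]$ with $[\mathrm{pt}]\in H_0(\Mbar^*_{0,0}(\PP^r,\delta'))$.
\item Such a class has homological codimension at least $2(\delta'(r+1)-2)$ in $\Mtil_{[\mathbf{G}]}$.
\item The boundary class has codimension exactly $2r-2$, and $2(\delta'(r+1)-2)>2r-2$ once $\delta'\geq 2$, so the image is zero.
\end{itemize}
Geometrically, $\Mtil_{[\mathbf{G}]}$ collapses onto the locus of a single basepoint of multiplicity $\delta'$. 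That locus has codimension $\delta'(r+1)-1>r$, so its dimension is too small to carry a class in the degree of the basepoint class. Only once this vanishing is established does your projection formula extend it to all of $\beta_v\cap H^{\mathrm{BM}}_\star(\Mtil_{[\mathbf{G}/e]})$.
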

\begin{proof}
  The boundary image in the case $\delta'=1$ follows from collapsing the genus zero stable map as a basepoint of the genus one map on vertex $v,$ similar to the genus one situation (§\ref{subsec:ratailcon}). (Co)homological degree contraints force the map to vanish in the other cases. 
\end{proof}

\begin{corollary}\label{cor:g0basptE2}
  The (Poincaré duals of) off-by-one classes $\beta_v$ do not survive to the $E^2$-page.
\end{corollary}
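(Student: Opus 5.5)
The plan mirrors Lemma \ref{lem:g1baseptE2}: for each vertex $v$ on the contraction radius of a stratum $[\mathbf{G}/e]$, I will produce a specific $E^1$-differential that is injective on the span of the classes $\beta_v$, modulo pure weight classes. The natural source of such a differential is the stratum $[\mathbf{G}]$ obtained by sprouting a degree one rational tail at $v$, and the input is Lemma \ref{lem:g0baspt}.

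First I reduce to the basic generators. By Lemma \ref{lem:MF}, the off-by-one classes involving $\beta_v$ on $\mathcal{M}^{\mathbf{F}}_{(\boldsymbol{\delta}^{(r)},\boldsymbol{m}^{(r)})}$ form a module over the pure weight classes. In the all univalent or bivalent case these are the classes $\beta_v-\beta_w$; otherwise they are the classes $\beta_v$ themselves. Through the Künneth formula of Lemma \ref{lem:kunneth}, a general class on $\Mtil_{[\mathbf{G}/e]}$ has the form $\beta_v\cap(\text{pure class})$ tensored with pure classes on the contraction core and on the rational tails.

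Second, I apply Lemma \ref{lem:g0baspt} to $\delta'=1$. Since $\boldsymbol{\delta}_v\ge 1$ on the radius, the stratum $[\mathbf{G}]$ always exists: in it the vertex $v$ carries degree $\boldsymbol{\delta}_v-1$ and a degree one tail is attached. Lemma \ref{lem:g0baspt} says the boundary map sends $\beta_v\cap[\mathcal{M}^{\mathbf{F}}]$ to $[\Mtil_{[\mathbf{G}^{cc}]}]\otimes[\mathrm{pt}]$. This is nonzero, since the fundamental class of the contraction core stratum tensored with the point class of $\Mbar^*_{0,0}(\PP^r,1)\cong\PP^{r-1}$ does not vanish. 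Compatibility with the Künneth decomposition, as in Lemma \ref{lem:kunnss}, extends this statement to $\beta_v$ capped with pure classes: those pure classes are carried along to the corresponding pure classes on $\Mtil_{[\mathbf{G}^{cc}]}$, and the same argument as in Lemma \ref{lem:pbss} shows this transport is injective. For the combinations $\beta_v-\beta_w$, the targets for $v$ and for $w$ are distinct strata, because they sprout at different vertices. Their images therefore lie in different direct summands of $E^1$ and cannot cancel.

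Third, I conclude. Summing over all $v$, the differential $d^1$ restricted to the span of the $\beta_v$-classes is injective, so no such class lies in $\ker d^1$ and none survives to $E^2$. Finally, I pass to coarse strata by taking $\mathrm{Aut}$-coinvariants, which is exact over $\mathbb{Q}$.

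The main obstacle is the case $\boldsymbol{\delta}_v=1$. There, sprouting a degree one tail leaves $v$ with degree zero, so $\beta_v$ is really the torus class from item (1) of Lemma \ref{lem:Dtilstar}. I would handle it by identifying $[\mathbf{G}]$ with the stratum where $\boldsymbol{v}_v=0$. By Remark \ref{rem:minuszerodeg}, this is exactly the closed subspace whose fundamental class defines that $\beta_v$, so the boundary map is nonzero by construction. A further point to check is that, when $v$ drops to degree zero, the stability and alignment conditions still permit $[\mathbf{G}]$ as a centrally aligned graph.
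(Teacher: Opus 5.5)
Your main line is the paper's own argument. The corollary is read off from Lemma~\ref{lem:g0baspt} with $\delta'=1$: its nonzero image $[\Mtil_{[\mathbf{G}^{cc}]}]\otimes[\mathrm{pt}]$ shows that $\beta_v\cap[\mathcal{M}^{\mathbf{F}}_{(\boldsymbol{\delta}^{(r)},\boldsymbol{m}^{(r)})}]$ is not in $\ker d^1$, just as Lemma~\ref{lem:g1baseptE2} is read off from degree-one rational tail contractions. Your points that different vertices give different target summands, and that passing to $\mathrm{Aut}$-(co)invariants is exact over $\mathbb{Q}$, are fine refinements.

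\textbf{The extension to $\beta_v$ capped with arbitrary pure classes fails.}

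\begin{itemize}
\item In the univalent/bivalent case of Lemma~\ref{lem:MF}, those pure classes include the powers $\psi^i$. These live on the radius factor, not on $\Mtil_{[\mathbf{G}^{cc}]}$, and they need not survive on the target.
\item Put $U=\Mtil_{[\mathbf{G}/e]}$ and $Z=\Mtil_{[\mathbf{G}]}$. The boundary map is linear over $H^\star(U\cup Z)$, and $\psi$ extends over $U\cup Z$ as $\psi_v$, the cotangent line at the inward node of $v$.
\item If $v$ is bivalent, its component over $Z$ has three special points, so $\psi_v|_Z=0$. This is what the second case of Lemma~\ref{lem:MF} predicts for the target.
\item Hence, when every radius vertex is bivalent, $(\beta_v-\beta_w)\cap\psi^i$ with $1\le i\le r-1$ is sent to zero by every tail-sprouting differential. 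So your claim that ``the transport is injective'' is false, and injectivity of $d^1$ on the whole $\beta$-module is not established.
\end{itemize}

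You should restrict, as Lemma~\ref{lem:g0baspt} does, to $\beta_v$ (or $\beta_v-\beta_w$) capped with the fundamental class of the $\mathcal{M}^{\mathbf{F}}$-factor. These may be tensored with pure classes from $\PP^r$, the contraction core and the rational tails, since the boundary acts as the identity on those Künneth factors.

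\textbf{The target stratum need not exist.} Your claim that ``$[\mathbf{G}]$ always exists'' requires $v$ to stay stable, and condition (3) of Definition~\ref{defn:centrgrph} to persist, after lowering $\boldsymbol{\delta}_v$. Both can fail:
\begin{itemize}
\item For an unmarked univalent degree-one $v$, stability fails. The natural target is then the stratum where $v$ leaves the radius as a degree-one rational tail, which matches $\{\boldsymbol{v}_j=0\}$ in Lemma~\ref{lem:Dtilstar}.
\item For a lone radius vertex of degree two with at least two further special points, condition (3) fails. Here $w=0$ is forced, and the proof of Lemma~\ref{lem:mapwdelta} shows $\beta_v$ links a degree-two basepoint locus, so $\delta'=1$ gives no target.
\end{itemize}
The paper leaves these cases implicit as well.
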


\subsection{Basepoint relations}\label{subsec:baseptrel}
It suffices to describe the images of the classes in the case of a single rational tail, since taking their Künneth tensor products with classes supported on the rational tails linearly span all the basepoint relations.
\begin{definition}
  We define basepoint relations as the boundary map images of: 
  \begin{enumerate}
    \item $$B_{k-1}(n,r,d)\to \bigoplus_{\substack{[\mathbf{G},\rho]: \mathbf{G}\text{ loop-free} \\\text{codim}\Mtil_{[\mathbf{G},\rho]}=1}} \mathrm{gr}^W_{-(k-1)} H_{k-1}^{\mathrm{BM}}(\Mtil_{[\mathbf{G},\rho]}),$$
    \item $$B_{k-1}(\mathsf{C}_{\ell,(\boldsymbol{\delta},\boldsymbol{m})})\to \bigoplus_{\substack{\mathsf{C}_{\ell,(\boldsymbol{\delta},\boldsymbol{m})}\rightsquigarrow[\mathbf{G}',\rho']\\ \text{rational tail contraction}}} \mathrm{gr}^W_{-(k-1)} H_{k-1}^{\mathrm{BM}}(\Mtil_{[\mathbf{G}',\rho']}),$$
    \item for all $[\mathbf{G},\rho]$ with $|\rho| = 1$ and no rational tails,$$\gr^W_{-(\star-1)} H_{\star}(\Mtil_{[\mathbf{G},\rho]})\supset \langle \beta_{v} \cap [\Mtil_{[\mathbf{G},\rho]}] \rangle\to \bigoplus_{\substack{[\mathbf{G},\rho]\rightsquigarrow[\mathbf{G}',\rho']\\ \text{rational tail contraction}}} \gr^W_{\star}H^{\mathrm{BM}}_{\star}(\Mtil_{[\mathbf{G}',\rho']}).$$ The homological degree on the target is $2(\dim \Mtil_{[\mathbf{G},\rho]}-(r-1)).$
  \end{enumerate}

  Let $\boldsymbol{\mathsf{B}}\subset \bigoplus_{[\mathbf{G},\rho]}\gr^W_{-\star}H_{\star}^{\mathrm{BM}}(\Mtil_{[\mathbf{G},\rho]})$ be the vector subspace generated by taking the cap products of the basepoint relations and all pure weight cohomology classes on the strata.
\end{definition}

Combining Lemmas \ref{lem:cobratail}, \ref{lem:pbss}, and \ref{lem:cobradmer} we give explicit formula of the basepoint relations coming from $B_{\star}(n,r,d)$ via the surjection $$\gr^W_{-\star}H_{\star}^{\mathrm{BM}}\hat{\mathcal{Q}}_{1,n+1}(\PP^r,d-1)\twoheadrightarrow B_{k-1}(r,n,d).$$ From Lemma \ref{lem:pics} and projective bundle formula, a basis of pure weight classes on $\hat{\mathcal{Q}}_{1,n+1}(\PP^r,d-1)$ is given by $$\left\{\left((\alpha_1 + \Theta\cdot \alpha_2)\cdot H_{\hat{\mathcal{Q}}}^m\right)\cap [\hat{\mathcal{Q}}_{1,n+1}(\PP^r,d-1)]\right\},$$ where \begin{enumerate}
  \item $\alpha_1,\alpha_2$ runs through some bases of $H^\star(\mathcal{M}_{1,n+2})$ and $H^\star(\mathcal{M}_{1,n+1})$ respectively,
  \item $0\leq m\leq (d-1)(r+1)-1.$
\end{enumerate}
Given such a basis element, consider all subsets $I\subset [n]$ such that the pair $(\alpha_1,\alpha_2)$ is pulled back from some $(\alpha_1^{(I)}, \alpha_2^{(I)})\in H^\star(\mathcal{M}_{I+2})\times H^\star(\mathcal{M}_{I+1}):$ for each $I,$ such $(\alpha_1^{(I)}, \alpha_2^{(I)})$ is unique if it exists. The basepoint relation is the collection of pure weight classes given by:
\begin{enumerate}
  \item $[(\alpha_1^{(I)}+\Theta \cdot \alpha_2^{(I)})\cdot H_{\hat{\mathcal{Q}}}^{m-\delta(r+1)}]\otimes [\mathrm{pt}]$ on $\Mtil_{[\mathbf{G}_{\delta, I}]}$ (defined before Lemma \ref{lem:pbss}), and the term is understood to vanish if $m-\delta(r+1)<0,$
  \item $[(\alpha_1^{(I)}+\Theta \cdot \alpha_2^{(I)})\cdot H_{\hat{\mathcal{Q}}}^{m-\delta(r+1)}]\otimes [\mathrm{pt}]$ on $\Mtil_{[\mathbf{G}_{\boldsymbol{\delta},\boldsymbol{m}},\rho]}$ when the contraction radius has a single vertex, and the markings on the core are the subset $I,$
  \item zero on all the other codimension one strata.
\end{enumerate}

\begin{remark}
  The basepoint relations are among classes of the form $\varphi\otimes [\mathrm{pt}]$ where $\varphi$ is a pure weight class on the mapping space from an elliptic curve and the rational tail or the contraction radius is decorated by the class $[\mathrm{pt}].$ In other words, the basepoint relations do not concern algebraic cycles on genus zero stable maps.
\end{remark}

Analogous formulas can be obtained for relations from $B_{k-1}(\mathsf{C}_{\ell,(\boldsymbol{\delta},\boldsymbol{m})}),$ and Lemma \ref{lem:g0baspt} already determines the relation induced by the classes $\beta_v.$

\subsection{Presentation of the homology group}\label{subsec:pres}

\begin{definition}
  Let $\boldsymbol{\mathsf{P}}\subset \bigoplus_{[\mathbf{G},\rho]}\gr^W_{-\star}H_{\star}^{\mathrm{BM}}(\Mtil_{[\mathbf{G},\rho]})$ be the boundary images of the cap product between any pure weight class and one of the following off-by-one cohomology classes:
  \begin{enumerate}
    \item pullback from\footnote{Strictly speaking, these classes will contribute to non-trivial differentials only at the $E^2$-page, so it only make sense to talk images of these classes under $E^2$-differential.} $H^3(\mathcal{M}_{1,4}),$
    \item pullback from $H^1(\mathcal{M}_{0,4}),$
    \item the off-by-one classes coming from torus bundles.
  \end{enumerate}
\end{definition}
Again, the tuples of pure weight classes in $\mathbf{P}$ correspond to pullback of relations from $\Mbar_{1,n}^{\mathrm{cen}}$ corresponding to Getzler's relation, WDVV relation, and $\psi_v = \psi_w$ for two vertices $v,w$ on the same level.

\begin{theorem}\label{thm:relations}
  We have $H_{\star}(\Mtil_{1,n}(\PP^r,d)) = \bigoplus_{[\mathbf{G},\rho]}\gr^W_{-\star}H^{\mathrm{BM}}_{\star}(\Mtil_{[\mathbf{G},\rho]})/\langle \boldsymbol{\mathsf{B}}, \boldsymbol{\mathsf{P}} \rangle.$
\end{theorem}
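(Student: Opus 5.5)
The plan is to read the presentation off the weight-truncated stratification spectral sequence of §1.2. Since $\Mtil_{1,n}(\PP^r,d)$ is smooth and proper, $H_{p+q}$ is pure of weight $-(p+q)$. Only $\gr^W_{-(p+q)}E^1_{p,q}$ survives, and the maps
$$\gr^W_{-(p+q)}E^1_{p,q}\twoheadrightarrow \cdots \twoheadrightarrow E^\infty_{p,q}$$
are successive quotients by images of weight-$(-(p+q))$ differentials. Summing over $p$, and using that the filtration quotients $E^\infty_{p,q}$ assemble to $H_\star$ over $\mathbb{Q}$ with no extension issues, the direct sum of pure weight pieces of the strata maps onto $H_\star(\Mtil_{1,n}(\PP^r,d))$; this is Corollary \ref{cor:maingen}. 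The kernel is the span of the images of all differentials $d^k$ restricted to the weight-$(-(p+q))$ part of $E^k_{p+k,q-k+1}$. That source is a subquotient of the off-by-one pieces $\gr^W_{-\star+1}H^{\mathrm{BM}}_\star$ of the strata. So the whole task is to show that this span equals $\langle \boldsymbol{\mathsf{B}},\boldsymbol{\mathsf{P}}\rangle$.

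First, I enumerate the off-by-one classes on each stratum. By Lemma \ref{lem:kunneth} and the compatibility of Künneth with weights, every off-by-one class on $\Mtil_{[\mathbf{G},\rho]}$ is a sum of tensor products in which exactly one factor is off-by-one and all others are pure. The rational tail factors $\Mbar^*_{0,m}(\PP^r,\delta)$ are smooth and proper, so they contribute only pure classes. The off-by-one factor therefore comes from one of the following:
\begin{enumerate}
\item the positive-degree core: Lemma \ref{lem:m1nrdoff}, Lemma \ref{lem:M1noff}, and Corollary \ref{cor:Mcycle}, which give pullbacks of Getzler's relation from $H^3(\mathcal{M}_{1,4})$ and the basepoint classes $B_{k-1}(n,r,d)$ and $B_{k-1}(\mathsf{C}_{\ell,(\boldsymbol{\delta},\boldsymbol{m})})$;
\item the contracted core in $\Mbar^{\mathrm{cen}}_{1,n'}$: WDVV, Getzler, and the torus bundle fibres;
\item the factorisation factor: Lemma \ref{lem:MF}, which gives $H^1(\mathcal{M}_{0,m})$, the torus classes $\alpha_v-\alpha_w$, and the genus zero basepoint classes $\beta_v$.
\end{enumerate}
This is an exhaustive list of sources, and each item belongs either to the $\boldsymbol{\mathsf{P}}$-type or to the $\boldsymbol{\mathsf{B}}$-type.

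Next, I check that each differential image is the claimed relation.

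For the $\boldsymbol{\mathsf{P}}$-type classes, I compare with the stratification spectral sequence of $\Mbar^{\mathrm{cen}}_{1,n'}$ under pullback, as in §\ref{subsec:basptrelcomp} and the contracted core discussion. Their differentials are the pullbacks of the WDVV, Getzler and $\psi_v=\psi_w$ relations. The WDVV and torus classes die at $E^2$; the Getzler classes act at $E^2$ and die at $E^3$.

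For the $\boldsymbol{\mathsf{B}}$-type classes, I use the following results:
\begin{enumerate}
\item Lemmas \ref{lem:cobratail}, \ref{lem:pbss}, \ref{lem:cobradmer} and \ref{lem:kunnss} compute $d^1$ explicitly on $B_{k-1}$ and its Künneth products.
\item The core edge contraction lemma shows that $B_{k-1}$ contributes nothing new in that direction.
\item Lemma \ref{lem:g0baspt} handles $\beta_v$.
\end{enumerate}
Lemma \ref{lem:g1baseptE2} and Corollary \ref{cor:g0basptE2} say that all basepoint classes have injective $d^1$, so they never reach $E^2$. Their $d^1$-images are by definition the space $\boldsymbol{\mathsf{B}}$, once cap products with pure classes are included via Künneth.

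The main obstacle is controlling the higher differentials. I must show that no $d^k$ with $k\ge2$ produces relations outside $\langle\boldsymbol{\mathsf{B}},\boldsymbol{\mathsf{P}}\rangle$, and that the $d^1$-images of different sources do not interact. I would argue as follows:
\begin{enumerate}
\item Every off-by-one source is either killed at $E^2$ (basepoint, WDVV and torus classes, whose $d^1$ is injective on them) or is a Getzler class.
\item The Getzler classes are pulled back from $\Mbar^{\mathrm{cen}}_{1,n'}$. By naturality of the spectral sequence under the flat forgetful pullbacks, their $d^2$-images are pullbacks of Getzler's relation, and nothing survives past $E^3$.
\item Where a $d^2$ source is only defined modulo $d^1$-images, the indeterminacy is already contained in $\boldsymbol{\mathsf{B}}+\boldsymbol{\mathsf{P}}$, so the quotient is unaffected.
\end{enumerate}
To pass from the ordered covers to the strata I take $\mathrm{Aut}[\mathbf{G},\rho]$-coinvariants, which is exact over $\mathbb{Q}$ (Lemma \ref{lem:kunnss}). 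This yields exactly the displayed quotient.
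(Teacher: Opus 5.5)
Your proposal follows the paper's proof essentially step for step. You enumerate the off-by-one sources on each stratum via the K\"unneth formula, Lemmas \ref{lem:m1nrdoff} and \ref{lem:MF}, and Corollary \ref{cor:Mcycle}. You dispose of the curve-moduli and torus classes (the $\boldsymbol{\mathsf{P}}$ part) by the $E^3$-page through comparison with $\Mbar^{\mathrm{cen}}_{1,n'}$. You kill the basepoint classes at $E^2$ using Lemma \ref{lem:g1baseptE2}, Corollary \ref{cor:g0basptE2} and \S\ref{subsec:baseptloops}, so that no off-by-one class remains to produce further differentials. Your extra remarks are harmless refinements of that same argument: that exactly one K\"unneth factor carries the off-by-one weight, that the indeterminacy of $d^2$ lies in $\langle \boldsymbol{\mathsf{B}},\boldsymbol{\mathsf{P}}\rangle$, and that taking $\mathrm{Aut}[\mathbf{G},\rho]$-coinvariants is exact.
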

\begin{proof}
  The generators of the off-by-one classes $\bigoplus_{[\mathbf{G},\rho]}\mathrm{gr}^W_{-(\star-1)}H_{\star}^{\mathrm{BM}}(\Mtil_{[\mathbf{G},\rho]})$ are known to be pullback from the moduli of curves, torus fibres, and basepoint classes in genus zero and one (including cycles of rational curves): see Lemmas \ref{lem:m1nrdoff}, \ref{lem:MF}, and \ref{cor:Mcycle}. The first two types of off-by-one classes are covered in $\boldsymbol{\mathsf{P}}$ and discussion at the beginning of \ref{subsec:basptrelcomp} implies that they will not survive to the $E^3$-page. Lemmas \ref{lem:g1baseptE2}, \ref{cor:g0basptE2} and discussion in §\ref{subsec:baseptloops} state that the basepoint classes all have non-zero images in the $E^1$-page differential--recorded by $\boldsymbol{\mathsf{B}}$--and do not survive to the $E^2$-page. 
  
  Therefore, after $E^3$-page, there is no off-by-one class that produce non-zero images in the spectral sequence differentials, which implies that we have exhausted all the possible relations among the pure weight classes in the $E^1$-page.
\end{proof}

\begin{remark}
It is more convenient to work with classes on $H^\star(\Mtil_{1,n}(\PP^r,d))$ and their relations. The relations in $\boldsymbol{\mathsf{P}}$ are known to be pulled back from relations on $\Mbar_{1,n}^{\mathrm{cen}}.$ 

On the other hand, let $\beta\in \boldsymbol{\mathsf{B}}\cap \bigoplus_{\dim \Mtil_{[\mathbf{G},\rho]=p}}\gr^W_{-\star}H_{\star}^{\mathrm{BM}}(\Mtil_{[\mathbf{G},\rho]})$ be a basepoint relation among $p$-dimensional strata, and let $\tilde{\beta}\in \bigoplus_{[\mathbf{G},\rho]}H_{\star}(\Mbar_{[\mathbf{G},\rho]})$ be a lift, such as the ones introduced in §\ref{subsec:strcl}. Then the pushforward of $\tilde{\beta}$ in $\Mtil_{1,n}(\PP^r,d)$ must lie in the pushforward image of $(p-1)$-dimensional strata. Setting up an induction on the dimension of $\Mtil_{1,n}(\PP^r,d)$ (varying $(n,r,d)$), the generators and relations on the $(p-1)$-dimensional strata are known by inductive hypothesis and Künneth formula. In short, we may iteratively produce a set of relations among the lifted generators in $H_\star(\Mtil_{1,n}(\PP^r,d))$ that is in bijection with the span of $\boldsymbol{\mathsf{B}}$ and $\boldsymbol{\mathsf{P}}.$ This leads to the statement of Theorem \ref{thm:rels}.
\end{remark}

\bibliographystyle{alpha}
\bibliography{g1maps.bib}

\begin{thebibliography}{CLPW24b}

\bibitem[AC98]{ac}
Enrico Arbarello and Maurizio Cornalba.
\newblock Calculating cohomology groups of moduli spaces of curves via algebraic geometry.
\newblock {\em Publications Math\'ematiques de l'IH\'ES}, 88:97--127, 1998.

\bibitem[Ara05]{ara}
Donu Arapura.
\newblock The {Leray} spectral sequence is motivic.
\newblock {\em Inventiones mathematicae}, 160(3):567--589, June 2005.
\newblock arXiv:math/0301140.

\bibitem[BC23]{battistellacarocci}
Luca Battistella and Francesca Carocci.
\newblock A smooth compactification of the space of genus two curves in projective space: via logarithmic geometry and {Gorenstein} curves.
\newblock {\em Geometry \& Topology}, 27(3):1203--1272, June 2023.

\bibitem[BFP24]{bfp}
J.~Bergström, C.~Faber, and S.~Payne.
\newblock Polynomial point counts and odd cohomology vanishing on moduli spaces of stable curves.
\newblock {\em Annals of Mathematics}, 199(3), May 2024.

\bibitem[BM96]{bm}
K.~Behrend and Yu. Manin.
\newblock {Stacks of stable maps and Gromov-Witten invariants}.
\newblock {\em Duke Mathematical Journal}, 85(1):1 -- 60, 1996.

\bibitem[BNR21]{bnr}
Luca Battistella, Navid Nabijou, and Dhruv Ranganathan.
\newblock Curve counting in genus one: {E}lliptic singularities and relative geometry.
\newblock {\em Algebraic Geometry}, pages 637--679, 11 2021.

\bibitem[CL24]{clchow}
S.~Canning and H.~Larson.
\newblock On the {Chow} and cohomology rings of moduli spaces of stable curves.
\newblock {\em Journal of the European Mathematical Society}, November 2024.

\bibitem[CLP23]{clp11}
Samir Canning, Hannah Larson, and Sam Payne.
\newblock The eleventh cohomology group of {$\overline{\mathcal {M}}_{g,n}$}.
\newblock {\em Forum of Mathematics, Sigma}, 11:e62, 2023.

\bibitem[CLP24]{clp}
Samir Canning, Hannah Larson, and Sam Payne.
\newblock Extensions of tautological rings and motivic structures in the cohomology of {${\overline {\mathcal {M}}}_{g,n}$}.
\newblock {\em Forum of Mathematics, Pi}, 12:e23, 2024.

\bibitem[CLPW24a]{clpw}
Samir {Canning}, Hannah {Larson}, Sam {Payne}, and Thomas {Willwacher}.
\newblock {Moduli spaces of curves with polynomial point counts}.
\newblock {\em arXiv e-prints}, page arXiv:2410.19913, October 2024.

\bibitem[CLPW24b]{clpw2}
Samir {Canning}, Hannah {Larson}, Sam {Payne}, and Thomas {Willwacher}.
\newblock {The motivic structures $\mathsf{LS}_{12}$ and $\mathsf{S}_{16}$ in the cohomology of moduli spaces of curves}.
\newblock {\em arXiv e-prints}, page arXiv:2411.12652, November 2024.

\bibitem[CLPW25]{clpw25}
Samir {Canning}, Hannah {Larson}, Sam {Payne}, and Thomas {Willwacher}.
\newblock {FA-modules of holomorphic forms on $\overline{\mathcal{M}}_{g,n}$}.
\newblock {\em arXiv e-prints}, page arXiv:2509.08774, September 2025.

\bibitem[Coo14]{Cooper2014}
Yaim Cooper.
\newblock The geometry of stable quotients in genus one.
\newblock {\em Mathematische Annalen}, 361(3-4):943--979, September 2014.

\bibitem[DA73]{SGA43}
P.~Deligne and M.~Artin.
\newblock {\em Théorie des {T}opos et {C}ohomologie {É}tale des {S}chémas}.
\newblock Springer Berlin Heidelberg, 1973.

\bibitem[Fon07]{Fontanari}
Claudio Fontanari.
\newblock Towards the cohomology of moduli spaces of higher genus stable maps.
\newblock {\em Archiv der Mathematik}, 89(6):530--535, Oct 2007.

\bibitem[FW16]{farbwolf}
Benson Farb and Jesse Wolfson.
\newblock Topology and arithmetic of resultants. {I}.
\newblock {\em New York J. Math.}, 22:801--821, 2016.

\bibitem[Get99]{getzresolv}
E.~Getzler.
\newblock {Resolving mixed Hodge modules on configuration spaces}.
\newblock {\em Duke Mathematical Journal}, 96(1):175 -- 203, 1999.

\bibitem[GP06]{getzpand}
Ezra Getzler and Rahul Pandharipande.
\newblock The {Betti} numbers of {$\overline{\mathcal{M}}_{0,n}(r,d)$}.
\newblock {\em Journal of Algebraic Geometry}, 15(4):709--732, May 2006.

\bibitem[KS24a]{vzdualcomplex}
Siddarth {Kannan} and Terry~Dekun {Song}.
\newblock {The dual complex of $\mathcal{M}_{1,n}(\mathbb{P}^r,d)$ via the geometry of the {V}akil--{Z}inger moduli space}.
\newblock {\em arXiv e-prints}, page arXiv:2411.03518, 2024.

\bibitem[KS24b]{genusonechar}
Siddarth {Kannan} and Terry~Dekun {Song}.
\newblock {The {$S_n$}-equivariant {E}uler characteristic of {$\overline{\mathcal{M}}_{1, n}(\mathbb{P}^r, d)$}}.
\newblock {\em arXiv e-prints}, page arXiv:2412.12317, 2024.

\bibitem[KS26]{virtualhodge}
Siddarth {Kannan} and Terry~Dekun {Song}.
\newblock {Virtual Hodge numbers of $\mathcal{M}_{g, n}(\mathbb{P}^r, d)$: stability and calculations}.
\newblock {\em arXiv e-prints}, page arXiv:2601.07981, January 2026.

\bibitem[MM07]{mminter}
Andrei Musta{\c{t}}ǎ and Magdalena~Anca Musta{\c{t}}ǎ.
\newblock Intermediate moduli spaces of stable maps.
\newblock {\em Inventiones mathematicae}, 167(1):47--90, 2007.

\bibitem[MM08a]{mmchow}
Anca~M Mustaţă and Andrei Mustaţă.
\newblock The {C}how ring of {$\overline{M}_{0,m}(n, d)$}.
\newblock {\em Journal f\"{u}r die reine und angewandte Mathematik (Crelles Journal)}, 2008(615), January 2008.

\bibitem[MM08b]{mmtaut}
Anca~M. Mustaţǎ and Andrei Mustaţǎ.
\newblock Tautological rings of stable map spaces.
\newblock {\em Advances in Mathematics}, 217(4):1728--1755, 2008.

\bibitem[MOP11]{mop}
Alina Marian, Dragos Oprea, and Rahul Pandharipande.
\newblock The moduli space of stable quotients.
\newblock {\em Geometry \& Topology}, 15(3):1651--1706, October 2011.

\bibitem[Opr06a]{opretorus}
Dragos Oprea.
\newblock Tautological classes on the moduli spaces of stable maps to {$\mathbb{P}^r$} via torus actions.
\newblock {\em Advances in Mathematics}, 207(2):661--690, December 2006.

\bibitem[Opr06b]{opreaflag}
Dragos Oprea.
\newblock The tautological rings of the moduli spaces of stable maps to flag varieties.
\newblock {\em Journal of Algebraic Geometry}, 15(4):623--655, June 2006.

\bibitem[Pan99]{Pandharipande1999}
Rahul Pandharipande.
\newblock Intersections of {$\mathbb{Q}$}-divisors on {K}ontsevich’s moduli space {$\overline{M}_{0,n}(\mathbb{P}^r,d)$} and enumerative geometry.
\newblock {\em Transactions of the American Mathematical Society}, 351(4):1481–1505, 1999.

\bibitem[Pet14]{pet}
Dan Petersen.
\newblock The structure of the tautological ring in genus one.
\newblock {\em Duke Mathematical Journal}, 163(4), March 2014.
\newblock arXiv:1205.1586 [math].

\bibitem[Pet15]{PetA2}
Dan Petersen.
\newblock Cohomology of local systems on the moduli of principally polarized abelian surfaces.
\newblock {\em Pacific Journal of Mathematics}, 275(1):39--61, April 2015.

\bibitem[Pet16]{PetM2ct}
Dan Petersen.
\newblock Tautological rings of spaces of pointed genus two curves of compact type.
\newblock {\em Compositio Mathematica}, 152(7):1398--1420, 2016.

\bibitem[RSPW19]{rspw}
Dhruv Ranganathan, Keli Santos-Parker, and Jonathan Wise.
\newblock Moduli of stable maps in genus one and logarithmic geometry, {I}.
\newblock {\em Geometry \& Topology}, 23(7):3315--3366, December 2019.

\bibitem[Son26]{relations}
Terry~Dekun Song.
\newblock Revisiting relations on genus zero stable maps to projective space, 2026+.
\newblock In preparation.

\bibitem[VZ08]{vakilzinger}
Ravi Vakil and Aleksey Zinger.
\newblock A desingularization of the main component of the moduli space of genus-one stable maps into {$\mathbb{P}^n$}.
\newblock {\em Geometry \& Topology}, 12(1):1--95, February 2008.

\end{thebibliography}
\,

\end{document}